\documentclass{svmult}

\usepackage{type1cm}
\usepackage{makeidx}
\usepackage{graphicx}
\usepackage{multicol}
\usepackage[bottom]{footmisc}
\usepackage{newtxtext} 
\usepackage[varvw]{newtxmath}
\makeindex

\usepackage[colorinlistoftodos,textwidth=97pt]{todonotes}
\usepackage[left=1.5in, right=1.5in, bottom=1.25in,
height=8in]{geometry}

\input xy
\xyoption{all}
\usepackage{accents}
\usepackage{epsfig}
\usepackage{xcolor}
\usepackage{amsthm}
\usepackage{bbm}
\usepackage{amsmath}
\usepackage{amscd}
\usepackage{amsopn}

\usepackage{tikz}
\usetikzlibrary{shapes,shadows,arrows}
\usepackage{tikz-cd}

\usepackage{xspace}

\usepackage{hhline}
\usepackage{easybmat}
\usepackage{caption}   
\usepackage{relsize}

\usepackage{url}
\usepackage{enumitem, hyperref}\hypersetup{colorlinks}

\usepackage{stmaryrd}

\usepackage[T1]{fontenc}

\newtheorem{Theorem}{Theorem}
\numberwithin{Theorem}{section}

\newtheorem{TheoremX}{Theorem}

\newtheorem{innercustomgeneric}{\customgenericname}
\providecommand{\customgenericname}{}
\newcommand{\newcustomtheorem}[2]{%
  \newenvironment{#1}[1]
  {%
   \renewcommand\customgenericname{#2}%
   \renewcommand\theinnercustomgeneric{##1}%
   \innercustomgeneric
  }
  {\endinnercustomgeneric}
}

\newcustomtheorem{customthm}{Theorem}
\newcustomtheorem{customlemma}{Lemma}
\newcustomtheorem{customcor}{Corollary}
\newcustomtheorem{customrk}{Remark}

\newtheorem   {Lemma}[Theorem]{Lemma}

\newtheorem   {Proposition}[Theorem]{Proposition}
\newtheorem   {Problem}[Theorem]{Problem}
\newtheorem   {Corollary}[Theorem]{Corollary}
\theoremstyle {definition}
\newtheorem   {Definition}[Theorem]{Definition}
\theoremstyle {remark}
\newtheorem   {Remark}[Theorem]{Remark}
\newtheorem   {Example}[Theorem]{Example}

\makeatletter
\def\reflb#1#2{\begingroup
    #2%
    \def\@currentlabel{#2}%
    \phantomsection\label{#1}\endgroup
}
\makeatother

\def    \eps    {\epsilon}

\newcommand{\CA}{{\mathcal A}}
\newcommand{\CC}{{\mathcal C}}

\newcommand{\CL}{{\mathcal L}}

\newcommand{\CU}{{\mathcal U}}
\newcommand{\CV}{{\mathcal V}}

\newcommand{\CS}{{\mathcal S}}
\newcommand{\CT}{{\mathcal T}}

\newcommand{\supp}{\operatorname{supp}}

\newcommand{\id}{{\mathit id}}

\newcommand{\const}{{\mathit const}}

\newcommand{\fg}{{\mathfrak g}}

\newcommand{\fb}{{\mathfrak b}}
\newcommand{\fc}{{\mathfrak c}}
\newcommand{\fd}{{\mathfrak d}}

\newcommand{\fh}{{\mathfrak h}}

\newcommand{\ty}{\tilde{y}}

\newcommand{\tL}{\tilde{L}}

\newcommand{\tK}{\tilde{K}}

\newcommand{\tU}{\tilde{U}}

\newcommand{\tu}{\tilde{u}}

\newcommand{\tx}{\tilde{x}}

\newcommand{\tz}{\tilde{z}}
\newcommand{\cz}{\check{z}}
\newcommand{\hz}{\hat{z}}

\newcommand{\tH}{\tilde{H}}

\newcommand{\CB}{{\mathcal B}}

\newcommand{\PP}{{\mathcal P}}

\def    \nat    {{\natural}}
\def    \F      {{\mathbb F}}
\def    \AA    {{\mathbb A}}
\def    \I      {{\mathbb I}}
\def    \C      {{\mathbb C}}
\def    \R      {{\mathbb R}}

\def    \Z      {{\mathbb Z}}
\def    \N      {{\mathbb N}}
\def    \Q      {{\mathbb Q}}

\def    \T      {{\mathbb T}}
\def    \CP     {{\mathbb C}{\mathbb P}}
\def    \RP     {{\mathbb R}{\mathbb P}}

\def    \12     {{\frac{1}{2}}}

\def    \p      {\partial}
\def    \codim  {\operatorname{codim}}

\def    \im     {\operatorname{im}}

\def    \SH     {\operatorname{SH}}

\def    \SL    {\operatorname{SL}}

\def    \HF     {\operatorname{HF}}

\def    \H      {\operatorname{H}}

\def    \CF      {\operatorname{CF}}
\newcommand    \vol     {\operatorname{vol}}

\def    \by      {\bar{y}}
\def    \bz      {\bar{z}}
\def    \bg      {\bar{\gamma}}

\def    \hn    {\scriptscriptstyle{H}}
\def    \Fl    {\scriptscriptstyle{Fl}}

\def    \cf    {\operatorname{c}}

\newcommand    \htop  {\operatorname{h_{\scriptscriptstyle{top}}}}
\newcommand    \hvol  {\operatorname{h_{\scriptscriptstyle{vol}}}}
\newcommand   \hbr {\operatorname{\hbar}}
\newcommand   \hm {\operatorname{h}}

\newcommand    \width {\operatorname{width}}

\newcommand   \slope {\operatorname{\mathit{slope}}}
\newcommand   \rmax {r_{\max}}

\newcommand   \WW {\widehat{W}}

\newcommand \Quad {\R^n_{{\geq 0}}}

\newcommand \diam {\operatorname{diam}}
\newcommand \Stab {\operatorname{Stab}}

\title*{Topics in Symplectic Dynamics: Barcode Entropy}
\author{Erman \c Cineli, Viktor L.\ Ginzburg, Ba\c{s}ak Z.\ G\"urel, and
  Marco Mazzucchelli}
\institute{Erman \c Cineli \at Department of
  Mathematics, ETH Z\"urich, R\"amistrasse 101, 8092
  Z\"urich, Switzerland\\ \email{erman.cineli@math.ethz.ch}
  \and
Viktor L.\ Ginzburg \at Department of Mathematics, UC Santa Cruz, Santa Cruz, CA 95064, USA\\ \email{ginzburg@ucsc.edu}
\and 
Ba\c{s}ak Z.\ G\"urel \at Department of Mathematics, UCF, Orlando, FL 32816, USA\\ \email{basak.gurel@ucf.edu}
\and
Marco Mazzucchelli \at IMJ-PRG, Sorbonne Universit\'e, 75252 Paris Cedex 5, France\\ \email{marco.mazzucchelli@imj-prg.fr}}

\begin{document}

\maketitle

\abstract{\ \ \ Barcode entropy is an invariant of a Hamiltonian system -- a
  Hamiltonian diffeomorphism or a Reeb flow -- measuring its Morse- or
  Floer-theoretic complexity at a small scale. More specifically, it
  is the exponential growth rate of the number of not-too-short bars
  in the Floer or symplectic homology persistence module. Barcode
  entropy is closely related to topological entropy, even though they
  originate in different contexts, and in low dimensions they
  coincide. In these notes, we study barcode entropy and related
  invariants in various settings and explore their connections with
  pure dynamics features and, in particular, topological entropy. The
  methods build on techniques from symplectic topology and Floer
  theory, dynamical systems, and smooth integral geometry. We also
  touch upon some other applications of the machinery we develop. These notes are based on the mini-course given by the second author at the CIME summer school ``Symplectic Dynamics and Topology'' (Cetraro, Italy, June 16-20, 2025).}

\tableofcontents

\markboth{Erman \c Cineli, Viktor L.\ Ginzburg, Ba\c{s}ak Z.\ G\"urel, and Marco Mazzucchelli}{Topics in Symplectic Dynamics: Barcode Entropy}


\section{Introduction}

A distinctive feature of Hamiltonian dynamical systems is the least
action principle: trajectories of such systems are the critical points
of the action functional. This enables one to study Hamiltonian systems via
Morse or Floer theory.
This theory extracts certain information from the system and relays it
to the dynamics, but \emph{a priori} it is not clear which dynamics
features, beyond periodic orbits or chords, are retained and which are
lost:

\medskip
\begin{center}
  \resizebox{5.cm}{2cm}{
      \begin{tikzpicture}[node distance={27mm}, thick, main/.style = {draw}]
  \node[main] (1) {Hamiltonian Systems};
  \node[main] (2) [below right of =1]{Floer/Morse Theory};
  \node[main] (3) [above right of =2]{Dynamics};
 
  \draw[->] (1) -- (2);
  \draw[->] (1) -- (3);
  \draw[->] (2) -- (3);
 \end{tikzpicture}} 
\end{center}
This is not a lossless process.

Here we focus on topological entropy as a simple but
important dynamics invariant reflecting the complexity of a system. For
our purposes, a Hamiltonian dynamical system is either a Hamiltonian
diffeomorphism or a Reeb flow.

Very roughly speaking,  in symplectic dynamics there  are two distinct
sources  of  topological  entropy.  The   first  is  the  topology  or
symplectic  topology   of  the  phase  space,   i.e.,  the  underlying
symplectic or contact  manifold. An example is  the exponential growth
of the  homology of  the loop  space or  more generally  of symplectic
homology.  This growth  results  in positive  entropy  and is  readily
visible in Morse or Floer  theory. For geodesic flows, this connection
between  Morse theory  and  topological entropy  has been  extensively
studied   in,   e.g.,   \cite{Di,Ka82,Pa,Pa:book}.    A   variety   of
far-reaching generalizations of these  classical results to Reeb flows
have been obtained in, e.g., \cite{AASS, Al:Anosov, Al:Leg, ACH, ADMM,
  AM, AP, MS}, relating the topological entropy of Reeb flows to their
Floer theoretic invariants (e.g., symplectic or contact homology).

An important feature of this source of topological entropy is that it
is unconditional: exponential growth of Floer-type homology forces all
Hamiltonian dynamical systems (within a reasonable class) on a given
phase space to have positive entropy. For instance, this is the case
for all geodesic flows on a surface of genus at least two.

However, one can never have growth of Floer homology for Hamiltonian
diffeomorphisms of closed symplectic manifolds due to its invariance.
While this fact is crucial for proving results of the symplectic
topological nature, such as Arnold's conjecture, it is a hindrance to
studying dynamics beyond periodic orbits via Floer theory.

The second source is dynamics, e.g., a horseshoe localized to a small
ball. This source is conditional -- it genuinely depends on a
particular system -- and this is what we are interested in in these
notes. Can Floer or Morse homology detect topological entropy coming
from a localized horseshoe?

The answer depends on what is understood as Floer or Morse
homology. For our purposes, all Morse- or Floer-type homology spaces
are always filtered by the action functional. Thus, for instance, for
a Hamiltonian diffeomorphism $\varphi$ of a closed symplectic manifold
$M$, its Floer homology over a fixed field $\F$ is a family of vector
spaces $\HF^s(\varphi)$ parametrized by the action $s\in \R$. In
addition, we have the structure maps
$\HF^s(\varphi)\to \HF^t(\varphi)$ for $s\leq t$, induced by
inclusions. For a pair of closed monotone Lagrangian manifolds $L$ and
$L'$ of $M$, we get the filtered Floer homology spaces
$\HF^s(L,\varphi(L'))$, $s\in \R$, and again the associated structure
maps.  Likewise, for the Reeb flow on the boundary of a Liouville
domain $W$, its symplectic homology is a family of vector spaces
$\SH^s(W)$, parametrized by $s\in \R$, together with the structure
maps. The same structure is present in Morse theory.

Thus, in many instances, we can think of Floer, Morse or symplectic
homology as a persistence module. This perspective on Floer theory was
pioneered in \cite{PS}. We refer the reader to \cite{PRSZ} for an
introduction to persistence homology theory and its applications in
symplectic geometry. (Basic definitions and results are also reviewed
in Section \ref{sec:persistence}.) In particular, these persistence
modules have barcodes associated to them. Strictly speaking, in some
cases the filtered Floer homology is not literally a persistence
module, but its barcode is nevertheless defined; \cite{UZ}.

Barcode entropy is an invariant of the sequence of Floer homology
persistence modules $\HF^s\big(\varphi^k\big)$ or
$\HF^s\big(L,\varphi^k(L')\big)$ for the iterates of a Hamiltonian
diffeomorphism $\varphi$, or of the symplectic homology persistence
module $\SH^s(W)$ for a Reeb flow $\varphi$ on $\p W$. In both cases,
it measures the exponential growth rate of the number of not-so-short
bars in the barcode. For both Hamiltonian diffeomorphisms and Reeb
flows, it comes in two versions: absolute and relative. In the former,
it depends only on the system; in the latter, it also uses a pair of
Lagrangian submanifolds as above. For Hamiltonian diffeomorphisms,
barcode entropy was originally introduced in \cite{CGG:Entropy}. For
geodesic and Reeb flows, it was defined in \cite{GGM} and \cite{FLS};
see also \cite{CGGM:Reeb,Fe1}. It has since been further studied in,
e.g., \cite{CGG:Growth,CGG:Metric,Fe2,Me}.

To be more precise, let $\fb_\eps\big(\varphi^k\big)$ be the number of
bars of length greater than $\eps$ for $\varphi^k$ in the Hamiltonian
case and let $\fb_\eps(s)$ be the number of such bars beginning below
$s$ for a Reeb flow $\varphi$. This is a decreasing function of
$\eps\geq 0$. We think of $\fb_\eps$ for small $\eps>0$ as a measure
of instability or chaos in the filtered homology. The $\eps$-barcode
entropy $\hbar_\eps(\varphi)$ or, in the relative case,
$\hbar_\eps(\varphi;L,L')$ is the exponential growth rate of
$\fb_\eps\big(\varphi^k\big)$ or $\fb_\eps(s)$. By passing to the
limit as $\eps\to 0^+$, we obtain the barcode entropy $\hbar(\varphi)$
or $\hbar(\varphi;L,L')$; see \eqref{eq:eps-entropy},
\eqref{eq:entropy} and \eqref{eq:eps-entropy2}.

For instance, $\fb_\infty$ is the dimension of the total homology,
i.e, the persistent part of the persistence module. The first source
of positive topological entropy discussed above is the case where
$\fb_\infty(s)$ grows exponentially. Then we automatically have
$\hbar_\eps>0$ for all $\eps$. However, we could still have -- and
often do have -- $\hbar>0$ even when the total homology is zero or just
independent of $\varphi$; see Example~\ref{ex:size_does_not_matter}.

The definition of barcode entropy is modeled on topological entropy,
but \emph{a priori} there is absolutely no reason to think that the
two concepts are related.  Surprisingly, they are. The following three
facts encompass most of what is currently known about this connection:
    \begin{itemize}

\item Theorem \ref{thm:A}:
      $\hbr(\varphi; L,L')\leq \htop(\varphi)$
    and, in particular, $\hbr(\varphi)\leq \htop(\varphi)$;
   
    \item Theorem \ref{thm:B}:
$\hbr(\varphi)\geq \htop(\varphi|_K)$ for any closed hyperbolic
invariant subset $K\subset M$;

\item Theorem \ref{thm:C}:
$\hbr(\varphi)= \htop(\varphi)$ in the
   Hamiltonian case when $\dim M=2$ and in the Reeb case when
   $\dim W=3$.

 \end{itemize}
 Our main goal in these notes is to develop the necessary machinery
 and prove these theorems. We will also briefly touch upon some other
 applications of the techniques on which the proofs are based. These
 applications concern the lower semi-continuity of Lagrangian volume,
 \cite{CGG:Vol}, and the behavior of the spectral norm under iterations,
 \cite{CGG:Spectral}. Theorem \ref{thm:B} has a relative version in
 both the Hamiltonian, \cite{Me}, and Reeb, \cite{Fe2}, settings, but
 for the sake of brevity we do not state or prove it here. Theorem
 \ref{thm:C} is an immediate consequence of Theorems \ref{thm:A} and
 \ref{thm:B} and the results from \cite{Ka,LCS,LCY} asserting that in
 low dimensions all topological entropy comes from hyperbolic sets.

 A pattern the reader should be aware of is that here the definitions
 are generally simpler for Reeb flows, whereas the proofs are simpler
 and conceptually cleaner for Hamiltonian diffeomorphisms. The
 difference in the proofs of Theorem \ref{thm:A} is relatively minor
 and superficial, but the proof of Theorem \ref{thm:B} in the Reeb
 setting is considerably more involved than its Hamiltonian
 counterpart and requires new ideas.  In the proofs, we will mainly
 focus on the Hamiltonian case and only briefly indicate the necessary
 modifications for Reeb flows.

 Beyond barcode entropy, one can think of $\fb_\eps$, which we call
 the barcode function, and its growth rate as invariants of $\varphi$
 that reflect its Floer-theoretic complexity. For instance, in Section
 \ref{sec:integrable} based on \cite{BG}, we establish an upper bound
 on the polynomial growth rate of $\fb_\eps$ for toric integrable
 systems, as a counterpoint to the exponential growth seen in more
 complex systems. Interestingly, the barcode function $\fb_\eps$, while
 not particularly commonly used, arises in some other applications of
 persistent homology to geometric problems not directly related to
 dynamics; see, e.g., \cite{BHPW, BP3S2, CSEHM, PPS}.

 We conclude these notes with an annotated list of several open
 problems; open problems are also occasionally mentioned in other
 sections.  Some work related to barcode entropy is entirely omitted
 from the notes. For instance, we do not discuss the results from
 \cite{ABC} on the categorical approximability of Lagrangian submanifolds,
 which would take us too far from our main subject and techniques. Nor
 do we consider the variant of barcode entropy for Varolg\"une\c s'
 relative symplectic homology introduced in \cite{Ah}.

 The reader is not expected to have prior knowledge of persistence
 modules or topological entropy, although it would, of course, be
 helpful. On the other hand, realistically, some familiarity
 with Hamiltonian/Lagrangian Floer homology and symplectic homology
 is essential. While we recall many of the definitions and results,
 this material is too extensively used to cover all the
 necessary background here. 

 The notes are organized as follows. Section \ref{sec:background}
 covers some of the background material, and the reader may wish to
 consult this section only as needed. Sections \ref{sec:persistence}
 and \ref{sec:Top_Entropy} provide concise introductions to persistence
 modules and topological entropy. In Sections \ref{sec:Lagr} and
 \ref{sec:SH}, we review elements of Floer theory -- Lagrangian and
 Hamiltonian Floer homology and symplectic homology -- and define the
 barcode function $\fb_\eps$. We set our conventions and notation and
 briefly recall the necessary definitions and facts, but this
 discussion is not comprehensive or detailed.

 In Section \ref{sec:barcode_entropy}, we define barcode entropy, state
 and discuss our main results -- Theorems \ref{thm:A}, \ref{thm:B} and
 \ref{thm:C} -- and prove basic properties of barcode entropy. The
 main goal of Section \ref{sec:pf-A2} is to prove Theorem \ref{thm:A}
 in both the absolute and relative (Theorem \ref{thm:A'}) settings. The
 proof, given in Section \ref{sec:strat}, relies on the machinery of
 Lagrangian tomographs, developed in Section \ref{sec:tomographs},
 which essentially belongs to smooth integral geometry. We conclude
 Section \ref{sec:pf-A2} with an application, based on \cite{CGG:Vol},
 to the lower semi-continuity of Lagrangian volume; see Section
 \ref{sec:Lagr-vol}.

 Theorem \ref{thm:B} is proved in Section \ref{sec:pf-B}. The proof is
 based on a version of the crossing energy theorem, going back to
 \cite{GG:hyperbolic,GG:PR}, which is stated and proved in Section
 \ref{sec:energy-dynamics}. The modifications needed in the Reeb case,
 which are non-trivial, are discussed in Section
 \ref{sec:energy-Reeb}. In Section \ref{sec:sectral+inv}, we touch upon
 an application of the crossing energy theorem to the properties of the
 spectral norm of the iterates.

 In Section \ref{sec:integrable}, we prove polynomial growth upper
 bounds for the barcode function of toric integrable Hamiltonians and
 Reeb flows. Finally, in Section \ref{sec:problems}, we
 discuss several open problems.




\section{Background}
\label{sec:background}
In this section, we cover some background material: persistence
modules and barcodes (Section \ref{sec:persistence}), Lagrangian and
Hamiltonian Floer homology (Section \ref{sec:Lagr}), filtered
symplectic homology (Section \ref{sec:SH}), and topological entropy
(Section \ref{sec:Top_Entropy}).  The reader may consider consulting
this section as needed.  The first and the last subsections are
essentially brief introductions which should be sufficient for our
purposes. In the middle two subsections, on Floer theory, we set our
conventions and notation, recall the necessary definitions and facts,
aiming at defining the barcode function. Here, some familiarity with
Floer theory is expected.

\subsection{Persistence modules}
\label{sec:persistence}
Persistence modules play a central role in the statements of our main
results. In this section, closely following \cite{CGGM:Reeb}, we
define the class of persistence modules suitable for our goals and
briefly touch upon their properties. We refer the reader to
\cite{PRSZ} for a general introduction to persistence modules and
their applications to geometry and analysis, although the class of
modules they consider is somewhat narrower than the ones we deal
with here, and also to, e.g., \cite{BV, CdSGO16} for some of the more general
results and further references.

\subsubsection{Definitions}
\label{sec:persistence-def}
Fix a field $\F$ which we will suppress in the notation. Recall that a
\emph{persistence module} $(\CV,\pi)$ is a family of vector spaces $\CV_s$
over $\F$ parametrized by $s\in \R$ together with a functorial family
$\pi$ of structure maps. These are linear maps
$\pi_{st}\colon \CV_s\to \CV_t$, where $s\leq t$ and functoriality is
understood as that $\pi_{sr}=\pi_{tr}\pi_{st}$ whenever
$s\leq t\leq r$ and $\pi_{ss}=\id$. In what follows we often suppress
$\pi$ in the notation and simply refer to $(\CV,\pi)$ as $\CV$.  In such a
general form the concept is not particularly useful and usually one
imposes additional conditions on the spaces $\CV_t$ and the structure
maps $\pi_{st}$. These conditions vary depending on the context. Below
we spell out the framework most suitable for our purposes.

Namely, we require that the following four conditions are met:

\begin{itemize}

\item[\reflb{PM1}{{(PM1)}}] There exists a bounded from below, closed, nowhere
  dense subset $\CS\subset \R$, called the \emph{spectrum} of $\CV$,
  such that the persistence module $\CV$ is \emph{locally constant}
  outside $\CS$, i.e., $\pi_{st}$ is an isomorphism when $s\leq t$
  are in the same connected component of $\R\setminus \CS$.

\item[\reflb{PM2}{\rm{(PM2)}}] The persistence module $\CV$ is
  \emph{q-tame}: $\pi_{st}$ has finite rank for all $s<t$.
 
\item[\reflb{PM3}{\rm{(PM3)}}] \emph{Left-semicontinuity}: For all
  $t\in\R$,
  \begin{equation}
    \label{eq:semi-cont}
    \CV_t=\varinjlim_{s<t} \CV_s.
  \end{equation}

\item[\reflb{PM4}{\rm{(PM4)}}] \emph{Lower bound}: $\CV_s=0$ when $s<s_0$
  for some $s_0\in\R$. (Throughout the notes we will assume that
  $s_0=0$.)

\end{itemize}

A few comments on this definition are in order. First, note that as a
consequence of \ref{PM1} and \ref{PM2}, $\CV_s$ is finite-dimensional
and \ref{PM3} is automatically satisfied when $s\not\in
\CS$. Furthermore, Condition \ref{PM1} does not uniquely determine the
spectrum $\CS$: it is simply a requirement that such a set
exists. While one could take the minimal set with the required
properties as the spectrum of $\CV$, which would make it unique and
determined by $\CV$, this choice is not necessarily the most natural
and we prefer to think of the choice of $\CS$ as a part of the
persistence module structure. For instance, in the setting of Example
\ref{ex:sublevels}, it is convenient to take the set of critical
values of $f$ as $\CS$, but the minimal (i.e.~homological)
spectrum can be strictly smaller. The same phenomenon can happen for
Floer or symplectic homology persistence modules. At the intuitive
level, the reader can safely assume that the spectrum $\CS$ is
discrete as in Example \ref{ex:Filt_complex}.

By Condition \ref{PM3}, a persistence module is completely determined
by its restriction to a dense subset $\Gamma\subset \R$. In other
words, once $\CV_s$ and $\pi_{st}$ are defined for $s$ and $t$ in
$\Gamma$, the persistence module can be extended to $\R$ by
\ref{PM3}. (However, in general the extension might fail to meet some
of the requirements \ref{PM1}--\ref{PM4}.) For instance, Floer or
symplectic homology persistence modules are naturally defined only
for $s\not\in\CS$, i.e., for $\Gamma=\R\setminus\CS$, and then the
definition is extended to all $s\in\R$ via \ref{PM3}. 

By \ref{PM4}, we can always assume that $s_0\leq \inf \CS$, i.e.,
$\CS$ is bounded from below. We emphasize, however, that $\CS$ is not
assumed to be bounded from above, and it is actually not in many
examples we are interested in. In what follows it will sometimes be
convenient to include $s=\infty$ by setting
$$
\CV_\infty=\varinjlim_{s\to\infty} \CV_s.
$$
Finally, in all examples we encounter here $\CS$ has zero measure and,
in fact, zero Hausdorff dimension. This fact is never directly used in the
notes, but we do use the requirement that $\CS$ is nowhere dense.

The above definition extends to graded persistence modules in an obvious
way.  Basic examples motivating requirements \ref{PM1}--\ref{PM4} come
from the sublevel homology of a smooth function and Morse theory.

\begin{Example}[Homology of sublevels]
\label{ex:sublevels}
Let $M$ be a smooth manifold and $f\colon M\to \R$ be a proper
$C^r$-smooth function bounded from below with $r$ sufficiently
large. Set $\CV_s:=\H_*\big(\{f<s\};\F\big)$ with the structure maps
induced by the inclusions. No other requirements are imposed on $f$,
e.g., $f$ need not be Morse. However, it is not hard to see that
conditions \ref{PM1}--\ref{PM4} are met with $\CS$ being the set of
critical values of $f$. Thus $\CV_s$ is a graded persistence module.
We note that one can have $\dim \CV_s=\infty$ for $s\in\CS$ already
when $M=S^1$, unless $f$ meets some additional conditions on $f$,
e.g., that $f$ is real analytic or the critical points of $f$ are
isolated. Furthermore, $r$ should be larger than or equal to $\dim(M)$, so that Sard's lemma
applies to ensure that $\CV$ is q-tame.
\end{Example}

\begin{Example}[Morse theory]
\label{ex:Morse}
Let $Q$ be a closed smooth Riemannian manifold. Denote by $\Lambda$ the space
of smooth loops $\gamma\colon S^1\to Q$ and by $f$ the energy functional
$f\colon\Lambda\to \R$ defined by
$$
f(\gamma)=\int_{S^1}\|\dot{\gamma}(t)\|^2\,dt.
$$
The critical points of $f$ are the closed geodesics parametrized
proportionally to arc-length and the sublevel homology
$\CV_s:=H_*\big(\{f<s\};\F\big)$ form again a graded persistence module. We
can replace here the space of closed loops by the space of paths
connecting two closed submanifolds $Q_0$ and $Q_1$ of $Q$. Then the
critical points of $f$ are the geodesics from $Q_0$ to $Q_1$,
parametrized proportionally to arc-length and perpendicular to the
these submanifolds at the end-points. (For instance, when $Q_0$ and
$Q_1$ are two points we have geodesics from $Q_0$ to $Q_1$.) In any
event, in all these cases the sublevel homology is again a persistence
module.
\end{Example}

\begin{Example}[Filtered complexes]
\label{ex:Filt_complex}
The homology of a bounded from below $\R$-filtered complex over $\F$
is a persistence module. Here is a more detailed description fitting
our purposes and modeling how the action filtration
  arises in Floer theory. Let $(\CC,\p)$ be a complex over $\F$ and
$\Gamma$ a basis of $\CC$, which we require to be countable or
finite. Let $F\colon \Gamma\to \R$ be a bounded from below function
such that $F(\Gamma)$ is discrete and $F(\p v)\leq F(v)$ for all
$v\in \Gamma$. For $w =\sum a_v v\in \CC$ set
$$
F(w)=\max_{a_v\neq 0} F(v)\textrm{ and } F(0)=-\infty.
$$
Then $\CC_s:=\{w\mid F(w)<s\}\subset \CC$ is a subcomplex and the
homology spaces $\CV_s:=\H(\CC_s,\p)$ form a persistence module, which
is graded when $\CC$ is graded. (More generally we
  could have started with a function $F\colon\CC\to\{-\infty\}\cup\R$
  such that $F(x+y)\leq\max\{F(x),F(y)\}$ for all $x,y\in\CC$, and
  $F(0)=-\infty$.)  For instance, when $f$ is a Morse function in
Example \ref{ex:sublevels}, to recover the sublevel homology we can
take as $\CC$ the Morse complex of $f$ and as $\Gamma$ the set of the
critical points $v$ of $f$ with $F(v):=f(v)$. Likewise, under suitable
non-degeneracy conditions (e.g., when the metric is bumpy for closed
geodesics) in Example \ref{ex:Morse}, we obtain the persistence module
$\CV$ when $\CC$ is the underlying Morse complex and $\Gamma$ is the
set of geodesics in question, suitably adjusted in the
  case of closed geodesics.
\end{Example}

Recall furthermore that an \emph{interval persistence module}
$\F_{(a,\,b]}$, where $-\infty<a<b\leq \infty$, is defined by setting
$$
{\F_{(a,\,b]}}_s:=\begin{cases}
  \F & \text{ when } s\in (a,\,b],\\
  0 & \text{ when } s \not\in (a,\,b],
\end{cases}
$$
and $\pi_{st}=\id$ if $a<s\leq t\leq b$ and $\pi_{st}=0$
otherwise. Interval modules are examples of simple persistence
modules, i.e., persistence modules that cannot be decomposed as a
(non-trivial) direct sum of other persistence modules as we will see
in Section \ref{sec:barcodes}.

Morphisms of persistence modules and algebraic operations on persistence
modules are defined in a natural way. In particular, we have exact
sequences and direct sums of persistence modules. However, the next
example provides a word of warning that one should be careful about
the role of structure maps in these constructions

\begin{Example}
  Consider two adjacent intervals $(a,\,b]$ and $(b,\,c]$. Then
  $$
  \F_{(a,\,b]}\oplus \F_{(b,\,c]}\not\cong \F_{(a,\,c]}
  $$
  as persistence modules. Indeed, take $t\in (a,\,b]$ and $s\in
  (b,\,c]$. Then $\pi_{st}=0$ for the module on the left and
  $\pi_{st}=\id$ on the right.

  The natural inclusion map $\F_{(b,\,c]}\hookrightarrow \F_{(a,\,c]}$
  and the quotient map $\F_{(a,\,c]}\to \F_{(a,\,b]}$ are morphisms of
  persistence modules. However, the inclusion
  $\F_{(a,\,b]}\hookrightarrow \F_{(a,\,c]}$ is not. Nor is the
  quotient map $\F_{(a,\,c]}\to \F_{(b,\,c]}$.
\end{Example}  

\subsubsection{Barcodes}
\label{sec:barcodes}

A key fact which we will use in these notes is the normal form or
structure theorem asserting that every persistence module meeting the
above conditions can be decomposed as a direct sum of a countable
collection (i.e., a countable multiset) of interval persistence
modules. Moreover, this decomposition is unique up to re-ordering of
the sum. (In fact, conditions \ref{PM1}--\ref{PM4} are far from
optimal and can be considerably relaxed.) We refer the reader to 
\cite[Thm.\ 3.8]{BV} and \cite{CdSGO16} for proofs of these facts for
a class of persistence modules more general than considered here and
further references, and to, e.g., \cite{CZCG, CB, ZC} for previous or
related results. (Here we only note that our class of persistence
modules directly fits in the framework of \cite[Thm.\
3.8]{BV}. Furthermore, while the interval decomposition fails for
q-tame modules in general, the barcode is still defined and the
isometry theorem holds in this case; see \cite{CdSGO16,Le}.)

The multiset $\CB(\CV)$ of intervals entering this decomposition is
referred to as the \emph{barcode} of $\CV$ and the intervals occurring
in $\CB(\CV)$ as \emph{bars}. The set of end-points of the bars is the
spectrum of the barcode.

Alternatively, and more explicitly, the barcode $\CB(\CV)$ can be described as follows. 
For each $c\in\R$ we define the birth function $\alpha:\CV_c\to[-\infty,c)$ and the death function $\beta:\CV_c\to[c,\infty]$ by
\begin{align*}
\alpha(v)&=\inf\{s<c\ |\ v\in\im(\pi_{sc})\},\\
\beta(v)&=\inf\{s\geq c\ |\ v\in\ker(\pi_{cs})\}.
\end{align*}
In the definition of $\beta$, we adopted the usual convention $\inf\varnothing=\infty$. Property \ref{PM4} implies that $\alpha(v)>-\infty$ if $v\neq0$. 
Let $I$ be an interval of the form $(a,b]$ or $(a,\infty)$, and in the latter case we set $b:=\infty$. We choose $c\in(a,b)$, and set
$n_{I} := \dim(A/(B+C))$,
where
\begin{align*}
A & =\{v\in\CV_c\ |\ \alpha(v)\leq a,\ \beta(v)\leq b\},\\
B & =\{v\in\CV_c\ |\ \alpha(v)< a,\ \beta(v)\leq b\},\\
C & =\{v\in\CV_c\ |\ \alpha(v)\leq a,\ \beta(v)< b\}.
\end{align*}
Property \ref{PM2} guarantees that $A$ is finite dimensional, and therefore the multiplicity $n_{I}$ is finite. A linear algebra argument implies that $n_{I}$ is well defined independently of the choice of $c\in(a,b)$; see \cite[Sec.\ 3]{GGM}. Then $I$ is a bar in $\CB(\CV)$ of multiplicity $n_I$ if $n_I>0$, and not a bar otherwise.

\begin{Example}[Singular value decomposition]
\label{ex:sing-value}
In the setting of Example \ref{ex:Filt_complex}, one can show that one
can find a new basis of the complex $\CC$, which we denote by
$\Gamma'=\{x_i,y_i,z_j\}$, with the properties that
\begin{itemize}
\item $\p x_i=y_i$ and $\p z_j=0$, and
\item $\Gamma'$ is \emph{orthogonal} in the sense that
  $f\big(\sum a_v v\big)=\max f(a_v v)$ where now $v\in\Gamma'$.
\end{itemize}
Such a basis $\Gamma'$ is called a \emph{singular value
  decomposition}. Then the barcode of the persistence module
$\CV_s:=\H(\CC_s,\p)$ is formed by the intervals
$\big(F(y_i), F(x_i)\big]$ and $\big(F(z_j), \infty\big)$. Note that
while the barcode associated with a persistence module is unique, up to
reordering, a singular value decomposition of a filtered complex is not
in general unique.
\end{Example}

\begin{figure}[t]
\begin{center}
\includegraphics{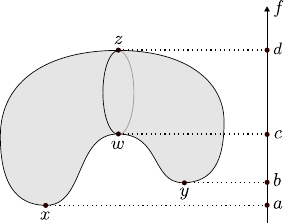}
\caption{A Morse function $f:S^2\to\R$ with four critical points and three critical values.}
\label{f:Morse_function}
\end{center}
\end{figure}

\begin{Example}
  Consider the Morse function $f\colon S^2\to \R$ of Figure
  \ref{f:Morse_function}, which has exactly four
    critical points critical points with critical values
    $a<b<c<d$. The barcode of the associated sublevel homology
    persistence module is $(a,\infty)$, $(b,c]$, and
    $(d,\infty)$.
\end{Example}

The normal form theorem gives a one-to-one correspondence between
isomorphism classes of persistence modules and barcodes, i.e.,
multisets of intervals satisfying some natural additional conditions
corresponding to \ref{PM1}--\ref{PM4}.

One of these conditions is of particular interest to us.  For
$\eps>0$, denote by $\fb_\eps(\CV,s)$ or just $\fb_\eps(s)$ the number
of bars $(a,\,b]$ in $\CB(\CV)$ with $a<s$ of length $b-a> \eps$,
counted with multiplicity. We call $\fb_\eps$ the \emph{barcode
  function} or the barcode growth function.  This is the numerical
invariant of $\CV$, which is essential for our purposes.

Clearly, $\fb_\eps(s)$ is locally constant in $s$ on the complement to
$\CS$ and in $\eps>0$ on the complement of
$\CS-\CS:=\{s-s'\mid s,\, s'\in \CS\}$. The latter can in general have
positive measure. For instance, it can be an interval; cf.\
\cite[Sec.\ 3.2.3]{CGG:Entropy} and \cite[p.\
87]{GO}. Furthermore, $\CS-\CS$ can fail to be closed when $\CS$ is closed but not compact.

\begin{Lemma}
  \label{lemma:b-eps}
  For every $\eps>0$ and $s\in\R$, we have $\fb_\eps(s)<\infty$.
\end{Lemma}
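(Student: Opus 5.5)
Fix $\eps>0$ and $s\in\R$. Every bar $(a,b]$ counted by $\fb_\eps(s)$ has $a<s$ and $b>a+\eps$. The idea is to show that all such bars are detected by a single structure map $\pi_{tr}$ with $t<r$, and hence their number is bounded by $\operatorname{rank}\pi_{tr}$. By \ref{PM2}, this rank is finite.

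The first step is to choose the parameters. Since $\CS$ is closed and nowhere dense, and since bars have length greater than $\eps$, I would like one pair $(t,r)$ with $a<t<r<b$ for every counted bar. A bar with left end $a<s$ and right end $b>a+\eps$ does not obviously contain a fixed interval, because $a$ ranges over $(-\infty,s)$. However, \ref{PM4} (with $s_0=0$) forces every bar to have $a\geq 0$. So the counted bars have $a\in[0,s)$, and the problem becomes a compactness question.

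The second step covers $[0,s]$ by finitely many intervals $J_i=[c_i,c_i+\eps/3]$. A bar with $a\in J_i$ and $b>a+\eps$ contains $(c_i+\eps/3,\,c_i+2\eps/3]$. For each $i$ I take $t_i<r_i$ inside this interval, say $t_i=c_i+\eps/2$ and $r_i=c_i+2\eps/3$.

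The third step is the rank estimate. By the normal form theorem, $\CV\cong\bigoplus_{I\in\CB(\CV)}\F_I$, so $\operatorname{rank}\pi_{t_ir_i}$ equals the number of bars, with multiplicity, containing $[t_i,r_i]$. This number is finite by \ref{PM2}. Summing over $i$ gives
$$
\fb_\eps(s)\leq\sum_i\operatorname{rank}\pi_{t_ir_i}<\infty .
$$

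The main point needing care is the use of the decomposition into intervals. If one prefers to avoid it, the same count can be carried out with the explicit multiplicities $n_I$. In that version one shows that distinct bars containing $[t,r]$ contribute linearly independent elements of $\im\pi_{tr}$, by taking $c=t$ in the definition of $n_I$ and passing to images in $\CV_r$.
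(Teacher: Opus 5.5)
Your argument is correct and is essentially the paper's: both use \ref{PM4} to confine the left endpoints of the counted bars to the compact range $[0,s]$, then bound those bars by the rank of a structure map over an interval they all contain, which is finite by \ref{PM2}. The paper argues by contradiction with subsequences, splitting into the cases where the bars lie in $(-\infty,s]$ or contain $s$; your finite cover of $[0,s]$ by intervals of length $\eps/3$ reaches the same conclusion directly and gives the explicit bound $\fb_\eps(s)\leq\sum_i\operatorname{rank}\pi_{t_ir_i}$.
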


The lemma is quite standard (see, e.g., \cite[Cor.\ 3.5]{CdSGO16}) and
is in fact a consequence of just \ref{PM2}, and we include a proof
below only for the sake of completeness.

By the lemma, $\fb_\eps(s)$ is right semi-continuous in $\eps$ and
left semi-continuous in $s$, i.e.,
\begin{equation}
  \label{eq:bf-semicont}
  \fb_{\eps'}(s')=\fb_\eps(s)
\end{equation}
whenever $\eps'\geq \eps$ is close to $\eps$ and $s'\leq s$ is close
to $s$. Here we are using the condition that the inequalities in the
definition of $\fb_\eps(s)$ are strict.

\begin{proof}[Proof of Lemma \ref{lemma:b-eps}]
  Assume the contrary: for some $s$ and $\eps>0$, there exists an
  infinite sequence of intervals $I_i=(a_i,\, b_i]$ with $a_i<s$ and
  $b_i-a_i>\eps$. We will show that then there exists an interval
  $[t',\, t]$ contained in an infinite number of intervals
  $I_i$. Hence the rank of the map $\pi_{t't}$ is infinite which is
  impossible due \ref{PM2}.

  To this end, note that there are two, not necessarily mutually exclusive
  possibilities: an infinite subsequence of intervals is in
  $(-\infty, s]$ or an infinite subsequence of intervals contains $s$
  as an interior point. Passing to those subsequences, we will treat
  these cases separately.

  In the former case, the intervals are actually contained in
  $(s_0,\, s]$ by \ref{PM4}. Passing to a subsequence again, we can
  assume that $b_i\to b$ for some $b\in (s_0+\eps,\, s]$. Then for
  some $t'<t\leq b$, close to $b$, an infinite number of intervals
  contain $[t',\, t]$.  Similarly, in the latter case there exists a
  pair of points $t'<t$ arbitrarily close to $s$ such that an infinite
  number of intervals contain $[t',\, t]$.
\end{proof}

It is worth keeping in mind that the total number of bars beginning
below $s$ can be infinite. We set
$\fb_\eps(\CV):=\fb_\eps(\CV, \infty)$, to be the total number of bars
of length greater than $\eps>0$. In general, this number can also be
infinite. However, it is finite when $\CV$ is bounded from above,
i.e., $\CS$ is bounded from above, by Lemma \ref{lemma:b-eps} below.

\subsection{Barcode function for Lagrangian Floer homology}
\label{sec:Lagr}
In this section, we briefly discuss the definition of the barcode
function for the filtered Lagrangian Floer homology. The reason that
this case requires a special treatment is that the filtered
Lagrangian or Hamiltonian Floer homology rarely is a persistence
module in the sense of Section \ref{sec:persistence}.  There are
various settings and levels of generality one could work with to
have this homology defined.  For the sake of simplicity, we focus on
the case of Hamiltonian isotopic closed monotone Lagrangian
submanifolds, adopting the framework from \cite{Us} with only minor
modifications.  (For such Lagrangian submanifolds, Floer homology was
originally defined in \cite{Oh}, albeit in a somewhat different
algebraic setting.) We refer the reader to that work and, of course,
to \cite{FOOO} for a much more detailed treatment and further
references.

\subsubsection{Background assumptions, conventions and notation}
\label{sec:conv}
For the reader's convenience, we set here our conventions and notation
and briefly recall some basic definitions; see, e.g., \cite{Us} for
further details and references. The reader may want to consult this
section only as needed.

We assume that the underlying symplectic manifold $(M,\omega)$ is
either closed or ``tame'' at infinity (e.g., convex) so that the
Gromov compactness theorem holds; see, e.g., \cite{McDS}. All
Lagrangian submanifolds are assumed to be closed unless explicitly
stated otherwise, and \emph{monotone}, i.e., for some $\kappa\geq 0$,
we have $\left<\omega, A\right>=\kappa \left<\mu_L,A\right>$ for all
$A\in\pi_2(M,L)$, where $\mu_L\in \H^2(M,L;\Z)$ is the Maslov class.
Then $M$ is also monotone with monotonicity constant $2\kappa$, i.e.,
$\left<\omega, A\right>=2\kappa \left<c_1(TM),A\right>$ for all
$A\in\pi_2(M)$. As in \cite{Us}, we define the \emph{minimal Maslov
  number} of $L$ as the positive generator $N_L$ of the subgroup of
$\Z$ generated by $\left<\mu_L, w\right>$ for all maps
$w\colon \AA\to M$ from the cylinder $\AA:=S^{1}\times [0, 1]$ to $M$
sending the boundary $\p\AA$ to $L$. When this group is trivial, we
set $N_L=\infty$.  In what follows, we require that $N_L\geq 2$ unless
$\kappa=0$.  Note that this definition allows $[\omega]$ to vanish on
$\pi_2(M,L)$ and thus includes weakly exact Lagrangian submanifolds.
The ground field $\F$ is required to have characteristic two; e.g.,
$\F=\F_2$.

Alternatively, one can require $L$ to be oriented, relatively spin and
weakly unobstructed after a bulk deformation as in \cite{FOOO} and
replace the coefficient field by a field of zero characteristic.

A \emph{Hamiltonian diffeomorphism} $\varphi=\varphi_H=\varphi_H^1$ is
the time-one map of the time-dependent flow (i.e., a \emph{Hamiltonian
  isotopy}) $\varphi^t=\varphi_H^t$ of a 1-periodic in time
Hamiltonian $H\colon S^1\times M\to\R$, where $S^1=\R/\Z$.  The
Hamiltonian vector field $X_H$ of $H$ is defined by
$i_{X_H}\omega=-dH$.  All Hamiltonians are assumed to be compactly
supported. In some instances it is more convenient to treat $\varphi$
as an element of the universal covering of the group of Hamiltonian
diffeomorphisms.

The $k$-th iterate $\varphi^k$ is viewed as the time-$k$ map of
$\varphi^t_H$. The $k$-periodic \emph{points} of $\varphi$ are in
one-to-one correspondence with the $k$-periodic \emph{orbits} of $H$,
i.e., of the time-dependent flow $\varphi_H^t$. A $k$-periodic orbit
$x$ of $H$ is said to be \emph{non-degenerate} if the linearized
return map $D_{x(0)}\varphi^k \colon T_{x(0)}M \to T_{x(0)}M$ has no
eigenvalues equal to one. A Hamiltonian $H$ is \emph{non-degenerate}
if all of its 1-periodic orbits are non-degenerate and \emph{strongly
  non-degenerate} when all of its periodic orbits are non-degenerate.

Recall that the \emph{Hofer norm} of $\varphi$ is defined as
$$
\|\varphi\|_{\hn}=\inf_{H}\int_{S^1}\big(\max_M H_t-\min_M H_t\big)\, dt,
$$
where the infimum is taken over all 1-periodic in time Hamiltonians
$H$ generating $\varphi$, i.e., $\varphi=\varphi_H$; see \cite{Ho,
  LMcD, Po:Hofer, Vi}.  The \emph{Hofer distance} between two
Hamiltonian isotopic Lagrangian submanifolds $L$ and $L'$ is
\begin{equation}
  \label{eq:d-Hofer}
d_{\hn}(L,L')=\inf\left\{\|\varphi\|_{\hn}\mid \varphi(L)=L'\right\};
\end{equation}
see \cite{Ch}. When $M$ is compact (or convex at infinity and
$\varphi$ is compactly supported) we also have the spectral norm
$\gamma(\varphi)\leq \|\varphi\|_{\hn}$ of $\varphi$ defined as
\begin{equation}
  \label{eq:spectral}
\gamma(\varphi):=\cf(\varphi)+\cf(\varphi^{-1}),
\end{equation}
where $\cf$ is the spectral invariant associated with the fundamental
class of $M$ and we treat $\varphi$ as an element of the universal
covering of the group of Hamiltonian diffeomorphisms. The spectral
norm is subadditive.  We refer the reader to, e.g., \cite{Oh:gamma,
Sc, Vi} and also, e.g., \cite{CS, EP, FGS, FS, GG:gaps,
  KS, Po:Book, Us0, Us1}, for the original treatment and a detailed
discussion of spectral invariants and these norms, and for further
references. The (ambient) spectral distance $\gamma(L,L') \leq d_{\hn}(L,L')$ is
defined similarly to \eqref{eq:d-Hofer}:
\begin{equation}
  \label{eq:d-gamma}
d_\gamma(L,L'):=\inf\{\gamma(\varphi)\mid \varphi(L)=L'\}.
\end{equation}
Here, of course, we assume that $L$ and $L'$ are Hamiltonian isotopic.

Finally, when $L$ meets certain additional conditions one can also
define the intrinsic spectral distance. Namely, assume that $L$ is
wide in the sense of \cite{BC}, i.e., $\HF(L)=\H(L)\otimes\Lambda$,
where $\Lambda$ is the Novikov ring; see Section
\ref{sec:Floer-complex}. The intrinsic distance between $L$ and $L'$
is defined in a similar fashion but now by using Lagrangian spectral
invariants. Among wide Lagrangian submanifolds are the zero section of
a cotangent bundle and the ``equator'' $\RP^n\subset \CP^n$. On the
other hand, displaceable Lagrangian submanifolds have $\HF(L)=0$ and
hence are not wide.  The intrinsic distance bounds the ambient
distance from below; see, e.g., \cite{KS} and references therein.

\subsubsection{Filtered Floer complex and homology}
\label{sec:Floer-complex}
Let $M$ be a symplectic manifold which is supposed to be sufficiently
``tame'' at infinity (e.g., compact or convex) to guarantee that
compactness theorems hold, and let $L$ and $L'$ be closed monotone
Lagrangian submanifolds intersecting transversely. We assume that $L$
and $L'$ are Hamiltonian isotopic to each other, i.e., there exists a
Hamiltonian isotopy $\varphi_F^t$, $t\in [0, 1]$, such that
$L'=\varphi_F(L)$. For the time being we will treat the isotopy
$\varphi_F^t$ as a part of the data. Furthermore, we require that
$N_{L}\geq 2$, where $N_{L}=N_{L'}$ is the minimal Chern number as
defined in Section \ref{sec:conv}.

Let $\PP=\PP(L,L')$ be the space of smooth paths in $M$ from $L$ to $L'$
and $\pi_1(M;L,L')$ be the set of its connected components. For
instance, the intersection points of $L$ and $L'$ are elements of
$\PP$; however, these elements might be in different connected
components of $\PP$. Fix a reference path $x_\fc$ in each
$\fc\in \pi_1(M;L,L')$. A \emph{capping} $w$ of a path $x\in\fc$ is a
homotopy of $x$ to $x_\fc$ (with end-points on $L$ and $L'$), taken up
to a certain equivalence relation. Namely, two cappings $w$ and $w'$
are equivalent if and only if the cylinder $v$ obtained by attaching
$w'$ to $w$ with the reversed orientation has zero symplectic area and
zero Maslov number. Furthermore, we say that two such cylinders are
equivalent when their symplectic areas and Maslov numbers are equal.
When $w$ and $w'$ are not equivalent, we call $v$, taken up to this
equivalence relation, and also $(x,w')$ a \emph{recapping} of $(x,w)$
and write $(x,w')=:(x,w)\# v$. We usually suppress a capping in the
notation. One can assign a well-defined Maslov index to a capped path
from $L$ to $L'$ by also fixing a trivialization of $TM$ along
$x_\fc$.

For the sake of simplicity, the ground field $\F$ throughout the notes
is $\F_2$. (However, in the Hamiltonian setting of Theorem
\ref{thm:A} and in Theorems \ref{thm:B} and \ref{thm:C} we can work with
any ground field $\F$.) The Floer complexes we mainly consider are
finite-dimensional vector spaces over the ``universal'' \emph{Novikov
  field} $\Lambda$ formed by formal sums
\begin{equation}
  \label{eq:lambda}
\lambda=\sum_{j\geq 0} f_j T^{a_j},
\end{equation}
where $f_j\in \F$ and $a_j\in\R$ and the sequence
$a_j$ (with $f_j\neq 0$) is either finite or $a_j\to\infty$.

The Floer complex $\CF(L,L')$, or just $\CC$ for brevity, is generated
by the intersections $L\cap L'$ with arbitrarily fixed cappings. The
differential $\p_{\Fl}$ is defined by the standard formula counting
holomorphic strips $u$ with boundary components on $L$ and $L'$,
asymptotic to the intersections with now all possible cappings and the
symplectic area $\omega(v)$ of the recapping $v$ contributing the term
$T^{\omega(v)}$ to the differential.

To be more precise, denote the generators by $x_i$. Thus
\begin{equation}
  \label{eq:CC}
\CC=\bigoplus_i \Lambda x_i.
\end{equation}
Then, assuming that the underlying almost complex structure is
regular, we have
\begin{equation}
  \label{eq:d-Fl1}
\p_{\Fl} x_i=\sum_j \lambda_{ij} x_j,
\end{equation}
where
\begin{equation}
  \label{eq:d-Fl2}
\lambda_{ij}=\sum_v f_v T^{\omega(v)}\in \Lambda
\end{equation}
Here the sum extends over all recappings $v$ of $x_j$ such that the
Maslov index difference of $x_i$ and $x_j\# v$ is 1 and $f_v\in \F$ is
the parity of the number of holomorphic strips $u$ asymptotic to $x_i$
and $x_j$, and such that the original fixed capping of $x_i$ is
equivalent to the capping obtained by attaching $x_j\# v$ to these
strips. Then, with the requirement that $L$ and $L'$ are Hamiltonian
isotopic, $\p_{\Fl}^2=0$.

This complex is not graded, due to our choice of the Novikov field, or
only $\Z_2$-graded.  We denote the Floer homology, i.e., the homology
of $\CC$, by $\HF(L,L')$ or just $\HF(L)$. This is also a
finite-dimensional vector space over $\Lambda$.  The complex $\CC$ and
its homology $\HF(L,L')$ split into a direct sum of complexes
$\CC_{\fc}=\CF_\fc(L,L')$ and homology groups $\HF_\fc(L,L')$ over
$\fc \in \pi_1(M;L,L')$.

The complex $\CC$ is naturally filtered by the action. Namely, let us
assign action equal to 0 to every reference path $x_\fc$ and the
action equal to the negative symplectic area $\CA(x,w)=-\omega(w)$ to a
capped path $(x,w)$.  The differential, \eqref{eq:d-Fl1}, is strictly
action-decreasing, and we obtain the required action filtration on
$\CC$ with the convention that for $\lambda\in\Lambda$ given by
\eqref{eq:lambda}, we have
\begin{equation}
\label{eq:filt1}
\nu(\lambda):=\min \{ a_j\}, \text{ where } f_j\neq 0,
\end{equation}
and
\begin{equation}
\label{eq:filt2}
\CA\left(\sum \lambda_i x_i\right):=\max_i\CA(\lambda_i x_i),
\text{ where } \CA(\lambda_i x_i):=\CA(x_i)-\nu(\lambda_i). 
\end{equation}

With this definition, the complex $\CC$ over $\Lambda$ is almost a
filtered complex in the sense of Example \ref{ex:Filt_complex}. The
only difference is that due to \eqref {eq:filt2} the multiplication by
$\lambda\in\Lambda$ shifts the filtration by $ -\nu(\lambda)$:
\begin{equation}
  \label{eq:filt3}
\CA(\lambda c)=\CA(c)-\nu(\lambda) \text{ for } c\in\CC.
\end{equation}

Purely formally, the above constructions can be summarized as a
\emph{Floer package} comprising the following:
\begin{itemize}

\item The finite-dimensional vector space $\CC$ over $\Lambda$ with a
  fixed set of generators $x_i$; see~\eqref{eq:CC}.

\item The differential $\p_{\Fl}$; see \eqref{eq:d-Fl1}.

\item The action filtration on $\CC$ given by \eqref{eq:filt1} and
  \eqref{eq:filt2} such that $\p_{\Fl}$ is strictly action decreasing.

\end{itemize}

A few remarks are due at this point. First of all, the action
filtration depends on the choice of the reference paths
$x_\fc$. Namely, on every direct summand $\CC_\fc$ a change of the
reference path shifts the filtration by a constant. Thus the
filtration is well-defined only up to independent shifts on these
summands. Clearly, this ambiguity does not affect the barcode of $\CC$
introduced further down in Section \ref{sec:Lagrangian-barcode}. In particular, the
bar length and the number of bars of a given length and
$\fb_\eps(L,L')$ are independent of the choice of reference paths.

Furthermore, while the generators of $\CC$ are (capped) intersections,
the differential genuinely depends on the background almost complex
structure. However, the resulting complexes
are chain homotopy equivalent with homotopy preserving the filtration.
Thus, the number of bars of a given length and
$\fb_\eps(L,L')$ are again well-defined.

The Floer homology $\HF(L,L')$, as a vector space over $\F$, inherits
the action filtration from $\CC$. To be more specific, we have a
family of vector spaces $\HF^a(L,L')$ over $\F$, parametrized by
$a\in \R$, where $\HF^a(L,L')$ is the homology of the subcomplex of
$\CC$ comprising the chains $\sum \lambda_i y_i$ with action less than
$a$. The inclusion of subcomplexes for $a<a'$ gives rise to natural
maps $\HF^a(L,L')\to \HF^{a'}(L,L')$. However, 
$\HF^a(L,L')$ is not a vector space over $\Lambda$. Furthermore, the family
$\HF^a(L,L')$ is not a persistence module over $\F$ as defined in
Section \ref{sec:persistence-def}. The reason is that this family of
spaces need not be bounded from below and \ref{PM1} and \ref{PM4} can
fail. (When $L$ and $M$
meet certain additional requirements one can have the Floer complex
defined over $\F$ and then this family turns into a persistence module;
see, e.g., Remarks \ref{rmk:filtr-other} and \ref{rmk:atoroidal}.)

Essentially by construction, we have the ``Poincar\'e
duality''. Namely, for a suitable choice of the auxiliary data,
\begin{equation}
  \label{eq:PD0}
\CF(L',L)=\CF(L,L')^* 
\end{equation}
over $\Lambda$ with ``inverted'' filtration.  (Here we used the fixed
basis $\{x_i\}$ to identify the vector spaces $\CF(L',L)$ and
$\CF(L,L')$ and their duals, and then the Floer differential in
$\CF(L',L)$ turns into the adjoint of the Floer differential in
$\CF(L,'L)$.)  As a consequence, $\HF(L',L)=\HF(L,L')^*$ and this is
also true for $\HF^a(L,L')$ with the inverted filtration.

\begin{Remark}[Exact Lagrangians]
  \label{rmk:filtr-other}
  There are other settings where the filtration on $\CC$ is defined
  and has the desired properties. One is when $(M,\omega)$ is an exact
  symplectic manifold (i.e., $\omega$ is exact) meeting certain
  additional requirements at infinity, and $L$ and $L'$ are exact
  Lagrangian submanifolds (i.e., the restrictions of a primitive of
  $\omega$ to $L$ and $L'$ are exact), not necessarily Hamiltonian
  isotopic. Then we can replace $\Lambda$ by the ground field $\F$ and
  the family $\HF^a(L,L')$ is a true persistence module. In this case,
  sometimes we can even allow one of the manifolds to be
  non-compact. For instance, $L$ can be Hamiltonian isotopic to the
  zero section in a cotangent bundle of a closed manifold and $L'$ can
  be a fiber.
\end{Remark}

\subsubsection{Hamiltonian perspective}
\label{sec:filtration}
It is beneficial to look at this complex $\CC=\CF(L,L')$ from a
different perspective and this is where the condition that $L$ and
$L'$ are Hamiltonian isotopic, which has so far been used only
implicitly, comes to the forefront. Namely, applying the inverse
Hamiltonian isotopy $\big(\varphi_F^t\big)^{-1}$ to paths from $L$ to
$L'$ we obtain paths from $L$ to itself and thus a homeomorphism
between $\PP(L,L')$ and $\PP(L,L)$ and a bijection between
$\pi_1(M;L,L')$ and $\pi_1(M;L) :=\pi_1(M;L,L)$. The intersections
$L\cap L'$ turn into Hamiltonian chords from $L$ to itself for
$\big(\varphi_F^t\big)^{-1}$. Likewise, the reference paths $x_\fc$
become the reference paths $y_\fc$ from $L$ to $L$ and a capping of
$y\in \PP(L,L)$ is a homotopy from $y\in\fc$ to $y_\fc$ with
end-points on $L$ up to the same equivalence relation.

This procedure turns the Cauchy--Riemann equation into the Floer
equation
\begin{equation}
  \label{eq:floer_0}
\frac{\p u}{\p s}+J\frac{\p u}{\p t}=-\nabla H_t(u)
\end{equation}
where $(s,t)\in\R\times [0, 1]$ and $u\colon \R\times [0,1]\to M$, and
$H_t= -F_t\circ \varphi_F^t$ is a Hamiltonian generating the inverse
of the isotopy $\varphi_F^t\colon M\to M$ from $L$ to
$L'$. A holomorphic strip with boundary on $L$ and $L'$ and
asymptotic to $L\cap L'$ becomes a solution $u$ of the Floer equation
with boundary on $L$ asymptotic to Hamiltonian chords. The Floer
differential now counts solutions of the Floer equation. The
Hamiltonian action of a capped path $\by=(y,w)$ is given by the
standard formula
\begin{equation}
  \label{eq:Action}
\CA(\by)=-\int_w\omega+ \int_0^1 H_t(y(t))\,dt,
\end{equation}
and the Floer equation is the $L^2$-anti-gradient flow equation for
$\CA_H$, i.e.,
\begin{equation}
  \label{eq:floer_1}
\p_s u =-\nabla_{L^2}\CA_H(u).
\end{equation}
As a consequence, when $u$ is asymptotic to $\by_1$ at $+\infty$ and
to $\by_0$ at $-\infty$, we have
\begin{equation}
  \label{eq:Energy-Action}
E(u)=\CA_H(\by_1)-\CA_H(\by_0),  
\end{equation}
where the energy $E(u)$ is given by
\begin{equation}
\label{eq:energy}  
E(u)=\int_{S^1\times\R}\|\p_s u\|^2\,dt\,ds.
\end{equation}

The resulting filtered complex over $\Lambda$ and the filtered
homology $\HF^a(L,L')$ are naturally isomorphic to their counterparts
from Section \ref{sec:Floer-complex}. Of course, in the case of
complexes we assume that in both constructions we use matching
auxiliary structures.

Note that whether or not $L$ and $L'$ are transverse, only a finite
set of elements in $\pi_1(M;L,L')\cong\pi_1(M;L)$ is
represented by Hamiltonian chords. Thus $\CC_{\fc}=0$ for all but a
finite collection of $\fc\in\pi_1(M;L,L') \cong\pi_1(M;L)$; this
collection might however depend on the Hamiltonian isotopy; cf.\
\cite[Prop.\ 6.2]{Us}.  In the situation we are interested in there is
usually a natural choice of the Hamiltonian isotopy. For instance, in
Section \ref{sec:barcode_entropy-def-Ham}, the isotopy comes from
$\varphi_H^t$ with time interval $[0, k]$ scaled to $[0, 1]$. 

A particular instance of Lagrangian Floer homology is Hamiltonian
Floer homology. Namely let $\varphi_H$ be a Hamiltonian diffeomorphism
of a closed monotone symplectic manifold $M$. Let $L$ be the
diagonal in the symplectic square $M\times M$ equipped with the
symplectic form $\omega\oplus(-\omega)$ and $L'$ the graph of
$\varphi_H$ in $M\times M$. Clearly, $L$ and $L'$ are Hamiltonian
isotopic with the isotopy $\id\times \varphi_H^t$.  The intersections
$L\cap L'$ are in one-to-one correspondence with the fixed points of
$\varphi_H$. The transverse intersection condition is equivalent to
that all fixed points are non-degenerate. The set $\PP(L,L')$ of paths
in $M\times M$ is in one-to-one correspondence with the space of loops
in $M$ and $\pi_1(M\times M; L, L')$ is the set of free homotopy
classes of loops. A capping is an equivalence class of cylinders $w$
connecting a loop $y$ to a reference loop $y_\fc$ in a free homotopy
type $\fc$. Here two cylinders $w_0$ and $w_1$ are equivalent if the
integrals of $\omega$ and $c_1(TM)$ over the torus $v$ obtained by
attaching $w_0$ to $w_1$ (with the reversed orientation) is zero.  The
action filtration is still given by \eqref{eq:Action}, and
\eqref{eq:floer_0}, \eqref{eq:floer_1} and \eqref{eq:Energy-Action}
still hold. Note also that in the Hamiltonian setting the condition
that $M$ is monotone can be further relaxed (within ``conventional''
Floer theory) to that $M$ is weakly monotone (see \cite{HS}) and here
we can use any field $\F$ as the ground field.

As a result of this construction, assuming that the fixed points of
$\varphi_H$ are non-degenerate, we obtain a filtered complex over
$\Lambda$, denoted $\CF(H)$ or $\CF(\varphi_H)$ and called the Floer
complex of $H$. The resulting filtered homology is the (fixed point)
Floer homology of $H$. The complex and homology are naturally graded
but the grading is inessential for our purposes and we ignore it. The
background conditions from Section \ref{sec:conv} on $L$ and the
ambient symplectic manifold translate to that $M$ is monotone. This
condition can be further relaxed to that $M$ is weakly monotone;
\cite{HS}.  Both the complex and the homology break down into the
direct sums over free homotopy classes $\fc$. The key theorem of Floer
theory is that $\HF_\fc^\infty(H)=\H_*(M;\F)\otimes \Lambda$ when
$\fc=1$ and zero otherwise; see, e.g., \cite{Sa} and references
therein. This fact is crucial for proving the existence results for
fixed or periodic points along the lines of Arnold's or Conley's
conjectures. However, it plays little role in our constructions.

\begin{Remark}
  \label{rmk:non-contr}
  In contrast with some other applications of Floer homology,
  working with periodic orbits (or Lagrangian intersections) in all
  free homotopy class is absolutely essential for our
  purposes. Without this, two of our main results -- Theorems
  \ref{thm:B} and \ref{thm:C} -- would fail; see \cite[Sec.\
  2.3]{CGG:Entropy} for more details.
\end{Remark}

\begin{Remark}
  \label{rmk:atoroidal}
  When $\omega$ is atoroidal, i.e., $\omega(v)=0$ for all maps
  $v\colon \T^2\to M$, there is no need to use the Novikov field
  $\Lambda$, and the Hamiltonian Floer homology is defined over
  $\F$. In this case, the filtered Floer homology is a persistence
  module.
\end{Remark}  

\begin{Remark}[Floer homology in the degenerate case]
  \label{rmk:FH-deg}
  The definition of filtered Floer homology extends by continuity to
  the case where $L$ and $L'$ are not transverse or $\varphi_H$ is
  degenerate, by applying a small perturbation to achieve
  transversality and then passing to the limit. One has to be careful
  however with the specifics of the perturbation process to guarantee
  convergence when the action spectrum is dense. On the other hand,
  when $\omega$ is toroidally rational, i.e., the values $\omega(v)$
  for all $v\colon \T^2\to M$ generate a discrete subgroup of $\R$,
  the action spectrum for every $\fc$ is discrete. Then, for $a$ not
  in the action spectrum, the filtered Floer homology of a small
  perturbation stabilizes. One can extend the definition to all $a$
  by, say, taking the left limit in $a$ as in \ref{PM3}. The
  conventional monotonicity or weak monotonicity requirements concern
  only the values $\omega(v)$ or/and $c_1(TM)(v)$ for $v\in
  \pi_2(M)$. These conditions are essential for ensuring that
  $\p_{\Fl}^2=0$ for all $\fc$ by preventing bubbling off, but they do
  not affect the action spectrum for $\fc\neq 0$; see \cite[Sec.\
  3.2.3]{CGG:Entropy}.  The monotonicity condition can also be used to
  turn the filtered Floer homology for $\fc=1$ (but only for this
  class) into a genuine graded persistence module; \cite{KS}. In any
  event, we do not need this construction to define the Lagrangian
  barcode function in the degenerate case, for it is simpler to do
  this directly.
  \end{Remark}

\subsubsection{Lagrangian barcode function}
\label{sec:Lagrangian-barcode}
In the setting of Section \ref{sec:Floer-complex}, assume that $L$ and
$L'$ are transverse and let $\CC:=\CF(L,L')$ be the Floer complex. As
in Example \ref{ex:sing-value}, there exists a basis
$\Gamma'=\{x_i,y_i,z_j\}$ of the complex $\CC$ over $\Lambda$ such
that
\begin{itemize}
\item $\p x_i=y_i$ and $\p z_j=0$, and
\item $\Gamma'$ is \emph{orthogonal} in the sense that
  $\CA\big(\sum a_c c\big)=\max \CA(a_c c)$ where now $c\in\Gamma'$.
\end{itemize}
Such a basis is again called a \emph{singular value decomposition}. We
refer the reader to \cite{UZ} for the proof of existence.  By
definition, the barcode $\CB(L,L')$ associated with $L$ and $L'$ is
formed by the intervals $\big(\CA(y_i),\CA(x_i)\big]$ and
$\big(\CA(z_j), \infty\big)$ with each interval taken up to an
arbitrary shift. In other words, this is essentially a multiset formed
by the length of these intervals: $\CA(x_i)-\CA(x_i)$ and also
$\infty$ with multiplicity equal to the number of generators
$z_j$. However, we prefer to think of the elements of $\CB(L,L')$ as
bars even though here, in contrast with the barcode of a persistence
module, we discard the location of a bar and keep track only of its
length. In other words, in the terminology from \cite[Rmk.\
3.6]{CGG:Entropy} the bars are unpinned.  The barcode $\CB(L,L')$,
defined in such a way, is unique, i.e., independent of the singular
decomposition (see \cite{UZ}), while the singular value decomposition
is of course not. Considering unpinned bars in this context is
natural: multiplication of the pair $x_i$ and $y_i$ or of $z_j$ and
$\infty$ by $\lambda\in\Lambda$ shifts the bar by $-\nu(\lambda)$ due
to \eqref{eq:filt2} or \eqref{eq:filt3}.

\begin{Definition}
  \label{def:barcode-fn-Lagr}
The barcode function $\fb_\eps(L,L')$ of $L$ and $L'$ is the number of
bars in $\CB(L,L')$ of length strictly greater than $\eps> 0$. 
\end{Definition}

The complexes $\CF(L',L)=\CF(L,L')^*$ and $\CF(L,L')$ have the same
barcode:
$$
\CB(L,L')=\CB(L',L);
$$
cf.\ \cite{UZ}. Hence,
\begin{equation}
  \label{eq:PD}
\fb_\eps(L,L')=\fb_\eps(L',L).
\end{equation}
We also set $\fb(L,L'):=|\CB(L,L')|$ to be the
total number of bars in the barcode. Then
$$
|L\cap L'|=\dim_\Lambda\CF(L,L') = 2\fb(L,L')-\dim_\Lambda\HF(L,L')\geq
\fb(L,L').
$$
In particular, $\fb_\eps(L,L')$ gives a lower bound for the number of
intersections:
\begin{equation}
\label{eq:intersections-b}
|L\cap L'| \geq \fb_\eps(L,L').
\end{equation}

Clearly,
$$
\CB\big(\psi(L),\psi(L')\big)=\CB(L,L')
$$
for any symplectomorphism $\psi\colon M\to M$, and hence 
 \begin{equation}
  \label{eq:inv}
\fb_\eps\big(\psi(L),\psi(L')\big)=\fb_\eps(L,L').
\end{equation}
Furthermore, $\fb_\eps(L,L')$ is constant as a function of $\eps$ on
any interval in the complement of the union of the sets
$\CS_\fc-\CS_\fc$. 

A crucial feature the barcode is that it is fairly insensitive to
small perturbations of the Lagrangian submanifolds with respect to the
Hofer metric. Namely, assume that $d_{\hn}(L',L'')<\delta/2$. Then
\begin{equation}
  \label{eq:insensitive1}
\fb_{\eps+\delta}(L,L') \leq \fb_\eps(L,L'')\leq \fb_{\eps-\delta}(L,L').
\end{equation}
We refer the reader to \cite{KS, PRSZ, UZ} for the proof.

It is convenient, although strictly speaking not necessary, to
introduce the \emph{Floer graph} $G$ associated with a Floer package
$\CC$; cf.\ \cite{CGG:HZ}. The vertices of $G$ are the generators
$x_i$ corresponding to the intersections $L\cap L'$, but not a
singular value decomposition. For each non-zero term $f T^a $ in
$\lambda_{ij}$ (see \eqref{eq:d-Fl1} and \eqref{eq:d-Fl2}) we connect
$x_i$ to $x_j$ by an arrow and label that arrow by the exponent
$a$. (Thus $G$ is a directed graph with finitely many vertices but
possibly infinitely many edges.) The length of an arrow is the action
difference $\CA(x_i)-\CA(T^ax_j)=\CA(x_i)-\CA(x_j)+a$, i.e., the
energy of any underlying Floer cylinder. We say that $x=x_i$ is
$\eps$-isolated if every arrow from or to $x$ has length strictly
greater that $\eps$. For instance, $x$ is $\eps$-isolated if every
Floer cylinder asymptotic to $x$ at $\pm\infty$ has energy strictly
greater than $\eps$. (The converse need not be true.) We will need the
following purely algebraic fact.

\begin{Proposition}
  \label{prop:isolated}
  Assume that $G$ has $p$ $\eps$-isolated vertices. Then
  $\fb_\eps(\CC) \geq p/2$.
\end{Proposition}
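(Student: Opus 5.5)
The plan is to reduce the statement to a count of \emph{short} bars and then bound that count by linear algebra over the Novikov ring. Let $n=\dim_\Lambda\CC$, let $h$ be the number of infinite bars, and let $s$ be the number of finite bars of length $\leq\eps$. From the singular value decomposition of Section \ref{sec:Lagrangian-barcode}, $n=2\fb(\CC)-h$, so $\fb(\CC)=(n+h)/2$. Hence $\fb_\eps(\CC)=\fb(\CC)-s=(n+h)/2-s$. It therefore suffices to prove
$$
2s\leq n-p,
$$
since this gives $\fb_\eps(\CC)\geq (h+p)/2\geq p/2$.

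Next I would normalize the complex. Replace each generator $x_i$ by $T^{\CA(x_i)}x_i$; this is allowed because $\Lambda$ has real exponents. All generators then have action $0$, and by \eqref{eq:filt2} the entry of $\p$ from $x_i$ to $x_j$ becomes a series whose valuations are exactly the lengths of the corresponding arrows of the Floer graph $G$. In particular:
\begin{itemize}
\item every matrix entry of $\p$ has valuation $>0$;
\item every entry in the row or column of an $\eps$-isolated vertex has valuation $>\eps$.
\end{itemize}
Let $\Lambda_0=\{\nu\geq 0\}$, with maximal ideal $\mathfrak m=\{\nu>0\}$ and the ideal $\mathfrak m_\eps=\{\nu>\eps\}$. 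The Floer differential preserves the lattice $\CC_0=\bigoplus\Lambda_0x_i$.

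Orthogonal bases in the sense of \cite{UZ} correspond to $\Lambda_0$-bases of $\CC_0$ after rescaling. So a singular value decomposition gives a $\Lambda_0$-basis $\{x_k,y_k,z_j\}$ of $\CC_0$ with $\p x_k=T^{\ell_k}y_k$, where the $\ell_k$ are the finite bar lengths; this is Smith normal form over the valuation ring $\Lambda_0$. Reducing modulo $\mathfrak m_\eps$, the image of $\bar\p$ in $(\Lambda_0/\mathfrak m_\eps)^n$ is $\bigoplus_k T^{\ell_k}(\Lambda_0/\mathfrak m_\eps)\,\bar y_k$. By Nakayama's lemma over the local ring $\Lambda_0/\mathfrak m_\eps$, its minimal number of generators is exactly $s$. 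Consequently $s$ depends only on $\p \bmod \mathfrak m_\eps$.

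Modulo $\mathfrak m_\eps$, the matrix $\bar\p$ vanishes on the rows and columns of the isolated vertices. So $\bar\p$ is supported on the $(n-p)\times(n-p)$ block $\CC'$ spanned by the non-isolated generators, i.e., $\bar\p=0\oplus\bar\p'$. The count $s$ is then the same for $\bar\p'$, which still satisfies $(\bar\p')^2=0$. For $\bar\p'$, apply the same Smith-form argument over $\Lambda_0/\mathfrak m_\eps$ to the free module of rank $n-p$. We obtain $s$ pairs $(x'_k,y'_k)$ that are part of a basis, with $\bar\p' x'_k=T^{\ell_k}y'_k$ and $\ell_k\leq\eps$. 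Reducing to the residue field $\F$, the $2s$ vectors $x'_k,y'_k$ are linearly independent in $\F^{n-p}$, which gives $2s\leq n-p$.

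The main obstacle is this last step: showing that the "short" Smith pairs can be chosen inside the non-isolated block and remain linearly independent over $\F$. The danger is that $y'_k$ could a priori coincide with some $x'_{k'}$. To exclude it, I would use $(\bar\p')^2=0$, which forces $\im\bar\p'\subset\ker\bar\p'$, and run the standard reduction algorithm for a square-zero matrix over the valuation ring. The algorithm repeatedly picks the entry of minimal valuation, which is $\leq\eps$, and eliminates both its row and its column. Each pivot removes two distinct non-isolated coordinates, so there are at most $(n-p)/2$ pivots of valuation $\leq\eps$. Carrying this elimination out carefully, with row and column operations over $\Lambda_0$ that respect the filtration, is where I expect the real work to be.
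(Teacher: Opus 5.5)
Your reduction up to the last step is sound: $\fb_\eps=(n+h)/2-s$; the number $s$ of short bars equals $\mu(\im\bar\p)$ for $\bar\p=\p \bmod \mathfrak m_\eps$; and $\im\bar\p=\im\bar\p'$ for the block $\p'$ of $\p$ on the $m:=n-p$ non-isolated generators. The gap is the final step, and it is a real gap, not just unfinished bookkeeping.

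The only input you propose to use there, $(\bar\p')^2=0$ over $R=\Lambda_0/\mathfrak m_\eps$, does \emph{not} imply $2s\le m$. Take $m=3$ and $\bar\p' e_1=T^ae_2$, $\bar\p' e_2=T^ae_3$, $\bar\p' e_3=T^ae_1$ with $\eps/2<a\le\eps$. Since $2a>\eps$, we have $(\bar\p')^2=0$ in $R$. Yet $\im\bar\p'=T^aR^3$ needs three generators, so $s=3>m/2$.

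Your pivot step fails on exactly this example. Pivoting at $e_1\mapsto T^ae_2$ does not split off the pair $(e_1,e_2)$, because $\bar\p' e_2\neq 0$ and $e_1$ is still hit by $\bar\p' e_3$. Over the domain $\Lambda_0$ with $\p^2=0$ exactly, after a pivot $\p x_i=T^vy$ one gets $\p y=T^{-v}\p^2x_i=0$, and the $x_i$-row vanishes automatically. Modulo $\mathfrak m_\eps$ these hold only up to entries of valuation $>\eps-v$, which need not vanish in $R$. So ``each pivot removes two distinct coordinates'' is unjustified. Likewise, a Smith form $\bar\p'=U\Sigma V$ over $R$ only gives $\bar\p'(V^{-1}e_k)=T^{\ell_k}Ue_k$ with two unrelated bases, so the $\F$-independence of the $2s$ vectors does not follow.

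The missing idea is to keep more information than $\p \bmod \mathfrak m_\eps$. Let $P$ be the isolated and $Q$ the non-isolated generators. From $\p^2=0$ over $\Lambda_0$, the $QQ$-block gives $(\p')^2=-\p_{QP}\p_{PQ}$. Both factors have all entries of valuation $>\eps$, so $(\p')^2\equiv 0$ modulo $\mathfrak m_{2\eps}=\{\nu>2\eps\}$. This excludes the example above, where $(\p')^2$ has entries of valuation $2a\le 2\eps$.

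With this the end is short:
\begin{itemize}
\item Write $\p'=U\Sigma V$ over $\Lambda_0$, with $U,V\in\mathrm{GL}_m(\Lambda_0)$ and $\Sigma=\mathrm{diag}(T^{e_1},\dots,T^{e_m})$. By your own invariance argument, exactly $s$ of the $e_i$ are $\le\eps$.
\item Set $M=VU$. The congruence $(\p')^2\equiv 0$ modulo $\mathfrak m_{2\eps}$ is equivalent to $\Sigma M\Sigma\equiv 0$ modulo $\mathfrak m_{2\eps}$, i.e., $e_i+e_j+\nu(M_{ij})>2\eps$.
\item Hence $\nu(M_{ij})>0$ whenever $e_i,e_j\le\eps$. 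So the reduction $\bar M\in\mathrm{GL}_m(\F)$ has an $s\times s$ zero block.
\item Invertibility of $\bar M$ then forces $s\le m-s$, i.e., $2s\le n-p$, as you wanted.
\end{itemize}
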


The proof of this proposition, which we will leave to the reader as an
exercise, is contained in \cite{UZ} although somewhat implicitly. A
direct proof can be found in \cite{CGG:Entropy}.

Up to this point we have assumed that $L\pitchfork L'$.  To extend the
barcode counting function $\fb_\eps(L,L')$ to the situation where $L$
and $L'$ need not be transverse, set
\begin{equation}
  \label{eq:b-eps3}
\fb_\eps(L,L'):=\liminf_{\tL'\to L'}\fb_\eps(L,\tL')\in \Z.
\end{equation}
Here the limit is taken over Lagrangian submanifolds
$\tL'\pitchfork L$ which are Hamiltonian isotopic to $L'$ and converge
to $L'$ in the $C^\infty$-topology (or at least in the
$C^1$-topology).  Note that, as a consequence,
$d_{\hn}(\tL',L')\to 0$. By \eqref{eq:PD}, we could alternatively
require that $\tL'$ is Hamiltonian isotopic to $L$, transverse to $L'$
and converges to $L$.  Furthermore, since $\fb_\eps(L,L')\in\Z$, the
limit in \eqref{eq:b-eps3} is necessarily attained, i.e., there exists
$\tL'$ arbitrarily close to $L'$ such that
$\fb_\eps(L,L')=\fb_\eps(L,\tL')$.

With this definition, $\fb_\eps(L,L')$ is monotone increasing as
$\eps$ decreases to 0, and \eqref{eq:PD}, \eqref{eq:inv} and
\eqref{eq:insensitive1}
continue to hold. For instance, to prove the first inequality in
  \eqref{eq:insensitive1}, note first that in \eqref{eq:b-eps3} we
  could have replaced the lower limit over $\tL'\to L'$ by the lower
  limit over $\tL\to L$. Then
  $$
  \fb_{\eps+\delta}(L,L'):=\liminf_{\tL\to L}\fb_{\eps+\delta}(\tL,L')\leq
  \liminf_{\tL\to L}\fb_\eps(\tL,L'')=: \fb_\eps(L,L''),
  $$
as desired.

Furthermore, $\fb_\eps(L, L')$ gives a lower bound on the number of
transverse intersections which is in some sense stable under small
perturbations with respect to the Hofer distance. To be more precise,
assume that Lagrangian submanifolds $L$, $L'$ and $L''$ are
Hamiltonian isotopic, $L''\pitchfork L$ and
$d_{\hn}(L',L'')<\delta/2$. Then, regardless of whether $L$ and $L'$
are transverse or not, we have
  \begin{equation}
    \label{eq:intersections}
  |L\cap L''|\geq \fb_{\eps}(L,L'')\geq \fb_{\eps+\delta}(L,L').
\end{equation}
Here the first inequality follows from \eqref{eq:intersections-b}, and
in the second we use \eqref{eq:insensitive1} and the fact that
$d_{\hn}(L',L'')<\delta/2$. These inequalities play a central role in
the proof of Theorem~\ref{thm:A'}.

The above definitions and constructions carry over word-for-word to
Hamiltonian diffeomorphisms. We denote the barcode and the
barcode function associated with $\varphi_H$ by $\CB(\varphi_H)$ and,
respectively, $\fb_\eps(\varphi_H)$. In this case it is sufficient to
assume that $M$ is weakly monotone and we can use any field as the
ground field.

\subsection{Filtered symplectic homology}
\label{sec:SH}
The first versions of symplectic homology were introduced in
\cite{CFH, Vi} and since then the theory has been further
developed. In this section, we only briefly touch upon basic
constructions treating filtered symplectic homology as a persistence
module and following \cite{CGGM:Reeb, CGGM:hyperbolic}. We refer the
reader to, e.g., \cite{Abu, Bo, BO0, BO, Se:biased} for
details.

\subsubsection{Conventions, notation, semi-admissible Hamiltonians, the Floer
equation}
\label{sec:setting}
When working with symplectic homology, it is convenient to adopt
conventions (e.g., signs in the action functional and the Floer
equation) which are different from what we used in Section
\ref{sec:Lagr} for Lagrangian intersections and Hamiltonian
diffeomorphisms.  In this section we spell out these conventions and
notation which are essentially identical to the ones used in
\cite{CGGM:Reeb, CGGM:hyperbolic, GG:LS}, and also recall several
elementary properties of (semi-)admissible Hamiltonians to be used
later.

Let $\alpha$ be the contact form on the boundary $M=\p W$ of a
Liouville domain $W^{2n\geq 4}$. We will also use the same notation
$\alpha$ for a primitive of the symplectic form $\omega$ on $W$. The
grading of Floer or symplectic homology is inessential for our
purposes and we make no assumptions on $c_1(TW)$.  As usual, denote by
$\WW$ the symplectic completion of $W$, i.e.,
$$
\WW=W\cup_M M\times [1,\,\infty)
$$
with the symplectic form $\omega=d\alpha$ extended from $W$ to
$M\times [1,\infty)$ as
$$
\omega := d(r\alpha),
$$
where $r$ is the coordinate on $[1,\,\infty)$. Sometimes it is
convenient to have the function $r$ also defined on a collar of
$M=\p W$ in $W$. Thus we can think of $\WW$ as the union of $W$ and
$M\times [1-\eta,\,\infty)$ for small $\eta>0$ with
$M\times [1-\eta,\, 1]$ lying in $W$ and the symplectic form given by
the same formula.

The action spectrum $\CS(\alpha)\subset (0,\infty)$ is the set of all
actions (aka periods) of closed orbits of the Reeb flow of $\alpha$
on $M$. This is a closed, zero measure set.

Most of the Hamiltonians $H\colon \WW\to \R$ considered in this
section are constant on $W$ and depend only on $r$ outside $W$, i.e.,
$H=h(r)$ on $M\times [1,\,\infty)$, where the $C^\infty$-smooth
function $h\colon [1,\,\infty)\to \R$ is required to meet the
following three conditions:
\begin{itemize}
\item $h$ is strictly monotone increasing;
\item $h$ is convex, i.e., $h''\geq 0$, and $h''>0$ on $(1,\, \rmax)$
  for some $\rmax>1$ depending on~$h$;
\item $h(r)$ is linear, i.e., $h(r)=ar-c$, when $r\geq \rmax$.
\end{itemize}
In other words, the function $h$ changes from a constant on $W$ to
convex in $r$ on $M\times [1,\, \rmax]$, and strictly convex on the
interior, to linear in $r$ on $M\times [\rmax,\, \infty)$.

We will refer to $a$ as the \emph{slope} of $H$ (or $h$) and write
$a=\slope(H)$. The slope is often, but not always, assumed to be
outside the action spectrum of $\alpha$, i.e.,
$a\not\in\CS(\alpha)$. We call $H$ \emph{admissible} if
$H|_W=\const<0$ and \emph{semi-admissible} when $H|_W\equiv 0$. (This
terminology differs from the standard usage, and we emphasize that
\emph{admissible Hamiltonians are not semi-admissible}.) When $H$
satisfies only the last of the three conditions, we call it
\emph{linear at infinity}.

The difference between admissible and semi-admissible Hamiltonians is
just an additive constant: $H- H|_W$ is semi-admissible when $H$ is
admissible. Hence the two Hamiltonians have the same filtered Floer
homology up to an action shift. For our purposes, semi-admissible
Hamiltonians are notably more suitable due to the $H|_W\equiv 0$
normalization.

The Hamiltonian vector field $X_H$ is determined by the condition
$$
\omega(X_H,\, \cdot)=-dH,
$$
and, on $M\times [1,\,\infty)$, 
$$
X_H=h'(r) R_\alpha,
$$
where $R_\alpha$ is the Reeb vector field. We denote the Hamiltonian
flow of $H$ by $\varphi_H^t$, the Reeb flow of $\alpha$ by
$\varphi_\alpha^t$, where $t\in \R$, and the Hamiltonian
diffeomorphism generated by $H$ by $\varphi_H:=\varphi_H^1$.

By a \emph{$\tau$-periodic orbit} of $H$ we will
mean one of several closely related but distinct objects. It can be a
$\tau$-periodic orbit of $\varphi_H$ and then
$\tau\in\N$. Alternatively, it can stand for a $\tau$-periodic orbit
of the flow $\varphi_H^t$ with $\tau\in (0,\,\infty)$. Furthermore,
working with periodic orbits of flows or maps, we might or not have
the initial condition fixed. For instance, without an initial
condition fixed, a non-constant 1-periodic orbit of the flow of $H$
gives rise to a whole circle of 1-periodic orbits (aka fixed points)
of $\varphi_H$. Likewise, a prime $\tau$-periodic orbit of $\varphi_H$
comprises $\tau$ $\tau$-periodic points.  In most cases the exact
meaning should be clear from the context and is often immaterial; when
the difference is essential we will specify whether an orbit is of the
flow or the diffeomorphism and if the initial condition is fixed or
not.

Every $T$-periodic orbit $z$ of the Reeb flow with $T<a=\slope(H)$
gives rise to a 1-periodic orbit $\tz=(z,r_*)$ of the flow of $H$ with
$r_*$ determined by the condition
\begin{equation}
  \label{eq:level}
h'(r_*)=T.
\end{equation}
Clearly, $\tz$ lies in the shell $1<r<\rmax$, and we have a one-to-one
correspondence between 1-periodic orbits of $H$ and the periodic
orbits of $\varphi_\alpha^t$ with period $T<a$ whenever
$a\not\in \CS(\alpha)$. In the pair $\tz=(z,r_*)$, we usually view
$\tz$ as a 1-periodic orbit of the flow $\varphi^t_H$ of $H$ or a
circle of fixed points of the Hamiltonian diffeomorphism $\varphi_H$,
while $z$, contrary to what the notation might suggest, is
parametrized by the Reeb flow but not as a projection of $\tz$ to
$M$. (By \eqref{eq:level}, the two parametrizations of $z$ differ by
the factor of $h'(r_*)=T$.) Fixing an initial condition on $z$
determines an initial condition on $\tz$, and the other way around. In
particular, $z$ gives rise to a whole circle $\tz(S^1)$ of fixed
points of $\varphi_H$.

We say that a $T$-periodic orbit $z$ of the Reeb flow is
\emph{isolated} (as a periodic orbit) if for every $T'> T$ it is
isolated among periodic orbits with period less than $T'$. Clearly,
all periodic orbits of $\alpha$ are isolated if and only if for every
$T'$ the number of periodic orbits with period less than $T'$ is
finite.  For instance, a non-degenerate periodic orbit is isolated.
Note that $\tz$ is isolated as a 1-periodic orbit of the flow of $H$
if $z$ is isolated. No fixed point of $\varphi_H$ on $\tz(S^1)$ is
isolated, but $\tz$ is Morse--Bott non-degenerate, as the set of fixed
points $\tz(S^1)$, if and only if $z$ is non-degenerate.

The action functional $\CA_H$ is defined by
$$
\CA_H(\gamma)=\int_\gamma\hat{\alpha}-\int_{S^1} H(\gamma(t))\, dt,
$$
where $\gamma\colon S^1=\R/\Z\to \WW$ is a smooth loop in $\WW$ and
$\hat{\alpha}$ is the Liouville primitive $\alpha$ of $\omega$ on $W$
and $\hat{\alpha}=r\alpha$ on $M\times [1-\eta,\,\infty)$ for a
sufficiently small $\eta>0$. (Note the sign difference with
\eqref{eq:Action}!) More explicitly, when
$\gamma\colon S^1\to M\times [1,\,\infty)$, we have
$$
\CA_H(\gamma)= \int_{S^1} r(\gamma(t))\alpha\big(\gamma'(t)\big)\, dt
- \int_{S^1} h\big(r(\gamma(t))\big)\, dt.
$$
Thus when $\gamma=\tz=(z,r_*)$ is a 1-periodic orbit of $H$, the
action can be expressed as a function of $r_*$ only:
$$
\CA_H(\tz)=A_h(r_*),
$$
where
\begin{equation}
  \label{eq:AH}
  A_h\colon [1,\,\infty)\to [0,\,\infty)\textrm{ is given by } A_h(r)=r
  h'(r)-h(r).
\end{equation}
Sometimes we will also denote this \emph{action function} by $A_H$.
It is easy to see that this is a monotone increasing function;
\cite[Sec.\ 2.2]{CGGM:Reeb}.
Next fix an almost complex structure $J$ on $\WW$ satisfying the following
conditions:
\begin{itemize}
\item $J$ is compatible with $\omega$, i.e., $\omega(\cdot,\, J\cdot)$
  is a Riemannian metric,
\item $J r\p /\p r=R_\alpha$ on the cylinder $M\times [1,\,\infty)$,
\item $J$ preserves $\ker (\alpha) $.  
\end{itemize}
The last two conditions are equivalent to that
\begin{equation}
  \label{eq:complex_strc}
dr\circ J=-r\alpha.
\end{equation}
We call such almost complex structures \emph{admissible}. If the first
condition still holds on $\WW$, and the second and the third
conditions are met only outside a compact set while within a compact
set $J$ can be time-dependent and 1-periodic in time, we call $J$
\emph{admissible at infinity}.

Let $H$ be a Hamiltonian linear at infinity and let $J$ be an
admissible at infinity almost complex structure. The Floer equation is
the still the 
$L^2$-anti-gradient flow equation, \eqref{eq:floer_1}, for $\CA_H$.
However, with the difference in signs, this equation now explicitly reads
\begin{equation}
\label{eq:floer_2}
\p_s u-J\big(\p_t u- X_H(u)\big)=0,
\end{equation}
 where $u\colon \R\times S^1\to \WW$ and $(s,t)$
are the coordinates on $\R\times S^1$ with $S^1=\R/\Z$. By construction, the
function $s\mapsto \CA_H\big(u(s,\,\cdot)\big)$ is decreasing.

Note that the leading term of this equation is the $\p$-operator, as
opposed to the $\bar{\p}$-operator as in \eqref{eq:floer_0}.  In
other words, when $H\equiv 0$, solutions of \eqref{eq:floer_2} are
anti-holomorphic rather than holomorphic curves. Nonetheless the
standard properties of the solutions of the Floer equation readily
translate to our setting, e.g., via the change of variables
$s\mapsto -s$. We will often refer to solutions $u$ of the Floer
equation as \emph{Floer cylinders}.

By construction, the energy-action relation, \eqref{eq:Energy-Action},
still holds with the energy $E(u)$ defined by \eqref{eq:energy}. When
$H$ is semi-admissible, we can be more precise.  Namely, let $u$ be
asymptotic to $\tx=(x, r^+)$ at $-\infty$ and $\ty=(y,r^-)$ at
$+\infty$. Then
\begin{equation}
  \label{eq:Energy-Action2}
E(u)=\CA_H(\tx)-\CA_H(\ty)=A_H(r^+)-A_H(r^-).
\end{equation}
Here, due to our conventions, $r^+\geq r^-$ -- hence the notation.

\subsubsection{Symplectic homology}
\label{sec:SH-def}
Fix a field $\F$ which we suppress in the notation.  Let $H$ be linear
at infinity with the slope outside $\CS(\alpha)$ and $J$ admissible at
infinity. Then the filtered Floer homology $\HF^\tau(H)$ of $H$ over
$\F$ is defined regardless of whether $H$ is non-degenerate or not;
cf.\ Remark \ref{rmk:FH-deg}. Indeed, let $\CS(H)$ be the action
spectrum of $H$. When $\tau\not\in \CS(H)$, we set
$\HF^\tau(H):=\HF^\tau(\tH)$, where $\tH$ is a sufficiently small
perturbation of $H$, depending on $\tau$. For $\tau\in\CS(H)$, we
define $\HF^\tau(H)$ as the left limit of $\HF^{\tau'}(H)$ as
$\tau'\to \tau-$ and $\tau'\not\in\CS(H)$; cf.\ \ref{PM3}. So defined,
the filtered Floer homology is a persistence module over $\F$.

For instance, assume that the Reeb flow is non-degenerate and $H$ is
semi-admissible with $a=\slope(H)\not\in\CS(\alpha)$. Then, as we have
pointed in the previous section, every non-trivial one-periodic orbit
of $H$ has the form $\bz=(z,r_*)$, where $z$ is a closed Reeb orbit of
period less than $a$. Then one can find a non-degenerate perturbation
$\tH$ such that $z$ gives rise to two generators $\hz$ and $\cz$ in
$\CF(\tH)$. If the grading is defined and $m$ is the Conley--Zehnder
index of $z$, the degree of $\hz$ is $m+1$ and the degree of $\cz$ is
$m$. In addition, $\tH$ is a $C^2$-small function on $W$ with each
critical point contributing one generator to the Floer complex.

In contrast with Hamiltonian Floer homology on closed manifolds, here
a homotopy does not in general induce a continuation map between the
homology for two distinct linear at infinity Hamiltonians. Let $H_s$,
$s\in\R$, be a homotopy between two linear at infinity Hamiltonians
$H_0$ and $H_1$, i.e., $H_s$ is a family of linear at infinity
Hamiltonians such that $H_s=H_0$ when $s$ is close to $-\infty$ and
$H_s=H_1$ when $s$ is close to $+\infty$. (In what follows we will
take the liberty to have homotopies parametrized by $s\in [0,\,1]$ or
some other finite interval rather than $\R$.) There are two cases
where a homotopy gives rise to a map in Floer homology.

The first one is when all Hamiltonians $H_s$ have the same slope. Then
the homotopy induces a continuation map
$$
\HF^\tau(H_0)\to \HF^{\tau +C}(H_1)
$$
shifting the action filtration by
$$
C=\int_{-\infty}^\infty
\, \max_{z\in\WW} \, \max\{0,-\p_s H_s (z)\}\,ds.
$$
Moreover, it is well-known and not hard to show that $\HF(H)$ does not
change as long as $\slope(H)$ stays outside of $\CS(\alpha)$.

The second case is when $H_s$ is monotone increasing, i.e., the
function $s\mapsto H_s(z)$ is monotone increasing for all $z\in
\WW$. In particular, the function $s\mapsto \slope(H_s)$ is also
monotone increasing. Note that while $\slope(H_0)$ and $\slope(H_1)$
are still required to be outside $\CS(\alpha)$, the intermediate
slopes $\slope(H_s)$ can pass through the points of
$\CS(\alpha)$. Such a homotopy induces a map
$$
\HF^\tau(H_0)\to \HF^{\tau}(H_1)
$$
preserving the action filtration.

The \emph{filtered symplectic homology} $\SH^\tau(\alpha)$ or $\SH^\tau(W)$ is defined as
\begin{equation}
  \label{eq:SH}
\SH^\tau(W):=\varinjlim_H \HF^\tau(H),
\end{equation}
where traditionally the limit is taken over all Hamiltonians linear at
infinity and such that $H|_W<0$. Since admissible (but not
semi-admissible) Hamiltonians form a co-final family, we can limit $H$
to this class.  When working with this definition, it is useful to
keep in mind that, as is not hard to see,
\begin{equation}
  \label{eq:spectra-conv}
\CS(H)\to\{ 0\}\cup\CS(\alpha)
\end{equation}
uniformly on compact intervals. Furthermore, even though $H|_W=0$ for
semi-admissible Hamiltonians rather than $H|_W<0$, one can take the
limit over all such Hamiltonians in \eqref{eq:SH}. This fact, which
readily follows from the definition, is useful for computations and
this is how we will usually treat \eqref{eq:SH} in what follows.  The
filtered symplectic homology is a persistence module. As a consequence
we have the barcode growth function $\fb_\eps(W,s)$ or just
$\fb_\eps(s)$ associated with this persistence module.

What is even more useful for computations is that no limit is needed
in the definition of the filtered symplectic homology. Namely, we have
$\HF(H)=\SH^a(W)$ for every semi-admissible Hamiltonian $H$ with
$a=\slope(H)\not\in \CS(\alpha)$. Furthermore, for any two such
Hamiltonians $H_0$ and $H_1$ the persistence modules $\HF^\tau(H_0)$
and $\HF^\tau(H_1)$, differ in the obvious sense by a
reparametrization. This reparametrization can be made arbitrary close
to the identity for a suitable choice of $H_0$ and $H_1$. Furthermore,
let us truncate the persistence module $\SH^\tau(W)$ at $\tau=a$ by
turning all bars passing through $a$ into infinite bars. (This is not
the standard truncation of persistence modules.) Then the persistence
module $\HF(H)$ is again a reparametrization of this truncated
symplectic homology, and moreover the reparametrization is arbitrarily
close to the identity for a suitable choice of $H$. We refer the
reader to \cite{CGGM:Reeb} and \cite{CGGM:hyperbolic} for the proofs
of these facts.

\subsection{Topological entropy}
\label{sec:Top_Entropy}
In this section, we will briefly review the definition and basic
properties of topological entropy. We refer the reader to, e.g.,
\cite{KH} for a much more detailed treatment.

\subsubsection{Basics}
\label{sec:Top_Entropy-Basics}
Let $(M,d)$ be a compact metric space and $\varphi\colon M\to M$ a
continuous map.  Suppose we can only distinguish points in $M$ to an
accuracy of $\eps>0$. How many distinct orbits of $\varphi$ can we see
for the first $k\in \N$ iterations? To answer this question, define
the $k$-shadowing metric on $M$ by
$$
d_k(x,y):=\max_{0\leq i\leq k-1}d\big(\varphi^i(x),\varphi^i(y)\big).
$$
Then the number of orbits we can detect is the $\eps$-separated number
$S_\eps(k)$ for the metric $d_k$, i.e., the maximal number of points
in $M$ with pairwise $d_k$-distance greater than $\eps$. A closely
related number is the $\eps$-covering number $C_\eps(k)$ which is the
minimal cardinality of a covering of $M$ by sets of $d_k$-diameter
less than $\eps$. 
These numbers increase with $k$ and decrease with $\eps$ or rather grow as $\eps\to 0^+$.
It is easy to see that
$$
C_{2\eps}(k)\leq S_\eps(k)\leq C_\eps(k)<\infty.
$$
\begin{Definition}
  \label{def:htop}
The \emph{topological entropy} of $\varphi$ is
\begin{align*}
  \htop(\varphi)
  &:=\lim_{\eps\to 0^+}\limsup_{k\to \infty}\frac{\log
    C_\eps(k)}{k} 
    =\lim_{\eps\to 0^+}\limsup_{k\to \infty}\frac{\log S_\eps(k)}{k}
     \in [0,\,\infty] .    
\end{align*}
\end{Definition}

It is not hard to see that the sequence $\log C_\eps(k)$ is
subadditive in $k$, and hence in the first line the limit (not just
the upper limit) exists and is finite for every $\eps>0$. In general,
the topological entropy of $\varphi$ can be infinite. Clearly, the
requirement that $M$ is compact can be replaced by the condition that
$\varphi$ is compactly supported, working with the restriction $\varphi|_K$, where $K$ is any compact set
  containing $\supp \varphi$.

For a flow $\varphi=\{\varphi^t\mid t\in\R\}$, the topological entropy
is defined in a similar fashion with $k$ replaced by continuous
time. One can show that $\htop(\varphi)$ is simply equal
to $\htop(\varphi^1)$, and this fact can also be taken as the
definition of topological entropy for flows.

In the following proposition we collect several basic properties of
topological entropy which follow, with more or less effort, directly
from the definition; see \cite{KH}.

\begin{Proposition}
  \label{prop:htop-properties}
Let as above $\varphi\colon M\to M$ be a continuous map of a compact
metric space or just a continuous compactly supported map. Then
\begin{enumerate}
  \item $\htop(\varphi)$ is independent of the metric and conjugation
    invariant, i.e., $\htop(\psi\varphi\psi^{-1})$ for any
    homeomorphism $\psi\colon M\to M$;

  \item if $\varphi$ is Lipschitz, then $\htop(\varphi)\leq \dim M\cdot \log^+ (L) $ where $\dim M$ is
    the (box) dimension of $M$ and $L$ is the Lipschitz constant of
    $\varphi$ and $\log^+=\max\{0,\log\}$;

  \item $\htop$ is local, i.e., $\htop(\varphi |_K)\leq
    \htop(\varphi)$ for any closed invariant subset $K$;

  \item $\htop$ is homogeneous:
    $\htop(\varphi^{k})=k\htop(\varphi)$ for $k\in\N$ and
    $\htop(\varphi^{-1})=\htop(\varphi)$ when $\varphi$ is a
    homeomorphism;

  \item for two continuous maps $\varphi$ and $\psi$ of two possibly
    different compact spaces we have
    $\htop(\varphi\times \psi)=\htop(\varphi)+\htop(\psi)$.
    
\end{enumerate}

\end{Proposition}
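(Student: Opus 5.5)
The plan is to establish the five items one at a time, working directly from Definition~\ref{def:htop} and using the sandwich $C_{2\eps}(k)\leq S_\eps(k)\leq C_\eps(k)$, so that in each case we may use whichever of the two counts is more convenient.

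\emph{(1) Metric independence and conjugation invariance.} Let $d,d'$ be two metrics inducing the topology of $M$. Since $M$ is compact, the identity map is uniformly continuous in both directions. Hence for every $\eps>0$ there is $\delta>0$ such that $d'<\delta$ implies $d<\eps$. The same implication then holds for the shadowing metrics $d'_k$ and $d_k$ simultaneously for all $k$. It follows that $C^{d}_\eps(k)\leq C^{d'}_\delta(k)$, and letting $\eps\to 0^+$ gives one inequality; symmetry gives the other. For conjugation invariance, $\psi$ is an isometry from $(M,\psi_*d)$ to $(M,d)$ conjugating $\psi\varphi\psi^{-1}$ to $\varphi$, so the counts agree and metric independence finishes the argument.

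\emph{(2) Lipschitz bound.} If $d(x,y)<\eps L^{-(k-1)}$ with $L\geq 1$, then $d_k(x,y)<\eps$. Therefore $C_\eps(k)$ is at most the covering number of $M$ at scale $\eps L^{-(k-1)}$, which is $\lesssim (\eps L^{-k})^{-\dim M-o(1)}$ by the definition of box dimension. Taking $\frac1k\log$, then $\limsup_k$, then $\eps\to0^+$ yields $\dim M\cdot\log L$. When $L<1$ we simply replace $L$ by $1$.

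\emph{(3) Locality.} Any $d_k$-separated set in $K$ is also $d_k$-separated in $M$, so $S_\eps^K(k)\leq S^M_\eps(k)$.

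\emph{(4) Homogeneity.} Put $\psi=\varphi^m$. The inequality $d^{\psi}_k\leq d^{\varphi}_{mk}$ gives $S^\psi_\eps(k)\leq S^\varphi_\eps(mk)$, hence $\htop(\varphi^m)\leq m\htop(\varphi)$. For the reverse inequality, uniform continuity provides $\delta$ with $d(x,y)<\delta\Rightarrow d(\varphi^i x,\varphi^i y)<\eps$ for $0\leq i<m$. Then $d^\psi_k<\delta$ implies $d^\varphi_{mk}<\eps$, so $C^\varphi_\eps(mk)\leq C^\psi_\delta(k)$, and monotonicity in $k$ handles the intermediate times. For $\varphi^{-1}$, the map $\varphi^{k-1}$ sends a $d_k^{\varphi}$-separated set bijectively onto a $d_k^{\varphi^{-1}}$-separated set, so the separated counts coincide.

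\emph{(5) Products.} Use the max metric on the product. Then $d_k$ on the product is the maximum of the two factors' $d_k$. Products of covers give $C_\eps(\varphi\times\psi,k)\leq C_\eps(\varphi,k)C_\eps(\psi,k)$, and products of separated sets give $S_\eps(\varphi\times\psi,k)\geq S_\eps(\varphi,k)S_\eps(\psi,k)$.

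\textbf{Main obstacle.} In (5) the definition uses $\limsup$, and $\limsup(a_k+b_k)$ can be strictly smaller than $\limsup a_k+\limsup b_k$. This is resolved by the remark after Definition~\ref{def:htop}: $\log C_\eps(k)$ is subadditive, so Fekete's lemma shows the limit exists. Running the lower bound through $C_{2\eps}\leq S_\eps$ then turns the inequality of $\limsup$'s into an equality of limits.
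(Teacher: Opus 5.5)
Your proposal is correct and is essentially the standard set of arguments from Katok--Hasselblatt \cite{KH}, which is what the paper defers to instead of giving its own proof. In particular, you handle the $\limsup$ issue in (5) correctly, via Fekete's lemma for the subadditive sequence $\log C_\eps(k)$ combined with $C_{2\eps}\leq S_\eps$. The only slip is notational: in (1), with $\psi_*d:=d(\psi^{-1}\cdot,\psi^{-1}\cdot)$, it is $\psi^{-1}$ that is an isometry $(M,\psi_*d)\to(M,d)$ conjugating $\psi\varphi\psi^{-1}$ to $\varphi$. This is harmless since $\psi$ is arbitrary.
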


As a consequence of the second assertion, $\htop(\varphi)<\infty$
whenever $M$ is a manifold and $\varphi$ is a compactly supported
$C^1$-map. 

\begin{Example}[Isometry]
  \label{ex:htop-isometry}
  Assume that $\varphi$ is an isometry. Then $\htop(\varphi)=0$ as
  follows immediately from the definition. For instance, a periodic
  map or a right or left parallel transport in a compact Lie group has
  zero topological entropy.
\end{Example}  

\begin{Example}[Doubling map]
  \label{ex:htop-double}
  Let $\varphi\colon S^1\to S^1$ be doubling map: $\varphi(x)=2x$
  where $x\in S^1=\R/\Z$. Then $\htop(\varphi)=\log 2$ and
  $\htop(\varphi)=\log m$ when we replace 2 by $m$; see \cite[Sec.\
  3.2]{KH}. Furthermore, $\htop(\varphi)\geq \log |\deg(\varphi)|$ for
  any $C^1$-map from a closed orientable manifold to
  itself. \cite[Sec.\ 8.3]{KH}. (This is not true for continuous
  maps.)
\end{Example}

\begin{Example}[Linear maps of tori]
  \label{ex:htop-linear}
  Let $A \colon \T^m \to \T^m$, where $\T^m = \R^m/\Z^m$, be a linear
  map, i.e., $A \in \SL(m, \Z)$. Then
  $\htop(A) \geq \sum_{|\lambda|>1} \log |\lambda|$, where $\lambda$'s
  are eigenvalues of $A$. This inequality turns into equality when $A$
  is hyperbolic, i.e, $A$ has no eigenvalues on the unit circle, but
  in general can be strict; see \cite[p.\ 127]{KH}.
\end{Example}

\begin{Example}[Flows on surfaces]
  \label{ex:htop-flow}
  Assume that $\varphi$ is a smooth flow
    (i.e., the flow of a smooth autonomous vector field) on a closed
    surface. Then $\htop(\varphi)=0$. This fact, which follows for
    instance from Theorem \ref{thm:Katok}, is readily believable, but
    in fact requires a proof.  The condition that $\dim M =2$ is
    essential: in dimensions greater than two a flow can have positive
    topological entropy. This is the case, for instance, for the
    mapping torus flow of a diffeomorphism with positive topological
    entropy.
\end{Example}

\begin{Example}[Bernoulli shift]
  \label{ex:htop-Bernoulli}
  Let $M=\{0,1\}^\Z$ be the space of two-sided infinite sequences
  $\{a_i\}$ of 0's or 1's and let $\varphi\colon M\to M$ be the shift
  by one step to the right: $\varphi(\{ a_i \}) = \{ a_{i-1}\} $. Then
  $\htop(\varphi) = \log 2 = 1$; see \cite[Sec.\ 3.2]{KH}. We have
  $\htop(\varphi) = \log m$ when we replace $\{0,1\}$ by an alphabet
  of $m$ letters.
\end{Example}  

\subsubsection{Positive entropy}
\label{sec:Top_Entropy-positive}
Oversimplifying the picture, we can say that there are two distinct,
albeit interconnected, sources of positive entropy. The first 
comes from global algebraic topological complexity of the map. The
second, which is of primary interest to us, can be local and is
related to dynamical or geometrical complexity.

The algebraic topological source can be illustrated by two phenomena:
the growth of free homotopy classes of periodic orbits and the
homological growth.

To see how the growth of the free homotopy classes works we will focus
on the setting where $\varphi$ is a smooth diffeomorphism connected
to the identity by an isotopy. This is the case, for instance, when
$\varphi$ is the time-one map in a flow or a Hamiltonian
diffeomorphism. Then for each periodic orbit we have its free homotopy
class defined. (As a side remark, in the Hamiltonian case this class
is independent of the Hamiltonian isotopy. This is a consequence of the
proof of Arnold's conjecture.) Assume next that the number of free
homotopy classes populated by periodic orbits of period less than or
equal to $k$ grows exponentially with $k$, i.e., this number is
bounded from below by $e^{ak}$ for some $a>0$. Then
$\htop(\varphi)> 0$. The first result of this type in the context of
geodesic flows goes back to \cite{Di} although, strictly speaking, the
setting there is somewhat different. A curious reader may consult
\cite{Pa:book} for further results, including
the one stated above, and references.

Another instance where the algebraic topological complexity ensures
that topological entropy is positive comes homological growth. This is
the situation where the size of the underlying Morse or Floer or
symplectic homology grows exponentially. Consider, for example, the
geodesic flow on a closed Riemannian manifold $Q$ and let $\Omega^L$
be the space of based loops of length less than $L$. Then, for any
field $\F$, the exponential growth rate of $\dim\H_*(\Omega^L;\F)$
gives a lower bound on the topological entropy of the geodesic flow;
\cite{Pa}. This fact generalizes the lower bound from \cite{Di} where
only $\dim\H_0(\Omega^L;\F)$ is considered, and both of these results
have been generalized to Reeb flows on unit (co)tangent bundles in
\cite{MS}. Combined with some geometry and group theory this tells us
that every geodesic flow on a surface of a genus $g\geq 2$ has
positive entropy. The same goes for all closed manifolds admitting a
metric of negative sectional curvature. Several yet more general lower
bounds on topological entropy of Reeb flows via the growth of various
types of symplectic or contact homology have been proved in
\cite{Al:Anosov,Al:Cyl,Al:Leg,AM,Me:Th}.

A different variant of homological growth lower bounds on topological
entropy comes from Yomdin's theorem; \cite{Yo}. The homological
counterpart of this result asserts that
\begin{equation}
  \label{eq:Yomdin-homology}
    \htop(\varphi) \geq \log\big(\textrm{ spectral radius of } \varphi_* \colon
    \H_*(M;\R) \to \H_*(M;\R)\big)
  \end{equation}
  for a $C^\infty$-diffeomorphism $\varphi\colon M\to M$. In other words,
  exponential growth of the norm of $\varphi_*$ guarantees that
  $\htop(\varphi)>0$.  This inequality is consistent with Examples
  \ref{ex:htop-double} and \ref{ex:htop-linear}.  Conjecturally, the
  same is true under less restrictive smoothness requirements and the
  claim is known as Shub's conjecture. However, in Yomdin's proof
  $C^\infty$-smoothness is absolutely essential. We will return to
  Yomdin's theorem later in this section.

  Yet, a dynamical system can have positive topological entropy coming
  from a completely different and local source. The simplest and
  common situation when this happens is when a diffeomorphism
  $\varphi$ or its iterate has a closed invariant set $K$ such that
  $\varphi|_K$ is the Bernoulli shift from Example
  \ref{ex:htop-Bernoulli}. The set $K$ and even the support of
  $\varphi$ can be contained in a small ball and then there are no
  algebraic topological sources of positive entropy. We will elaborate
  on this phenomenon in the next section.

  A closely related ``universal'' source of positive entropy for
  smooth maps is volume growth. Namely, let $M$ be a manifold and
  $L\subset M$ a compact submanifold of any dimension possibly with
  boundary. For a $C^1$-smooth map $\varphi\colon M\to M$, set
  \begin{equation}
    \label{eq:hvol-rel}
  \hvol(\varphi; L)
  :=\limsup_{k\to \infty}\frac{\log^+\vol\big(\varphi^k(L)\big)}{k}, 
\end{equation}
where $\vol$ stands for the Riemannian volume with respect to a fixed
Riemannian metric. We will call $\hvol(\varphi; L)$, which is
obviously independent of the metric, the \emph{volume growth entropy
  of $\varphi$ relative to $L$}.  The celebrated theorem of Yomdin
connects topological entropy and volume growth.

\begin{Theorem}[Yomdin's Theorem, \cite{Yo}]
  \label{thm:Yomdin}
  For a $C^\infty$-smooth diffeomorphism $\varphi$ of a closed manifold,
   $$
  \sup_L \hvol(\varphi;L)=\htop(\varphi).
  $$
\end{Theorem}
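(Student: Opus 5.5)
Yomdin's theorem has two halves, $\sup_L \hvol(\varphi;L)\leq \htop(\varphi)$ and $\sup_L \hvol(\varphi;L)\geq\htop(\varphi)$, which are of completely different natures. I would first dispose of the inequality $\hvol\geq\htop$. It is due to Newhouse and holds already for $C^{1+\alpha}$ maps, but it is not elementary; in these notes I would take it as a black box with a reference. If a sketch is wanted: use the variational principle to pick an ergodic measure $\mu$ of nearly maximal entropy. Pesin theory then gives local unstable manifolds of dimension $u$. A small $u$-disk $L$ transverse to the stable directions has volume growth at least $h_\mu(\varphi)$, by a covering/separation argument along unstable leaves. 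Since $L$ is a compact submanifold with boundary, taking the supremum over $L$ gives the bound.

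The substantive direction, and the genuinely Yomdin part, is $\hvol(\varphi;L)\leq\htop(\varphi)$ for $C^\infty$ maps. The plan is the standard one via semialgebraic reparametrization.

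First, fix $r\in\N$ and $\eps>0$. Cover $M$ by charts, and fix $d_k$-Bowen balls of radius $\eps$. Write $\varphi^k(L)$ as a union over $(k,\eps)$-separated orbit segments of pieces $\varphi^k(L\cap B_k(x,\eps))$. This gives
\[
\vol\big(\varphi^k(L)\big)\leq S_\eps(k)\cdot \max_x \vol\big(\varphi^k(L\cap B_k(x,\eps))\big).
\]
The count $S_\eps(k)$ contributes $\htop$ in the limit, so everything reduces to showing that each local piece has volume at most $C\,e^{k(\dim M)\log\|D\varphi\|/r}$. The factor $1/r$ is what makes the piece negligible as $r\to\infty$.

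Second, bound the local pieces by induction on $k$ using the Yomdin--Gromov algebraic lemma. A $C^r$ map $\sigma\colon[0,1]^m\to\R^n$ with $\|\sigma\|_{C^r}\leq1$ has image covered by $C(m,n,r)$ reparametrizations $\psi_j$ with $\|\sigma\circ\psi_j\|_{C^r}\leq1$. At each iterate, a piece is rescaled to the $\eps$-scale, which costs a factor $\|D\varphi\|$ in the derivatives. Restoring $C^r$-norm $\leq1$ requires subdividing into roughly $\|D\varphi\|^{m/r}$ subcubes and then applying the lemma. The number of charts is therefore at most $(C\|D\varphi\|^{m/r})^k$. Each chart has image of volume bounded in terms of $\eps$, and chart images lying in one Bowen ball stay of size $\eps$ at all intermediate times, which is what allows the induction to continue.

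Combining the two steps gives
\[
\hvol(\varphi;L)\leq\htop(\varphi)+\frac{m\log^+\|D\varphi\|}{r}+\frac{\log C(m,n,r)}{\text{scale}}.
\]
The constant $C(m,n,r)$ is independent of $\|D\varphi\|$. The scale issue is handled by replacing $\varphi$ by $\varphi^N$ and using homogeneity from Proposition~\ref{prop:htop-properties} together with the analogous homogeneity of $\hvol$. Then $\log C/N\to0$, while the term $\log\|D\varphi^N\|/(rN)$ stays bounded by $\mathrm{Lip}/r$. Finally let $r\to\infty$, which uses $C^\infty$-smoothness.

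I expect the main obstacle to be the algebraic (reparametrization) lemma itself. Its proof combines Taylor approximation of $\sigma$ by polynomials of degree $r$ with a uniform bound on the number of semialgebraic cells needed to reparametrize polynomial images with $C^r$ control. This is where one needs a count depending only on degree and dimension, as in Gromov's Bourbaki exposition. I would cite it rather than reprove it.
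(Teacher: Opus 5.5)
Your split matches the paper's treatment exactly. The paper gives no argument of its own: it attributes $\hvol(\varphi;L)\leq\htop(\varphi)$ for $C^\infty$ maps to Yomdin \cite{Yo} and the reverse inequality, already for $C^{1+\eps}$ maps, to Newhouse \cite{Ne}, as you do, and your sketches follow the standard proofs. One slip to fix: the Yomdin--Gromov lemma as you state it (hypothesis $\|\sigma\|_{C^r}\leq 1$, covering the whole image) is vacuous; the version your induction needs assumes only $\|D^r\sigma\|\leq 1$ and reparametrizes only the part $\sigma^{-1}([0,1]^n)$ mapped into the unit cube, which is where the Bowen-ball localization enters, because after subdividing into about $\|D\varphi\|^{m/r}$ cubes the first derivatives are still of size about $\|D\varphi\|^{1-1/r}$.
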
  

This is a deep theorem which is by an order of magnitude more
difficult than any result on topological entropy we have mentioned so
far. The homological lower bound, \eqref{eq:Yomdin-homology}, is a
consequence of \eqref{eq:Yomdin-vol}.  The key point here is the
inequality
\begin{equation}
  \label{eq:Yomdin-vol}
    \hvol(\varphi; L)\leq \htop(\varphi), 
  \end{equation}
  which holds for every $L$ as long as $\varphi$ is $C^\infty$-smooth,
  and the $C^\infty$-smoothness condition is essential.  Here the
  inequality in one direction immediately follows from
  \eqref{eq:Yomdin-vol}. The opposite inequality is established in
  \cite{Ne} and requires $\varphi$ to be only $C^{1+\eps}$.

  Next, let us replace $M$ by $M\times M$ and $\varphi$ by
  $\id\times\varphi$. We set $L$ to be the diagonal in $M\times
  M$. Then $\varphi^k(L)$ is the graph $\Gamma_k$ of $\varphi^k$.
  Applying \eqref{eq:hvol-rel} in this setting, we will call
   \begin{equation}
    \label{eq:hvol}
  \hvol(\varphi)
  :=\limsup_{k\to \infty}\frac{\log^+\vol\big(\Gamma_k\big)}{k} 
\end{equation}  
the \emph{volume growth entropy} of $\varphi$. By Yomdin's inequality,
\eqref{eq:Yomdin-vol}, we have
\begin{equation}
  \label{eq:Yomdin-vol2}
    \hvol(\varphi)\leq \htop(\varphi) 
  \end{equation}
  since $\htop(\id\times\varphi)=\htop(\varphi)$. This inequality will
  play an essential role in the proofs of our main theorems.

  The volume growth entropy for a flow $\varphi$ is defined in a similar fashion
  by replacing discrete time $k$ by continuous time $t$.

\subsubsection{Hyperbolicity and periodic orbits}
\label{sec:Top_Entropy-hyp}
One of the most important sources of positive entropy, both local and
global, is hyperbolicity.  Recall that a compact invariant set
$K\subset M$ of a diffeomorphism $\varphi:M\to M$ is said to be
hyperbolic if for some Riemannian metric on $M$ there exist positive
constants $\lambda_- < 1 < \lambda_+$ and a splitting
$T_xM=E^-_x\oplus E^+_x$ for every $x\in K$, invariant under
$D\varphi$, such that
$$
\|D\varphi|_{E^-_x}\|\leq \lambda_-\textrm{ and }
\|D\varphi|_{E^+_x}\|\geq \lambda_+
$$
for all $x\in K$. We refer the reader to, e.g., \cite[Sect.\ 6]{KH}
for a detailed discussion of hyperbolicity. Throughout the
notes such sets are required to be compact by definition.

The definition extends to the flows by adding the neutral direction
tangent to the flow lines in the splitting of $TM$ along $K$. It is
useful to keep in mind that the hyperbolic set of a flow $K$ is not
hyperbolic for the time-$T$ map for any $T$ unless $K$ is comprised of
the fixed points of the flow in $K$.

For instance, a hyperbolic fixed point or a hyperbolic periodic orbit is a
hyperbolic set. A more interesting example is the entire torus
$K=\T^n$ in the setting of Example \ref{ex:htop-linear}. Then $K$ is
hyperbolic when $\varphi=A$ is hyperbolic. Another example is a
horseshoe. For our purposes a horseshoe is a hyperbolic set $K$ such
that some iterate of $\varphi|_K$ is conjugate to the Bernoulli shift
from Example \ref{ex:htop-Bernoulli}.

In low dimensions hyperbolic invariant sets are ubiquitous; \cite{LCS}.

\begin{Theorem}[A. Katok, \cite{Ka}]
  \label{thm:Katok}
  For a $C^{1+\eps}$-diffeomorphism $\varphi$ of a surface
\begin{equation}
  \label{eq:htop-Katok}
  \htop(\varphi)=\sup_K \htop(\varphi|_K)
\end{equation}
where the supremum is taken over all hyperbolic invariant sets and, in
fact, horseshoes when $\htop(\varphi)>0$.
\end{Theorem}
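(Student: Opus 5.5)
The inequality $\htop(\varphi)\geq\sup_K\htop(\varphi|_K)$ is immediate from locality, Proposition~\ref{prop:htop-properties}(3). For the reverse inequality we may assume $\htop(\varphi)>0$, since otherwise there is nothing to prove. Fix $\delta>0$. The plan is to produce a horseshoe $K$ with $\htop(\varphi|_K)\geq \htop(\varphi)-3\delta$ in four steps:
\begin{enumerate}
\item pass from topological to metric entropy via the variational principle;
\item use Ruelle's inequality, together with $\dim M=2$, to see that the chosen measure is hyperbolic;
\item use Pesin theory and Katok's closing lemma to obtain many hyperbolic periodic orbit segments;
\item glue these segments into a uniformly hyperbolic invariant set on which an iterate of $\varphi$ is a full shift.
\end{enumerate}

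First, by the variational principle, and then by ergodic decomposition, pick an ergodic $\varphi$-invariant probability measure $\mu$ with $h_\mu(\varphi)>\htop(\varphi)-\delta>0$. Let $\chi_1\geq\chi_2$ be the Lyapunov exponents of $\mu$. Ruelle's inequality for $\varphi$ gives $0<h_\mu\leq\chi_1^+$. Applied to $\varphi^{-1}$, it gives $0<h_\mu\leq(-\chi_2)^+$. Since $M$ is a surface there are only these two exponents, so $\chi_1\geq h_\mu>0>-h_\mu\geq\chi_2$. Hence $\mu$ is a hyperbolic measure, with one expanding and one contracting direction. This is the only point where $\dim M=2$ is used, and it is exactly why the statement fails in higher dimensions, where zero exponents may occur.

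Next, since $\varphi$ is $C^{1+\eps}$, Pesin theory applies. It gives Pesin blocks $\Lambda_l$ with $\mu(\Lambda_l)\to1$, on which there are Lyapunov charts, uniform hyperbolicity estimates after a bounded change of metric, and local stable and unstable manifolds of uniform size depending continuously on the point. By Katok's entropy formula (Brin--Katok), for a fixed small $\rho>0$ and a set $\Lambda\subset\Lambda_l$ of positive measure and small diameter, one gets, for large $n$, a family of $N\geq e^{n(h_\mu-2\delta)}$ points $x_i\in\Lambda$ that are $(n,\rho)$-separated and satisfy $\varphi^n(x_i)\in\Lambda$. Here we use Poincar\'e recurrence together with the ergodic theorem to ensure that most orbit segments return to $\Lambda$. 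Katok's closing and shadowing lemma then says each such segment is shadowed by a hyperbolic periodic orbit, with hyperbolicity constants controlled by $l$.

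The main step, and the one I expect to be the real obstacle, is the construction of the horseshoe itself. In the Lyapunov chart at a point of $\Lambda$, take a small ``rectangle'' $R$ bounded by pieces of local stable and unstable manifolds. For each $i$, consider the component $R_i$ of $R\cap\varphi^{-n}(R)$ along the segment of $x_i$. Using the Hölder continuity of $D\varphi$ to keep the nonlinear error below the hyperbolic rates along $n$ steps, one shows the following:
\begin{itemize}
\item $\varphi^n$ maps each $R_i$ across $R$ as a ``full-width'' strip, i.e.\ $R_i$ is stretched in the unstable direction and contracted in the stable direction;
\item invariant stable and unstable cone fields are preserved over $\bigcup_i R_i$;
\item by $(n,\rho)$-separation, the strips $R_i$ are pairwise disjoint.
\end{itemize}
Then $K_0=\bigcap_{j\in\Z}\varphi^{-jn}\big(\bigcup_i R_i\big)$ is a uniformly hyperbolic set for $\varphi^n$, and $\varphi^n|_{K_0}$ is conjugate to the full shift on $N$ symbols. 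Set $K=\bigcup_{j=0}^{n-1}\varphi^j(K_0)$; this is a compact $\varphi$-invariant hyperbolic set (a horseshoe in the sense above). By Proposition~\ref{prop:htop-properties}(4) and Example~\ref{ex:htop-Bernoulli}, $\htop(\varphi|_K)=\frac1n\htop(\varphi^n|_K)\geq\frac1n\log N\geq h_\mu-2\delta>\htop(\varphi)-3\delta$. Letting $\delta\to0$ finishes the argument. The delicate technical points are the uniform control of cone invariance and strip crossing along nonuniformly hyperbolic segments, and the disjointness of the strips. This is where the $C^{1+\eps}$ assumption is essential, and I would follow Katok's original argument in \cite{Ka} and \cite[Supplement]{KH}.
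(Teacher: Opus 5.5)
Your outline is correct and follows Katok's own argument: variational principle, Ruelle's inequality in dimension two, Pesin theory with the closing lemma, and then a horseshoe built from many separated returning orbit segments. The paper does not reprove the theorem; it only cites \cite{Ka} and \cite[Suppl.\ S]{KH}, so this is the route being referenced.

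There are two small precision points, neither of which affects the result. First, Ruelle's inequality reads $h_\mu\le\chi_1^{+}+\chi_2^{+}$. You need to combine it with its $\varphi^{-1}$ version to rule out $\chi_2>0$ before you can conclude $h_\mu\le\chi_1$ and $h_\mu\le-\chi_2$. Second, the return condition $\varphi^n(x_i)\in\Lambda$ cannot be guaranteed for all large $n$, only for infinitely many $n$. Alternatively, as in Katok's proof, pigeonhole over return times to get a common return time $m\in[n,(1+\delta)n]$; the final entropy estimate is unchanged.
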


A similar result holds for flows without fixed points on 3-manifolds;
\cite{LY,LS}.

Hyperbolicity has many implications but the most important for us is
its connection with topological entropy via the growth of periodic
orbits. Let $P_k(\varphi)$ be the number of periodic points of
$\varphi$ of period less than or equal to $k$. A compact invariant set $K$ is called \emph{locally maximal} or \emph{isolated} when it is the largest invariant set in some neighborhood $U$, i.e.
\[K=\bigcup_{k\in\Z}\varphi^k(U).\] 
The next result is
quite standard; see, e.g., \cite[Thm.\
18.5.1]{KH}.

\begin{Theorem}
  \label{thm:p-htop}
  Let $K$ be a locally maximal hyperbolic set of $\varphi$. Then
\begin{equation}
\label{eq:p-htop}
\htop(\varphi|_K)=\limsup_{k\to\infty} \frac{\log^+ P_k\big(\varphi|_K\big)}{k}.
\end{equation}
\end{Theorem}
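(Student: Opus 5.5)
The plan is to prove the two inequalities separately, working in the $d_k$-metrics of Section~\ref{sec:Top_Entropy-Basics}. Write $\varphi$ for $\varphi|_K$. The key tool is expansivity of hyperbolic sets: there is $\eps_0>0$ such that two distinct points of $K$ whose full orbits stay $\eps_0$-close coincide. For the upper bound $\limsup_k \frac1k\log^+ P_k \le \htop(\varphi)$, fix $\eps<\eps_0$ and suppose two distinct points $x,y\in K$ of period dividing $m\le k$ satisfy $d_m(x,y)\le\eps$. Periodicity then makes their entire orbits $\eps$-close, so expansivity forces $x=y$. Hence the points of period exactly $m$ form an $(m,\eps)$-separated set, giving $\#\mathrm{Fix}(\varphi^m)\le S_\eps(m)$. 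Summing over $m\le k$ yields $P_k\le k\,S_\eps(k)$. Taking $\frac1k\log$ and $\limsup$, and noting that the factor $k$ is subexponential, gives the desired bound. This direction needs only expansivity, not local maximality.

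The lower bound $\htop(\varphi)\le\limsup_k\frac1k\log^+P_k$ is where local maximality enters, through the Anosov closing lemma and the shadowing lemma. For $\delta>0$ small there is $\eps$ such that every $\delta$-pseudo-orbit in $K$ is $\eps$-shadowed by a genuine orbit. Since $K$ is locally maximal, this shadowing orbit lies in $K$, and for a periodic pseudo-orbit it is periodic. The plan is to take a maximal $(k,3\eps)$-separated set $E\subset K$, with $|E|=S_{3\eps}(k)$. By compactness, pick a finite $\delta$-dense set $F\subset K$ of size $N$ independent of $k$. For each $x\in E$, the orbit segment $x,\dots,\varphi^{k-1}(x)$ must be closed up into a periodic pseudo-orbit.

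Here is the main obstacle: closing an arbitrary segment requires a bounded-length connecting $\delta$-chain from $\varphi^{k}(x)$ back near $x$. This needs some recurrence or transitivity, which a general hyperbolic set lacks. I would handle it via spectral decomposition (Smale). A locally maximal hyperbolic set splits into finitely many basic sets, each topologically transitive, and $\htop(\varphi)$ is the maximum of $\htop$ over these pieces. So I would reduce to $K$ transitive, and then to topologically mixing after passing to an iterate $\varphi^p$ on a cyclic piece, using Proposition~\ref{prop:htop-properties}(4) for homogeneity. In the mixing case, specification holds: there is $M=M(\delta)$ such that any two points are joined by a $\delta$-chain of length exactly $M$.

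Each $x\in E$ then yields a periodic $\delta$-pseudo-orbit of period $k+M$, and its shadowing periodic point $p_x\in K$ satisfies $d_k(p_x,x)\le\eps$. If $p_x=p_y$, then $d_k(x,y)\le 2\eps<3\eps$, so $x=y$; the map $x\mapsto p_x$ is therefore injective. This gives $S_{3\eps}(k)\le P_{k+M}$. Dividing by $k$, letting $k\to\infty$ with $M$ fixed, and then letting $\eps\to0$ yields $\htop(\varphi)\le\limsup_k\frac1k\log^+P_k$. Combining this with the first paragraph gives \eqref{eq:p-htop}.
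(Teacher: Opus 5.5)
The paper does not prove this statement; it only cites \cite[Thm.\ 18.5.1]{KH}. Your overall route is the standard textbook one: expansivity for $\limsup_k\frac1k\log^+P_k\le\htop(\varphi|_K)$, and shadowing plus spectral decomposition for the reverse inequality. Your first inequality is complete as written.

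The second inequality has a false step. The spectral decomposition theorem does not decompose a locally maximal hyperbolic set $K$ into basic sets. It decomposes only the nonwandering set $\mathrm{NW}(\varphi|_K)$, and in general $K\neq\mathrm{NW}(\varphi|_K)$. For instance, take two saddle fixed points $p,q$ of a surface diffeomorphism together with the orbit of a transverse heteroclinic point $z\in W^u(p)\pitchfork W^s(q)$, with no connection from $q$ back to $p$. This is a locally maximal hyperbolic set in which the orbit of $z$ is wandering, indeed not even chain recurrent: shadowing would otherwise produce periodic points of $\varphi|_K$ near $z$, and the only ones are $p$ and $q$. So for $x$ on this orbit and small $\delta$ there is no $\delta$-chain from $\varphi^k(x)$ back to $x$, and your closing step cannot be carried out on all of $K$. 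Consequently, your claim that $\htop(\varphi|_K)$ is the maximum of the entropies of the basic sets does not follow from spectral decomposition. It requires the separate fact that topological entropy is carried by the nonwandering set, $\htop(\varphi|_K)=\htop\big(\varphi|_{\mathrm{NW}(\varphi|_K)}\big)$.

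This fact is standard, for instance via the variational principle, since Poincar\'e recurrence forces every invariant probability measure to be supported in the nonwandering set. But it is a genuine ingredient your proof must invoke. With it, the argument goes through: take the maximal $(k,3\eps)$-separated set inside a basic set $\Omega\subset\mathrm{NW}(\varphi|_K)$, build the periodic pseudo-orbits inside $\Omega$, and use local maximality of $K$ (not of $\Omega$) to place the shadowing periodic points in $K$. They are then counted by $P_{k+M}(\varphi|_K)$.

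Two smaller points. First, the shadowing lemma should be quantified as ``for every $\eps>0$ there is $\delta>0$'', which is the order you need when letting $\eps\to0$. Second, passing to a mixing iterate is unnecessary: transitivity of $\Omega$ already gives connecting $\delta$-chains of length between $1$ and some $M(\delta)$. That suffices because $P_n$ counts all periods up to $n$. The $\delta$-dense set $F$ is never used.
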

A similar identity holds for flows of nowhere vanishing vector fields;
\cite[Thm.\ 5.4.22]{FH}. Then, as a consequence of
\eqref{eq:htop-Katok}, in dimension two for diffeomorphisms and
dimension three for flows, we have
\begin{equation}
  \label{eq:p-htop2}
\htop(\varphi)\leq\limsup_{k\to\infty} \frac{\log^+ P_k(\varphi)}{k}.
\end{equation}

However, it is important to note that in higher dimensions, without
hyperbolicity-type conditions, the growth of periodic orbits, i.e.,
growth of $P_k$, is essentially unrelated to topological entropy.

\begin{Example}[Entropy without periodic points]
  \label{ex:htop-per}
Let $\psi\colon M\to M$ be a map with positive entropy and $R\colon
S^1\to S^1$ an irrational rotation. Then $R$ and
$$
\varphi:=R\times \psi\colon S^1\times M\to S^1\times M
$$
have no periodic points, but $\htop(\varphi)=\htop(\psi)>0$. In
particular, \eqref{eq:p-htop2} fails in higher dimensions. This
example however is extremely non-generic.
\end{Example}

Going in the opposite direction, in higher dimensions one can have
diffeomorphisms $\varphi$ with $\htop(\varphi)=0$ and arbitrarily fast
growth of $P_k(\varphi)$; \cite{Kal}. In this case the left hand side
in \eqref{eq:p-htop2} is zero, but the right hand side can be $\infty$.
In dimension two, one can also have $P_k(\varphi)$ growing arbitrarily
fast (but with $\htop(\varphi)>0$) and moreover this behavior is in
some sense typical; \cite{As}. Thus in dimension two one can also have
infinite right hand side in \eqref{eq:p-htop2}.




\section{Barcode entropy}
\label{sec:barcode_entropy}
In this section, we define several versions of barcode entropy for
Hamiltonian diffeomorphisms and Reeb flows, state and discuss our main
results, and prove basic properties of barcode entropy.

\subsection{Key definitions}
\label{sec:barcode_entropy-def}

\subsubsection{Reeb flows}
\label{sec:barcode_entropy-def-Reeb}
Throughout this section we keep the notation and conventions from
Section \ref{sec:SH}. Thus let $(W, d\alpha)$ be a Liouville
domain. We will also use the notation $\alpha$ for the contact form
$\alpha|_M$ on the boundary $M=\p
W$. 

Fix a ground field $\F$, which we suppress in the notation, and denote
the (non-equivariant) filtered symplectic homology of $W$ over $\F$
for the action interval $[0,\, s)$ by $\SH^s(W)$. The grading of
symplectic homology plays no role and we view $\SH^s(W)$ as an
ungraded vector space over $\F$.  We make no assumptions on the first
Chern class $c_1(TW)$.

Recall from Section \ref{sec:SH} that the symplectic homology together
with the natural maps $\SH^{s_0}(W)\to \SH^{s_1}(\alpha)$ for
$s_0\leq s_1$ is a persistence module and that $\fb_\eps(W,s)$ or just
$\fb_\eps(s)$, when $W$ is clear from the context, stands for its
barcode function. Thus $\fb_\eps(W,s)$ is the number of bars of
length greater than $\eps$, beginning in the range $[0,\, s)$ in the
barcode $\CB(W)$ of $\SH(W)$. This is an increasing function in $s$
and $1/\eps$, locally constant as a function of $s$ in the complement
to $\CS(\alpha)\cup \{0\}$, where $\CS(\alpha)$ is the action spectrum
of $\alpha$.

The barcode entropy of $\alpha$, denoted by $\hbar(\alpha)$, measures
the exponential growth rate of $\fb_\eps(\tau)$ and is defined as
follows.

\begin{Definition}
  \label{def:barcode_entropy}
The \emph{$\eps$-barcode entropy} of $\alpha$ is 
\begin{equation}
  \label{eq:eps-entropy}
  \hbar_\eps(\alpha): =
  \limsup_{s\to\infty}\frac{\log^+ \fb_\eps(s)}{s},
\end{equation} 
where $\log$ is taken base 2, $\log 0=-\infty$ and
$\log^+:=\max\{0,\log\}$, and the \emph{barcode entropy} of $\alpha$
is
\begin{equation}
  \label{eq:entropy}
  \hbar(\alpha):=\lim_{\eps \to 0^+}\hbar_\eps(\alpha)\in [0,\, \infty].
\end{equation}
\end{Definition}
A few comments are due at this point. First of all,
$\hbar_\eps(\alpha)$ increases as
$\eps \to 0^+$, and hence the limit in \eqref{eq:entropy} does
exist. Furthermore, 
$$
\hbar_\eps(\alpha)\leq \hbar(\alpha),
$$
and, as is easy to see, for any $a>0$,
$$
\hbar_\eps(a\alpha)=a^{-1}\hbar_\eps(\alpha)
\textrm{ and }
\hbar(a\alpha)= a^{-1}\hbar(\alpha).
$$

In what follows we will sometimes use the notation
$\hbar_\eps(\varphi)$ and $\hbar(\varphi)$, where $\varphi$ is the
Reeb flow of the contact form $\alpha$ on $M$. Barcode entropy of Reeb
flows was introduced in \cite{FLS} based on similar definitions of
barcode entropy for geodesic flows, \cite{GGM}, and Hamiltonian
diffeomorphisms in \cite{CGG:Entropy} discussed in the next section;
see also \cite{CGGM:Reeb}.

\begin{Remark}
  Hypothetically, $\hbar_\eps(\alpha)$ and $\hbar(\alpha)$ might
  depend on the entire Liouville domain $(W,d\alpha)$ rather than just
  the contact form $\alpha$ on $M=\p W$. However, we are not aware of
  any examples where this happens. (Theorem \ref{thm:C} 
  implies, in particular, that $\hbar(\alpha)$ is completely
  determined by $(M,\alpha)$ when $\dim M=3$.) Moreover, according to
  \cite[Sect.\ 4.2]{FLS}, $\hbar(\alpha)$ is independent of the
  filling at least when $c_1(TW)=0$ and we believe that this is true
  in general as long as $(M,\alpha)$ is the boundary of the Liouville
  domain and hence $\hbar(\alpha)$ is defined.
\end{Remark}

\begin{Remark}[Invariance]
  \label{rmk:Invariance}
  To what extend the interior $\mathring{W}$ of a Liouville domain
  $W$, taken up to an exact symplectomorphism, determines the Reeb
  dynamics on $\p W$? It is the standard fact that the filtered
  symplectic homology is an invariant of $\mathring{W}$. The first
  variant of this statement was proved in \cite{CFHW}, and since then
  the fact has been revisited many times in different contexts; see,
  e.g., \cite{BG,Gu,Hu,Us-SBM}. Hence, every invariant of the
  symplectic homology is also an invariant of the interior. This
  includes, in particular, barcode entropy or more generally the
  barcode growth function. To some degree this is also true even when
  the boundary $\p W$ is not smooth; see Remark \ref{rmk:Invariance2},
  \cite[Cor.\ 2.9]{BG} and \cite{Hu}. As a consequence of Theorem
  \ref{thm:C} below, the topological entropy of the Reeb flow is, less
  obviously, also an invariant when $\dim W=4$. At the same time, some
  dynamics features of the Reeb flow are not invariant. For instance,
  ergodicity is not: for $W$ with an ergodic Anosov--Katok Reeb
  pseudo-rotation on $\p W$ (see \cite{Ka:Izv}), $\mathring{W}$ is
  symplectomorphic to an open ellipsoid when $\dim W = 4$; \cite{ABE}.
\end{Remark}  

\subsubsection{Hamiltonian diffeomorphisms}
\label{sec:barcode_entropy-def-Ham}

As in Section \ref{sec:Lagr}, consider a symplectic manifold $M$ and a
closed monotone (e.g., exact) Lagrangian submanifold $L\subset M$ and
a second Lagrangian submanifold $L'$ Hamiltonian isotopic to $L$. In
particular $M$ is monotone. The ambient manifold $M$ is not required
to be closed, but it has to have a sufficiently nice structure at
infinity, e.g., to be ``tame'' or convex. In addition, the minimal
Maslov number $N_L$ of $L$ needs to be at least 2. Furthermore, let
$\varphi=\varphi_H\colon M\to M$ be a compactly supported Hamiltonian
diffeomorphism. Set $L^k=\varphi^k(L')$. Thus we have the barcode
growth functions $\fb_\eps\big(L,L^k\big)$ and, when $M$ is closed,
$\fb_\eps\big(\varphi^k\big)$ defined. Here in the former case the
ground field is $\F_2$, but in the latter we can work with any field
as long as $M$ is monotone.

The notion of the barcode entropy of $\varphi$ comes in two versions:
relative to $L$ and $L'$, and absolute.

\begin{Definition}[Relative Barcode Entropy]
  \label{def:hbr-rel1}
  The \emph{$\eps$-barcode entropy of $\varphi$ relative to $L$ and $L'$} is
  \begin{equation}
    \label{eq:eps-entropy2}
  \hbr_\eps(\varphi;L,L'):=\limsup_{k\to \infty}\frac{\log^+
    \fb_\eps\big(L, L^k\big)}{k}
\end{equation}
and the \emph{barcode entropy of $\varphi$ relative to these pair} is
  $$
  \hbr(\varphi;L,L'):=\lim_{\eps\to 0+} \hbr_\eps(\varphi; L, L') \in
  [0,\,\infty].
  $$
\end{Definition}
As for the case of Reeb flows, $\hbr_\eps(\varphi; L, L')$ is
increasing as $\eps\to 0+$, and hence the limit in the definition
of $\hbr(\varphi,L)$ exists although \emph{a priori} it can be
infinite.  We also emphasize again that the global Floer homology
$\HF(L)=\HF(L,L')$ is immaterial for this construction beyond the fact
that it is defined. For instance, $L$ can be a small circle in a
surface with $\HF(L)=0$. When $L'=L$ we will use the notation
$\hbr_\eps(\varphi;L)$ and $\hbr(\varphi;L)$. The notion of barcode
entropy for Reeb flows also has a relative analogue relying on wrapped
Floer homology; see \cite{Fe1}.

\begin{Remark}
It is easy to construct examples of Hamiltonian systems where the
filtered Floer or symplectic homology depends on the ground field
$\F$. However, we do not have examples where the barcode entropy
depends on $\F$. Theorem \ref{thm:C} below asserts that this does not
happen for the absolute entropy in dimensions two and three.
\end{Remark}  

\begin{Definition}[Absolute Barcode Entropy]
  \label{def:hbr-bar}
  The \emph{$\eps$-barcode entropy} of $\varphi$ is
  $$
  \hbr_\eps(\varphi):=\limsup_{k\to \infty}\frac{\log^+
    b_\eps\big(\varphi^k\big)}{k}
  $$
  and the \emph{(absolute) barcode entropy} of $\varphi$ is
  $$
  \hbr(\varphi):=\lim_{\eps\to 0+} \hbr_\eps(\varphi) \in
  [0,\,\infty]
  $$
  or, in other words,
  $$
  \hbr(\varphi):=\hbr(\id\times\varphi;\Delta).
  $$
 \end{Definition}
 Here again $\hbr_\eps(\varphi)$ is increasing as $\eps\to 0+$,
 and hence the limit in the definition of $\hbr(\varphi)$ exists.

\subsection{Main results}
\label{sec:barcode_entropy-res}
As stated in the introduction, barcode entropy is closely related to
topological entropy. In this section we state a series of results to
this account.

\begin{TheoremX}
   \label{thm:A}
   Let $\varphi\colon M\to M$ be a Hamiltonian
   $C^\infty$-diffeomorphism of a closed monotone symplectic manifold
   or the Reeb flow on the boundary of a Liouville domain. Then
   \begin{equation}
     \label{eq:A}
   \hbr(\varphi)\leq \hvol(\varphi)\leq \htop(\varphi).
 \end{equation}
\end{TheoremX}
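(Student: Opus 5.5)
The second inequality in \eqref{eq:A} is Yomdin's inequality \eqref{eq:Yomdin-vol2}. For a Reeb flow it is the flow version of \eqref{eq:Yomdin-vol} applied to $\id\times\varphi$ on $M\times M$, or equivalently to the time-one map. So the content of the theorem is the first inequality $\hbr(\varphi)\leq\hvol(\varphi)$. I would prove it in the relative form $\hbr(\varphi;L,L')\leq \hvol(\varphi;L')$, with $L'$ a closed Lagrangian and $\hvol$ as in \eqref{eq:hvol-rel}. The absolute case then follows by taking $M\times M$, the diagonal $\Delta$ for both Lagrangians, and $\id\times\varphi$, since $\varphi^k(\Delta)=\Gamma_k$.

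The strategy is to bound $\fb_\eps(L,L^k)$ by counting intersections with a volume-controlled Hamiltonian perturbation of $L$. First I would construct a \emph{Lagrangian tomograph}: a finite-dimensional family $L_\theta=\psi_\theta(L)$, $\theta\in B$, where $B$ is a small ball in some $\R^N$ and each $\psi_\theta$ is Hamiltonian with $d_{\hn}(L,L_\theta)<\delta/2$ for all $\theta$. The family should be chosen so that the evaluation map $\Psi\colon B\times L\to M$, $(\theta,x)\mapsto\psi_\theta(x)$, is a submersion; for example, build it from generating functions on a Weinstein neighborhood of $L$ using enough coordinate functions. By the parametric transversality theorem, $L_\theta\pitchfork L^k$ for almost every $\theta$. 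By \eqref{eq:intersections},
\[
\fb_{\eps+\delta}(L,L^k)\leq |L_\theta\cap L^k|
\]
for all such $\theta$. Averaging over $\theta$ with a smooth density gives $\fb_{\eps+\delta}(L,L^k)\leq\int_B|L_\theta\cap L^k|\,d\theta$.

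The key step is the integral-geometric (Crofton-type) estimate
\[
\int_B |L_\theta\cap L^k|\,d\theta\leq C\,\vol(L^k),
\]
with $C$ independent of $k$. For this, pull the measure back: the set $\{(\theta,x): \psi_\theta(x)\in L^k\}=\Psi^{-1}(L^k)$ is a submanifold of $B\times L$ of dimension $N$, and its projection to $B$ has fibers $L_\theta\cap L^k$. The coarea formula bounds the integral by the $N$-volume of $\Psi^{-1}(L^k)$ times the sup of the normal Jacobian. Since $\Psi$ is a submersion with uniformly bounded geometry over a compact set, that volume is at most a constant times $\vol(L^k)$. I expect this to be the main obstacle. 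The constant must be uniform over all possible tangent planes of $L^k$, which can become arbitrarily wild, so it must come from the submersion property of $\Psi$ alone. This requires care near tangencies, where the Jacobian of the projection to $B$ degenerates; the coarea inequality handles this direction, and one must check that it works.

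Taking logarithms, dividing by $k$ and letting $k\to\infty$ gives $\hbr_{\eps+\delta}(\varphi;L,L')\leq\hvol(\varphi;L')$. Letting $\eps,\delta\to0$ gives the result. For Reeb flows the same scheme applies. There I would use the fact, recalled in Section \ref{sec:SH}, that truncated $\SH^s(W)$ agrees up to a near-identity reparametrization with $\HF(H)$ for a semi-admissible $H$ of slope $s$. This lets me view $\fb_\eps(s)$ as a Lagrangian barcode count of $\Delta$ against the graph of $\varphi_H$ in $\WW\times\WW$, inside a compact region. The volume of that graph grows like the volume of the graph of the Reeb flow up to time about $s$, plus a polynomial factor from the $r$-direction, and this factor does not affect exponential rates.
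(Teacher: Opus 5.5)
Your proposal is correct and is essentially the paper's proof. The graph construction reduces the Hamiltonian case to the relative one, which then follows from a Hofer-small Lagrangian tomograph, the stability bound \eqref{eq:intersections} and Crofton's inequality (Lemma \ref{lemma:Crofton}); your Reeb sketch via semi-admissible Hamiltonians is the second of the two approaches the paper outlines. The uniformity you flag as the main obstacle is resolved in the paper's Crofton proof as follows. Tangencies only lower the Jacobian of $\pi|_\Sigma$ in the bound $\int_B N\,d\xi\leq\const\cdot\vol(\Sigma)$. Choosing a metric on $E$ for which $D\Psi$ is an isometry on normals to the fibers of $\Psi$, the coarea formula gives $\vol(\Sigma)\leq\max_y\vol\big(\Psi^{-1}(y)\big)\cdot\vol(L^k)$, independently of the tangent planes of $L^k$.
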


The new part here is the first inequality, for the second one is a
consequence of Yomdin's theorem; see Section
\ref{sec:Top_Entropy-positive} and, in particular,
\eqref{eq:Yomdin-vol2}. Theorem \ref{thm:A} has a relative analogue
which, for the sake of brevity we state only for Hamiltonian
diffeomorphisms.

\begin{customthm}{A$^{\prime}$}
  \label{thm:A'}
  Let $L$ and $L'$ be Hamiltonian isotopic, closed monotone Lagrangian submanifolds
  with minimal
  Chern number $N_L\geq 2$ in a symplectic manifold $M$ and let
  $\varphi\colon M\to M$ be a compactly supported Hamiltonian
  $C^\infty$-diffeomorphism. Then
  \begin{equation}
    \label{eq:A'}
   \hbr(\varphi; L,L')\leq \hvol(\varphi;L)\leq \htop(\varphi).
 \end{equation}
\end{customthm}
 
Here again the second inequality follows from
\eqref{eq:Yomdin-vol}. The Hamiltonian case of Theorem \ref{thm:A}
follows immediately from Theorem \ref{thm:A'} and both results are
originally proved in \cite{CGG:Entropy}. The Reeb version of Theorem
\ref{thm:A} is established in \cite{FLS}; see also \cite{GGM} for the
case of geodesic flows and also \cite{CGGM:Reeb}. A variant of Theorem
\ref{thm:A'} for Reeb flows is proved in \cite{Fe1}. These theorems
also have analogues for metric entropy. Namely, in \cite{CGG:Metric}
in the Hamiltonian and Reeb settings, for a pair $(L,L')$ we construct
a class of invariant probability measures $\mu$ such that
$\hbar(\varphi; L, L')\leq \hm_{\mu}(\varphi)$ for some, but probably
not all, of these measures. Here $\hm_\mu(\varphi)$ is the metric
entropy of the homeomorphism or the flow $\varphi$.

\begin{Remark}[Growth of periodic points]
  \label{rmk:PO-Growth}
  One cannot replace the number of bars $\fb_\eps(\varphi^k)$ or
  $\fb_\eps(L,L^k)$ in the definition of barcode entropy by the total
  number of $k$-periodic points or Lagrangian intersections, while
  keeping Theorems \ref{thm:A} and \ref{thm:A'}. Indeed, as we have
  already pointed out in Section \ref{sec:Top_Entropy-hyp}, in
  dimension two, the number of periodic points can grow arbitrarily
  fast, and moreover super-exponential growth is in some sense
  typical; see \cite{As}. In higher dimensions, a smooth zero-entropy
  map may have super-exponential orbit growth; \cite{Kal}. Hence, in
  both cases the exponential growth rate of the number of periodic
  points could in general be infinite.
\end{Remark}

In the presence of hyperbolic invariant sets we have inequalities
going in the opposite direction.

\begin{TheoremX}
   \label{thm:B}
   Let $\varphi$ be a Hamiltonian diffeomorphism of a
   closed monotone symplectic manifold $M$ or the Reeb flow on the
   boundary $M$ of a Liouville domain, and let $K\subset M$ be a
   hyperbolic invariant subset. Then

   $$
   \hbr(\varphi)\geq \htop(\varphi|_K).
   $$
\end{TheoremX}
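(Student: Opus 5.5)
We plan to prove Theorem \ref{thm:B} by reducing it to counting periodic orbits in a hyperbolic set and showing that each such orbit contributes, up to a factor of two, one bar whose length is bounded below uniformly in the period. We focus on the Hamiltonian case; the Reeb case follows the same route with extra care about the neutral direction.

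First we reduce to a locally maximal hyperbolic set. We replace $K$ by a locally maximal hyperbolic set $K'$ with $\htop(\varphi|_{K'})$ arbitrarily close to $\htop(\varphi|_K)$. For a general hyperbolic set this is a known approximation result, and it is automatic when $K$ is already locally maximal, e.g., a horseshoe. By Theorem \ref{thm:p-htop}, the number $P_k(\varphi|_{K'})$ of $k$-periodic points in $K'$ then grows at the exponential rate $\htop(\varphi|_{K'})$. It therefore suffices to find some $\eps>0$, independent of $k$, such that
$$
\fb_\eps\big(\varphi^k\big)\geq \tfrac12\, \#\{k\text{-periodic points of }\varphi\text{ in }K'\}
$$
for all large $k$. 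These periodic points are hyperbolic, hence non-degenerate. After a small perturbation of $\varphi^k$ away from a neighborhood $U$ of $K'$ (or by the liminf definition of $\fb_\eps$ in the degenerate case, together with \eqref{eq:insensitive1}), we may assume all $k$-periodic points are non-degenerate. It is essential here to use all free homotopy classes, as in Remark \ref{rmk:non-contr}.

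The core step is a \emph{crossing energy theorem}. There exists $\eps_0>0$ such that, for every $k$, every Floer cylinder of $\varphi^k$ (for a suitable fixed class of almost complex structures, with $J$ chosen $k$-periodic in time) that is asymptotic at one end to a $k$-periodic orbit contained in $K'$ has energy at least $\eps_0$. With this in hand, every periodic point in $K'$ is an $\eps$-isolated vertex of the Floer graph for $\eps<\eps_0$. Proposition \ref{prop:isolated} then gives the displayed inequality, and taking $\log^+/k$ and $\limsup$ yields $\hbr(\varphi)\geq\hbr_\eps(\varphi)\geq \htop(\varphi|_{K'})$.

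The main obstacle is proving the crossing energy bound with $\eps_0$ independent of $k$. Our plan is to argue by contradiction. Take Floer cylinders $u_k$ with energies $E(u_k)\to 0$. We view each as a map on the strip with time running over $[0,k]$, cut it into unit-time pieces, and use small energy together with Gromov compactness or the monotonicity lemma to show that on each piece $u_k$ is $C^0$-close to a genuine orbit segment of $\varphi$. Concatenating these pieces produces, for each $s$, a $\delta$-pseudo-orbit of $\varphi$ of length $k$ staying in $U$. Since $u_k(s,\cdot)$ starts near the orbit in $K'$ at $s\to\pm\infty$, continuity forces it, for some $s$, to leave a small neighborhood while remaining a pseudo-orbit in $U$. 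This contradicts expansivity and the shadowing lemma on the locally maximal hyperbolic set, because such a pseudo-orbit must be shadowed by a unique $k$-periodic orbit and so cannot drift. The delicate point is making the small-energy-implies-pseudo-orbit estimate uniform in $k$. We would get this by applying the energy bound locally on each unit time slab, since the total energy bounds the energy of every slab. In the Reeb case, flow-direction degeneracy makes shadowing only up to time reparametrization, and we expect that to require the more involved version.
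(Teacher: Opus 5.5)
Your Hamiltonian outline matches the paper's: reduce to a locally maximal $K'$, count periodic points via Theorem~\ref{thm:p-htop}, prove a $k$-independent energy gap, and conclude with Proposition~\ref{prop:isolated}.

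\textbf{The energy gap.} Your route here differs from the paper's, and it is viable. You follow $u(s,\cdot)$ from the end at $x$. While the circles stay within $r$ of $x$, they are $\eta$-pseudo-orbits in the closing-lemma neighborhood, shadowed by a unique orbit of $K'$. By expansivity and continuity in $s$, that orbit must be $x$, so the circles can never reach distance $r$. This replaces the paper's two-case split: the crossing energy theorem when $y(0)\notin K$, and closing lemma plus expansivity when $y(0)\in K$. It handles every $y\neq x$ at once, at the cost of using hyperbolicity throughout. The paper's crossing energy theorem needs only local maximality and is reused elsewhere.

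\textbf{Gap: cylinders from $x$ to itself.} Your argument yields only that $u$ stays $C^0$-close to $x$ and that $y=x$. It does not give $E(u)=0$. Cylinders from $x$ to a recapping $x\# v$ are loop arrows in the Floer graph, and $\eps$-isolation requires these to be long too. For such cylinders nothing drifts, so your claim that continuity forces a nontrivial small-energy cylinder to leave a small neighborhood is unjustified. The missing step is the last paragraph of the proof of Proposition~\ref{prop:energy}. Since $y=x$ and each loop $s\mapsto u(s,t)$ stays near $x(t)$, $u$ closes up to a torus homotopic to a map factoring through the circle $x$. Hence $E(u)=\int u^*\omega=0$ and $u$ is trivial. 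Relatedly, your energy bound must be stated as holding ``unless $E(u)=0$''.

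\textbf{Reeb case.} The statement covers Reeb flows, and your proposal does not prove this case. The obstacle you anticipate, shadowing only up to time reparametrization, is standard hyperbolic-flow theory \cite{FH} and is not the real difficulty. The real issue is that $K\times\{r\}$ is never locally maximal or hyperbolic for the flow of $H$, since the $r$-direction is neutral. Moreover, as $r\to1^+$, where $H$ becomes constant, circles of small-energy cylinders are pseudo-orbits shadowing constant orbits. Your compactness/shadowing argument then collapses onto points of $\partial W$.

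The paper resolves this with the Minimum Principle, Theorem~\ref{thm:location}, which needs $h'''\geq 0$. Together with Bourgeois--Oancea monotonicity \cite{BO}, it confines small-energy cylinders to a thin shell $M\times(r_*^--\delta,\,r_*^++\delta)$, after which Theorem~\ref{thm:CE} follows as in the Hamiltonian case. One must also use the characterization \eqref{eq:hbar-dyn} of barcode entropy. Finally, one must check that 1-periodic orbits of $\tau H$ in $K\times I$ grow at the same exponential rate as Reeb orbits in $K$ of period at most $\tau$.
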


This theorem is proved in \cite{CGG:Entropy} in the Hamiltonian case
and \cite{CGGM:Reeb} for Reeb flows; the case of geodesic flows is
treated in \cite{GGM}. The theorem also has a relative variant: see
\cite{Me} for the statement and the proof in the Hamiltonian case and
\cite{Fe2} for the Reeb version.

In dimension two, $\htop(\varphi)$ is the supremum of
$\htop(\varphi|_K)$ over all $K$ as in Theorem \ref{thm:B}; see
\cite{Ka} and also \cite[Suppl.\ S by Katok and Mendoza]{KH}. A
similar result holds for flows on three manifolds; \cite{LY,LS}.
Hence, by Theorem \ref{thm:A'} (or Theorem \ref{thm:A}) and Theorem
\ref{thm:B}, we have the following.

 \begin{TheoremX}
   \label{thm:C}
   Let $\varphi\colon M\to M$ be a Hamiltonian
   $C^\infty$-diffeomorphism of a closed surface or the Reeb flow on the
   boundary of a 4-dimensional Liouville domain. Then
   \begin{equation}
     \label{eq:bar-top}
   \hbr(\varphi)= \hvol(\varphi)=\htop(\varphi).
 \end{equation}
\end{TheoremX}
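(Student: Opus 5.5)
The argument is a sandwich: Theorem \ref{thm:A} bounds barcode entropy from above by topological entropy, and Theorem \ref{thm:B}, combined with the low-dimensional results that localize entropy on hyperbolic sets, bounds it from below. I take Theorems \ref{thm:A} and \ref{thm:B} as given, since they are stated earlier; Theorem \ref{thm:C} is then a short deduction from them.

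\emph{Upper bound.} For $\varphi$ a Hamiltonian $C^\infty$-diffeomorphism of a closed surface $M$, the surface is automatically monotone in the weak sense needed. It is also symplectically aspherical, which is even simpler. Hence Theorem \ref{thm:A} applies and gives
$$\hbr(\varphi)\leq \hvol(\varphi)\leq \htop(\varphi).$$
In the Reeb case on the boundary of a $4$-dimensional Liouville domain, the Reeb part of Theorem \ref{thm:A} gives the same chain of inequalities directly.

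\emph{Lower bound.} It suffices to show $\htop(\varphi)\leq \hbr(\varphi)$, which together with the upper bound forces all three quantities to be equal. If $\htop(\varphi)=0$ there is nothing to prove. Otherwise, in the surface case I invoke Katok's Theorem \ref{thm:Katok}, since a $C^\infty$-diffeomorphism is in particular $C^{1+\eps}$. It yields hyperbolic invariant sets (horseshoes) $K_j$ with $\htop(\varphi|_{K_j})\to\htop(\varphi)$. Theorem \ref{thm:B} gives $\hbr(\varphi)\geq\htop(\varphi|_{K_j})$ for every $j$, and passing to the supremum over $j$ gives $\hbr(\varphi)\geq \htop(\varphi)$.

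\emph{Reeb case and the main obstacle.} For the Reeb flow on the $3$-manifold $M=\p W$, the vector field $R_\alpha$ is nowhere vanishing, so the Lima--Sarig/Lian--Young results \cite{LY,LS} apply. They play the role of Katok's theorem for flows without fixed points on 3-manifolds, and give hyperbolic sets of the flow whose entropies approximate $\htop(\varphi)$. Theorem \ref{thm:B} in its Reeb form then applies verbatim, with hyperbolicity understood for flows, including the neutral direction along the flow. The only points needing care are bookkeeping: matching hypotheses (smoothness, hyperbolicity in the flow sense as in Theorem \ref{thm:B}) and the convention that the flow entropy equals $\htop(\varphi^1)$. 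There is no genuine obstacle beyond the deep cited results.
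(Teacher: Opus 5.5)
Your proposal is correct and is essentially the paper's own argument. The upper bound is Theorem~\ref{thm:A}, via Yomdin's inequality. The lower bound comes from Theorem~\ref{thm:B} applied to the hyperbolic sets supplied by Katok's theorem for surfaces and by \cite{LY,LS} for Reeb flows in dimension three.

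One small correction: $S^2$ is not symplectically aspherical, since $\omega$ is nonzero on $\pi_2(S^2)$. This is harmless, because every closed surface is monotone, and monotonicity is all that Theorems~\ref{thm:A} and~\ref{thm:B} require.
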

Surprisingly, here even the second inequality, which does not
explicitly involve the notion of barcode entropy, appears to be new.

A $C^\infty$ generic Hamiltonian diffeomorphism of a closed surface
has positive topological entropy; \cite{KLCN, LCS}. This is also true
for Reeb flows in dimension three; see \cite{CDHR} and also
\cite{CKMS}. Therefore, in dimensions two and three $\hbar>0$ and
$\fb_\eps$ for small $\eps>0$ grows exponentially fast
$C^\infty$-generically.

\begin{Example}
  \label{ex:size_does_not_matter}
  We emphasize that barcode entropy can be positive even when the
  Floer or symplectic homology is bounded or zero. For instance, we
  can have $\hbar(\varphi)>0$ for a Hamiltonian diffeomorphism of a
  closed symplectic manifold even
  though $\HF(\varphi)=\H_*(M)\otimes \Lambda$ is independent of
  $\varphi$.  Furthermore, assume that $W\subset \R^{2n}$ is a
  star-shaped domain with smooth boundary. Then $\SH(W)=0$;
  \cite{Vi:GAFA}. However, $\hbar(\varphi)>0$ when $\htop(\varphi)>0$
  and $\dim W=4$ or when $\varphi$ has a hyperbolic invariant set with
  positive topological zero. Likewise, let $L\subset \R^{2n}$ be a
  monotone Lagrangian torus. Then we can have $\hbar(\varphi;L)>0$ for a compactly
  supported Hamiltonian diffeomorphism $\varphi$, but $\HF(L)=0$.
\end{Example}

Theorem \ref{thm:C}, at least in the Hamiltonian case, does not
generalize to higher dimensions. Namely, as shown in \cite{Ci}, there
exists a Hamiltonian diffeomorphism $\varphi\colon M\to M$, where
$\dim M\geq 6$, such that $\hbr(\varphi)=0$ but $\htop(\varphi)>0$;
\cite{Ci}. Moreover, $\varphi$ can be made autonomous. We do not know
if a similar counterexamples exist for Reeb flows although we see no
reason why they would not; see Section \ref{sec:problems}. 

Recall also from Remark \ref{rmk:non-contr} that it is absolutely
essential here that in the definition of barcode entropy we took the
Floer homology for all free homotopy classes of loops. Otherwise,
Theorems \ref{thm:B} and \ref{thm:C} would fail; see the discussion in
\cite[Sec.\ 2.3]{CGG:Entropy}.

\begin{Remark}[``Classical'' applications]
  The lower bounds between barcode entropy and topological entropy for
  Reeb and geodesic flows along the lines of Theorems \ref{thm:A} and
  \ref{thm:A'} encompass several ``classical'' results relating
  positivity of topological entropy to topology of the underlying
  Riemannian manifold or symplectic topology of the contact
  manifold. The first of these is Dinaburg's theorem, \cite{Di},
  giving a lower bound for the topological entropy of a geodesic flow
  via the growth of the fundamental group. Likewise, the lower bound
  from \cite{Pa} for the topological entropy by the exponential growth
  rate of the homology of the based loop space immediately follows
  from \cite{GGM}. Both of these results have been further generalized
  to Reeb flows on unit (co)tangent bundles in \cite{MS} and
  \cite[Prop.\ 1.8]{FS}, and this generalization is in turn covered
  in \cite{Fe1}. Several yet more general lower bounds on topological
  entropy of Reeb flows via the growth of various types of symplectic
  or contact homology have been proved in
  \cite{Al:Anosov,Al:Cyl,Al:Leg,AM,Me:Th}. Some of these lower bounds
  also follow, directly or at least conceptually, from the results of
  \cite{CGGM:Reeb,FLS,Fe1}.
\end{Remark}

\begin{Remark}[Other variants of barcode entropy]
  Barcode entropy, as defined here by analogy with topological
  entropy, is just one way to encode the growth rate of the barcode as
  a single number. Two other closely related definitions are
  considered in \cite{CGG:Growth}.  In the first one, we replace the
  upper limit of the barcode function for a fixed $\eps>0$ by the
  upper limit $\fb_{\eps_k}\big(L, L^k\big)$ where $\eps_k\to 0+$
  subexponentially and then take the supremum over all such
  sequences. The resulting ``sequential'' barcode entropy bounds the
  barcode entropy from above but the analogue of our Theorem
  \ref{thm:A'} still holds in this case.  In the second one,
  $\fb_\eps(\varphi)$ is replaced by $\sum\beta_i^d$ where the sum
  extends over all finite bars and $d>0$. When $d$ is
  sufficiently large, e.g., $d\geq 2\dim M-1$, we again have Theorem
  \ref{thm:A}. In dimension two, all these variants of barcode entropy
  agree and are equal to the topological entropy.
\end{Remark}

\begin{Remark}[Equivariant barcode entropy]
 In the Reeb setting, we could have also considered the barcode growth
 in the $S^1$-equivariant symplectic homology and equivariant barcode
 entropy. As pointed out in \cite[Rmk.\ 2.4]{GGM}, by the Gysin
 sequence and \cite[Thm.\ 3.1]{BP3S2}, the resulting entropy would
 bound the non-equivariant barcode entropy from above. Hence, we would
 still have an analogue of Theorem \ref{thm:B}. However, we do not
 know if Theorem \ref{thm:A} would hold or not.
\end{Remark}

\begin{Remark}[Slow barcode entropy]
  \label{rmk:slow}
Finally, rather than focusing on the exponential growth rate, one can
 consider a variant of slow entropy encoding the polynomial growth
 rate of the barcode function; cf.\ \cite{BG,FrLS,FS, KT}.  To this end,
 we simply replace $k$ or $s$ by $\log k$ or $\log s$ in the
 denominator of \eqref{eq:eps-entropy} or \eqref{eq:eps-entropy2}. The
 resulting slow entropy is infinite whenever barcode entropy is
 positive, but it is of interest for ``low complexity'' Hamiltonian
 systems. We will revisit the question of polynomial growth Section
 \ref{sec:integrable}.
\end{Remark}

\subsection{Basic properties of barcode entropy}
\label{sec:barcode_entropy-prop}
In this section we list, for the sake of completeness, some basic
properties of barcode entropy focusing on the Hamiltonian case and
closely following \cite[Sec.\ 4.2]{CGG:Entropy}.

\begin{Proposition}[Properties of Barcode Entropy]
\label{prop:hbar-prop}
We have the following:

  \begin{itemize}

  \item[\reflb{BE1}{\rm{(i)}}] For every $k\in\N$, we have
    $\hbr(\varphi^k; L,L') \leq k\hbr(\varphi; L,L')$. In particular,
    $\hbr(\varphi^k)\leq k\hbr(\varphi)$.

  \item[\reflb{BE2}{\rm{(ii)}}] Assume that the products
    $L_0\times L_1$ and $L'_0\times L'_1$ in $M_0\times M_1$ are
    monotone. Then for Hamiltonian diffeomorphisms
    $\varphi_0\colon M_0\to M_0$ and $\varphi_1\colon M_1\to M_1$, we
    have
  $$
  \hbr(\varphi_0\times\varphi_1; L_0\times L_1, L'_0\times L'_1)\leq
  \hbr(\varphi_0; L_0, L'_0)+\hbr(\varphi_1; L_1, L'_1).
  $$
  In particular,
  $\hbr(\varphi_0\times\varphi_1)\leq
  \hbr(\varphi_0)+\hbr(\varphi_1)$.

\item[\reflb{BE3}{\rm{(iii)}}] We have
  $\hbr(\varphi; L,L')=\hbr(\varphi^{-1};L',L)$ and, in
  particular, $\hbr(\varphi^{-1})=\hbr(\varphi)$.

  \item[\reflb{BE4}{\rm{(iv)}}] For any symplectomorphism
    $\psi\colon M\to M$,
  \[
    \hbr(\varphi;L,L')
    = \hbr\big(\psi\varphi\psi^{-1}; \psi(L),
    \psi(L')\big) \text{ and } \hbr(\varphi)
    =\hbr(\psi\varphi\psi^{-1}).
  \]
  As a consequence, for every $k\in\N$,
  \[
    \hbr(\varphi; L,L')=\hbr\big(\varphi;
    \varphi^k(L),\varphi^k(L')\big).
  \]
 
\item[\reflb{BE5}{\rm{(v)}}] For a fixed Hamiltonian diffeomorphism
  $\varphi$, the barcode entropy $\hbar(\varphi;L,L')$ is lower
  semicontinuous in the pair $(L, L')$ with respect to the Hofer 
  distance or, whenever $L$ is wide (i.e., $\HF(L)=\H_*(L)$), the
  spectral distance. In particular, $\hbar(\varphi;L)$ is lower
  semicontinuous in $L$.

\end{itemize}
\end{Proposition}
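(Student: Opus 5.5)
Each of the five properties reduces to a statement about the barcode function $\fb_\eps$ at a fixed iterate, combined with elementary manipulations of $\limsup$ and of the limit $\eps\to0^+$. The tools are the properties of $\fb_\eps(L,L')$ collected in Section~\ref{sec:Lagrangian-barcode}: the duality \eqref{eq:PD}, symplectic invariance \eqref{eq:inv}, and the Hofer insensitivity \eqref{eq:insensitive1}. Each is extended to the non-transverse case via \eqref{eq:b-eps3}. Everything is proved in the relative setting first. The absolute statements then follow from $\hbr(\varphi)=\hbr(\id\times\varphi;\Delta)$, noting that $(\id\times\varphi)^k=\id\times\varphi^k$ and that products and conjugations of such maps are again of this form, up to the symplectomorphism swapping factors of $(M_0\times M_1)^2$.

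For \ref{BE1}, we have $\fb_\eps\big(L,(\varphi^k)^j(L')\big)=\fb_\eps(L,L^{kj})$. Hence the $\limsup$ over $j$ of $\frac1j\log^+$ is $k$ times the $\limsup$ along the subsequence $kj$, which is at most $k\hbr_\eps(\varphi;L,L')$; letting $\eps\to0^+$ gives the claim. For \ref{BE3}, by \eqref{eq:PD} and \eqref{eq:inv} with $\psi=\varphi^{-k}$,
$$
\fb_\eps(L,\varphi^k(L'))=\fb_\eps(\varphi^k(L'),L)=\fb_\eps(L',\varphi^{-k}(L)),
$$
and the right-hand side is exactly the barcode function defining $\hbr(\varphi^{-1};L',L)$. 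For \ref{BE4}, \eqref{eq:inv} gives $\fb_\eps(\psi L,\psi\varphi^k L')=\fb_\eps(L,\varphi^kL')$, and $(\psi\varphi\psi^{-1})^k\psi(L')=\psi\varphi^k(L')$. The consequence follows by taking $\psi=\varphi^k$ and noting that $\varphi$ commutes with itself.

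For \ref{BE2}, I would invoke the Künneth formula for filtered Floer complexes. We have $\CF(L_0\times L_1,L_0^k\times L_1^k)\cong \CF(L_0,L_0^k)\otimes\CF(L_1,L_1^k)$ as filtered complexes, for product almost complex structures and transverse perturbations chosen as products. The barcode of a tensor product is then read off from singular value decompositions, which are themselves tensor products. A pair of bars of lengths $\beta_0$ and $\beta_1$ produces bars of length $\min(\beta_0,\beta_1)$, with at most two such bars per pair; an infinite bar paired with a bar of length $\beta$ gives length $\beta$. Hence
$$
\fb_\eps(\text{product})\leq 2\,\fb_\eps(L_0,L_0^k)\,\fb_\eps(L_1,L_1^k).
$$
Taking $\frac1k\log^+$ and $\limsup$ gives subadditivity at each $\eps$, and then in the limit. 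This step is the main obstacle: one must verify the filtered Künneth isomorphism over $\Lambda$ in the monotone setting, and also handle the non-transverse case. For the latter, I would take the $\liminf$ in \eqref{eq:b-eps3} along product perturbations. This is legitimate because \eqref{eq:insensitive1} makes $\fb$ at slightly shifted $\eps$ comparable for any nearby perturbation, and the resulting $\eps$-shift disappears after $\eps\to0^+$.

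For \ref{BE5}, suppose $d_{\hn}(L,\tL)$ and $d_{\hn}(L',\tL')$ are both less than $\delta/4$. First apply \eqref{eq:insensitive1} in the second argument. Since $\varphi^k$ is a Hofer isometry, $d_{\hn}(\varphi^k\tL',\varphi^kL')=d_{\hn}(\tL',L')$. Then apply it in the first argument, using \eqref{eq:PD}. Together these give
$$
\fb_{\eps}(\tL,\varphi^k\tL')\geq \fb_{\eps+\delta}(L,\varphi^kL').
$$
Hence $\hbr_\eps(\varphi;\tL,\tL')\geq\hbr_{\eps+\delta}(\varphi;L,L')$, and therefore $\hbr(\varphi;\tL,\tL')\geq \hbr_{\eps+\delta}(\varphi;L,L')$. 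Given a target below $\hbr(\varphi;L,L')$, choose $\eps$ and $\delta$ small so that $\hbr_{\eps+\delta}(\varphi;L,L')$ exceeds it; every pair that is $\delta/4$-close in Hofer distance then also exceeds it, which is lower semicontinuity. For wide $L$, I would use the analogue of \eqref{eq:insensitive1} for the spectral distance from \cite{KS}.
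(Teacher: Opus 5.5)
Your proposal is correct and follows the paper's proof essentially item by item: the subsequence argument for (i), the tensor-product/singular-value-decomposition count $\fb_\eps(\text{product})\leq 2\,\fb_\eps(L_0,L_0^k)\,\fb_\eps(L_1,L_1^k)$ for (ii), duality plus symplectic invariance for (iii) and (iv), and \eqref{eq:insensitive1} combined with the invariance of the Hofer distance under $\varphi^k$ for (v). The only minor deviation is that you obtain the absolute identity $\hbr(\varphi)=\hbr(\psi\varphi\psi^{-1})$ from the relative one via the symplectomorphism $\psi\times\psi$, which preserves the diagonal, whereas the paper identifies the Floer complexes of $\varphi$ and $\psi\varphi\psi^{-1}$ directly and then passes to the degenerate case by continuity; both routes work.
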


\begin{proof}
  Assertion \ref{BE1} is a direct consequence of the definition. To
  prove \ref{BE2} recall that the Floer complex of the pair
  $(L_0\times L_1,L'_0\times L'_1)$ is the tensor product of the Floer
  complexes of $(L_0, L'_0)$ and $(L_1,L'_1)$ over the Novikov field
  $\Lambda$; see, e.g., \cite[Sec.\ 2.6]{HLS} and \cite{Li}. Then a
  singular value decomposition for the product is obtained by taking
  the ``product'' of singular value decompositions of the factors in a
  self-evident way. As a consequence, every pair of bars
  $\beta_0\in \CB(L_0,L'_0)$ and $\beta_1\in \CB(L_1,L'_1)$ gives rise
  to two bars of length $\min\{\beta_0,\beta_1\}$ in
  $\CB(L_0\times L_1,L'_0\times L'_1)$ when both bars are finite. If
  one or both bars in a pair are infinite, the pair gives rise to one
  bar of length $\min\{\beta_0,\beta_1\}$.  Therefore,
  $$
  \fb_\eps(L_0\times L_1,L'_0\times L'_1)\leq 2 \fb_\eps(L_0,L'_0)\cdot
    \fb_\eps(L_1,L'_1),
    $$
    which proves \ref{BE2}.

    Assertion \ref{BE3} follows from \eqref{eq:inv} and the Poincar\'e
    duality, \eqref{eq:PD}. The first identity in \ref{BE4} also
    follows from \eqref{eq:inv}. The second identity is a consequence
    of the fact that when $\varphi$ is non-degenerate, $\varphi$ and
    $\psi\varphi\psi^{-1}$ have isomorphic Floer complexes and hence
    the same barcode. By continuity,
    $\CB(\varphi)=\CB(\psi\varphi\psi^{-1})$ even when $\varphi$ is
    degenerate.  The last one follows from the first identity by
    setting $\psi=\varphi^k$ and using the fact that $\varphi$
    commutes with $\varphi^k$.

    To prove \ref{BE5}, focusing on the Hofer distance it suffices to show that
  \begin{equation}
    \label{eq:LSC-pair}
  \hbar(\varphi; \tL,\tL')\geq \hbar_{4\delta}(\varphi; L,L'),
\end{equation}
whenever $d_{\hn}(L,\tilde{L})<\delta$ and
$d_{\hn}(L',\tilde{L}')<\delta$. Thus assume that $\tL'=\psi(L')$,
where $\|\psi\|_{\hn}<\delta$. Then
$d_{\hn}\big(\varphi^k,\varphi^k\psi\big)<\delta$, and setting
$\tL^k=\varphi^k(\tL')$ and $L^k=\varphi^k(L')$ as above, we have
$d_{\hn}\big(L^k,\tL^k\big)<\delta$. Therefore, by
\eqref{eq:insensitive1},
  $$
  \hbar_\eps(\varphi; \tL,\tL')\geq \hbar_{\eps+4\delta}(\varphi;
  L,L').
  $$
  Passing to the limit as $\eps\to 0+$, we obtain
  \eqref{eq:LSC-pair}. For the spectral distance the argument is
  similar but now we need to use the continuity of the barcode in the
  spectral norm established in \cite{KS}.
\end{proof}

\begin{Remark}
  Recall from Proposition \ref{prop:htop-properties} that for
  topological entropy \ref{BE1} and \ref{BE2} turn into equality. We
  do not know if these inequalities can actually be
  strict. Furthermore, by Theorem \ref{thm:C},
  $\hbr(\varphi^k)=k\hbr(\varphi)$ when $M$ is a surface. Also recall
  that, as was shown in \cite{AM21}, $\htop(\varphi)$ is lower
  semicontinuous in $\varphi$ with respect to the Hofer metric when
  $\dim M=2$. Hence, $\hbar(\varphi)$ is also Hofer lower
  semicontinuous for Hamiltonian diffeomorphisms of surfaces by
  Theorem \ref{thm:C}. This observation leads to the
  question/conjecture if/that this is also true in all dimensions.  We
  also note that $\hbr(\varphi;L,L')$, with $\varphi$ fixed, is quite
  sensitive to deformations of $L$ and $L'$ even by a Hamiltonian
  isotopy; cf.\ \cite[Ex.\ 2.9]{CGG:Entropy}.
\end{Remark}

We expect an analogue of Proposition \ref{prop:hbar-prop} to hold for
Reeb flows with a similar proof and the Hofer distance replaced by a
suitable version of the symplectic Banach--Mazur distance defined in
\cite{PRSZ,SZ-SBM,Us-SBM}; see Problem \ref{prob:basic}.




\section{Proof of Theorems \ref{thm:A} and \ref{thm:A'}}
\label{sec:pf-A2}

As we have already mentioned, the Hamiltonian case of Theorem
\ref{thm:A} is an immediate consequence, via the graph construction,
of its counterpart in Theorem \ref{thm:A'}. In this section, we prove
the latter result, closely following \cite{CGG:Entropy}.  In the Reeb
case, the proof of both theorems (see \cite{CGGM:Reeb,FLS,Fe1,GGM}
uses a similar method and differs only in technicalities, which we
briefly elaborate on at the end of Section \ref{sec:strat}.

The idea of the proof is that $\fb_\eps(L,L')$ gives rise to a lower
bound on the number of intersections $N(\xi)$ not only of $L$ and $L'$
but also of $L_\xi$ and $L'$ when $L_\xi$ is a family of small
Hamiltonian perturbations of $L$. This is a consequence of
\eqref{eq:insensitive1}. Then a variant of Crofton's inequality from
integral geometry gives a lower bound on the volume $\vol(L')$ of $L'$
via the integral of this intersection function $N(\xi)$ over the
parametrizing set $B\ni\xi$. Replacing $L'$ by $L^k:=\varphi^k(L')$,
we obtain a lower bound on the growth of $\vol\big(L^k\big)$ via the
growth of $\fb_\eps(L,L^k)$ and the theorem follows.

  We break the proof down into two subsections. The first of them
  -- Section \ref{sec:tomographs} -- focuses on the machinery of
  Lagrangian tomographs; the actual proof is given in Section
  \ref{sec:strat}. We conclude this section with an application of
  Lagrangian tomographs, based on \cite{CGG:Vol}, to the lower
  semi-continuity of Lagrangian volume; see Section
  \ref{sec:Lagr-vol}.

\subsection{Lagrangian tomographs}
\label{sec:tomographs}
The notion of a Lagrangian tomograph and a variant of Crofton's
inequality, originating in integral geometry, are the key tools used
in the proof of the theorem.  The framework described in this section
is essentially contained in \cite{Ar1, Ar2}, in a setting very close
to ours, and we include the proofs only for the sake of completeness.
(See also \cite{BL, CGG:Vol, Se} for other applications in the context
of symplectic dynamics.) Furthermore, this framework is similar to the
double fibrations utilized in integral geometry for generalizing and
proving Crofton's formula; see \cite{APF98, APF07, GS}.

\subsubsection{Crofton's inequality}
\label{sec:Crofton}

Let $L$ be a closed manifold and $B$ a compact manifold possibly with
boundary. We denote a point in $B$ by $\xi$. Also, let $d\xi$ be a
smooth measure on $B$. In the situation we are mainly interested in,
$B$ is the closed $d$-dimensional ball $B^d$ and $d\xi$ is the
Lebesgue measure, and $L$ is the Lagrangian submanifold.  Sometimes it
is convenient to require $d\xi$ to be supported in the interior of
$B$. Denote by $\pi\colon E=B\times L\to B$ the projection to the
first factor. More generally, we can take $\pi\colon E\to B$ be a
fiber bundle over some compact base $B$ with fiber $L$, and $0\in B$ a
particular point. Furthermore, let
$$
\Psi\colon E \to M
$$ 
be a submersion onto its image where $M$ is a manifold with a fixed
auxiliary Riemannian metric. This manifold need not be compact but,
of course, $\Psi(E)\subset M$ is, since $E$ is compact. We require
$\Psi_\xi:=\Psi|_{\xi\times L}$ to be an embedding for all $\xi$, and hence
$L_\xi:=\Psi_\xi(L)$ is a smooth closed submanifold of~$M$.

In the spirit of integral geometry we will refer to $\Psi$ as a
\emph{tomograph} and call $L_0$, which we will identify with $L$, the
\emph{core} of the tomograph. We say that $\Psi$ is a \emph{Lagrangian
  tomograph} if all submanifolds $L_\xi$ are Lagrangian and
Hamiltonian isotopic to each other.

Finally, let $L'$ be a closed submanifold of $M$ with
$$
\codim L'=\dim L.
$$
Since $\Psi$ is a submersion, $\Psi_\xi\pitchfork L'$ for almost all
$s\in B$. Hence, 
$$
N(\xi):=|L_\xi\cap L'|
$$
is a locally constant function on the complement to a zero measure
closed subset of $B$. As a consequence, $N$ is an integrable function
on $B$.

\begin{Lemma}[Crofton's inequality]
  \label{lemma:Crofton}
  We have
  \begin{equation}
    \label{eq:Crofton}
  \int_B N(\xi)\,d\xi\leq\const \cdot \vol(L'),
\end{equation}
where the constant depends only on $d\xi$,
$\Psi$ and the metric on $M$, but not on $L'$.
\end{Lemma}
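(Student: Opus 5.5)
The plan is the standard double-fibration (coarea) argument. Set $Z:=\Psi^{-1}(L')\subset E$. Since $\Psi$ is a submersion onto its image, $Z$ is a compact smooth submanifold of $E$ of codimension $\codim L'=\dim L$, so $\dim Z=\dim B$. Moreover, $\Psi|_Z\colon Z\to L'$ is a submersion whose fiber over $y\in L'$ is $\Psi^{-1}(y)$, of dimension $\dim E-\dim M$. For every $\xi$ at which $\Psi_\xi\pitchfork L'$ we have $N(\xi)=|\pi^{-1}(\xi)\cap Z|$, because $\Psi_\xi$ is an embedding. Therefore
$$
\int_B N(\xi)\,d\xi=\int_Z (\pi|_Z)^*d\xi ,
$$
interpreted as an integral of a density: $\pi|_Z\colon Z\to B$ is a map between manifolds of the same dimension, and the area formula counts preimages. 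We get an equality rather than an inequality because the non-transverse $\xi$ form a null set. Next, fix a Riemannian metric on $E$. Then $(\pi|_Z)^*d\xi\le C_1\,dv_Z$ pointwise, where $dv_Z$ is the induced Riemannian density on $Z$ and $C_1$ bounds $\|d\pi\|^{\dim B}$ times the density of $d\xi$. Here we use that $E$ is compact. It therefore suffices to show that $\vol(Z)\le C_2\vol(L')$ with $C_2$ independent of $L'$.

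For this second step, apply the coarea formula to the submersion $\Psi|_Z\colon Z\to L'$:
$$
\vol(Z)=\int_{L'}\int_{\Psi^{-1}(y)}\frac{1}{J(\Psi|_Z)}\,dv\,dy .
$$
The fiber $\Psi^{-1}(y)$ is a compact submanifold of $E$, and it is the same for every $L'$ through $y$. Hence its volume is bounded by a constant $C_3$ depending only on $\Psi$, since the fibers of the submersion $\Psi$ vary smoothly over the compact image $\Psi(E)$.

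The main obstacle is a uniform lower bound on the normal Jacobian $J(\Psi|_Z)$ at $z\in Z$, independent of the tangent plane $T_{\Psi(z)}L'$. At $z$, the differential $d\Psi_z$ is onto $T_yM$, and $T_zZ=d\Psi_z^{-1}(T_yL')$. The map $d\Psi_z$ restricted to $(\ker d\Psi_z)^\perp\cap T_zZ$ is the restriction of the isomorphism $(\ker d\Psi_z)^\perp\to T_yM$ to the preimage of $T_yL'$. Its Jacobian is therefore at least $\sigma_{\min}(d\Psi_z)^{\dim L'}$, where $\sigma_{\min}$ is the smallest singular value of $d\Psi_z$ on $(\ker d\Psi_z)^\perp$. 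By compactness of $E$ and the submersion property, $\sigma_{\min}$ is bounded below by a positive constant. This gives $1/J\le C_4$ uniformly over all subspaces, so $\vol(Z)\le C_3C_4\vol(L')$.

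Combining the two steps gives $\int_B N\,d\xi\le C_1C_3C_4\vol(L')$, and all three constants depend only on $d\xi$, $\Psi$ and the metric.

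A remaining technical point is smoothness of the pieces. $Z$ is smooth by transversality, since $\Psi$ is a submersion. The critical values of $\pi|_Z$ are exactly the non-transverse $\xi$, and these form a null set by Sard's theorem. So the area formula applies directly, and no Lagrangian property is needed anywhere.
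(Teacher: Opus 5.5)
Your proof is correct and follows essentially the same route as the paper: pass to $\Sigma=\Psi^{-1}(L')$, bound $\int_B N\,d\xi$ by $\vol(\Sigma)$ via the projection $\pi$, then bound $\vol(\Sigma)$ by $\vol(L')$ via the coarea formula for $\Psi$ and a uniform bound on the fiber volumes $\vol\big(\Psi^{-1}(y)\big)$. The only difference is cosmetic: the paper picks adapted metrics on $E$ (a product metric for the first step, and one making $D\Psi$ an isometry on the normals to its fibers for the second) and then uses comparability of metrics on the compact $E$, whereas you fix one metric and bound $\|d\pi\|$ and the normal Jacobian of $\Psi|_\Sigma$ directly through the smallest singular value of $d\Psi$.
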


Lemma \ref{lemma:Crofton} is proved in \cite{Ar1,Ar2} and can also be
thought of as a direct application of the coarea formula (see, e.g.,
\cite[Sec.\ 13.4.3]{BZ}) or of the variant of Crofton's formula from
\cite{APF98}, which we will briefly touch upon in Section
\ref{sec:Crofton2}.  However, a direct proof is simple and short, and
we include it below for the sake of completeness. This general
framework and Lemma \ref{lemma:Crofton} are also very much in the
spirit of the Gelfand transform in integral geometry and various
versions of Crofton's formula; see, e.g., \cite{APF98,APF07} and
references therein. The key difference is that here and in
\cite{Ar1,Ar2} $\Psi$ is required to be only a submersion, not a
fibration. Furthermore, we note that in this generality, in contrast
with the standard Crofton formula, one cannot expect an inequality
going in the direction opposite of \eqref{eq:Crofton}. Indeed, the
graph of a smooth map or even of a diffeomorphism between two closed
manifolds can have arbitrarily large volume but it intersects every
vertical slice at only one point.

\begin{proof}[Proof of Lemma \ref{lemma:Crofton}]
  Set $\Sigma=\Psi^{-1}(L')$. This is a smooth submanifold of $E$ and
  $$
  \codim \Sigma = \codim L' = \dim L, \textrm{ i.e., } \dim\Sigma =
  \dim B.
  $$
  By construction,
  \begin{equation}
    \label{eq:L-Sigma}
  |(\xi\times L)\cap \Sigma|=|L_\xi\cap L'|=N(\xi).
  \end{equation}

  In the proof it will be convenient to equip $E$ with two different
  auxiliary metrics: the first metric is adapted to $\pi$ and the second
  metric to $\Psi$.

  We begin by fixing some metrics on $B$ and $L$, and assuming first
  that $E=B\times L$ carries the product metric and $d\xi$ is the
  Riemannian volume form or, to be more precise, the volume
  density. (It would be sufficient to require $D\pi$ to be an isometry
  on the normals to the fibers.)  Then, by \eqref{eq:L-Sigma},
  $$
  \int_B N(\xi)\,d\xi=\int_\Sigma\pi^*d\xi\leq \vol(\Sigma).
  $$
  Here, $\pi^*d\xi$ is the pull-back measure or the pull-back density,
  but not the pull-back differential form. The last inequality is a
  consequence of the fact that $D\pi_x\colon T_xE\to T_{\pi(x)}B$,
  $x\in E$, is an orthogonal projection along the fiber, and hence,
  when restricted to $T_x\Sigma$, it can only decrease the
  $\dim B$-dimensional volume. As a consequence, for an arbitrary
  metric on $E$ and an arbitrary smooth measure $d\xi$ on $B$, we have
  \begin{equation}
    \label{eq:N-Sigma}
  \int_B N(\xi)\,d\xi\leq \const\cdot\vol(\Sigma).
\end{equation}

Next, let us equip $E$ with a metric such that the restriction of
$D\Psi$ to the normals to the fibers of $\Psi$ (i.e., the inverse
images $\Psi^{-1}(y)$, $y\in M$) is an isometry. Then, by Fubini's
theorem or, more specifically, the coarea formula (see \cite[Sect.\
13.4.3]{BZ}), we have
\begin{equation}
  \label{eq:const}
  \vol(\Sigma) =\int_{L'}\vol\big(\Psi^{-1}(y)\big)\,dy|_{L'}
  \leq\max_{y\in\Psi(E)}\vol\big(\Psi^{-1}(y)\big)\cdot
  \vol(L'),
\end{equation}
where in the first equality $dy|_{L'}$ stands for the induced volume
form on $L'$. Thus
  \begin{equation}
    \label{eq:Sigma-L}
    \vol(\Sigma)\leq\const\cdot \vol(L').
  \end{equation}
  For an arbitrary metric on $E$, \eqref{eq:Sigma-L} still holds,
  albeit with a different constant. Combining \eqref{eq:N-Sigma} and
  \eqref{eq:Sigma-L}, we obtain \eqref{eq:Crofton}.
\end{proof}

\subsubsection{Existence of Lagrangian tomographs}
\label{sec:ELT}

We establish in this section the existence of Lagrangian
tomographs. Let $L=L_0$ be a closed Lagrangian submanifold of $M$.

\begin{Lemma}
  A Lagrangian tomograph with core $L$ and $\dim B=d$ exists if and
  only if $L$ admits an immersion into $\R^d$.
\end{Lemma}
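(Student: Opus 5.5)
The plan is to reduce everything to a Weinstein neighborhood of $L$. There, Hamiltonian deformations of $L$ that are $C^1$-close to $L$ are graphs of exact $1$-forms, and the submersion condition becomes a pointwise spanning condition on differentials, which is exactly the immersion condition. Fix a symplectic embedding $\iota$ of a neighborhood $U$ of the zero section in $T^*L$ onto a neighborhood of $L$ in $M$, with the zero section going to $L$.

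\emph{Sufficiency.} Let $f=(f_1,\dots,f_d)\colon L\to\R^d$ be an immersion. For $\xi\in\R^d$ set $f_\xi:=\sum_i\xi_i f_i$. Take $B=B^d_r$, a small closed ball with $r>0$ chosen so that the graphs $\Gamma_{df_\xi}$ lie in $U$ for all $\xi\in B$. Define
\[
\Psi(\xi,q):=\iota\big(q,\,d_qf_\xi\big).
\]
The checks are as follows.
\begin{enumerate}
\item Each $\Psi_\xi$ is an embedding, since it is the composition of a section of $T^*L$ with $\iota$.
\item Each $L_\xi$ is Lagrangian and Hamiltonian isotopic to $L$, because $\Gamma_{df_\xi}$ is the time-one image of the zero section under the flow of $f_\xi\circ\pi$. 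This Hamiltonian can be cut off outside a neighborhood of $\Psi(E)$.
\item $\Psi$ is a submersion. At $(\xi,q)$, the $q$-derivative covers a complement of the vertical space $T^*_qL$, namely the tangent space to the graph. The $\xi$-derivative contributes the vertical vectors $\sum_i v_i\,d_qf_i$, $v\in\R^d$. These span $T^*_qL$ precisely because $d_qf$ is injective, i.e., its transpose $\R^d\to T^*_qL$ is surjective.
\end{enumerate}

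\emph{Necessity.} Given a Lagrangian tomograph $\Psi$ with core $L$ and $\dim B=d$, shrink $B$ to a small coordinate ball around $0$ so that all $L_\xi$ lie in the Weinstein neighborhood and are $C^1$-close to $L$. Each $L_\xi$ is then the graph of a closed $1$-form $\alpha_\xi$ on $L$, depending smoothly on $\xi$, with $\alpha_0=0$. For $v\in T_0B$, the first variation $\beta_v:=\p_v\alpha_\xi|_{\xi=0}$ is a closed $1$-form. Surjectivity of $D\Psi$ at $(0,q)$ onto $T_qM=T_qL\oplus T^*_qL$ forces the map $v\mapsto\beta_v(q)$ to be onto $T^*_qL$ for every $q$.

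If each $\beta_v$ is exact, say $\beta_v=dF_v$ with $F_v$ linear in $v$, then choosing a basis $v_1,\dots,v_d$ of $T_0B$ gives $F=(F_{v_1},\dots,F_{v_d})\colon L\to\R^d$ with $dF$ injective at every point, i.e., an immersion.

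The main obstacle is exactness. I would show that $[\alpha_\xi]=0$ in $\H^1(L;\R)$ for all small $\xi$, i.e., that the Lagrangian flux of the path $L_{t\xi}$ vanishes, and then differentiate to get $[\beta_v]=0$. Since $L_\xi$ is Hamiltonian isotopic to $L$, the flux class $[\alpha_\xi]$ lies in the set of fluxes of Lagrangian loops through $L$ generated by Hamiltonian isotopies. The first approach I would try is to argue that this set is countable, or at least totally disconnected. Then the continuous map $\xi\mapsto[\alpha_\xi]$ on the connected ball $B$ with $[\alpha_0]=0$ is constant, so $[\alpha_\xi]\equiv 0$. Should that argument be delicate, the fallback is a more elementary route: in the Weinstein chart the graph of $\alpha_\xi$ is Hamiltonian isotopic to the zero section. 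By the standard Weinstein-neighborhood argument for $C^1$-small graphs, the class $[\alpha_\xi]$ is then forced to vanish, and continuity in $\xi$ does the rest.
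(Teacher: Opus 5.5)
Your argument is the paper's: Weinstein reduction, the tomograph $\Psi_\xi=df_\xi$ with $f_\xi=\sum_i\xi_i g_i$ for sufficiency, and, for necessity, primitives of the linearization $\partial\Psi_\xi/\partial\xi|_{\xi=0}$ assembled into an immersion $L\to\R^d$. The paper simply asserts that this linearization consists of exact forms (immediate when the Hamiltonian isotopies stay inside $T^*L$, but needing an argument in $M$), and of your two justifications only the countability one works: closing up the zero-area Hamiltonian cylinder over a loop $\gamma$ with the fiberwise cylinder in the chart places $\int_{\psi\gamma}\alpha_\xi$ in countably many translates of $\langle[\omega],H_2(M;\Z)\rangle$, indexed by the mapping class of the reparametrization $\psi$, whereas the fallback fails because the Hamiltonian isotopy from $L$ to $L_\xi$ is given only in $M$ and may leave the Weinstein chart.
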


\begin{proof}
  By the Weinstein tubular neighborhood theorem it is sufficient to
  prove the lemma when $L$ is the zero section in $M=T^*L$.
  
Let $\iota\colon L \to \R^d$ be an immersion, and 
$$
f_\xi=\xi_1 g_1+\ldots + \xi_d g_d,
$$
where $\xi=(\xi_1,\ldots,\xi_d)\in \R^d$ are the coordinate functions
on $\R^d$ and $g_i:=\xi_i\circ \iota$ are the restrictions of the
coordinate functions to $L$. Let us now require $\xi$ to be in a ball
$B\subset \R^d$ centered at the origin. Then, setting
$\Psi_\xi(x)=df_\xi(x)$, $x\in L$, we obtain a map
$\Psi\colon B\times L\to T^*L$. It is easy to see that the condition
that $\iota$ is an immersion is equivalent to that $\Psi$ is a
tomograph as desired.

Indeed, the immersion condition is equivalent to that
$dg_1,\ldots, dg_d$ generate $T_x^*L$ at every point $x\in L$.  In
other words, the map
  $$
  D\Psi_{(0,x)}\colon T_0B\oplus T_xL\to T_{(0,x)}T^*L
  = T^*_xL\oplus T_xL
  $$
  is onto. Therefore, $\Psi$ is a submersion when $B$ is sufficiently
  small if and only if $(g_1,\ldots,g_d)\colon L\to \R^d$ is an
  immersion. (This observation is already contained in, e.g.,
  \cite{GuSt}.)  It is clear from the construction that the
  submanifolds $L_\xi$ are embedded.

  Conversely, assume that $\Psi\colon B\times L\to T^*L$ is a
  tomograph with $\Psi_0=\id$. Then the linearization
  $A=\p \Psi_\xi/\p \xi$ is a linear map from $T_0B$ to the space of exact
  sections of $T^*L$. Pick some functions $\{g_\ell\}$ so that
  $dg_\ell=A(e_\ell)$ where $e_1,\ldots, e_d$ is a basis in
  $T_0B$. Then $\iota=(g_1,\ldots,g_d)$ is an immersion $L\to\R^d$;
  cf.\ \cite{GuSt}. This completes the proof of the lemma.
  \end{proof}

\subsection{From barcodes to volume growth: Proof of
  Theorem \ref{thm:A'}}
\label{sec:strat}
With the machinery of Lagrangian tomographs set up we are now in a
position to prove the Hamiltonian case of Theorem \ref{thm:A'}. As we
have already mentioned, we only need to prove the first inequality in
\eqref{eq:A'}; for the second one follows from Yomdin's theorem.

The key to the proof is the following result. As in
Section \ref{sec:tomographs}, fix an auxiliary
Riemannian metric on $M$ and denote by $\vol(L')$ the Riemannian
volume of a submanifold $L'$.

\begin{Proposition}
  \label{prop:fb-vol}
For every $\eps>0$ and closed monotone Lagrangian submanifold $L$,
there exists a constant $C$ such that
\begin{equation}
  \label{eq:fb-vol}
  \fb_\eps(L,L')\leq C\cdot \vol(L')
\end{equation}
for every Lagrangian submanifold $L'$ Hamiltonian isotopic to $L$.
\end{Proposition}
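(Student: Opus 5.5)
The plan is to combine the Lagrangian tomograph from Section \ref{sec:tomographs} with the stability inequality \eqref{eq:intersections} and Crofton's inequality, Lemma \ref{lemma:Crofton}.

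First, fix $\eps>0$ and $L$. Any closed manifold $L$ immerses into some $\R^d$ (Whitney), so by the existence lemma in Section \ref{sec:ELT} there is a Lagrangian tomograph $\Psi\colon B\times L\to M$ with core $L_0=L$. In the Weinstein neighborhood model each $L_\xi$ is the graph of $df_\xi$ with $f_\xi=\sum\xi_i g_i$, hence is Hamiltonian isotopic to $L$ via the autonomous Hamiltonian $f_\xi\circ\pi$, cut off outside a neighborhood of $L$. The Hofer norm of that isotopy is at most $\const\cdot|\xi|$, so $d_{\hn}(L_\xi,L)\leq C_0|\xi|$. We shrink $B$ to a ball of radius $r$ with $C_0r<\eps/4$, so that $d_{\hn}(L_\xi,L)<\eps/4$ for all $\xi\in B$. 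We take $d\xi$ to be Lebesgue measure on $B$.

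Next, let $L'$ be Hamiltonian isotopic to $L$. For almost every $\xi\in B$ we have $L_\xi\pitchfork L'$, because $\Psi$ is a submersion. For such $\xi$, apply \eqref{eq:intersections} with the roles arranged as follows: $L'$ is fixed, $L_\xi$ is a Hofer-close replacement of $L$, and we use the symmetry \eqref{eq:PD}. This gives
$$
N(\xi)=|L_\xi\cap L'|\geq \fb_{\eps/2}(L_\xi,L')\geq \fb_{\eps}(L,L'),
$$
where the second inequality is \eqref{eq:insensitive1} with $\delta/2=\eps/4$. These inequalities hold whether or not $L\pitchfork L'$, since $\fb_\eps$ was extended to the non-transverse case by \eqref{eq:b-eps3}, with \eqref{eq:insensitive1} still valid.

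Integrating over $B$ gives $\vol(B)\,\fb_\eps(L,L')\leq\int_B N(\xi)\,d\xi$. Crofton's inequality \eqref{eq:Crofton} then gives $\int_B N\,d\xi\leq\const\cdot\vol(L')$, where the constant depends only on $\Psi$, $d\xi$ and the metric, hence only on $L$ and $\eps$. So \eqref{eq:fb-vol} holds with $C=\const/\vol(B)$.

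The main obstacle is the Hofer estimate $d_{\hn}(L_\xi,L)<\eps/4$ uniformly in $\xi$. It needs a compactly supported Hamiltonian generating the graph isotopy, and the cutoff near the boundary of the Weinstein neighborhood must not spoil the bound. This works because on the zero-section side the Hamiltonian $f_\xi\circ\pi$ has oscillation at most $2|\xi|\max|g|$, and the cutoff can be placed where the graph never goes, so it does not affect the oscillation bound. The remaining points are routine: a.e. transversality, and the measurability and integrability of $N$, which were noted before Lemma \ref{lemma:Crofton}.
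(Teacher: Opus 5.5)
Your proposal is correct and follows the paper's proof essentially step for step. You take a Lagrangian tomograph with core $L$, shrunk so that every $L_\xi$ is Hofer-close to $L$; then \eqref{eq:intersections} together with \eqref{eq:PD} gives $N(\xi)\geq \fb_\eps(L,L')$ for almost every $\xi$; and Crofton's inequality finishes the argument. Your threshold $d_{\hn}(L_\xi,L)<\eps/4$ is exactly what \eqref{eq:insensitive1} requires with $\delta=\eps/2$ (the paper's \eqref{eq:H-close} writes $\eps/2$, a harmless slip since $B$ can always be shrunk further), and your cut-off estimate spells out the Hofer bound that the paper obtains simply by shrinking $B$.
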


\begin{proof}
Pick a Lagrangian tomograph with core $L_0:=L$.
Thus we have a family of Lagrangian submanifolds $L_\xi$ smoothly
parametrized by the closed $d$-dimensional ball $B^d=B^d(r)$ of radius
$r>0$ (for some large $d$). By shrinking $B$ if necessary, we can
ensure that these submanifolds have the following properties:
\begin{itemize}
\item[\reflb{ELT1}{(i)}] The Lagrangian submanifolds $L_\xi$ are
  Hamiltonian isotopic to $L_0=L$ and the Hofer distance between $L_0$
  and $L_\xi$ is small:
  \begin{equation}
    \label{eq:H-close}
  d_{\hn}(L_0,L_\xi)<\eps/2
\end{equation}
(In fact, $L_\xi$ can even be taken $C^\infty$-close to
$L_0$. Furthermore, here we can replace the Hofer distance by the
spectral distance when the latter is defined; \cite{KS}.)

\item[\reflb{ELT2}{(ii)}] The Lagrangian submanifold $L_\xi$ is
  transverse to $L'$ for almost all $\xi\in B^d$. Let
  $$
  N(\xi):=|L_\xi\cap L'|
  $$
  be the number of intersections of $L_\xi$ and $L'$. Then $N(\xi)$ is
  a measurable function, finite for almost all $\xi$. Furthermore, by
  Crofton's inequality (Lemma \ref{lemma:Crofton}), we have
\begin{equation}
  \label{eq:vol-int}
  \int_{B^d} N(\xi)\, d\xi\leq \const\cdot \vol (L'),
\end{equation}
where $d\xi$ is the standard Lebesgue measure on $B^d$.
\end{itemize}
Note that all conditions in \ref{ELT1} and \ref{ELT2}, other than
\eqref{eq:H-close}, are satisfied automatically; see Section
\ref{sec:Crofton}. As we have already pointed out, to guarantee
\eqref{eq:H-close} we can simply shrink $B$.

Then, whenever $L_\xi\pitchfork L'$, we have
$$
  N(\xi) \geq \fb_{\eps/2}(L_\xi,L')
         \geq \fb_{\eps}(L_0,L').
$$
This is an immediate consequence of \eqref{eq:intersections} with
Lagrangian submanifolds suitably relabeled, $\eps$ replaced by $\eps/2$
and $\delta=\eps/2$.  (Here we could have required $L_\xi$ to be
sufficiently $C^\infty$-close to $L_0$ avoiding the notion of the
Hofer or spectral distance or used the spectral distance when it is
defined.) Combining this inequality with \eqref{eq:vol-int}, we obtain
\eqref{eq:fb-vol}, which finishes the proof of the proposition and
Theorem \ref{thm:A'} in the Hamiltonian case.
\end{proof}

  There are two ways to adapt this argument to a Reeb flow $\varphi^t$
  on $M=\p W$. For the sake of simplicity, we consider here only the
  absolute variant, i.e., \eqref{eq:A}, referring to \cite{Fe1} for
  the proof of the Reeb analogue of \eqref{eq:A'}.

The first approach, based on a modification of Crofton's inequality,
is roughly speaking as follows. Consider a tomograph $L_\xi$, $\xi\in B$,
in $M\times M$ whose core is the diagonal $L_0$ and let $N_s(\xi)$ be
the number of intersections of the graph $\Gamma_t$ of $\varphi^t$ for all $t\in
(0,\,s)$ with $L_\xi$. This is again an integrable function of $\xi$
and $s$, finite for almost all $\xi$ and $s$, and 
$$
  N_s(\xi) \geq \fb_{\eps}\big(s),
$$
when the size of $B$ is small enough.  Let $v(t)$ be the volume of
$\Gamma_t$. Then Lemma \ref{lemma:Crofton} takes the form
$$
\int_B N_s(\xi)\,d\xi\leq \const \int_0^s v(t)\, dt.
$$
Exponential growth of $\fb_\eps(s)$ forces exponential growth of
$v(s_k)$ for some sequence $s_k\to\infty$ and the proof ends in
essentially the same way as in the Hamiltonian case. This approach is
worked out in detail in \cite{GGM} for geodesic flows.

The second approach is based on a modification of the definition of
barcode entropy. Namely, let $H$ be a semi-admissible Hamiltonian with
slope $a\not\in \CS(\alpha)$. Then the filtered Floer homology
$\HF(H)$ is a genuine persistence module and we have the number
$\fb_\eps(H)$ of bars of length greater than $\eps$ well-defined; see
Section \ref{sec:SH-def}. Next, let us replace $H$ by $s H$ where we
require that $sa\not\in \CS(\alpha)$. Now we have the function
$\fb_\eps(sH)$ and it is not hard to show that
\begin{equation}
  \label{eq:hbar-dyn}
\hbar(\alpha)=\lim_{\eps\to 0+}\limsup_{s\to\infty}\frac{\log^+
  \fb_\eps(sH)}{s };
\end{equation}
see \cite[Thm.\ 4.4]{CGGM:Reeb}. Moreover, here we can take the upper
limit over any sequence $s_k\to\infty$ such that $s_{k+1}/s_k\to
1$. In particular, we can set $s_k=k$ whenever $ka\not\in \CS(\alpha)$
for all $k$. The proof of Theorem \ref{thm:A} in the Hamiltonian case
carries over to semi-admissible Hamiltonians $H$. We use
the tomograph construction for the graph of $\varphi_{H}^k|_U$
where $U$ is a suitable chosen neighborhood of $M$ in
$\WW$. Rescaling to account for the effect of the slope $a$, we
arrive at \eqref{eq:A}. The details of this argument, which is
somewhat different from the proof in \cite{FLS}, can be found in
\cite[Sec.\ 4.2]{CGGM:Reeb}.

\begin{Remark}
The machinery of Lagrangian tomographs has also been used in \cite{BL}
to establish an inequality between barcode entropy and categorical
entropy from \cite{DHKK}. In the next section, we will discuss another
application of the method.
\end{Remark}

\subsection{Digression: Lagrangian volume}
\label{sec:Lagr-vol}
A remarkable feature of surface area is that it is $C^0$ lower
semi-continuous. This is by no means obvious. Surface area is usually
defined as a certain integral over a parametrized surface, involving
the derivatives of the parametrization, and there is absolutely no
reason to expect to have controlled behavior under $C^0$-small
perturbations. The simplest way to prove this fact is perhaps by using
a suitable variant of Crofton's formula.  The result, of course, has
numerous generalizations in differential and integral geometry; see
\cite{BuIv,Ce,Fe52,Iv}.  With this in mind, it should come as no
surprise that this lower semi-continuity has an analogue in symplectic
geometry.

\subsubsection{Lower semi-continuity of Lagrangian volume}
\label{sec:LSC}
Let $M$ be a symplectic manifold as in Section \ref{sec:conv}. Thus
$M$ is monotone and either closed or sufficiently nice at infinity
(e.g., convex) to ensure that the relevant filtered Floer homology is
defined. Furthermore, let $\CL$ be a class of closed monotone
Lagrangian submanifolds $L$ of $M$, Hamiltonian isotopic to each
other. We require in addition that the minimal Chern number of $L$ is
at least 2. Let, as in Section \ref{sec:conv}, $d_\gamma$ be the
ambient spectral distance on $\CL$ defined by \eqref{eq:d-gamma} or
the intrinsic spectral norm from \cite{KS} when it is defined.

Finally, fix a Riemannian metric compatible with $\omega$. Then we
have the volume or, to be more precise, the surface area function:
$$
\vol\colon \CL\to (0,\infty)
$$
sending $L$ to its surface area, which we refer to as the Lagrangian
volume. We conjecture that $\vol$ is lower semi-continuous on $\CL$ with
respect to the $\gamma$-distance; \cite{CGG:Vol}. This conjecture has
been proved in
two disparate cases. The first of these is where $M$ is K\"ahler and
has a large symmetry group (e.g., $M=\C^n$ or $\CP^n$).

\begin{Theorem}[Thm.\ 1.2, \cite{CGG:Vol}]
  \label{thm:symmetry}
  Let $M$ be K\"ahler and the Riemannian metric be the real part of
  the K\"ahler form. Assume furthermore that the group of Hamiltonian
  K\"ahler isometries acts transitively on the Lagrangian Grassmannian
  bundle over $M$. Then $\vol$ is $d_\gamma$-lower semi-continuous on $\CL$.
\end{Theorem}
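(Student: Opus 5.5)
The plan is to prove a Crofton-type identity adapted to the symmetric setting: the Lagrangian volume of $L'$ is a fixed constant times the average number of intersections of $L'$ with translates $gL_0$, $g$ ranging over the compact subgroup $G$ of Hamiltonian K\"ahler isometries. We then combine this with the stability of intersection counts under $d_\gamma$-perturbations, \eqref{eq:intersections}, and let the bar length go to zero. Lower semicontinuity then follows because $\vol(L)$ is approximated from below by a barcode count that is itself $d_\gamma$-lower semicontinuous.

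\textbf{Step 1 (exact Crofton formula).} Fix $L_0\in\CL$ and a Haar measure $dg$ on $G$. Consider $\Psi\colon G\times L_0\to M$, $(g,x)\mapsto gx$. Transitivity on the Lagrangian Grassmannian bundle makes $\Psi$ a submersion. The coarea computation in the proof of Lemma \ref{lemma:Crofton} then becomes an equality, because the integrand is $G$-invariant:
$$\int_G |gL_0\cap L'|\,dg = c\cdot\vol(L')$$
for every Lagrangian $L'$. The constant $c>0$ is the same for all $L'$. The reason is that the local contribution at an intersection point depends only on the pair of tangent Lagrangian planes. By transitivity, after averaging over the stabilizer, this contribution reduces to a universal function of the tangent plane of $L'$, and transitivity again makes that function constant. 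I expect this to be the main obstacle: one must check that the Jacobian factor in the coarea formula, integrated over the fibre $\Psi^{-1}(y)$, really is independent of the Lagrangian plane $T_yL'$. This is exactly where transitivity on the Lagrangian Grassmannian, and not merely on $M$, is used. The same step would also handle noncompact $G$ (e.g.\ $M=\C^n$) by restricting to a compact region containing the relevant Lagrangians.

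\textbf{Step 2 (from intersections to bars and back).} By \eqref{eq:intersections-b}, $|gL_0\cap L'|\geq\fb_\eps(gL_0,L')$ for a.e.\ $g$. Conversely, for generic transverse $L'$, a.e.\ $g$ and $\eps$ smaller than the minimal energy of strips between $gL_0$ and $L'$, we have $\fb(gL_0,L')\geq|gL_0\cap L'|/2$. More conveniently, set $F_\eps(L'):=\int_G\fb_\eps(gL_0,L')\,dg$. Then $F_\eps\le c\vol(L')$, and $F_\eps(L')$ increases, as $\eps\to0^+$, to $\int_G\fb(gL_0,L')\,dg\geq \tfrac12 c\vol(L')$, by monotone convergence and because generic intersections are transverse. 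To get the full constant rather than $\tfrac12$, I would instead replace $\fb$ by $2\fb-\dim\HF$, or simply count both endpoints of each finite bar. Concretely, set $\tilde\fb_\eps:=$ (number of endpoints of bars of length $>\eps$). This satisfies $\tilde\fb_\eps\le |L\cap L'|$, converges to $|L\cap L'|$ as $\eps\to0^+$ in the transverse case, and obeys \eqref{eq:insensitive1}-type stability with respect to $d_\gamma$ (from \cite{KS}).

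\textbf{Step 3 (conclusion).} Suppose $L_j\to L$ in $d_\gamma$, with $d_\gamma(L_j,L)<\delta/2$. Since $G$ acts by Hamiltonian isometries, $gL_0$ is fixed while $L_j$ varies, and the stability gives $\tilde\fb_\eps(gL_0,L_j)\ge\tilde\fb_{\eps+\delta}(gL_0,L)$ for all $g$. Integrating yields
$$c\vol(L_j)\ge\int_G\tilde\fb_{\eps}(gL_0,L_j)\,dg\ge\int_G\tilde\fb_{\eps+\delta}(gL_0,L)\,dg.$$
Hence $\liminf_j\vol(L_j)\ge c^{-1}\int_G\tilde\fb_{\eta}(gL_0,L)\,dg$ for every $\eta>0$. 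Letting $\eta\to0^+$ and using monotone convergence together with Step 1 for the smooth $L$ gives $\liminf_j\vol(L_j)\ge\vol(L)$.
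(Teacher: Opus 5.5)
Your proposal is correct and is essentially the paper's argument: the classical tomograph $(g,x)\mapsto gx$ over $G$ with Haar measure, the $G$-invariance of the Crofton density $\fd_\Psi$ and of the metric density combined with transitivity on the Lagrangian Grassmannian bundle to make them proportional on Lagrangian planes, and then the $d_\gamma$-lower semicontinuity of $L\mapsto\int_L\fd_\Psi$ from barcode stability (Theorem \ref{thm:density}). Your endpoint count $\tilde\fb_\eps=2\fb_\eps-\dim_\Lambda\HF$ correctly supplies a detail the paper leaves to the exercise, namely recovering the full intersection number rather than half of it; note only that $G$ need not be compact (e.g.\ for $\C^n$), but the full Haar measure can be used directly, since only a compact set of $g$ moves $L_0$ to meet a given compact $L'$.
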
 

When prove this theorem in Section \ref{sec:Crofton2}.  When $L=\T^n$,
the restrictive condition that $M$ has a large symmetry group can be
dropped. In fact, we have a more precise result asserting roughly
speaking that for a fixed Lagrangian submanifold $L_0$ from $\CL$ and
another Lagrangian submanifold $L\in \CL$, which is $d_\gamma$-close
to $L_0$ (depending on $L_0$), the part of $L$ situated $C^0$-close to
$L_0$ is at least almost as large as $L_0$.

\begin{Theorem}[Thm.\ 1.3, \cite{CGG:Vol}]
  \label{thm:torus}
  Assume that $L_0=\T^n\in\CL$ and let $U$ be an arbitrary open subset
  containing $L_0$. Then the function
$$
\vol_U\colon \CL\to [0,\infty)
$$
sending $L$ to the surface area of $U\cap L$ is $d_\gamma$-lower semi-continuous
on $\CL$ at $L_0$.
\end{Theorem}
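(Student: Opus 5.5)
The plan is to run the tomograph argument of Proposition \ref{prop:fb-vol} locally near $L_0$, and to use the fact that for $L_0=\T^n$ there is a tomograph that is essentially a family of translates. Let $L\in\CL$ with $d_\gamma(L_0,L)<\delta$, and fix $\eta>0$. The goal is to show $\vol(U\cap L)\geq \vol(L_0)-\eta$ once $\delta$ is small.

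\textbf{Step 1: a flat tomograph.} By the Weinstein neighborhood theorem, identify a neighborhood $V\subset U$ of $L_0$ with a neighborhood of the zero section in $T^*\T^n=\T^n\times\R^n$. Take the tomograph $\Psi\colon B^n(r)\times\T^n\to V$, $\Psi(\xi,q)=(q,\xi)$, so that $L_\xi=\T^n\times\{\xi\}$ are the constant sections. These are Lagrangian and Hamiltonian isotopic to $L_0$ only when $\xi$ is exact, which it is not. Instead I would work with the fact that $L_\xi$ are all \emph{Lagrangian isotopic} and that the spectral distance behaves well: the cotangent fibers $F_q=\{q\}\times\R^n$ form a dual family. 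The better tomograph is therefore the fibers themselves: $\Psi'(q,p)=(q,p)$, with $B=\T^n$ and fibers $F_q\cap V$. Crofton via the coarea formula, exactly as in Lemma \ref{lemma:Crofton} with $\Psi'=\id$, gives
\[
\int_{\T^n}|F_q\cap L\cap V|\,dq\leq \vol(L\cap V),
\]
with constant $1$ if the metric near $L_0$ is chosen, up to a factor $1+\eta$, to be the flat product metric. This is where $U$ small and the flat Weinstein model is used: any compatible metric is $C^0$-close to a product metric on a thin enough neighborhood after rescaling, and the error is absorbed into $\eta$.

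\textbf{Step 2: each fiber hits $L$ near $L_0$.} I need $|F_q\cap L\cap V|\geq 1$ for almost every $q$. Suppose $L$ is $d_\gamma$-close to $L_0$ but, for some $q$, $L\cap V$ misses $F_q$. Then a small Hamiltonian push of $L_0$ supported near $F_q\cap V$ is enough to detect this, and the standard fact to invoke is that spectral invariants of $(L_0,L)$ with respect to the fiber Floer homology $\HF(F_q,\cdot)$ are nonzero for the torus. Concretely, I would use the Floer homology of $L$ with the fiber, $\HF(F_q,L_0)\neq 0$, which is one-dimensional with its unique bar infinite. Closeness in $d_\gamma$ should force $\HF(F_q,L)$, filtered in a window, to have a bar concentrated in the region $|p|<\rho(\delta)$ with $\rho\to0$. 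The action of an intersection point is controlled by its $p$-coordinate, since the action of $(q,p)$ on $F_q\cap L$ is a primitive evaluated there. The resulting intersection lies in $V$, so it lies in $L\cap U$.

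\textbf{Step 3: conclude.} Integrating gives $\vol(L\cap U)\geq(1-\eta)\vol(L_0)$, where $\vol(L_0)=\int dq$ in the flat model, which is the asserted lower semicontinuity at $L_0$.

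\textbf{Main obstacle.} The hardest step is Step 2: the fibers are non-compact, so the Floer theory and the spectral estimates have to be set up in the convex or wrapped sense, and the action-to-location argument must be made rigorous. If this does not go through directly, I would instead use the compact tomograph $\xi\mapsto\T^n\times\{\xi\}$ combined with the spectral-invariant ``lifting'' to the $\Z^n$-cover, where these tori become exact graphs.
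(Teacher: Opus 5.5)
\textbf{The gap is in Step 2, and it breaks Step 3 as well.} You claim that $d_\gamma$-closeness forces $L$ to meet almost every fiber disk $F_q\cap V$, via action control. This is false. In fact the functional $L\mapsto\int_{\T^n}|F_q\cap L\cap V|\,dq$ is not $d_\gamma$-lower semicontinuous at $L_0$.

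Here is a counterexample. Work in the Weinstein chart with $V\subset\{|p|<w\}$, and fix $c=10w$ smaller than the width of the chart. Let $L_N=\Gamma_{dS_N}$ with $S_N(q)=\frac{c}{N}\sin(Nq_1)$. Then $L_N=\varphi_H(L_0)$ for $H=\pm S_N(q)\chi(p)$, with the appropriate sign and a cutoff $\chi$ equal to $1$ on $\{|p|\le c\}$. Hence $d_\gamma(L_N,L_0)\le\|H\|_{\hn}\le 2c/N\to0$.

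However, $L_N$ meets each $F_q$ exactly once, at $p=c\cos(Nq_1)\,e_1$. This point lies in $V$ only when $|\cos(Nq_1)|<1/10$. That set of $q$ occupies a fraction $\frac{2}{\pi}\arcsin\frac{1}{10}<\frac{1}{10}$ of $\T^n$, independently of $N$. So $\int|F_q\cap L_N\cap V|\,dq\ge(1-\eta)\vol(L_0)$ fails for $L_N$ arbitrarily $\gamma$-close to $L_0$. This is consistent with the theorem: $\vol(L_N\cap V)$ grows like $N$, because $L_N$ is nearly vertical inside $V$, and that is exactly the part your fiber density does not see.

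Your heuristic that ``the action is controlled by the $p$-coordinate'' runs the wrong way. The intersection point $(q,dS_N(q))$ has action $\pm S_N(q)=O(1/N)$ while $|p|$ is of order $c$. Small spectral distance controls actions, not $C^0$-location.

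There is also a structural problem. The disks $F_q\cap V$ have boundary and are not Hamiltonian isotopic to $L_0$, so no barcode stability such as \eqref{eq:intersections} is available for the pairs $(F_q,L)$.

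Your fallback does not help. The tori $\T^n\times\{\xi\}$ have non-zero flux, so they are not in $\CL$. Their Crofton density is $|dp_1\wedge\dots\wedge dp_n|$, which vanishes on $TL_0$, so it cannot produce $\vol(L_0)$ as a lower bound.

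\textbf{The missing idea.} The tomograph must consist of compact Lagrangian tori $L_\xi$ that are \emph{Hamiltonian isotopic to $L_0$}, with $\fd_\Psi$ supported in $U$.

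\begin{itemize}
\item For every $\xi$ with $L_\xi\pitchfork L_0$, the number $|L_\xi\cap L_0|$ equals the number of endpoints of bars in $\CB(L_\xi,L_0)$, and all these bars have positive length.
\item Stability of barcodes under $d_\gamma$ (cf.\ \eqref{eq:insensitive1} and \eqref{eq:intersections}) then gives $\liminf_{L\to L_0}|L_\xi\cap L|\ge|L_\xi\cap L_0|$ for almost every $\xi$. The intersection points automatically lie in $U$.
\item Crofton's formula \eqref{eq:Crofton2} together with Fatou's lemma yields Theorem~\ref{thm:density}.
\item The paper then concludes using a tomograph whose density satisfies \eqref{eq:LT1} and \eqref{eq:LT2}.
\end{itemize}

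Your fiber family does essentially satisfy these two density conditions. This is what makes it tempting, provided you choose the chart so that the fibers are tangent to $JTL_0$ along $L_0$, rather than by ``flattening'' the metric, since the induced metric on $L_0$ need not be flat. What it lacks is the Hamiltonian-isotopy property, and the example above shows that this property is indispensable. Building a family with both properties is the non-trivial step carried out in \cite{CGG:Vol}, and it is where $L_0=\T^n$ is used.
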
 

The proof of this result is considerably more involved than the proof
of Theorem \ref{thm:symmetry} and we will only briefly discuss in
Section \ref{sec:Crofton}.

\begin{Corollary}
  \label{cor:torus}
  Assume that $L=\T^n$. Then $\vol$ is $d_\gamma$-lower semi-continuous on $\CL$.
\end{Corollary}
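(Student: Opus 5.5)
The plan is to derive Corollary \ref{cor:torus} directly from Theorem \ref{thm:torus}, using the trivial monotonicity $\vol_U(L)\leq\vol(L)$ and the fact that the class $\CL$ is homogeneous under Hamiltonian isotopies.

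Fix an arbitrary $L_1\in\CL$; we must show that $\vol$ is $d_\gamma$-lower semi-continuous at $L_1$. The first point is that Theorem \ref{thm:torus} is stated only at the base point $L_0=\T^n$, but nothing distinguishes $L_0$ inside $\CL$. Indeed, every member of $\CL$ is Hamiltonian isotopic to $L_0$ and hence is itself an embedded Lagrangian torus in $\CL$. We can therefore apply Theorem \ref{thm:torus} with $L_1$ playing the role of $L_0$; the hypothesis $L_0=\T^n\in\CL$ is met. If the reader prefers to keep the original base point, we can instead transport the situation by a Hamiltonian diffeomorphism $\psi$ with $\psi(L_0)=L_1$. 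Then $d_\gamma$ is invariant under conjugation, since $\gamma(\psi\phi\psi^{-1})=\gamma(\phi)$. The only cost is that the Riemannian metric is replaced by $\psi^*g$, and the theorem is insensitive to the choice of metric up to this replacement. The cleaner route is the first one, and I would use it.

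Next take $U=M$, or any open set containing $L_1$. Theorem \ref{thm:torus} gives
\[
\liminf_{d_\gamma(L,L_1)\to 0}\vol_U(L)\geq \vol_U(L_1)=\vol(L_1).
\]
Since $\vol(L)\geq \vol_U(L)$ for every $L\in\CL$, we obtain
\[
\liminf_{d_\gamma(L,L_1)\to 0}\vol(L)\geq\vol(L_1),
\]
which is exactly lower semi-continuity at $L_1$. Because $L_1$ was arbitrary, this finishes the proof.

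The only point needing care, which I expect to be the main (mild) obstacle, is checking that Theorem \ref{thm:torus} really applies with an arbitrary element of $\CL$ as the base point. This requires two things. First, the hypothesis concerns only the diffeomorphism type of the base point together with the ambient setup, namely the monotone class $\CL$ with minimal Maslov number at least $2$ and a fixed metric, and both are unchanged. Second, the statement is local at the base point. Both hold as the theorem is phrased, so the corollary follows immediately.
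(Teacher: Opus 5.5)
Your argument is correct and is exactly the derivation the paper intends, which it leaves implicit: every element of $\CL$ is Hamiltonian isotopic to $\T^n$, so Theorem~\ref{thm:torus} applies with any $L_1\in\CL$ as the base point, and taking $U=M$ gives $\vol_U=\vol$, hence lower semi-continuity at every point. The alternative transport via $\psi$ and $\psi^*g$, and the inequality $\vol\geq\vol_U$ for a general $U$, are harmless but unnecessary.
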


Since $d_\gamma$ is bounded from above by the Hofer distance, in the
setting of this corollary or of Theorems \ref{thm:symmetry} and
\ref{thm:torus}, the $\vol$ function is also lower semi-continuous
with respect to the Hofer distance.

\begin{Example}
  \label{ex:min}
  The function $\vol$ is automatically lower semi-continuous at $L$
  when $L\subset M$ is a local or global volume minimizer in
  $\CL$. For instance, as is easy to see, this is the case for the zero
  section of the cotangent bundle equipped with the Sasaki
  metric. Likewise, the standard $\RP^n\subset \CP^n$ and the Clifford
  torus are volume minimizers with respect to the Fubini--Studi metric
  on $\CP^n$; \cite{Oh:Invent}. The same is true for the product tori in
  $\C^n$ with respect to the standard metric; \cite{Oh:MathZ}. In
  contrast, Theorem \ref{thm:symmetry} and Corollary \ref{cor:torus}
  assert lower semi-continuity at every point of $\CL$ and in the case
  of the corollary for a broad class of metrics on $M$.
\end{Example}

Denote by $\hat{\CL}$ the Humili\`ere completion of $\CL$, i.e., its
completion with respect to $d_\gamma$; cf., \cite{Hum}. The
Corollary \ref{cor:torus} and Theorem \ref{thm:symmetry} are
equivalent to the following result.

\begin{Corollary}
  \label{cor:compl}
  Assume that $L=\T^n$ or that $M$ is as in Theorem
  \ref{thm:symmetry}. Then $\vol$ extends to a lower semi-continuous
  function on $\hat{\CL}$.
\end{Corollary}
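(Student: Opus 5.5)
Corollary \ref{cor:compl} is a formal consequence of the lower semi-continuity statements on $\CL$ (Theorem \ref{thm:symmetry} and Corollary \ref{cor:torus}). I would prove it by the standard lower-envelope extension of a lower semi-continuous function to a metric completion. Let $\hat{L}\in\hat{\CL}$. Define
$$
\widehat{\vol}(\hat L):=\lim_{\delta\to 0^+}\ \inf\big\{\vol(L)\mid L\in\CL,\ d_\gamma(L,\hat L)<\delta\big\}\in[0,\infty],
$$
where $d_\gamma$ also denotes the extended metric on $\hat{\CL}$. The limit exists because the infimum is monotone in $\delta$. Equivalently, $\widehat{\vol}(\hat L)$ is the infimum of $\liminf\vol(L_i)$ over all sequences $L_i\in\CL$ with $L_i\to\hat L$.

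The proof then has three steps.

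\emph{Step 1: $\widehat{\vol}$ extends $\vol$.} For $L_0\in\CL$ we have $\widehat{\vol}(L_0)\le\vol(L_0)$ trivially, by taking $L=L_0$. The reverse inequality is exactly the $d_\gamma$-lower semi-continuity of $\vol$ at $L_0$ within $\CL$, which holds by Theorem \ref{thm:symmetry} or Corollary \ref{cor:torus}. This is the only place where the geometric input enters. Note that density of $\CL$ in $\hat{\CL}$ means the neighborhoods used in the definition are taken inside $\CL$ but measured with the metric of $\hat{\CL}$, which restricts to $d_\gamma$ on $\CL$.

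\emph{Step 2: $\widehat{\vol}$ is lower semi-continuous on $\hat{\CL}$.} Suppose $\hat L_j\to\hat L$ and $\widehat{\vol}(\hat L_j)<c$ for all $j$. By definition, for each $j$ there is $L_j\in\CL$ with $d_\gamma(L_j,\hat L_j)<1/j$ and $\vol(L_j)<c$. Then $L_j\to\hat L$ in $\hat{\CL}$, so $\widehat{\vol}(\hat L)\le c$. Hence the set $\{\widehat{\vol}\le c\}$ is closed. This is a routine diagonal argument.

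\emph{Step 3: the converse (optional).} The statement says the two results are equivalent, so it is worth noting the reverse direction. A lower semi-continuous extension to $\hat{\CL}$ restricts to a lower semi-continuous function on $\CL$, since the restricted metric is $d_\gamma$.

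The only potential subtlety is Step 1: whether lower semi-continuity within $\CL$ suffices when the approximating sequences range over $\CL$ inside $\hat{\CL}$. It does, because the topology induced on $\CL$ from $\hat{\CL}$ is the $d_\gamma$-topology. Beyond this point the argument is formal, and there is no real obstacle.
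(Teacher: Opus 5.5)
Your proposal is correct and is essentially the paper's argument. The paper gives no proof beyond remarking that Corollary~\ref{cor:torus} and Theorem~\ref{thm:symmetry} are equivalent to this statement. Your lower-envelope extension over the dense subset $\CL$ is the standard reasoning behind that remark: the envelope is automatically lower semi-continuous on $\hat{\CL}$, and it agrees with $\vol$ on $\CL$ precisely because $\vol$ is $d_\gamma$-lower semi-continuous there.
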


  We do not know if in general the function $\vol$ is bounded away
  from zero on $\CL$ or equivalently on $\hat{\CL}$. This is obviously
  so when $\vol$ has a global minimizer as in the setting of Example
  \ref{ex:min}. Furthermore, lower bounds for $\vol(L)$ in terms of
  the displacement energy of $L$ are obtained in \cite{Vi:metric} when
  $M=\R^{2n}$ or $\CP^n$ or a cotangent bundle.

  \begin{Remark} 
    Theorems \ref{thm:symmetry} and \ref{thm:torus} resemble and are
    partially inspired by the deep lower semi-continuity and
    robustness results for topological entropy in low dimensions from
    \cite{ADMM, ADMP, AM:braid}. However, we are not aware of
    any direct formal connections between the two settings.
\end{Remark}

\subsubsection{Crofton's formula revisited}
\label{sec:Crofton2}
In this section we briefly discuss the proofs of these results, which
are also based on a variant of Crofton's formula and Lagrangian
tomographs. To this end, it is convenient to cast the argument in the
framework of densities.

Let $P$ be the Stiefel bundle over a manifold $M^m$, i.e., $P$ is
formed by $k$-frames $\bar{v}=(v_1,\ldots,v_k)$.  Recall that a
$k$-density $\fd$ on $M$ is a function $\fd\colon P\to \R$ such that
\begin{equation}
  \label{eq:density}
\fd(\bar{v}')=|\det A|\fd(\bar{v}),
\end{equation}
where $A$ is the linear transformation of the span of $\bar{v}$
sending $\bar{v}$ to $\bar{v}'$; see, e.g., \cite{APF98}. Sometimes it
is convenient to drop the condition that the vectors from
$\bar{v}=(v_1,\ldots, v_k)$ are linearly independent by setting
$\fd(\bar{v})=0$ otherwise.

Here are several examples of densities: A Riemannian or Finsler metric
on $M$ or, more generally, any homogeneous degree-one function
$TM\to\R$ is a 1-density. For instance, in self-explanatory
notation, the functions $|dx|$, $|dy|$, $|dx|+|dy|$ and
$\sqrt{dx^2+dy^2}$ are 1-densities on $\R^2$.  Furthermore, for every
$k\leq m$ a Riemannian metric gives rise to a $k$-density $\fg_k$
defined by the condition that $\fg_k(\bar{v})$ is the volume of the
parallelepiped spanned by $\bar{v}$. Thus $\fg_m$ is the Riemannian
volume. For a differential $k$-form $\alpha$ its absolute value
$|\alpha|$ is a $k$-density. The sum of two $k$-densities is again a
$k$-density.

A $k$-density $\fd$ can be integrated over a compact $k$-dimensional
submanifold $L$ without requiring $L$ to be oriented or even
orientable.  Similarly to differential forms, densities can be pulled
back and, under suitable additional conditions, pushed forward. When
it is defined, the push-forward $\Psi_*\fd$ of $\fd$ by a map
$\Psi\colon M'\to M$ is characterized by the condition that
$$
\int_L \Psi_*\fd=\int_{\Psi^{-1}(L)}\fd.
$$

Next let $\Psi\colon E\to M$ be a tomograph in the sense of Section
\ref{sec:Crofton}. Thus, in particular, a Hamiltonian diffeomorphism
we have a fiber bundle $\pi\colon E\to B$ with fiber $L$ over a
compact base $B$, and a smooth measure $d\xi$ on $B$ which we will
treat as a density. It is convenient to require $d\xi$ to be supported
in the interior of $B$ when $B$ is a manifold with boundary. We also
have embedded submanifolds $L_\xi:=\Psi\big(\pi^{-1}(\xi)\big)$,
$\xi\in B$, in $M$. Let $d=\dim B$ be the dimension of the tomograph
and $k=\dim L$.  The pull-back/push-forward density
\begin{equation}
  \label{eq:fd}
\fd_{\Psi}:=\Psi_*\pi^*\, d\xi
\end{equation}
is a smooth $k$-density. We call $\supp \fd_{\Psi}\subset \Psi(E)$ the
support of the tomograph $\CT$.

Let $L'$ be a closed submanifold of
$M$ such that $\codim L'=\dim L$. Set
$$
N(\xi):=|L_\xi\cap L'|\in [0,\,\infty].
$$
Since $\Psi$ is a submersion, $\Psi_\xi\pitchfork L'$ for almost all
$\xi\in B$. Hence $N(\xi)<\infty$ almost everywhere and $N$ is an
integrable function on $B$. We refer the reader to, e.g., \cite{APF98}
for a proof of the following simple but important result, which 
generalizes Lemma \ref{lemma:Crofton} and is proved in a
similar way.
\begin{Proposition}[Crofton's formula; \cite{APF98}]
  \label{prop:Crofton2}
  We have
  \begin{equation}
\label{eq:Crofton2}
  \int_B N(\xi)\,ds=\int_{L'}\fd_{\Psi}.
\end{equation}
\end{Proposition}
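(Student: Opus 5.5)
The plan is to follow the proof of Lemma \ref{lemma:Crofton}, but to keep equalities throughout by working with densities instead of Riemannian volumes. Set $\Sigma:=\Psi^{-1}(L')\subset E$. Since $\Psi$ is a submersion, $\Sigma$ is a smooth closed submanifold with $\codim\Sigma=\codim L'=k$, so $\dim\Sigma=d$. Moreover, by construction, $|(\pi^{-1}(\xi))\cap\Sigma|=N(\xi)$ for every $\xi$, exactly as in \eqref{eq:L-Sigma}. The identity \eqref{eq:Crofton2} then splits into two equalities:
$$
\int_B N(\xi)\,d\xi=\int_\Sigma \pi^*d\xi
\quad\text{and}\quad
\int_\Sigma\pi^*d\xi=\int_{L'}\Psi_*\pi^*d\xi=\int_{L'}\fd_\Psi .
$$

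The second equality is the defining property of the push-forward density applied to $\Psi^{-1}(L')=\Sigma$. I would therefore make sure that $\Psi_*$ is well-defined for a submersion. The fiber of $\Psi$ over $y$ is $F_y=\Psi^{-1}(y)$, a compact submanifold of dimension $d+k-m$. For a $k$-frame $\bar v$ at $y$, lift it to vectors $\tilde v$ in $T_xE$ and set
$$
(\Psi_*\fd)(\bar v)=\int_{F_y}\fd\big(\tilde v,\cdot\big),
$$
where $\fd(\tilde v,\cdot)$ is regarded as a density on $F_y$. Here $\fd=\pi^*d\xi$ is a $d$-density on $E$, and $d=k+\dim F_y$. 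The integrand does not depend on the choice of lift, because changing a lift by vectors tangent to $F_y$ does not change the value of the density. The $|\det|$ rule \eqref{eq:density} is inherited, so $\Psi_*\fd$ is a smooth $k$-density. The identity $\int_L\Psi_*\fd=\int_{\Psi^{-1}(L)}\fd$ for $\Psi\pitchfork L$ then follows from a partition of unity and a local product decomposition $E\cong \Psi(E)\times F$ near each point, via Fubini. This is the coarea step, which is now an equality with no metric constants, since no metric is involved.

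For the first equality I would use the area formula for the map $\pi|_\Sigma\colon\Sigma\to B$ between manifolds of equal dimension $d$. By definition of the pulled-back density, $(\pi^*d\xi)|_\Sigma$ equals $|\det D(\pi|_\Sigma)|$ times $d\xi$, measured in local charts. Sard's theorem lets us discard the critical values of $\pi|_\Sigma$, which form a null set. Over the regular values, $\pi|_\Sigma$ is a finite covering map because $\Sigma$ is compact, and the number of sheets over $\xi$ is $N(\xi)$. Integrating gives $\int_\Sigma\pi^*d\xi=\int_B N\,d\xi$. I also need to check that the regular values of $\pi|_\Sigma$ are precisely the $\xi$ with $L_\xi\pitchfork L'$, which is a linear-algebra check, although the area formula does not actually require this identification.

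The main obstacle is the first equality at the critical set. There the pulled-back density vanishes, so these points contribute zero to $\int_\Sigma\pi^*d\xi$, while $N(\xi)$ may be infinite there. These values are harmless: they lie on a null set in $B$, which does not affect $\int_B N\,d\xi$.
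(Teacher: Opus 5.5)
Your proposal is correct and is essentially the argument the paper intends, which is to prove it like Lemma~\ref{lemma:Crofton}: pass to $\Sigma=\Psi^{-1}(L')$, get $\int_B N\,d\xi=\int_\Sigma\pi^*d\xi$ from the area formula, and use the defining property of $\Psi_*$ in place of the metric coarea bound, so both inequalities of that lemma become identities. Two cosmetic points: your count $d=k+\dim F_y$ assumes $\dim M=2k$ (the Lagrangian case), since in general $\fd_\Psi$ is a $(\dim L')$-density; and in the Sard step you should also discard the regular points of $\pi|_\Sigma$ that lie over critical values, which is harmless because $\pi|_\Sigma$ is a local diffeomorphism there and so pulls null sets back to null sets.
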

It is easy to see that Crofton's inequality, \eqref{eq:Crofton}, from
Lemma \ref{lemma:Crofton} is indeed a consequence of
\eqref{eq:Crofton2}.

Returning to the setting of Section \ref{sec:LSC}, assume that $\Psi$ is a
Lagrangian tomograph with core $L_0\in\CL$.

\begin{Example}[Classical Lagrangian tomographs]
  \label{exam:symmetry}
  Assume that $M$ is K\"ahler and that the group $G$ of Hamiltonian
  K\"ahler isometries acts transitively on $M$. Let $L_0$ be any
  closed Lagrangian submanifold of $M$. Set $E=L_0\times G$ with $\pi$
  being the projection to the second factor $B=G$ and
  $\Psi(x,\xi):=\xi(x)$, where $x\in L_0$ and $\xi\in G$. Finally, we
  let $d\xi$ be a Haar measure on $G$. Then, as is easy to see, we
  obtain a Lagrangian tomograph with support $M$.  This is essentially
  the classical setting of Crofton's formula -- see the reference
  cited above. (Usually one replaces the base $B=G$ by the space
  $G/\Stab(L_0)$ formed by all images of $L_0$ in $M$ under $G$. Here,
  however, we prefer to keep $B=G$.) This construction applies to
  $M=\C^n$ and $\CP^n$ and, more generally, to any simply connected
  homogeneous K\"ahler manifold, although $G$ is not compact when $M$
  is not compact.
\end{Example}

\begin{Theorem}
  \label{thm:density}
  The function
  $$
I_\Psi\colon  L\mapsto \int_L\fd_{\Psi}
  $$
  is $d_\gamma$-lower semi-continuous on $\CL$.
\end{Theorem}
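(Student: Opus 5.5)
The plan is to combine Crofton's formula, Proposition \ref{prop:Crofton2}, with the Hofer/spectral stability of the barcode, in the same way as in the proof of Proposition \ref{prop:fb-vol}. The difference is that we now use the barcode of the pair $(L_\xi,L)$ with the \emph{infinitesimal} threshold $\eps\to 0^+$, and we use the fact that every intersection point contributes a bar.

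Fix $L\in\CL$ and a sequence $L_j\to L$ in $d_\gamma$. We must show
$$
\liminf_j I_\Psi(L_j)\geq I_\Psi(L),
$$
where by \eqref{eq:Crofton2}
$$
I_\Psi(L')=\int_B |L_\xi\cap L'|\,d\xi .
$$

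\textbf{Step 1: reduce to a pointwise inequality in $\xi$.} For almost every $\xi$ we have $L_\xi\pitchfork L$. At such a $\xi$, set $N(\xi)=|L_\xi\cap L|$. Since all intersections are transverse, each intersection point is a generator of the Floer complex $\CF(L_\xi,L)$.

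Let $\beta(\xi)>0$ be the length of the shortest finite bar of $\CB(L_\xi,L)$. Recall the identity
$$
|L_\xi\cap L|=2\fb(L_\xi,L)-\dim_\Lambda\HF(L_\xi,L).
$$
Then for $\eps<\beta(\xi)$ we get
$$
2\fb_\eps(L_\xi,L)-\dim_\Lambda \HF = N(\xi).
$$
Here all bars are counted, and the infinite bars are counted once.

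\textbf{Step 2: stability.} Suppose $d_\gamma(L_j,L)<\delta/2$ with $\delta<\eps$. The barcode continuity in $d_\gamma$ from \cite{KS} (the analogue of \eqref{eq:insensitive1}) gives a $\delta$-matching between the barcodes. Each finite bar of $(L_\xi,L)$ longer than $\eps$ is then matched with a finite bar of $(L_\xi,L_j)$, up to perturbing $L_j$ to be transverse. The infinite bars match infinite bars, and their number is $\dim\HF$, which is fixed since $\HF$ is invariant. Using \eqref{eq:intersections-b} in its refined form, we obtain
$$
|L_\xi\cap L_j|\geq 2\fb_{\eps}(L_\xi,L_j)-\dim\HF\geq 2\fb_{\eps+\delta}(L_\xi,L)-\dim\HF .
$$
The last quantity equals $N(\xi)$ as soon as $\eps+\delta<\beta(\xi)$. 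Hence
$$
\liminf_j |L_\xi\cap L_j|\geq N(\xi)\quad\text{for a.e. } \xi .
$$

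\textbf{Step 3: integrate.} Fatou's lemma on $B$ with the measure $d\xi$ gives
$$
\liminf_j \int_B|L_\xi\cap L_j|\,d\xi\geq \int_B N(\xi)\,d\xi,
$$
which is the claim by \eqref{eq:Crofton2}. This requires no uniformity in $\xi$, since $\beta(\xi)>0$ only pointwise and Fatou absorbs the non-uniformity.

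\textbf{Main obstacle.} The delicate point is Step 2, the counting identity. We need $|L_\xi\cap L_j|\geq 2\,\#\{\text{finite bars}\}+\#\{\text{infinite bars}\}$, not merely \eqref{eq:intersections-b}. We also need the number of infinite bars to be $d_\gamma$-stable, i.e.\ equal to $\dim_\Lambda\HF(L_\xi,L)$, which is a Hamiltonian invariant. Finally, we need to check that the $d_\gamma$ (rather than Hofer) continuity of barcodes applies to the pairs $(L_\xi,L_j)$. For the ambient spectral distance this follows by writing $L_j=\psi_j(L)$ with $\gamma(\psi_j)$ small, and the intrinsic case is covered by \cite{KS}. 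The nontransverse $L_j$ are handled by the liminf definition \eqref{eq:b-eps3}, or simply by noting that a.e.\ $\xi$ is transverse to each of the countably many $L_j$.
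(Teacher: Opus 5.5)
Your proposal is correct and follows the route the paper indicates: Crofton's formula \eqref{eq:Crofton2} combined with barcode stability under $d_\gamma$, as in the proof of Proposition~\ref{prop:fb-vol}. You also make two needed refinements: you argue pointwise in $\xi$ and integrate with Fatou's lemma, and you count intersections via $|L_\xi\cap L_j|=2\fb(L_\xi,L_j)-\dim_\Lambda\HF$. Using the literal bound \eqref{eq:intersections} instead would only give $\liminf_j I_\Psi(L_j)\geq\int_B\fb(L_\xi,L)\,d\xi$, which loses a factor of two. The step you flag as the main obstacle is in fact immediate. Every infinite bar has length $>\eps$, and their number $\dim_\Lambda\HF$ is a Hamiltonian invariant. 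Subtracting it from both sides of the spectral version of \eqref{eq:insensitive1} gives the inequality for finite bars directly, with no explicit bar-by-bar matching needed.
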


The proof of this theorem, left to the reader as an exercise, is
similar to the proof of Theorem \ref{thm:A'} in Section
\ref{sec:strat} and also based on \eqref{eq:intersections}; see
\cite{CGG:Vol}.

\begin{proof}[Proof of Theorem \ref{thm:symmetry}]
  In the setting of the theorem, consider the Lagrangian tomograph
  $\CT$ from Example \ref{exam:symmetry}. Both the
  push-forward/pull-back density $\fd_\CT$ and the metric $n$-density
  $\fg$ are invariant under the group $G$ of (Hamiltonian) K\"ahler
  isometries. Since $G$ acts transitively on the Lagrangian
  Grassmannian bundle, these two densities agree up to a factor on the
  frames spanning Lagrangian subspaces;
  cf. \cite[Lem. 5.4]{APF07}. Hence, the function
$$
L\mapsto \int_L\fg
$$
is also $d_\gamma$-lower semi-continuous on $\CL$ by Theorem
\ref{thm:density}.
\end{proof}

\begin{proof}[On the proof of Theorem \ref{thm:torus}]
  In general, there is no hope to exactly match a Lagrangian metric
  density $\fg$ by the push-forward/pull-back density $\fd_\Psi$ of a
  localized tomograph as in the proof of Theorem
  \ref{thm:symmetry}. However, to prove the theorem it, is sufficient
  to tightly bound $\fg$ from below.

Let $L=\T^n\subset M^{2n}$ be a Lagrangian torus. Fix a compatible
metric on $M$ and denote by $\fg$ the metric density. We consider
Lagrangian tomographs $\Psi$ with fiber diffeomorphic to $L$ and a
ball $B^d$ serving as the base $B$. Thus $E=L\times B$ and $\pi$ is
the projection to the second factor. Fix an open set $U\supset L$ and
$\eta>0$. Then there exists a
  Lagrangian tomograph $\Psi$ as above with $L=L_0=\Psi(\pi^{-1}(0))$,
  supported in $U$, and such that
  \begin{equation}
    \label{eq:LT1}
  \fd_\Psi|_L=\fg|_L\textrm{ pointwise, }
\end{equation}
and
   \begin{equation}
    \label{eq:LT2}
  \fd_\Psi\leq (1+\eta)\fg.
\end{equation}
The theorem easily follows from these two conditions on $\Psi$
and Theorem \ref{thm:density}; see
\cite[Sec.\ 3]{CGG:Vol}. However, the existence of a Lagrangian
tomograph meeting these requirements is non-trivial, and this is where
the condition that $L$ is a torus is used. The reader can find a
detailed argument in \cite[Sec.\ 4]{CGG:Vol}.
\end{proof}



\section{Crossing energy and the proof of Theorem \ref{thm:B}}
\label{sec:pf-B}
In this section, we discuss the proof of Theorem \ref{thm:B} focusing
again on the absolute variant of the Hamiltonian case following
\cite{CGG:Entropy}. We break the proof down into a few
steps, which are of independent interest.  The main idea is
that all Floer cylinders asymptotic to a periodic orbit of $\varphi$
in $K$ at at least one end have energy bounded from below by some
constant $\eps_K>0$ independent of the cylinder and the period. This
result (Theorem \ref{prop:energy}), established in Section
\ref{sec:energy-dynamics}, is an easy consequence of the crossing
energy theorem -- Theorem \ref{thm:cross-energy} below -- which asserts a
similar fact for just locally maximal sets $K$ and Floer cylinders
``crossing'' an isolating neighborhood of $K$. Then, in the hyperbolic
case, the entropy of $K$ is captured by the exponential growth rate of
$k$-periodic orbits in $K$ by Theorem \ref{thm:p-htop}. Now, Theorem
\ref{thm:B} readily follows by purely algebraic means such as Proposition
\ref{prop:isolated} giving a lower bound on $\fb_\eps\big(\varphi^k\big)$ in
terms of the size of an $\eps$-isolated set of generators. 

In contrast with Theorem \ref{thm:A}, the transition to the Reeb case
is quite non-trivial due to difficulties arising in the proof of
crossing energy bounds, and we will touch upon it in Section
\ref{sec:energy-Reeb}. We refer the reader to \cite{Fe2,Me} for the
proofs of the relative Hamiltonian and Reeb variants of the theorem.
A direct proof of Theorem \ref{thm:B} and the crossing energy theorem
for geodesic flows, relying on finite-dimensional approximations in
Morse theory, is given in \cite{GGM}.

In Section \ref{sec:sectral+inv}, we discuss an application of the
crossing energy theorem to the properties of the spectral norm of the
iterates.

\subsection{Crossing energy and dynamics}
\label{sec:energy-dynamics}

The iterate Hamiltonian diffeomorphism $\varphi^k$ is the time-$k$ map
in the Hamiltonian isotopy $\varphi_H^t$ generated by $H$. In what
follows, when working with the Floer equation for this iterate, it is
convenient to denote the Hamiltonian by $H^{\sharp k}$ and refer to
it, somewhat abusing terminology, as the \emph{iterated Hamiltonian}.
We emphasize that $H^{\sharp k}$ is
the same Hamiltonian as $H$, but now viewed as $k$-periodic in
time. Likewise, the solutions of the Floer equation, \eqref{eq:Floer},
are allowed to be $k$-periodic in time rather than $1$-periodic or,
more generally, defined on a closed domain
$\Sigma\subset \R\times S^1_k$, where $S^1_k=\R/k\Z$, rather than a
domain in $\R\times S^1$. There are, of course, other Hamiltonians,
with easily adjustable period, generating $\varphi^k$ and giving rise
to the same filtered Floer complex, but this is a natural and
convenient choice from the dynamics perspective. Moreover, this choice
becomes essential for the proof of Theorem \ref{thm:cross-energy}.

Thus, consider solutions $u\colon \Sigma\to M$ of the Floer
equation
\begin{equation}
  \label{eq:Floer}
 J\p_s u=\p_t u-X_{H}
\end{equation}
for the iterated Hamiltonian $H^{\nat k}\colon S^1_k\times M\to \R$
with $S^1_k=\R/k\Z$, where $\Sigma\subset \R\times S^1_k$ is a closed
domain, i.e., a closed subset with non-empty interior and $J$ is a
background $k$-periodic in time almost-complex structure. This
equation is equivalent to \eqref{eq:floer_0} and \eqref{eq:floer_1}.
Recall from \eqref{eq:energy} that the energy of $u$ is
$$
 E(u)=\int_\Sigma \|\p_s u\|^2 \, ds dt.
$$
where $\|\cdot\|$ stands for the norm with respect to
$\left<\cdot\, ,\cdot\right>=\omega(\cdot,J\cdot)$, and hence
$\|\cdot\|$ depends on $J$.  When $\Sigma=\R\times S^1_k$
and $u$ is asymptotic to $k$-periodic orbits $x$ at $-\infty$ and $y$
at $\infty$, we have
$$
E(u)=\CA(x)-\CA(y),
$$
by \eqref{eq:Energy-Action}, i.e., $E(u)$ is the action difference
between $x$ and $y$. Here we treat $x$ and $y$ as capped $k$-periodic
orbits of $H$ with the capping of $y$ obtained by ``attaching'' $u$ to
the capping of $x$.

Throughout the proof, it will be convenient to work with the extended
phase space $\tilde{M}=S^1\times M$ with $S^1=\R/\Z$. The
time-dependent flow $\varphi^t$ lifts as the genuine flow
$\tilde{\varphi}^t$ on $\tilde{M}$ given by
$$
 \tilde{\varphi}^t(\theta,p)=\big(\theta+t,\varphi^t(p)\big)
$$
generated by the vector field $\p_\theta +X_H$, where in the first
term $t$ is viewed as an element of $S^1=\R/\Z$. Likewise, any map
$z\colon \R\to M$ or $z\colon S^1_k\to M$ lifts to the map
$\tilde{z}(t)=(t, z(t))$, and in a similar vein a solution $u$ of the
Floer equation lifts to a map $\tilde{u}\colon \Sigma\to
\tilde{M}$. If $u$ is asymptotic to $x$ and $y$, the lift $\tilde{u}$
is asymptotic to $\tx$ and $\ty$ in the natural sense. In what
follows, a lift from $M$ to $\tilde{M}$ will always be indicated by
the tilde, and we will identify $M$ with $\{0\}\times M$.

A loop $x\colon S^1_k\to M$ is a $k$-periodic orbit of $\varphi^t$ if
and only if its lift $\tx$ is a $k$-periodic orbit of
$\tilde{\varphi}^t$ and if and only if the sequence
$\hat{x}=\{x_i:=x(i)\mid i\in\Z_k\subset S^1_k\}$ formed by the
intersections of $\tx$ with the cross-section $M$ is a $k$-periodic
orbit of $\varphi$.

Next let us recall the crossing energy theorem (see \cite[Thm.\
6.1]{GG:PR} and also \cite{GG:hyperbolic}), which is crucial to the
proof.  Let $K\subset M$ be a compact invariant set of a Hamiltonian
diffeomorphism $\varphi$ of $M$.  Recall that $K$ is said to be
\emph{locally maximal} or \emph{isolated} (as an invariant set) if there exists a neighborhood $U\supset K$ such that for no initial
condition $p\in U\setminus K$ the orbit through $p$ is contained in
$U$, i.e., there exists $k\in\Z$, possibly depending on $p$, such that
$\varphi^k(p)\not\in U$ or, equivalently,
$$
K=\bigcap_{k\in\Z}\varphi^k(U).
$$
The neighborhood $U$ is called an isolating neighborhood of $K$. Then
any neighborhood of $K$ contained in $U$ is also isolating, and hence
such neighborhoods can be made arbitrarily small. In other words,
whenever $U\supset V\supset K$ and $U$ is an isolating neighborhood
and $V$ is open, $V$ is also an isolating neighborhood. For flows,
locally maximal sets are defined in a similar fashion.

The set $K$ naturally lifts to an invariant set
$\tilde{K}\subset \tilde{M}$ of the flow $\tilde{\varphi}^t$, which is
the union of the integral curves through $K=\{0\}\times K$. (Since $K$
is invariant it suffices to take only $t\in [0,1]$.) The set $K$ is
locally maximal for $\varphi$ if and only if $\tilde{K}$ is locally
maximal for the flow.

Let $u\colon \Sigma\to M$ be a
solution of the Floer equation, \eqref{eq:Floer}, where
$\Sigma\subset \R\times S^1_k$ is a closed domain.  We say that $u$ is
\emph{asymptotic} to $K$ at $\infty$ (or at $-\infty$) if for any
neighborhood $\tilde{U}$ of $\tilde{K}$ there is a half-cylinder
$[s_{\tilde{U}},\,\infty)\times S^1_k$ (or
$(-\infty,\,s_{\tilde{U}}]\times S^1_k$) in $\Sigma$ which is mapped
into $\tilde{U}$ by $\tilde{u}$. For instance, $u$ is asymptotic to
$K$ if $u(s,\cdot)$ uniformly converges as $s\to \infty$ or
$s\to -\infty$ to a $k$-periodic orbit $x$ with $x(0)\in K$ (but not
necessarily with $x(t)\in K$ for all $t\in S^1_k$). In this case,
abusing terminology, we will also say that $u$ is asymptotic to the
$k$-periodic orbit $\hat{x}:=\{x_i:=x(i)\mid i\in\Z_k\}$ of
$\varphi$. We emphasize that here the domain $\Sigma$ of $u$ need not
be a cylinder, although to be asymptotic to $K$ it must contain a
half-cylinder.

Finally, fix a (sufficiently small) isolating neighborhood $\tilde{U}$
of $\tilde{K}$. Set
$\p \tilde{U}:=\mathrm{closure}(\tilde{U})\setminus \tilde{U}$.

\begin{Theorem}[Crossing Energy Theorem, I; Thm.\ 6.1 in \cite{GG:PR}]
  \label{thm:cross-energy}
  Fix a 1-periodic in time almost complex structure $J$ on $M$.  Let
  $J'$ be a $k$-periodic in time almost complex structure on $M$ which
  is sufficiently $C^\infty$-close to $J$, depending on $k$, uniformly
  on $U$. Furthermore, let $u\colon \Sigma\to M$, where
  $\Sigma\subset \R\times S^1_k$, be a solution of the Floer equation
  for $J'$ and $H^{\sharp k}$, asymptotic to $K$ as $s\to\infty$ or
  $s\to-\infty$, and such that
 \begin{itemize}

 \item[\reflb{CEa}{(a)}] either $\p\Sigma\neq \emptyset$ and
   $\tilde{u}(\p\Sigma)\subset \p \tilde{U}$

 \item[\reflb{CEb}{(b)}] or $\Sigma=\R\times S^1_k$ and
   $\tilde{u}(\Sigma)\not\subset \tilde{U}$.
 \end{itemize}
 Then there exists a constant $c_\infty>0$, independent of $k$, $J'$,
 $u$ and $\Sigma$ such that
 \begin{equation}
   \label{eq:cross-energy}
   E(u)>c_\infty .
 \end{equation}

\end{Theorem}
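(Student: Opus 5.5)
We will argue by contradiction, combining Gromov compactness with the isolation of $\tilde K$. Suppose there are sequences $k_i$, $J'_i$ (each $C^\infty$-close to $J$ on $U$, depending on $k_i$), domains $\Sigma_i$ and Floer solutions $u_i$ satisfying \ref{CEa} or \ref{CEb}, asymptotic to $K$, with $E(u_i)\to 0$. The key is that the shift $s$ and the time $t$ are essentially free parameters. Since $H^{\sharp k}$ is just $H$ viewed as $k$-periodic, a time shift $t\mapsto t+j$ with $j\in\Z$ preserves the equation (up to replacing $J'$ by a shifted structure, still close to $J$). So we may recenter $u_i$ near a point $(s_i,t_i)$ where $\tilde u_i$ passes through $\p\tilde U$, and normalize $t_i\in[0,1]$.

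In case \ref{CEa} the point $(s_i,t_i)$ lies on $\p\Sigma_i$. In case \ref{CEb} we use that $u_i$ is asymptotic to $K$ at one end while $\tilde u_i(\Sigma)\not\subset\tilde U$. Then there is a first/last $s$-slice meeting $\p\tilde U$, and we restrict $u_i$ to the half-cylinder beyond it. In that region $\tilde u_i$ stays in $\mathrm{closure}(\tilde U)$ and touches $\p\tilde U$.

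The main step is a small-energy a priori estimate. Since $E(u_i)\to0$, and since $u_i$ takes values near a compact set with uniformly bounded geometry (in $\tilde U$, with $J'_i\to J$), the standard $\eps$-regularity / mean value inequality for Floer solutions with small energy gives a uniform gradient bound on $|\p_s u_i|$ on the recentered domains, at least away from $\p\Sigma_i$. This step is the main obstacle. In case \ref{CEa} the boundary $\p\Sigma_i$ is arbitrary, so we cannot simply bound things up to the boundary. Instead we first try to show that the $s$-length of the portion of $\Sigma_i$ inside $\mathrm{closure}(\tilde U)$, adjacent to the asymptotic end, is unbounded. We then apply interior estimates on large balls $[s_i-R,s_i+R]\times[-R,R]$ that lie in this portion, choosing the recentering point deep in the interior but at a slice intersecting a slightly smaller neighborhood's boundary $\p\tilde V$, with $\tilde K\subset\tilde V\Subset\tilde U$. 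The asymptotic end lies in $\tilde V$, so such a slice exists.

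With these bounds, Arzel\`a--Ascoli yields a subsequence converging in $C^\infty_{loc}$ on $\R\times\R$ (the $t$-period $k_i$ grows or stays bounded; in either case lift to $\R^2$) to a solution $u_\infty$ of the Floer equation for $J$ and $H$ with $E(u_\infty)=0$. Thus $\p_s u_\infty=0$, so $u_\infty(s,t)=x(t)$ is an integral curve of $X_H$ defined for all $t\in\R$. Its lift $\tilde x$ stays in $\mathrm{closure}(\tilde V)\subset\tilde U$ for all time and passes through $\p\tilde V$. This contradicts local maximality, since the full orbit lies in $\tilde U$ and hence in $\tilde K\subset\tilde V$ (open). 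Hence $E(u)>c_\infty$ for a uniform $c_\infty$, independent of $k$, because all estimates were local in $(s,t)$ and used only $H$, $J$ and $\tilde U$.
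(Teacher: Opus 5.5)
There is a genuine gap, and it is at the step you call the main obstacle. For case (a) your argument needs the point where $\tilde u_i$ meets $\partial\tilde U_V$ to lie at unbounded domain distance from $\partial\Sigma_i$. Only then do your interior estimates and the $C^\infty_{loc}$ limit on $\R\times\R$ make sense. You only say you will ``try to show'' this, and it does not follow from interior estimates.

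Near $\partial\Sigma_i$ the small-energy mean value inequality gives at best $|\partial_s u_i(z)|\le C\,E(u_i)^{1/2}/\mathrm{dist}(z,\partial\Sigma_i)$. The integral of this bound along a path ending on $\partial\Sigma_i$ diverges logarithmically. So nothing you have written prevents $u_i$ from running from $\partial\tilde V$ to $\partial\tilde U$ inside a thin collar of $\partial\Sigma_i$ (say around a tiny hole of $\Sigma_i$) while $E(u_i)\to 0$.

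Monotonicity does not rescue this either. Small-energy Floer solutions can have essentially one-dimensional images: a trivial cylinder over an orbit has zero energy, and restricted to the preimage of $\mathrm{closure}(\tilde U)$ it has boundary on $\partial\tilde U$. Passing to the graph of $u$ does not help, because the domain alone already accounts for the full monotonicity bound, so you learn nothing about $E(u)$. Controlling $u$ up to an arbitrary boundary mapped to $\partial\tilde U$ is exactly what needs the target-local Gromov compactness of \cite{Fi}. That is how case (a) is proved in \cite{GG:PR}; the notes do not reprove it.

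As written, your case (b) has the same defect. You restrict $u_i$ to the half-cylinder beyond the first slice meeting $\partial\tilde U$ and use estimates only away from its boundary, with $J'_i$ controlled only on $U$. But the touching point lies on that boundary. Passing to $\tilde V$ only moves the problem to the same unproved collar estimate, now near the region where $\tilde u_i$ leaves $\tilde U$ and $J'_i$ is uncontrolled.

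The notes avoid this by proving only case (b) and assuming $J'$ is $C^\infty$-close to $J$ on all of $M$. Then, since $u_i$ is defined on the whole cylinder $\R\times S^1_{k_i}$, the pointwise bound \eqref{eq:ptwise-bnd} holds everywhere. You can then take $\tilde U$ with closure inside an isolating neighborhood and recenter directly at a point of the first slice touching $\partial\tilde U$. The rest of your argument is then correct: the limit is a full orbit whose lift stays in $\mathrm{closure}(\tilde U)$ and meets $\partial\tilde U$, contradicting local maximality.

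In that form it is the paper's proof. The only difference is that the paper takes the limit of the circles $u_i(s,\cdot)$, viewed as $k$-periodic $\eta_i$-pseudo-orbits (Proposition~\ref{prop:pseudo-orbit}, via Gr\"onwall), rather than of the maps themselves. The pseudo-orbit version needs only the $C^0$ gradient bound, and it is the form reused with the Anosov closing lemma in Proposition~\ref{prop:energy}.
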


For a fixed $k$, the lower bound, \eqref{eq:cross-energy}, readily
follows from Gromov's compactness and the main point of the theorem is
that $c_\infty$ can be taken independent of $k$. This is where the
condition that $K$ is locally maximal is essential as the following
example shows.

\begin{Example}
  Let $\varphi$ be an irrational rotation of $S^2$ about the $z$-axis
  and let $K$ be the North Pole. (Thus $H$ is a scaled height
  function.)  Then for every $k$ and arbitrary capping of the South
  Pole there exists a Floer cylinder $u_k$ for $H^{\# k}$ connecting
  $K$ with the South Pole. (This follows from the structure of the
  quantum homology of $S^2$.)  For a certain sequence $k_i\to\infty$
  one can find a capping of the South Pole such that
  $E\big(u_{k_i}\big)\to 0$. This example generalizes to
  pseudo-rotations of all complex projective spaces; see
  \cite{GG:gaps,GG:PR}.
\end{Example}  

\begin{Remark}
  The lower bound $c_\infty$ depends on the choice of an isolating
  neighborhood $\tU$ of $\tK$: a smaller neighborhood might
  necessitate a smaller lower bound. The threshold on how close $J'$
  and $J$ need to be for \eqref{eq:cross-energy} to hold depends on
  $k$. Finally, as readily follows from the proof, the lower bound
  $c_\infty$ can also be chosen to be stable with respect to
  $C^\infty$-small perturbations of $H^{\sharp k}$, i.e., so that
  \eqref{eq:cross-energy} holds for solutions of the Floer equation
  for all $k$-periodic Hamiltonians $C^\infty$-close to
  $H^{\sharp k}$.
\end{Remark}

\begin{Example}
  \label{ex:hyperbolic}
  Assume that $K$ comprises just one fixed point $p$ of $\varphi$ and this
  point is hyperbolic. Hence $p$ is isolated as a periodic point of
  $\varphi$ and as an invariant set. Then there exists a constant
  $c_\infty>0$, independent of $k$ and $u$, such that $E(u)>c_\infty$
  for all Floer cylinder $u$ for $H^{\# k}$ asymptotic to $p$, or to
  be more precise to the $k$-th iterate $p^k$, at either end. Moreover,
  $c_\infty$ can also be chosen to be stable with respect to
  $C^\infty$-small perturbations of $H^{\sharp k}$.
\end{Example}  

Here we are mainly interested in Case \ref{CEb} of the theorem. It is
easy to see that this case is a consequence of the more general Case
\ref{CEa} which was originally established in \cite{GG:PR} by using a
variant of the Gromov Compactness Theorem from \cite{Fi}.  However,
Case \ref{CEb} can also be proved directly by an argument which is
considerably simpler than that proof, under the slightly more
restrictive requirement that $J'$ is sufficiently $C^\infty$-close to
$J$ on $M$, but not just on $U$, in the class of $k$-periodic in time
almost complex structures. This is sufficient for our purposes and we
will outline the proof below.

The proof and applications of the theorem hinge on the following
simple but important observation. Recall that a sequence
$$
\hat{z}:=\{z_i\mid i\in \Z_k:=\Z/k\Z\}
$$
is called \emph{$k$-periodic $\eta$-pseudo-orbit} of $\varphi$, or
just a pseudo-orbit for the sake of brevity, if
\begin{equation}
      \label{eq:pseudo-orbit}
      d\big(\varphi(z_i),z_{i+1}\big)<\eta \textrm{ for all $i\in\Z_k$},
\end{equation}
where $d$ the distance on $M$ with respect to an arbitrary auxiliary
Riemannian metric $M$. As in Theorem \ref{thm:cross-energy}, fix an
almost complex structure $J$ and assume that $J'$ is $k$-periodic and
sufficiently close to $J$.

\begin{Proposition}
      \label{prop:pseudo-orbit}
      Let $u\colon \R\times S^1_k\to M$ be a solution of the Floer
      equation, \eqref{eq:Floer}. Assume that $E(u)$ is sufficiently
      small, i.e., $E(u)<e$ with an upper bound $e>0$ depending only
      on $(M, \omega, J)$ and $H$. Then for every $s\in\R$, the
      sequence
    $$
    \hat{z}:=\big\{z_i:=u(s,i)\mid i\in \Z_k\big\}
    $$
    is a $k$-periodic $\eta$-pseudo-orbit of $\varphi$,
    where we can take
    \begin{equation}
      \label{eq:eta-energy}
    \eta= O\big(E(u)^{1/4}\big)
  \end{equation}
  uniformly in $k$ and $J'$ as long as $J'$ is close to $J$.
\end{Proposition}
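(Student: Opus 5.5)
The plan is to reduce the statement to a local, $k$-independent estimate on unit squares, using the standard energy-concentration (mean value) inequality for Floer cylinders together with Gromov compactness. Fix $s\in\R$ and $i\in\Z_k$, and consider the restriction of $u$ to the rectangle $Q_i=[s-1,s+1]\times[i-1,i+2]\subset\R\times S^1_k$. Since $H^{\sharp k}$ is just the $1$-periodic $H$ viewed as $k$-periodic, and $J'$ is $C^\infty$-close to the $1$-periodic $J$, after a time shift by $i$ this restriction solves the Floer equation for $(H,J'_{\cdot+i})$ on a fixed rectangle $Q_0$. The equation data on $Q_0$ thus range over a compact family independent of $k$. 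The energy of $u|_{Q_i}$ is at most $E(u)<e$.

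The first key step is the a priori gradient bound. If $e$ is below the bubbling threshold, which is uniform for $J'$ near $J$ by the monotonicity lemma and the minimal energy of $J$-holomorphic spheres, then the standard mean value inequality (as in McDuff--Salamon or Salamon's lectures) gives
\[
\sup_{[s-\frac12,s+\frac12]\times[i-\frac12,i+\frac32]}\|\p_s u\|^2\leq C\,E(u|_{Q_i})+C'\,E(u|_{Q_i})^{\beta}
\]
with constants independent of $k$. Here one should be careful: the inhomogeneous term from $H$ makes the inequality of the form $\Delta e\geq -A e^2 - B$. The cleanest route is to invoke the mean value inequality for $e=\|\p_s u\|^2$, which satisfies $\Delta e\geq -A(e+e^2)$ for Floer solutions. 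This yields $\sup\|\p_s u\|^2\leq C\,E$ on the half-size domain once $E<e$.

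The second step converts this into the pseudo-orbit estimate. From the Floer equation, $\p_t u-X_H(u)=J'\p_s u$, so the path $t\mapsto u(s,t)$, $t\in[i,i+1]$, is a $\delta$-perturbed trajectory of $X_H$ with $\|\p_tu-X_H(u)\|\leq\delta:=\sup\|\p_su\|\leq C E^{1/2}$. By Gronwall's inequality over a unit time interval, with Lipschitz constant of $X_H$ fixed,
\[
d\big(\varphi(u(s,i)),u(s,i+1)\big)\leq C''\delta = O(E^{1/2}),
\]
which is even better than the claimed $E^{1/4}$. If the sharp mean value inequality is unavailable and one only gets a weaker bound, the plan is to use a cruder estimate instead. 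Via Cauchy--Schwarz on the $s$-average, combined with the $\epsilon$-regularity bound $\|\p_s u\|\le$ const, one gets $\int_i^{i+1}\|\p_su(s',t)\|\,dt$ small for some $s'$ near $s$. One then moves from $s'$ to $s$ using $|\p_s u|\leq C E^{1/2}$ pointwise on short intervals. This route produces an exponent like $E^{1/4}$, consistent with \eqref{eq:eta-energy}. Either way, uniformity in $k$ comes from the fact that all estimates are local on rectangles of fixed size with data from a compact family.

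The main obstacle is the first step: establishing the pointwise bound on $\|\p_s u\|$ uniformly in $k$ and $J'$. I would first try to prove it by contradiction via rescaling and Gromov compactness. Suppose there were sequences with $E\to0$ but with a gradient at some point bounded below, after shifting the point to the origin of $Q_0$. If the gradients blow up, rescaling produces a nonconstant $J$-holomorphic sphere or plane with energy $\leq e$, which is impossible for small $e$. Otherwise the maps converge in $C^\infty_{loc}$ to a Floer solution with zero energy, whose $\p_s u$ therefore vanishes identically, a contradiction. This gives the uniform smallness, and a quantitative version gives the $E^{1/2}$ (or at worst $E^{1/4}$) rate.
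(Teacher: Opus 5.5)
Your proposal is correct and follows the paper's route. Both arguments use a pointwise bound on $\|\p_s u\|$ for small-energy Floer cylinders defined on all of $\R\times S^1_k$; you make this uniform in $k$ and $J'$ by working on unit-size rectangles and shifting time by integers. Both then feed the Floer equation into a Gr\"onwall estimate over one unit of time.

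The only difference is that you derive the gradient bound yourself, via the mean value inequality for $\Delta e\geq -A(e+e^2)$ (the linear term can be absorbed on small balls). This gives $O(E(u)^{1/2})$, which is sharper than the $O(E(u)^{1/4})$ that the paper quotes from the version of the inequality with an additive constant. Your fallback paragraph is therefore unnecessary. As written it is also circular, since it invokes the $E^{1/2}$ bound where only the uniform $\epsilon$-regularity bound is needed.
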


The key point of this proposition is that every ``circle'' in a Floer
cylinder $u$ for $H^{\# k}$ is an $\eta$-pseudo-orbit with
$\eta= O\big(E(u)^{1/4}\big)$, provided that $E(u)$ is below a certain
threshold $e>0$ which depends only on $(M, \omega, J)$ and $H$, but
not $u$ or $k$.
  
\begin{proof}
  Recall that when $E(u)$ is sufficiently small (with an upper bound
  $e$ depending on $M$ and $H$ but not $u$ and $k$), we have the
  pointwise upper bound
  \begin{equation}
  \label{eq:ptwise-bnd}
    \| \p_s u\|\leq \const\cdot E(u)^{1/4}=O\big(E(u)^{1/4}\big),
  \end{equation}
  where the constant is again independent of $u$ and $k$ and of $J'$
  when $J'$ is close to $J$; see \cite[Sect.\ 1.5]{Sa} or
  \cite[p.\ 542--543]{GG:LS} or, for a different proof,
  \cite{Br}. (Note that it is essential here that the domain of $u$ is
  the entire cylinder $\R\times S^1_k$.) Set $\gamma(t):=u(s,t)$. Then
  we have
  $$
  \big\|\gamma'(t)-X_H\big(\gamma(t)\big)\big\|\leq
  O\big(E(u)^{1/4}\big)
  $$
  for all $t\in S^1_k$ by the Floer equation, \eqref{eq:Floer}. A
  simple calculation along the lines of Gr\"onwall inequality, which
  we leave to the reader as an exercise, shows that
  \begin{equation}
    \label{eq:po-distance}
d\big(\varphi^t\big(\gamma(i)\big),\gamma(i+t)\big)\leq
e^{ct}O\big(E(u)^{1/4}\big)
\end{equation}
for some constant $c$ and for all $t$ and $i\in\Z_k$. Setting $t=1$,
we obtain \eqref{eq:pseudo-orbit} and \eqref{eq:eta-energy}.
  \end{proof}

\begin{proof}[Proof of Theorem \ref{thm:cross-energy}]
  Now we are in a position to prove Case \ref{CEb} under the
  requirement that $J'$ is sufficiently $C^\infty$-close to $J$ in the
  class of $k$-periodic in time almost complex structures on $M$. Let
  $U$ be an isolating neighborhood of $K$ such that the closure of $U$
  is also contained in an isolating neighborhood.

  Arguing by contradiction, assume that $E(u)$ can be arbitrarily
  small, i.e., there exists a sequence
  $u_k\colon \R\times S^1_k\to M$, where $k=k_i\to \infty$ with
  $E(u_k)\to 0$ such that the image of $\tu_k$ is not entirely
  contained in the closure of $\tU$.  Consider the largest
  half-cylinder in $\R\times S^1_k$ whose image is contained in the
  closure. By Proposition \ref{prop:pseudo-orbit}, the restriction
  $\gamma_k$ of $u_k$ to the boundary of this cylinder gives rise to a
  $k$-periodic $\eta_k$-pseudo-orbit, with
  $\eta_k=O\big(E(u_k)\big)\to 0$, through a point $p_k$ close to
  $\p U$. (Here we implicitly use \eqref{eq:po-distance}: the moment
  of time where $\gamma_k$ is tangent to $\p U$ might not be an
  integer. However, \eqref{eq:po-distance} ensures that the value of
  $\gamma_k$ at the closest integer point is close to $\p U$.) Thus we
  obtain longer and longer two-directional $\eta_k$-pseudo-orbits with
  $\eta_k\to 0$ passing through some point $p_k$ near $\p U$. Passing
  to a subsequence, we can ensure that the sequence $p_{k_i}$
  converges and taking the limit as $E(u_k)\to 0$, we obtain an entire
  orbit of $\varphi$ which is contained in $\bar{U}$, but not in
  $K$. This is impossible since $K$ is locally maximal. The argument
  is spelled out in detail in a very similar context in
  \cite{CGGM:Reeb,CGGM:hyperbolic}.
\end{proof}

Theorem \ref{thm:cross-energy} gives a lower bound on the energy of a
Floer cylinder $u$ asymptotic on one side to a periodic orbit with
initial condition in $K$ and to a periodic orbit outside $K$ on the
other, provided that $K$ is locally maximal. When $K$ is in addition
hyperbolic, it is enough to require that one ``end'' of $u$ is in $K$.
To be more precise, consider a Floer cylinder
$u\colon \Sigma=\R\times S^1_k\to M$ for some $k$-periodic almost
complex structure $J'$ sufficiently close to a fixed 1-periodic almost
complex structure $J$ as in Theorem \ref{thm:cross-energy} and
asymptotic to $k$-periodic orbits $x$ and $y$ of $\varphi^t$ with
$x(0)\in K$.  (It does not matter if $u$ is a asymptotic to $x$ at
$\infty$ or $-\infty$, and whether $y(0)$ is in $K$ or not.) Then we
have the following refinement of Theorem \ref{thm:cross-energy}, which
is an easy consequence of the theorem and the Anosov Closing Lemma;
\cite[Thm.\ 6.4.15]{KH}.

The latter lemma asserts that whenever $\hz$ is a $k$-periodic
$\eta$-pseudo-orbit of $\varphi$ contained in a sufficiently small
neighborhood $U$ of $K$, there exists a true $k$-periodic orbit
$\hat{w}=\{w_0,\ldots,w_{k}=w_0\}$ in $K$ shadowing $\hz$, i.e., such that
$d(z_i,w_i)< C\eta$. Here $\eta$ has to be sufficiently small, but the
upper threshold is completely determined by $\varphi$ and $U$ and
independent of $k$ and $\hz$. Likewise, $C$ depends only on $U$ and
$\varphi$, but not on $k$ or $\hz$.

\begin{Proposition}
  \label{prop:energy}
  Assume that $K$ is a locally maximal hyperbolic invariant set of
  $\varphi$. Then
  \begin{equation}
    \label{eq:gap}
  E(u)=|\CA(x)-\CA(y)|>\eps_K, \textrm{ unless } E(u)=0,
 \end{equation}
 for some constant $\eps_K>0$, independent of $u$ and $x$ and $y$, and
 also of $k$ and $J'$ as long as $J'$ is sufficiently $C^\infty$-close
 to $J$ in the class of $k$-periodic in time almost complex structures
 on $M$.
  \end{Proposition}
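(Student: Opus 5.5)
Argue by contradiction and split into two cases according to whether the cylinder leaves an isolating neighborhood. Fix a sufficiently small isolating neighborhood $U$ of $K$ with lift $\tU$ of $\tK$. Choose it so that the Anosov Closing Lemma applies to pseudo-orbits in $\bar U$, with threshold $\eta_0$ and constant $C$, and so that Theorem \ref{thm:cross-energy} applies with constant $c_\infty$. Set $\eps_K$ smaller than $c_\infty$ and smaller than the threshold $e$ from Proposition \ref{prop:pseudo-orbit}. If $E(u)\le\eps_K$ and $\tu(\R\times S^1_k)\not\subset\tU$, then Case \ref{CEb} of Theorem \ref{thm:cross-energy} gives $E(u)>c_\infty\ge\eps_K$, a contradiction. (Here we use that $u$ is asymptotic to $x$ with $x(0)\in K$; since $K$ is invariant, $\tx$ lies in $\tK$.) So it remains to show that a cylinder with $0<E(u)\le\eps_K$ whose image stays inside $\tU$ cannot exist, possibly after shrinking $\eps_K$.

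In the remaining case, by Proposition \ref{prop:pseudo-orbit} every circle $\hz(s)=\{u(s,i)\}_{i\in\Z_k}$ is a $k$-periodic $\eta$-pseudo-orbit in $U$ with $\eta=O(E(u)^{1/4})$, uniformly in $k$ and $J'$. By the Closing Lemma, each $\hz(s)$ is $C\eta$-shadowed by a true $k$-periodic orbit $\hat w(s)$ in $K$. By hyperbolicity (expansivity) of $K$, with expansivity constant $\delta$, any two periodic orbits in $K$ that stay $\delta$-close along the whole orbit coincide. Hence for $\eta$ small, i.e.\ $\eps_K$ small, $\hat w(s)$ is uniquely determined by $\hz(s)$. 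The circles $\hz(s)$ vary continuously in $s$, so $s\mapsto\hat w(s)$ is locally constant and thus constant. As $s\to\pm\infty$, the circles converge to $\hat x$ and $\hat y$, and uniqueness of shadowing forces $\hat w=\hat x=\hat y$ as $k$-periodic orbits of $\varphi$.

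This gives $x=y$ as uncapped orbits, and the cylinder $u$ stays in a small neighborhood, in the $d_k$ sense, of the single orbit $\tx$. The main obstacle is to conclude from this that $E(u)=0$, i.e.\ that the recapping is trivial. I would argue as follows. All loops $u(s,\cdot)$ lie in a thin tubular neighborhood of the loop $x$, which retracts onto $x$. So $u$, viewed as a cylinder from $x$ to itself, can be closed up into a torus homotopic to one lying in that neighborhood, i.e.\ essentially a multiple cover of $x$ composed with a thin annulus. Its symplectic area is then zero. By \eqref{eq:Energy-Action}, $E(u)=\CA(x)-\CA(y)=0$ with the induced cappings. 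The delicate point is making this area argument uniform in $k$: the tube around a $k$-periodic orbit is long, but it is still a thin tubular neighborhood and retracts onto the loop, and symplectic area vanishes on maps into a neighborhood that retracts onto a 1-dimensional set. If needed, I would instead use the nondegeneracy of $x$ (hyperbolic orbits are nondegenerate) to say that small-energy cylinders from $x$ to itself with trivial class are constant. Either way, $E(u)>0$ forces $E(u)>\eps_K$.
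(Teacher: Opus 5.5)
Your outline is essentially the paper's proof. The paper splits according to whether $y(0)\in K$ rather than whether $\tu$ leaves $\tU$, but in its second case it immediately invokes Theorem \ref{thm:cross-energy} to put $\tu$ inside $\tU$, so the two decompositions coincide. The pseudo-orbit, Anosov closing and expansivity steps giving $\hat x=\hat w=\hat y$ are the same in both. With your split, you should say why $\hat y\subset K$ before using expansivity: $\hat y$ is a true periodic orbit within $C\eta$ of $\hat w\subset K$, so it lies in the isolating neighborhood and hence in $K$.

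The step whose justification does not hold as written is the last one.

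\begin{itemize}
\item A neighborhood of fixed size $\rho$ of the image $x(S^1_k)\subset M$ is not a thin tube retracting onto the curve uniformly in $k$. A long periodic orbit in a horseshoe returns arbitrarily close to itself and can be $\rho$-dense in $K$. There is also no reason for $\omega$ to be exact on a $\rho$-neighborhood of $K$.
\item What you actually have, and what the paper uses, is the parametrized estimate $\sup_{s,t} d\big(u(s,t),x(t)\big)=O\big(E(u)^{1/4}\big)$, uniform in $k$. At integer times this is the index-matched shadowing. In between it follows from \eqref{eq:po-distance} together with a uniform Lipschitz bound on $\varphi^t$, $t\in[0,1]$.
\item Once this bound is below the injectivity radius, $u_\lambda(s,t)=\exp_{x(t)}\big(\lambda\exp_{x(t)}^{-1}u(s,t)\big)$ deforms the closed-up torus to $(s,t)\mapsto x(t)$. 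That map factors through $S^1_k$ and so has zero area. Since $E(u)$ is, up to sign, the $\omega$-area of this torus, $E(u)=0$.
\item Equivalently, you can apply your own retraction principle in $S^1_k\times M$ to the map $(s,t)\mapsto(t,u(s,t))$. There the graph of $x$ is embedded and does have a tube of uniform width.
\end{itemize}

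Drop the nondegeneracy fallback. The energy threshold it gives depends on $k$, and triviality of the class is exactly what has to be shown.
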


This proposition generalizes Example \ref{ex:hyperbolic}.  
  
  \begin{proof} 
    We consider two cases depending on the location of the orbit~$y$.
    The first case is when $y(0)\not\in K$. Then $\ty$ is not entirely
    contained in any isolating neighborhood of $\tilde{K}$. Applying
    Theorem \ref{thm:cross-energy} (Case b), we obtain \eqref{eq:gap}
    with $\eps_K=c_\infty$.

  The second case is when $y(0)\in K$. Then both $\hat{x}$ and
  $\hat{y}:=\{y_i:=y(i)\mid i\in\Z_k\}$ are $k$-periodic orbits of
  $\varphi$ in $K$. Let us assume that $u$ is asymptotic to $x$ at
  $-\infty$; the other case is handled similarly.  We will show that
  then $x=y$ and $E(u)=0$ when $E(u)$ is below a certain threshold
  which depends only on $M$, $J$, $K$ and $H$, but neither on $u$ nor
  $x$ nor $y$ nor~$k$.

  To this end, fix a sufficiently small isolating neighborhood $U$ of
  $K$. In particular, we may assume that the Anosov Closing Lemma
  applies to $\varphi$ on $U$. Then, by Theorem
  \ref{thm:cross-energy}, $\tilde{u}$ is entirely contained in
  $\tilde{U}$. Hence, $\hat{z}$ is contained in $\tilde{U}\cap M=U$
  for all $s\in \R$.  Assume furthermore that $E(u)$ is so small that
  Proposition \ref{prop:pseudo-orbit} applies and thus
  $\hat{z}=\{u(s,i)\mid i\in\Z_k\}$ is a periodic $\eta$-pseudo-orbit
  in $U$ for every $s$.

  Therefore, by the Anosov Closing Lemma, there exists a true periodic
  orbit $\hat{w}$ in $K$ shadowing $\hat{z}$. Namely, we have
  $d(z_i,w_i)<C\eta$, $i\in\Z_k$, for some constant $C>0$, which
  depends only on $U$ and $\varphi$. By \cite[Cor.\ 6.4.10]{KH},
  $\varphi|_K$ is expansive: there is a constant $\delta>0$ such that
  any two distinct orbits $\{v_i\}$ and $\{v'_i\}$ of $\varphi$ in $K$
  are at least $\delta$ apart, i.e., $d(v_j,v'_j)>\delta$ for some
  $j\in \Z$. It follows that when $E(u)$ and hence $\eta$ are small
  enough (e.g., $2C\eta<\delta$), the orbit $\hat{w}$ is unique and
  depends continuously on $\hat{z}$ and thus on $s\in\R$. Therefore,
  again since $\varphi|_K$ is expansive, $\hat{w}$ is independent of
  $s\in\R$. Clearly, when $s$ is close to $-\infty$, we have
  $\hat{w}=\hat{x}$, and $\hat{w}=\hat{y}$ when $s$ is close to
  $\infty$. Thus, $x=y$, and setting $u(\infty, t)=x(t)=y(t)$, we can
  view $u$ as a $C^0$-map from
  $\T^2=\big(\R\cup\{\infty\}\big)\times S^1_k$ to $M$. This map is
  smooth on the complement to $\{\infty\}\times S^1_k$. Furthermore,
  it is easy to see from \eqref{eq:ptwise-bnd} and Proposition
  \ref{prop:pseudo-orbit} that for every $s\in \R$ the loop
  $t\mapsto u(s,t)$ is $C^0$-close to the loop $x=y$ pointwise
  uniformly in $s$.  Hence, the loop $s\mapsto u(s,t)$ lies in a small
  neighborhood of $x(t)$. As a consequence, $u$ contracts to $x$ in
  $M$, and hence $E(u)=0$.  Indeed, $E(u)$ is the difference of
  actions of capped periodic orbits. Since $x=y$, this difference is
  the integral of $\omega$ over~$u$. The cycle represented by $u$ is
  homologous to zero, and hence the integral is zero.
\end{proof}

\begin{Remark}
An ``elementary'' proof of the crossing energy theorem in the case
where $K$ is a single point and $M=\CP^n$ relying on the machinery of
generating functions is given in \cite{Al}.
\end{Remark}  

\subsection{From crossing energy to barcode growth}
\label{sec:energy-B}    
Now we are ready to prove Theorem \ref{thm:B}. Without loss of
generality, we may assume that $\htop\big(\varphi|_K\big)>0$; for
otherwise the statement is vacuous.

A minor technical issue we need to first deal with is that a
hyperbolic invariant set need not be locally maximal, and hence
Theorem \ref{thm:cross-energy} and Proposition \ref{prop:energy} do
not directly apply.  However, there automatically exists a locally
maximal hyperbolic invariant set $K'$ with
$\htop\big(\varphi|_{K'}\big)$ arbitrarily close to
$\htop\big(\varphi|_K\big)$.  This is a consequence of \cite[Thm.\
3.3]{ACW} and the Variational Principle, \cite[Thm.\ 4.5.3]{KH}.  (The
hyperbolicity condition is essential for the former result, but not for
the Variational Principle.)  To be more precise, for every $\delta>0$,
one can find $K'$ such that
$\htop\big(\varphi|_{K'}\big)\geq \htop
\big(\varphi|_K\big)-\delta$. As a consequence, without loss of
generality, we can assume the hyperbolic set $K$ to be locally maximal.

Denote by $p(k)$ the number of $k$-periodic points of
$\varphi|_K$. Since $K$ is hyperbolic, by Theorem \ref{thm:p-htop}, we
have
\begin{equation}
\label{eq:p-htop}
\htop(\varphi|_K)=\limsup_{k\to\infty} \frac{\log^+ p(k)}{k}.
\end{equation}
Hence, to prove the theorem, it is sufficient to show that 
\begin{equation}
  \label{eq:p-b}
\fb_\eps\big(\varphi^k\big)\geq p(k)/2
\end{equation}
when $\eps>0$ is small. We will use Proposition \ref{prop:energy} to
prove this for $\eps<\eps_K$.

Fix $k\geq 1$ and recall from Section \ref{sec:Lagrangian-barcode} that the limit,
\eqref{eq:b-eps3}, in the definition of $\fb_\eps\big(\varphi^k\big)$ is
attained, i.e, there exists non-degenerate, arbitrarily
$C^\infty$-small perturbations $\psi$ of $\varphi^k$ such that
\begin{equation}
  \label{eq:psi-phi}
  \fb_\eps(\varphi^k)= \fb_\eps(\psi).
\end{equation}

All $k$-periodic points of $\varphi$ in $K$ are non-degenerate and
hence persist as fixed points of $\psi$ or, equivalently, as
$k$-periodic orbits $S^1_k\to M$ of the Hamiltonian flow $\psi^t$. In
fact, we can take $\psi=\varphi$ on a small neighborhood of $K$.
Moreover, by Proposition \ref{prop:energy}, $E(u)>\eps_K$
for any Floer cylinder $u$ asymptotic to such an orbit $x$ of $\psi$
when $\psi$ is sufficiently $C^\infty$-close to $\varphi^k$.

Let $\mathcal{K}$ be the collection of fixed points of $\psi$
corresponding to the $k$-periodic points of $\varphi$ in $K$. There
are exactly $p(k)$ of them: $|\mathcal{K}|=p(k)$.  By Proposition
\ref{prop:energy} every such fixed point is $\eps$-isolated in the
sense of Section \ref{sec:Lagrangian-barcode} and, by Proposition
\ref{prop:isolated} and \eqref{eq:psi-phi},
$$
\fb_\eps(\varphi^k)= \fb_\eps(\psi)\geq |\mathcal{K}|/2=p(k)/2,
$$
which proves \eqref{eq:p-b} and completes the proof of the
theorem. \qed

\subsection{Theorem \ref{thm:B} and crossing energy for Reeb flows}
\label{sec:energy-Reeb}
The proof of Theorem \ref{thm:B} in the Reeb case follows roughly the
same path as in the Hamiltonian case, but the argument is more delicate
and there is one serious additional difficulty to overcome. The proof is
still based on the crossing energy theorem which now takes the
following admittedly rather involved form, in which we rely on the
notation and conventions from Section \ref{sec:setting}.

\begin{Theorem}[Crossing Energy Theorem, II; Thm.\ 5.1 in \cite{CGGM:Reeb}]
  \label{thm:CE}
  Let $K$ be a hyperbolic, locally maximal compact invariant set of
  the Reeb flow of $\alpha$. Fix an interval
  $$
  I=[r_-,\, r_+]\subset (1,\,\rmax)
  $$
  and let $H(r,x)=h(r)$ be a semi-admissible Hamiltonian with
  $\slope(H)=:a \not\in\CS(\alpha)$ such that
\begin{equation}
  \label{eq:h'''1}
  h'''\geq 0 \textrm{ on } [1,\, r_++\delta]
\end{equation}
for some $\delta>0$ with $r_++\delta<\rmax$. Fix an admissible almost
complex structure $J$. Furthermore, let $z$ be a $\tau$-periodic orbit of
$\alpha$ in $K$ and $\tz:=(z, r^*)$ be the corresponding 1-periodic
orbit of the flow of $sH$. (Hence, $sa\geq \tau$ and $r^*$ depends on
$s$.) Assume that $r^*\in I$.

Then there exists $\sigma >0$ independent of $s$ and $z$ such that
$E(u) \geq \sigma$ for any Floer cylinder
$u \colon \R \times S^1 \to \WW$ of $\tau H$ asymptotic to $\tz$ at either
end, unless $E(u)=0$ and thus $u$ is trivial, i.e., independent of the
first coordinate.
\end{Theorem}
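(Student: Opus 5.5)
We will follow the scheme of Theorem \ref{thm:cross-energy} and Proposition \ref{prop:energy}, arguing by contradiction. Suppose there are slopes $s_i$, orbits $z_i\subset K$ of periods $\tau_i$ with $r^*_i\in I$, and nontrivial Floer cylinders $u_i$ asymptotic to $\tz_i$ with $E(u_i)\to 0$. First we confine $u_i$ to a neighborhood of the level $r^*_i$. The condition $dr\circ J=-r\alpha$ makes $r\circ u$ satisfy a maximum principle. Since $r^*_i$ lies in the compact interval $I$, the Hamiltonian is uniformly controlled there. The convexity assumption \eqref{eq:h'''1} ($h'''\geq0$, so $A_h$ is convex and $h'$ is increasing with controlled derivative) then lets us compare actions via \eqref{eq:Energy-Action2}. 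From this we aim to show that $u_i$ stays in a shell $r\in[r^*_i-\delta',r^*_i+\delta']$, with $\delta'\to0$ as $E\to0$ uniformly in $s$. Concretely, the energy $E(u)=A_H(r^+)-A_H(r^-)$ together with monotonicity estimates bounds how far $r$ can travel.

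Next, in this shell the Floer equation for $sH$ reads, in the $M$-component, $\p_t u\approx s h'(r)R_\alpha$ up to an error controlled by $\|\p_s u\|$. The error is small by the pointwise bound \eqref{eq:ptwise-bnd}, applied uniformly for $J$ admissible and $H$ restricted to the compact shell. Here is the key difficulty compared with the Hamiltonian case: rescaling time by $\tau_i\approx s_ih'(r^*_i)\to\infty$ possibly makes the derivative bound degrade with $s$. To handle this, we will reparametrize each loop $t\mapsto u(\sigma,t)$ as a curve of Reeb-time length about $\tau_i$. We then show that it is an $\eta$-pseudo-orbit of the Reeb flow in the flow sense, with $\eta\to0$ uniformly in $s$. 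The intended mechanism is that $\|\p_s u\|$ is small pointwise while $\p_t u$ has a large Reeb component, so the deviation per unit Reeb time stays small. We will need to check this with the conformal rescaling $t\mapsto \tau t$ carefully. We expect this uniformity in $s$ to be the main obstacle. The convexity hypothesis \eqref{eq:h'''1} is presumably exactly what yields the needed monotone control of $h'(r(u))$ against the period.

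With pseudo-orbits in hand, we will repeat the two-case argument of Proposition \ref{prop:energy} for flows. In the first case, a loop leaves an isolating neighborhood of $K$. Then a limit of pseudo-orbits of growing length through a point near the boundary produces a full Reeb orbit in the closure of the neighborhood but not in $K$, which contradicts local maximality. In the second case, all loops stay near $K$. We then apply the Anosov closing lemma and expansivity for hyperbolic flows; these give uniqueness up to time shift, and the time shift must be handled through the neutral direction. From them we obtain a unique shadowing periodic orbit, which depends continuously on $\sigma$ and hence is constant. So both ends equal $\tz$ up to the $S^1$-reparametrization. The cylinder then closes up to a torus homotopic into a neighborhood of $\tz$. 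Its $\omega$-area, which equals $E(u)$, therefore vanishes because $\omega=d(r\alpha)$ is exact there. This contradicts $E(u_i)>0$ and gives $\sigma>0$.
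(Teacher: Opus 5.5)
You have the paper's architecture (confine $u$ to a shell, rescale, extract pseudo-orbits, then run the local-maximality and closing-lemma argument). However, the confinement step is exactly the new content of Theorem~\ref{thm:CE}, and you do not supply it; the mechanism you indicate cannot supply it either.

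\begin{itemize}
\item The condition $dr\circ J=-r\alpha$ gives only a \emph{maximum} principle for $r\circ u$. This is Bourgeois--Oancea monotonicity, the upper bound in Theorem~\ref{thm:location}.
\item The identity \eqref{eq:Energy-Action2} constrains only the asymptotic levels $r^\pm$. It says nothing about how far $r\circ u$ can drop in the interior of the cylinder.
\item Comparing actions loop by loop does not help. It only shows that a dip occupies a small fraction of the loop, and after rescaling the loop is arbitrarily long, so a dip can still last arbitrarily long.
\item Monotonicity for holomorphic curves rules out only excursions deep into $W$, not dips into a thin collar $1<r<1+\eta$.
\end{itemize}

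What you need is the Minimum Principle, Theorem~\ref{thm:location}: there is $\sigma_0>0$, independent of the slope factor, such that every Floer cylinder with $E(u)\leq\sigma_0$, asymptotic at either end to an orbit in $M\times[r_*^-,r_*^+]$, lies in $M\times(r_*^--\delta,r_*^++\delta)$. This is a genuinely new estimate, and it is precisely where \eqref{eq:h'''1} is used. Your assertion that $u_i$ stays in a shell $[r_i^*-\delta',r_i^*+\delta']$ with $\delta'\to0$ is stated without any valid argument.

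This gap is fatal, not technical. Along such a dip, $X_H=h'(r)R_\alpha$ with $h'(r)\to0$ as $r\to1^+$. The loops of $u$ are then pseudo-orbits of an almost stationary flow over arbitrarily long time intervals. In the limit they collapse to a point of $M=\p W$, where $X_H=0$, rather than converging to a Reeb orbit in the closure of an isolating neighborhood of $K$. Local maximality of $K$ then gives no contradiction.

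By contrast, the difficulty you single out, uniformity of the pointwise bound in the slope factor $s$, is routine. Conformal rescaling by $s$ turns a Floer cylinder of $sH$ into one of $H$ on $\R\times(\R/s\Z)$ with the same energy, the analogue of $H^{\sharp k}$. Once the image is confined to a compact shell on which $h'$ is bounded away from $0$, \eqref{eq:ptwise-bnd} holds uniformly, and the projected loops are pseudo-orbits of a time-reparametrized Reeb flow. Hypothesis \eqref{eq:h'''1} plays no role in that step, contrary to your guess. With Theorem~\ref{thm:location} in hand, the rest of your plan goes through as in Theorem~\ref{thm:cross-energy} and Proposition~\ref{prop:energy}.
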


Here again the point that $\sigma$ is independent of $\tau$ is
crucial for our purposes. (For $\sigma$ depending on $\tau$ the assertion
is an easy consequence of Gromov compactness and requires no addition
conditions.) Furthermore, fix $0<\sigma'<\sigma$ and $\tau\geq 0 $. Consider
a $C^\infty$-small $\tau$-periodic in time, non-degenerate perturbation
$H'$ of $\tau H$ and a $C^\infty$-small compactly supported generic
$\tau $-periodic perturbation $J'$ of $J$. The 1-periodic orbit $\tz$ of
$\tau H$ from Theorem \ref{thm:CE} splits into several non-degenerate
periodic orbits of $H'$ contained in a small tubular neighborhood of
$\tz$. It follows again from a suitable version of the Gromov
compactness theorem (see, e.g., \cite{Fi}) that every Floer cylinder
of $H'$ asymptotic to any of these orbits at either end has energy
greater than $\sigma'$.

The Reeb case of Theorem \ref{thm:B} follows from Theorem \ref{thm:CE}
roughly in the same fashion as its Hamiltonian counterpart; see
\cite[Sec.\ 5.2]{CGGM:Reeb} with \eqref{eq:hbar-dyn} used as the
definition of barcode entropy. The parallels between dynamics of
hyperbolic sets for flows and maps, and in particular
\eqref{eq:p-htop}, are laid out in detail in \cite{FH}. An extra step
needed in the proof is showing that the growth of the number of
1-periodic orbits of $\tau H$ in $K\times I$ is the same on the
exponential scale as the growth of periodic orbits of the Reeb flow in
$K$ with period up to $\tau$.  The technical condition, \eqref{eq:h'''1},
does not cause any complications because one can always find an
admissible Hamiltonian meeting this requirement.

A new difficulty arises in the proof of Theorem \ref{thm:CE}
and this is where \eqref{eq:h'''1} enters the picture. (We do not know
if this requirement is essential or not.)  The difficulty manifests
itself in several ways.

First, the set $K\times \{r\}\subset \WW$, for any $r\geq 1$, is
never locally maximal for the flow of $H$ regardless of whether $K$ is
locally maximal or not. Indeed, when $K$ is invariant all sets
$K\times \{r\}$ are invariant. Furthermore, this set is never
hyperbolic. As a consequence, one cannot directly apply Theorem
\ref{thm:cross-energy} or Proposition \ref{prop:energy} to $H$. On a
finer level, the problem is that the Reeb flow slows down as $r\to 1+$
turning into the identity flow on $W$ where $H\equiv 0$. Hence, the
part of $\gamma(t):=u(s,t)$ lying in a small neighborhood $r\leq r_0$
of $W$ is automatically an $\eta$-pseudo-orbit for a large interval of
time, shadowing a constant orbit, when $E(u)$ is small. Here
$\eta\to 0+$ as $E(u)\to 0$ and the time interval goes to infinity as
$r_0\to 1$, and the proof of Theorem \ref{thm:cross-energy} breaks
down: the pseudo-orbits $\gamma$ simply converge to a point in
$M=\p W$. (Note that a part that is contained in $W$ of any Floer
cylinder is necessarily holomorphic, and hence $E(u)$ is automatically
bounded away from 0 by the standard monotonicity whenever $u$ reaches
into $W$ beyond a fixed neighborhood of $M=\p W$. As a consequence,
the problem occurs only when $\min r\circ u$ approaches 1 from above
or below.)

In other words, the issue arises when there is a sequence of Floer
cylinders $u_k$ for $\tau_k H$ with $\tau_k\to\infty$ and
$E(u_k)\to 0$ asymptotic to orbits $\tz=(z,r^*)$ in
$K\times [1,\infty)$, perhaps with some control of the second
component $r^*$, such that $\min r\circ u_k\to 1$. This is precluded
by the following.

\begin{Theorem}[``Minimum Principle''; Thm.\ 6.1, \cite{CGGM:hyperbolic}]
\label{thm:location}
Let $H(r,x) =h(r)$ be a semi-admissible Hamiltonian.  Assume that
$1<r_*^-\leq r_*^+$ and $\delta >0$ are such that
$$
1<r_*^--\delta \text{ and }
r_*^+ +\delta \leq \rmax,
$$
and
\begin{equation}
  \label{eq:h'''2}
  h''' \geq 0 \text{ on }
  [1, \, r_*^++\delta] \subset [1, \, \rmax).
\end{equation}
Fix an admissible almost complex structure $J$. Then there exists
$\sigma_0 >0$ such that for any $\tau >0 $ and any Floer cylinder
$u \colon \R \times S^1 \to \WW$ for $\tau H$ with energy
$E(u) \leq \sigma_0$ and asymptotic, at either end, to a periodic
orbit in $M\times [r_*^-,\,r_*^+]$, the image of $u$ is contained in
$M\times (r_*^--\delta,\, r_*^++\delta)$.
\end{Theorem}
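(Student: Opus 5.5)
We argue by contradiction via a compactness argument combined with the maximum principle in the symplectization coordinate $r$. The plan is to analyze the function $\rho:=r\circ u$ on the part of the cylinder where $u$ lies in the cone $M\times[1,\infty)$. There $J$ is admissible, i.e., \eqref{eq:complex_strc} holds, and $H=h(r)$. A standard computation then gives an elliptic inequality for $\rho$. Writing the Floer equation \eqref{eq:floer_2} for $\tau H$ in the form $\p_s u=J(\p_t u-\tau h'(r)R_\alpha)$ and applying $dr$ together with $dr\circ J=-r\alpha$, we get
$$
\p_s\rho=-\rho\,\alpha(\p_t u)+\tau\rho h'(\rho),\qquad \p_t\rho=\rho\,\alpha(\p_s u).
$$
Differentiating and using $d\alpha\ge 0$ on $J$-complex lines in $\ker\alpha$ yields
$$
\Delta\rho\geq \p_s\big(\tau\rho h'(\rho)\big)+(\text{lower order terms}).
$$
This expresses $\Delta\rho$ as a term of the form $\tau(\rho h'(\rho))'\,\p_s\rho$ plus a nonnegative part. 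The hypothesis $h'''\ge 0$ on $[1,r_*^++\delta]$ makes $h''$ nondecreasing. I intend to use this to show that the first-order term cannot produce an interior minimum of $\rho$ below $r_*^--\delta$ when $\p_s\rho$ is small. By \eqref{eq:ptwise-bnd} this is the case when $E(u)$ is small, with smallness uniform in $\tau$ after rescaling.

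The main step, and the expected obstacle, is to make the bound uniform in $\tau$. A naive maximum principle gives the upper bound $\rho<r_*^++\delta$, since $\rho$ is subharmonic-like away from the convex region. The lower bound is the real issue, because of the slow-down of the Reeb flow near $r=1$ discussed above: the term $\tau h'(\rho)$ can be tiny while $\tau$ is large. I would pass to the reparametrized function $A_h(\rho)=\rho h'(\rho)-h(\rho)$, which is the action function \eqref{eq:AH} and is monotone. The derivative of $A_h(\rho)$ involves $\rho h''(\rho)$, and its second derivative involves $h''+\rho h'''\ge 0$. The aim is to show that for each $s$ the average over $t$ of $A_h(\rho(s,t))$ is close to $\CA_{\tau H}(u(s,\cdot))/\tau$ up to an error controlled by the energy. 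The difference between the two is an integral of $\rho\,\alpha(\p_t u)-\tau h(\rho)$ plus terms bounded by $\|\p_s u\|$.

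Since the action along $u$ is monotone in $s$ and lies between the actions of the asymptotic orbits, and these differ by at most $E(u)\le\sigma_0$, the averaged action level of each loop $u(s,\cdot)$ is pinned near $A_h(r_*)$ for some $r_*\in[r_*^-,r_*^+]$. The convexity coming from $h'''\ge0$ (Jensen's inequality applied to $A_h$ composed with the inverse of $h'$) should convert this average control into pointwise control of $\min_t\rho(s,t)$. Together with the pointwise bound $\|\p_t u-\tau X_H\|=O(E^{1/4})$, this keeps each loop inside the shell.

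Finally, parts of $u$ entering $W$ would be holomorphic, so they are excluded by monotonicity once $\sigma_0$ is small. A connectedness argument in $s$, starting from the asymptotic end where $\rho\in[r_*^-,r_*^+]$, then shows the image stays in $M\times(r_*^--\delta,r_*^++\delta)$. The Jensen/convexity step giving uniformity in $\tau$ is where I expect the real difficulty. If it fails, I would fall back on a rescaled compactness argument: take $u_k$ with $E(u_k)\to0$, $\tau_k\to\infty$ and $\min\rho_k\to r<r_*^--\delta$, and derive a contradiction from the limiting Reeb pseudo-orbit on the level $r$ having the wrong action.
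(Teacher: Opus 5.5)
The notes do not prove this theorem (they defer to \cite{CGGM:hyperbolic}), so your outline has to stand on its own. It does not. The upper bound is essentially fine. The exact form of your computation is $\Delta\rho=\|\p_s u\|^2+\tau\rho h''(\rho)\,\p_s\rho$ on the whole cone, which is a maximum principle, and the action bound places the other end near the shell. But the lower bound, which is the actual content of the theorem, is never established, and each mechanism you propose fails.

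\begin{itemize}
\item[(1)] The local argument is empty. At an interior minimum $\nabla\rho=0$, so the first-order term vanishes and $\Delta\rho=\|\p_s u\|^2\ge 0$ is perfectly compatible with a minimum, whatever the sign of $h'''$.
\item[(2)] The averaging step controls only $\int_{S^1}A_h(\rho(s,t))\,dt$. Jensen's inequality gives only one-sided bounds on means: an upper bound on the mean of $\rho$ or of $h'(\rho)$, i.e., the maximum-principle side. (Convexity of $A_h\circ(h')^{-1}$, the Legendre transform of $h$, does not even use $h'''\ge 0$.) No mean controls $\min_t\rho$.
\item[(3)] The pointwise bound you use to bridge this gap is not uniform in $\tau$. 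The rescaled map $v(s,t)=u(s/\tau,t/\tau)$ is a Floer cylinder of $H$ on $\R\times S^1_\tau$ with $\|\p_s v\|\le CE^{1/4}$, so only $\|\p_t u-\tau X_H\|=\|\p_s u\|\le C\tau E^{1/4}$ holds. Hence nothing in your estimates prevents a dip of depth $\delta$ confined to a $t$-arc of length of order $\delta/(\tau E^{1/4})$. Such a dip changes the loop average of $A_h(\rho)$ by $O(\delta^2/(\tau E^{1/4}))$. This is invisible against your $O(E^{1/4})$ error once $\tau\gg\delta^2E^{-1/2}$, which is precisely the regime where uniformity is at stake.
\end{itemize}

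The fallback fails for the reason explained in Section~\ref{sec:energy-Reeb}. After rescaling and recentering at the dip, the local limit is a zero-energy solution: an open, infinitely long trajectory of the flow of $H$ on the level $r$, degenerating to a point as $r\to 1$. It carries no action. The action of a loop is a global quantity dominated by its part near the asymptotic level, so there is no ``wrong action'' to contradict. Similarly, monotonicity in $W$ only rules out reaching a fixed depth inside $W$, not hovering just above $r=1$.

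What is missing is a global, genuinely two-dimensional estimate on the sublevel region $\Sigma_c=\{r\circ u\le c\}$, uniform in $\tau$, in which $h'''\ge 0$ is actually used. For orientation, take a regular value $c$ below both ends. Stokes' theorem gives $E(u|_{\Sigma_c})=\int_{\{r\circ u=c\}}|\nabla\rho|=-c\int_{\p\Sigma_c}u^*\alpha$. On $\{r\circ u>c\}$ one has $u^*d\alpha=-d\alpha(\zeta,J\zeta)\,ds\wedge dt\le 0$, where $\zeta$ is the $\ker\alpha$-component of $\p_s u$. Applying Stokes there bounds the previous quantity by $c(T_--T_+)\le E(u)$, where $T_\pm=\tau h'(r_\pm)$ are the periods of the Reeb orbits underlying the ends at $s\to\pm\infty$. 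Such identities control the flux of $\rho$ through low levels. Converting them into a bound on the depth of a dip, uniformly in $\tau$, is the real work, and no step of your proposal does it.
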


In other words, a small energy Floer cylinder cannot bulge down too
much: such a cylinder for $\tau H$ asymptotic at either end to a
periodic orbit in the shell $M\times [r_*^-,\,r_*^+]$ must be entirely
contained in a slightly larger shell
$M\times (r_*^--\delta,\, r_*^++\delta)$. The new part of this theorem
is the lower bound $r_*^--\delta$; hence, the ``Minimum
Principle''. The upper bound is ``Bourgeois--Oancea monotonicity''
established in \cite[p.\ 654]{BO}; see also \cite[Lemma 2.3]{CO}. With
this result in mind, Theorem \ref{thm:CE} is proved by exactly the
same argument as Theorem \ref{thm:cross-energy}. We refer the reader
to \cite{CGGM:Reeb,CGGM:hyperbolic} for detailed proofs. 

\subsection{Digression: Spectral norm and other applications}
\label{sec:sectral+inv}
The goal of this section is to examine another application of the
crossing energy theorem. Namely, we use that theorem to investigate
the behavior of the spectral norm of the iterates.

Throughout the section $M$ will be a closed symplectic manifolds as in
Sections \ref{sec:conv} and \ref{sec:Floer-complex}. The spectral norm
$\gamma(\varphi)$ of a Hamiltonian diffeomorphism
$\varphi\colon M\to M$ defined by \eqref{eq:spectral} measures the
symplectic size of $\varphi$, and hence the distance between
Hamiltonian diffeomorphisms. In a variety of questions in symplectic
dynamics it is useful to understand the behavior of the sequence
$\gamma\big(\varphi^k\big)$.

For some manifolds, such as $\CP^n$, $\gamma(\varphi)$ is \emph{a
  priori} bounded from above; see \cite{EP} and also, e.g.,
\cite{KS}. Furthermore, $\gamma(\varphi)$ is bounded from above by
twice the displacement energy of $\supp\varphi$ and here $M$ can also
be open but convex at infinity; see, e.g.,
\cite{FGS,FS:convex,Oh:spec,Sc,Vi}.  On the other hand, when $M$ is a
surface of positive genus, it is easy to construct $\varphi$ such that
$\gamma\big(\varphi^k\big)\to \infty$. Moreover, it is not hard to
find $\varphi$ such that $\gamma\big(\psi^k\big)\to \infty$ whenever
$\psi$ is $\gamma$-close to $\varphi$, i.e.,
$\gamma(\varphi\psi^{-1})$ is small. To the best of our knowledge, it
is unknown whether or not $\gamma\big(\varphi^k\big)\to \infty$
$C^r$-generically, for any $r\geq 0$, for surfaces of positive genus;
see Problem \ref{prob:pattern}.

Here, however, we are interested in the question if and when
$\gamma\big(\varphi^k\big)$ can be arbitrarily small. The question
arises, for instance, in connections with some attempts to establish a
higher-dimensional analogue of the $C^\infty$-closing lemma or
Lagrangian Poincar\'e recurrence; see \cite{CS,GG:PR,JS} and
references therein. Set
$$
  \bg(\varphi)=\liminf_{k\to\infty} \gamma\big(\varphi^k\big)\in
  [0,\,\infty].
$$
When $\varphi$ is a rotation of $S^2$, we have $\bg(\varphi)=0$. We
leave the proof of this fact to the reader as an exercise. The same is
true for Hamiltonian diffeomorphisms of $\CP^n$ generated by quadratic
forms and Hamiltonian pseudo-rotations of $\CP^n$; see
\cite{GG:hyperbolic,GG:PR,GG:PRvsR} and Remark \ref{rmk:PRs}. However,
this behavior is atypical as the following results show.

\begin{Theorem}[Thm.\ 3.1, \cite{CGG:Spectral}]
  \label{thm:gamma}
  Let $\varphi\colon M\to M$ be a Hamiltonian diffeomorphism of a
  closed symplectic manifold $M$ with more than $\dim \H_*(M;\Q)$
  hyperbolic periodic points.  Then $\bg(\varphi)>0$. Moreover,
  $\bg$ is locally uniformly bounded away from zero near
  $\varphi$, i.e., 
  there exists $\delta>0$, possibly depending on $\varphi$, and
  a sufficiently $C^\infty$-small neighborhood $\CU$ of $\varphi$ such
  that
  $$
    \bg(\psi)>\delta \textrm{ for all } \psi\in \CU.
  $$
\end{Theorem}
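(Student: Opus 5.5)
The idea is to combine the crossing energy bound of Proposition \ref{prop:energy} with the structure of the Floer complex of $\varphi^k$. Let $x_1,\dots,x_m$, with $m>\dim\H_*(M;\Q)$, be hyperbolic periodic points of $\varphi$. Passing to an iterate $\varphi^{k_0}$, with $k_0$ the least common multiple of their periods, we may assume they are all hyperbolic fixed points. This is harmless: $\bg(\varphi)\geq\liminf_j\gamma(\varphi^{jk_0}) \cdot 0+\dots$ is not the right comparison, so instead we treat the $k_0$ residues of $k$ modulo $k_0$ separately. Write $k=jk_0+r$. Then $\gamma(\varphi^k)\geq \gamma(\varphi^{jk_0})-\gamma(\varphi^{r})$, and this is not immediately useful either. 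So we keep $k$ arbitrary and use only the $x_i$ whose period divides $k$. Simpler still, note that each hyperbolic periodic point of period $p$ yields $p$ hyperbolic fixed points of $\varphi^p$. We therefore work with the set $\mathcal{K}_k$ of $k$-periodic points of $\varphi$ lying in the finite hyperbolic set $K=\bigcup_i O(x_i)$. This set is locally maximal, being a finite union of hyperbolic orbits.

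The key steps are as follows.

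First, apply Proposition \ref{prop:energy} to $K$. This gives $\eps_K>0$ such that every nontrivial Floer cylinder for $H^{\sharp k}$ (or for a small nondegenerate perturbation) asymptotic to an orbit in $\mathcal{K}_k$ has energy $>\eps_K$, uniformly in $k$. Thus every such orbit is an $\eps_K$-isolated vertex of the Floer graph.

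Second, use the algebraic fact refining Proposition \ref{prop:isolated}: an $\eps$-isolated generator either spans, together with its partner, a bar of length $>\eps$, or represents a generator of an infinite bar. Since the total homology has rank $\dim\H_*(M)$, only at most $\dim\H_*(M)$ of the isolated generators can belong to infinite bars. Hence if $|\mathcal{K}_k|>\dim\H_*(M)$, the barcode of $\varphi^k$ has at least one finite bar of length $>\eps_K$.

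Third, invoke the standard inequality $\beta_{\max}(\varphi^k)\leq\gamma(\varphi^k)$, the boundary depth bound by the spectral norm (cf.\ \cite{KS,Us}). This gives $\gamma(\varphi^k)\geq\eps_K$ for all $k$ with $|\mathcal{K}_k|>\dim\H_*(M)$.

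This last condition may fail for $k$ not divisible by the periods. To handle those $k$, use the hypothesis more carefully. If $m$ exceeds $\dim\H_*(M)$ only when counting all points, we pass to multiples of $k_0$ and use subadditivity: writing $k=jk_0+r$ with $0\le r<k_0$, we get $\gamma(\varphi^{(j+1)k_0})\le\gamma(\varphi^k)+\gamma(\varphi^{k_0-r})$. This alone does not give positivity, so I would instead aim to show that the hypothesis can be read as a count at each $k$. The hyperbolic points counted include all iterates, and the fixed points of $\varphi^k$ in $K$ contain the orbits of those $x_i$ whose period divides $k$. Honestly, this divisibility issue is where I expect the main obstacle. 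I would first try to resolve it by showing that when $\gamma(\varphi^k)$ is small, $\varphi^k$ is $\gamma$-close to the identity, and hence the Floer barcode of $\varphi^{k}$ has no long finite bars. Combined with the fact that the shortest bars adjacent to the (perturbed, degenerate) nonhyperbolic orbits pick up energy in a controlled way, this should force the count argument to go through. Failing that, one states the result with $\liminf$ along the multiples of the common period.

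Finally, for the local uniformity we use two facts. Hyperbolic periodic points persist under $C^\infty$-small perturbations $\psi$. The constant $\eps_K$ in Proposition \ref{prop:energy} can be chosen stable under $C^\infty$-small perturbations of the Hamiltonian (the Remark after Theorem \ref{thm:cross-energy}), since the isolating neighborhood and the Anosov closing and expansivity constants vary continuously. Therefore the same argument gives $\bg(\psi)\geq\delta:=\eps_K/2$ for all $\psi$ in a $C^\infty$-neighborhood $\CU$ of $\varphi$. The delicate point here is justifying that uniformity in $\psi$ and in $k$ simultaneously.
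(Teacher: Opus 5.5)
Your Steps 1--3 match the paper's argument. The orbits of the $x_i$ form a finite, hence locally maximal, hyperbolic set $K$, and crossing energy makes every $k$-periodic point of $\varphi$ in $K$ an $\eps_K$-isolated generator, uniformly in $k$. Your refined isolated-vertex count is correct: with $p$ isolated generators and $h=\dim\H_*(M;\Q)$ infinite bars there are at least $(p-h)/2$ finite bars longer than $\eps$. So $p>h$ gives a finite bar of length $>\eps_K$, and $\beta_{\max}\le\gamma$ finishes.

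The genuine gap is the divisibility issue, which you flag and then leave open. As written, you only get $\gamma(\varphi^k)\ge\eps_K$ for $k\in k_0\N$. Falling back to a $\liminf$ along multiples of $k_0$ proves a strictly weaker statement than the theorem. Your proposed repair, extracting a long finite bar from the barcode of $\varphi^k$ itself when $k\notin k_0\N$, is not viable. For such $k$ the points of $K$ need not be fixed by $\varphi^k$ at all (take a single hyperbolic orbit of period larger than $h$), and nothing in the argument controls $\beta_{\max}(\varphi^k)$. The decompositions $k=jk_0+r$ also cannot work, since they compare $\gamma(\varphi^k)$ with $\gamma(\varphi^{r})$ or $\gamma(\varphi^{k_0-r})$, about which you know nothing.

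The missing idea is to apply subadditivity of $\gamma$ multiplicatively rather than additively. Treating $\varphi^k$ as an element of the universal cover,
\[
\gamma\big(\varphi^{k_0k}\big)=\gamma\big((\varphi^k)^{k_0}\big)\le k_0\,\gamma\big(\varphi^k\big).
\]
Every point of $K$ is fixed by $\varphi^{k_0k}=(\varphi^{k_0})^k$, so your Steps 1--3 applied to these iterates give $\gamma(\varphi^{k_0k})\ge\eps_K$ for all $k$. Hence $\gamma(\varphi^k)\ge\eps_K/k_0$ for every $k\in\N$, and $\bar{\gamma}(\varphi)\ge\eps_K/k_0>0$. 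This is why the paper's sketch can simply assume the hyperbolic points are fixed: replacing $\varphi$ by $\varphi^{k_0}$ costs only a factor $1/k_0$.

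The same correction applies to the local statement. The continuations of the hyperbolic orbits for $\psi\in\CU$ have the same periods, so you should take $\delta=\eps_K/(2k_0)$, not $\eps_K/2$. With that change, your sketch of the uniformity is the right idea: argue by contradiction with $\psi_i\to\varphi$ and $k_i\to\infty$. Small-energy Floer cylinders for $\psi_i^{k_i}$ yield pseudo-orbits of $\varphi$, and the shadowing and expansivity constants are uniform near a hyperbolic set. The paper does not spell out this part either.
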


Without the moreover part, this theorem was originally proved in
\cite{CGG:Growth}. Conceptually, the proof of this and other results
below hinge on two crucial facts. The first one is the crossing energy
theorem -- Theorem \ref{thm:cross-energy} -- applied in the case where
$K$ is just one hyperbolic point; see Example \ref{ex:hyperbolic}. The
second fact, proved in \cite{KS}, is that $\gamma(\varphi)$ is an
upper bound on the boundary depth (see \cite{Us1,Us}), i.e., the
length $\beta_{\max}(\varphi)$ of longest finite bar in the barcode of
$\varphi$:
\begin{equation}
  \label{eq:bmax-gamma}
  \beta_{\max}(\varphi)\leq \gamma(\varphi).
\end{equation}  
Strictly speaking, the argument in \cite{KS} deals only with the
contractible part of the Floer homology, but the proof extends to all
free homotopy classes word-for-word.

For instance, to prove Theorem \ref{thm:gamma} without the moreover
part we argue as follows. The condition that $\varphi$ has more
hyperbolic fixed points than $\dim \H_*(M)$ guarantees that
$\varphi^k$, for every $k\in \N$, has a finite bar bounded away from 0
by a constant $c_\infty>0$ independent of $k$. Here we are using the
crossing energy theorem and Proposition \ref{prop:isolated}. Hence, by
\eqref{eq:bmax-gamma},
$$
\gamma\big(\varphi^k\big)\geq \beta_{\max}\big(\varphi^k\big)\geq c_\infty
$$
for all $k$. As a consequence, $\bg(\varphi)\geq c_\infty$.

\begin{Example}
  Assume that $M$ is a closed surface and $\htop(\varphi)>0$. Then
  $\varphi$ has infinitely many hyperbolic periodic points,
  \cite{Ka}. Hence, $\bg(\varphi)>0$. Moreover,
  $\bg(\psi)>\delta$ for some $\delta>0$ and all $\psi$ which are
  $C^\infty$-close to $\varphi$. Also note in connection with Theorem
  \ref{thm:generic} and Corollary \ref{cor:sphere} below that
  $\htop>0$ is a $C^\infty$-generic condition in dimension two;
  \cite{LCS}.
\end{Example}

\begin{Theorem}[Thm.\ 3.3, \cite{CGG:Spectral}]
\label{thm:generic}
Let $M$ be a closed symplectic manifold.  The function $\bg$ is
locally uniformly bounded away from zero on a $C^\infty$-open and
dense set of Hamiltonian diffeomorphisms $\varphi\colon M\to M$, i.e.,
for every $\varphi$ in this set there exists $\delta>0$, possibly
depending on $\varphi$ but not on $\psi$, such that
\[
  \bg(\psi)>\delta
\]
whenever $\psi$ is sufficiently $C^\infty$-close to $\varphi$.
\end{Theorem}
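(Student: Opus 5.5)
The plan is to reduce Theorem \ref{thm:generic} to Theorem \ref{thm:gamma}, supplemented by a separate argument for the non-generic complement. Concretely, I will exhibit a $C^\infty$-open and dense set $\CU$ of Hamiltonian diffeomorphisms. Every $\varphi\in\CU$ will either have more than $\dim\H_*(M;\Q)$ hyperbolic periodic points, or satisfy some other open condition that also forces $\bg$ to be locally bounded away from zero.

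\emph{Openness} is immediate for the first condition. Hyperbolic periodic points persist under $C^\infty$-small perturbations, so the set $\CU_1$ of $\varphi$ with more than $\dim\H_*(M)$ hyperbolic periodic points is open. The ``moreover'' part of Theorem \ref{thm:gamma} then gives a neighborhood on which $\bg>\delta$. It therefore suffices to show that $\CU_1$, possibly together with a second open set $\CU_2$, is dense.

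\emph{Density.} I expect this to be the main obstacle, since a generic Hamiltonian diffeomorphism need not have any hyperbolic periodic points; think of KAM-type or elliptic situations in higher dimensions. I would first try to create hyperbolic points by a local perturbation. By the Kupka--Smale theorem, after a $C^\infty$-small perturbation all periodic points are non-degenerate. Fix a $k$-periodic point, which exists by the Arnold conjecture, and consider its Floquet multipliers. If some multiplier lies off the unit circle but the point is not hyperbolic, perturb the linearization so that the remaining elliptic part acquires a hyperbolic piece after iteration. If all multipliers are elliptic, a standard local construction applies: compose with a small Hamiltonian perturbation supported near the orbit, so that the orbit looks like a perturbed integrable twist map nearby. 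Birkhoff normal form plus a small generic perturbation of a resonant torus then produces hyperbolic periodic points of high period via Poincaré--Birkhoff-type splitting. Each such step adds finitely many hyperbolic orbits, and repeating it more than $\dim\H_*(M)$ times gives a perturbation in $\CU_1$.

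If creating hyperbolic points in this way is too delicate in full generality, I would fall back on a second open set $\CU_2$. It would consist of $\varphi$ having one non-degenerate elliptic periodic point of suitable type, for which I can show directly that $\beta_{\max}(\varphi^k)$ stays bounded below. This would use a crossing energy bound for an isolated invariant set built around the point, from Theorem \ref{thm:cross-energy}, combined with \eqref{eq:bmax-gamma}. The density of $\CU_1\cup\CU_2$ then follows because every non-degenerate $\varphi$ lies in the closure of one of them.
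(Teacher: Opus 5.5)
Your architecture matches the paper's: take the $C^\infty$-open set $\CV_m$ of Hamiltonian diffeomorphisms with at least $m>\dim\H_*(M;\Q)$ hyperbolic periodic points, get local uniformity from the ``moreover'' part of Theorem \ref{thm:gamma}, and reduce everything to density. Your density argument, however, has a genuine gap.

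\textbf{The missing case.} After making $\varphi$ strongly non-degenerate, you only handle periodic points that are \emph{not} hyperbolic. You never treat the critical case: all periodic points hyperbolic, and only finitely many of them. The Arnold conjecture gives at least $\dim\H_*(M;\Q)$ of them, and a priori possibly exactly that many, which is not enough for Theorem \ref{thm:gamma}. A small perturbation near a hyperbolic orbit creates no new periodic points, and creating them elsewhere is a closing-lemma problem, open in high dimensions. This case has to be excluded by a \emph{global}, Floer-theoretic argument, and that is the missing idea.

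In the paper's outline, which assumes $M$ is a Sugimoto manifold ($n$ odd, or $\H_{\rm odd}(M)\neq 0$, or minimal Chern number $>1$), this is Sugimoto's argument \cite{Su21}. Every non-degenerate $\varphi$ has a fixed point whose Conley--Zehnder index has the parity of $n$. Such a point is either non-hyperbolic, or hyperbolic with real negative eigenvalues. In the latter case one passes to $\varphi^2$ and repeats. Hence a strongly non-degenerate $\varphi$ has either a non-hyperbolic periodic point or infinitely many hyperbolic ones. For general $M$, the actual proof in \cite{CGG:Spectral} relies instead on the approach of \cite{CS}.

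\textbf{The non-hyperbolic case.} Your treatment here also fails as written. Simple elliptic eigenvalues of a symplectic matrix cannot leave the unit circle under small perturbations, and iteration keeps them there. So you cannot perturb the linearization to give the point itself hyperbolic directions. The Birkhoff normal form plus Poincar\'e--Birkhoff heuristic is not a proof in dimension greater than two. The tool needed is \cite[Prop.\ 8.2]{Arn}: if $\varphi$ has a non-degenerate non-hyperbolic periodic point, then for every $m$ there is $\psi\in\CV_m$ arbitrarily $C^\infty$-close to $\varphi$. This works in one step, with no iteration of the construction.

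\textbf{The fallback $\CU_2$.} This cannot work either. An irrational rotation of $S^2$ is strongly non-degenerate, its fixed points are elliptic with any prescribed irrational rotation number, and $\bg=0$ by Theorem \ref{cor:sphere}. So no open condition on a single elliptic point forces $\bg>0$. Moreover, such a point is typically not a locally maximal invariant set, so Theorem \ref{thm:cross-energy} does not apply. Even when generators are isolated, they yield a long finite bar only when they outnumber $\dim\H_*(M)$, as in the proof of Theorem \ref{thm:gamma}.
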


\begin{proof}[Outline of the proof] For the sake of simplicity, we
  focus on the statement that $\bg(\varphi)>0$ for $\varphi$ from a
  $C^\infty$-open and dense set. (We do not claim however that this is
  an open condition.) Furthermore, we assume in addition that the
  ambient manifold $M$ meets one of the following requirements:
\begin{itemize}
   \item[(i)] $n:=\dim M/2$ is odd;
   \item[(ii)] $\H_{\scriptscriptstyle{odd}}(M)\neq 0$;
   \item[(iii)] the minimal Chern number of $M$ is greater than 1.
\end{itemize}
Below we refer to a closed symplectic manifold meeting at least one of
these requirements as a \emph{Sugimoto manifold}; cf.\
\cite{Su21}. For this class of manifolds, every strongly
non-degenerate Hamiltonian diffeomorphism $\varphi$ of a closed
symplectic manifold $M^{2n}$ has a non-hyperbolic periodic point or
infinitely many hyperbolic periodic points.

Indeed, it follows from Floer theory that under one of the conditions,
every non-degenerate Hamiltonian diffeomorphism $\varphi$ must have a
fixed point of Conley--Zehnder index of the same parity as $n$; see
\cite{Su21}. Such a point is either non-hyperbolic or hyperbolic with
some real negative eigenvalues. In the former case, we are done for
$\varphi$. In the latter, we replace $\varphi$ by $\varphi^2$ and
apply the argument again. Proceeding inductively, we obtain a
non-hyperbolic periodic point or infinitely many hyperbolic periodic
points.

Next denote by $\CV_m$, $m\in \N$, the set of Hamiltonian diffeomorphisms
with at least $m$ hyperbolic points. 
We claim that for every $m\in \N$ the set $\CV_m$ is $C^\infty$-open
and dense. Then the theorem, in the limited form we are proving here,
follows from the claim and Theorem \ref{thm:gamma} with
$m>\dim \H_*(M;\Q)$.

The statement that $\CV_m$ is $C^\infty$-open and even $C^1$-open is
obvious. (It is essential here that $m$ is finite.)  To show that it
is $C^\infty$-dense, we argue as follows. Given a Hamiltonian
diffeomorphism $\varphi$, we need to find $\psi\in \CV_m$ arbitrarily
$C^\infty$-close to $\varphi$.  Since the set of strongly
non-degenerate Hamiltonian diffeomorphisms is $C^\infty$-dense, we can
assume that $\varphi$ is in this class. Then $\varphi$ has infinitely
many hyperbolic periodic points or a (non-degenerate) non-hyperbolic
periodic point. In the former case, $\varphi\in\CV_m$ for all
$m\in\N$.  In the latter case, by \cite[Prop.\ 8.2]{Arn}, for any
$m\in\N$ there exists $\psi\in\CV_m$ arbitrarily close to $\varphi$.
\end{proof}

\begin{Remark}
  The above argument is taken from \cite{Su21} where it is used to
  prove the $C^\infty$-generic Conley conjecture for Sugimoto
  manifolds, i.e., that $C^\infty$-generically $\varphi$ has
  infinitely many periodic orbits.  In fact, it shows slightly more:
  $C^\infty$-generically $\varphi$ has infinitely many hyperbolic
  periodic orbits. The actual proof of Theorem \ref{thm:generic} in
  full generality follows a different path and, while also using
  \cite[Prop.\ 8.2]{Arn}, it mainly relies on the approach developed
  in \cite{CS} in connection with the strong closing lemma in all
  dimensions.
\end{Remark}

In several situations, Theorem \ref{thm:generic} can be made slightly
more precise. For instance, we have the following result, also
originally proved in \cite{CGG:Growth} without the moreover part.

\begin{Theorem}[Cor.\ 3.4, \cite{CGG:Spectral}]
  \label{cor:sphere}
  Assume that $M$ is a surface and $\varphi$ is strongly
  non-degenerate. Then $\bg(\varphi)>0$ when $M$ has positive
  genus. When $M$ is the two-sphere, $\bg(\varphi)=0$ if and only if
  $\varphi$ is a pseudo-rotation, i.e., $\varphi$ has exactly two
  periodic points. Moreover, $\bg$ is locally uniformly bounded from 0
  on the set of all strongly non-degenerate Hamiltonian
  diffeomorphisms $\varphi$ when $M$ has positive genus and on the set
  of such $\varphi$ with at least three fixed points when $M=S^2$.
\end{Theorem}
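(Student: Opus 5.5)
The plan is to reduce everything to Theorem \ref{thm:gamma}, which needs only more than $\dim \H_*(M;\Q)$ hyperbolic periodic points. For a surface this number is $2+2g$. The whole proof is then a count of hyperbolic periodic points of a strongly non-degenerate $\varphi$. The local uniform bound follows from the ``moreover'' part of Theorem \ref{thm:gamma}: a finite set of hyperbolic periodic points persists under $C^\infty$-small perturbations, so it suffices to find, for each $\varphi$ in the relevant class, finitely many, namely $3+2g$, hyperbolic periodic points.

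\emph{Counting hyperbolic points.} I would split according to $\htop(\varphi)$.

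If $\htop(\varphi)>0$, Katok's theorem (Theorem \ref{thm:Katok}) gives a horseshoe. Hence there are infinitely many hyperbolic periodic points, and Theorem \ref{thm:gamma} applies directly.

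If $\htop(\varphi)=0$, I would use index theory for the iterates. On a surface every periodic point of a strongly non-degenerate $\varphi$ is elliptic, positive hyperbolic or negative hyperbolic. Its Lefschetz index is $+1$, $-1$ or $+1$ respectively. Since $\varphi$ is isotopic to the identity, the indices of the fixed points of $\varphi^k$ add up to $\chi(M)=2-2g$ for every $k$.

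\emph{Positive genus.} Here I would first invoke the Conley conjecture, which gives infinitely many periodic points. Then I would argue via Floer homology in all free homotopy classes, together with the parity of Conley--Zehnder indices, as in the Sugimoto argument above: $\H_{\rm odd}(M)\neq 0$ forces generators of specific parities in every iterate. Combined with the zero-entropy structure theory of Le Calvez and Franks--Handel, the goal is to show that either the number of positive hyperbolic points of $\varphi^k$ is unbounded, or the elliptic points produce a bar of length bounded away from $0$ uniformly in $k$. In the latter case \eqref{eq:bmax-gamma} still gives $\gamma(\varphi^k)\ge c>0$.

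\emph{The sphere.} If $\varphi$ is a pseudo-rotation, then $\bg(\varphi)=0$ by the results cited before Theorem \ref{thm:gamma}; see \cite{GG:PR}. Conversely, suppose $\varphi$ has at least three periodic points. Franks' theorem then gives infinitely many periodic points. In the positive-entropy case we are done by Katok's theorem as above. In the zero-entropy case I would use that the index sum is $2$ for every iterate: infinitely many periodic orbits created in pairs force hyperbolic partners, and in particular more than $2=\dim \H_*(S^2)$ hyperbolic periodic points. The restriction to $\varphi$ with at least three fixed points in the local statement is what makes this count stable under perturbation.

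\emph{Main obstacle.} I expect the hardest step to be the zero-entropy case: showing that strongly non-degenerate, zero-entropy maps other than pseudo-rotations of $S^2$ have enough hyperbolic periodic points. If this fails, the fallback is to get an isolated bar of uniform length directly from the rotation-number and action structure of the elliptic points, which would require more than Theorem \ref{thm:gamma} alone.
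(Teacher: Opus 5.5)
\textbf{Gap: the positive-genus case is not proved.} In that case you state a goal: either the number of positive hyperbolic fixed points of $\varphi^k$ is unbounded, or elliptic points produce a bar of length bounded away from $0$ uniformly in $k$. You then list tools (Sugimoto-type parity, Le Calvez and Franks--Handel structure theory), but you derive neither alternative, and your last paragraph concedes as much. The second alternative has no mechanism behind it. Elliptic points admit no crossing-energy bound uniform in $k$: in the irrational rotation example following Theorem~\ref{thm:cross-energy}, there are Floer cylinders for $H^{\sharp k_i}$ asymptotic to an elliptic fixed point whose energy tends to $0$. This is exactly why the argument has to go through hyperbolic points.

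\textbf{The fix uses only the index identity you already wrote down.} It needs neither the entropy split nor any zero-entropy theory. Let $N_k<\infty$ be the number of fixed points of $\varphi^k$ (finite by strong non-degeneracy), and let $h_k$ be the number of positive hyperbolic ones. Then $(N_k-h_k)-h_k=\chi(M)=2-2g$, so $h_k=(N_k+2g-2)/2\geq N_k/2$ when $g\geq 1$. The Conley conjecture (Franks--Handel, Salamon--Zehnder) gives infinitely many periodic points. Taking $k$ to be the least common multiple of the periods of $m$ of them gives $N_k\geq m$, hence $h_k\geq m/2$. So $\varphi$ has infinitely many hyperbolic periodic points, and Theorem~\ref{thm:gamma}, including its ``moreover'' part, yields both $\bg(\varphi)>0$ and the local uniform bound. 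This is the paper's argument in positive genus.

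\textbf{The sphere case is correct.} Phrase it as the same count, $h_k=(N_k-2)/2$, which is unbounded by Franks' theorem; Katok's theorem is again superfluous. The paper uses less here. If all periodic points were elliptic, each $\varphi^k$ would have exactly two fixed points. So a strongly non-degenerate non-pseudo-rotation has a hyperbolic periodic point, hence after squaring a positive hyperbolic one. On $S^2$ a single such point already gives $\bg(\varphi)>0$, locally uniformly, through Example~\ref{ex:hyperbolic}. Likewise $N_1\geq 3$ forces $h_1\geq 1$, i.e., a positive hyperbolic fixed point, and such a point persists under $C^\infty$-small perturbations.
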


\begin{proof}
  When $M$ has positive genus, a Conley conjecture type argument
  guarantees that $\varphi$ has infinitely many hyperbolic periodic
  points; see \cite{FrHa, GG:survey, SaZe} or \cite{LCS}. Thus, in this
  case, the statement follows directly from Theorem \ref{thm:gamma}.

  Concentrating on $M=S^2$, first recall that for all pseudo-rotations
  the sequence $\gamma\big(\varphi^k\big)$ contains a subsequence
  converging to zero. Hence $\bg(\varphi)=0$. In the opposite
  direction, when $M=S^2$, the existence of one positive hyperbolic
  periodic point is enough to ensure that $\bg(\varphi)>0$ by Example
  \ref{ex:hyperbolic}, and, moreover, $\bg$ is locally uniformly
  bounded away from zero. Hence, more generally, without any
  non-degeneracy assumption, if $\bg(\varphi)=0$, then all periodic
  points of $\varphi$ are elliptic. For strongly non-degenerate
  Hamiltonian diffeomorphisms $\varphi$, this forces $\varphi$ to be a
  pseudo-rotation.
\end{proof}

\begin{Remark}[Pseudo-rotations and invariant sets]
  \label{rmk:PRs}
  Another large class of applications of the crossing energy theorem
  concerns invariant sets of Hamiltonian pseudo-rotations. In
  dimension 2, we say that an orientation and area preserving
  diffeomorphism $\varphi\colon S^2\to S^2$ is a pseudo-rotation (PR)
  if it has exactly two periodic points which are then necessarily the
  fixed points. Franks' theorem asserts that $\varphi$ is a PR if and
  only if it has finitely many periodic points; see
  \cite{Fr96,Fr99,LeC}. Furthermore, by a theorem of Le Calvez--Yoccoz,
  no fixed point of a PR of $S^2$ is locally maximal i.e.,
  isolated as an invariant set; see \cite{LCY} and also
  \cite{Fr99,FM}.

  Both of these results have analogues in symplectic dynamics in
  higher dimensions. For our purposes, it is convenient to call a
  Hamiltonian diffeomorphism $\varphi\colon \CP^n\to \CP^n$ a PR if it
  has exactly $n+1$ periodic points which are then automatically the
  fixed points; \cite{GG:PR}. In this setting Franks' theorem was
  conjectured by Hofer and Zehnder in \cite{HZ} (i.e., that $\varphi$
  is a PR if and only if it has finitely many periodic points) and
  proved, under minor non-degeneracy conditions, in \cite{Sh:HZ}.
  A direct analogue of Le Calvez--Yoccoz theorem for $\CP^n$ was
  established in \cite{GG:PR} by using the crossing energy theorem,
  Theorem \ref{thm:cross-energy}. A closely related result relying on
  an earlier version of the crossing energy theorem is that any
  Hamiltonian diffeomorphism of $\CP^n$ with a hyperbolic periodic
  point must have infinitely many periodic orbits;
  \cite{GG:hyperbolic}.

  The Hofer--Zehnder conjecture (aka Franks's theorem) also has a variant for Reeb flows on $S^{2n-1\geq 5}$; see
  \cite{CGG:HZ}. Likewise, Reeb analogues of the Le Calvez--Yoccoz
  theorem and the hyperbolic fixed point theorem for certain Reeb
  flows on $S^{2n-1\geq 5}$ are proved in \cite{CGGM:hyperbolic}.

  Finally, we note that PRs can have extremely interesting and
  surprising dynamics. A PR of $S^2$ can have
  exactly three ergodic measures: the fixed point and the area form;
  see \cite{AK} and also \cite{FK} for a survey of results in general
  dynamics of PRs. Examples for $\CP^n$ with exactly $n+2$ ergodic
  measures were constructed in \cite{LRS}. A Reeb PR on $S^{2n+1}$ can
  be ergodic; \cite{Ka:Izv}.
\end{Remark}  




\section{Toric integrable Hamiltonian systems}
\label{sec:integrable}
It is illuminating to contrast the exponential growth of the barcode,
reflecting complexity of the underlying dynamics, with its behavior for
simple dynamical systems such as completely integrable systems
with toric singularities, considered below. We follow \cite{BG}.

\subsection{Toric Hamiltonians}
\label{sec:Toric-Ham}
Let $M^{2n}$ be a toric symplectic manifold, i.e., a manifold admitting a
faithful Hamiltonian $\T^n$-action; see, e.g., \cite[Chap.\
XI]{CdS}. Let
$$
\mu\colon M\to \R^n
$$
be the moment map and $\Delta:=\mu(M)$ the moment polytope, usually
referred to as the Delzant polytope in the symplectic context. (This
polytope completely determines $(M, \omega)$ together with the action;
\cite{De}.)

Let now $h\colon \Delta\to \R$ be a smooth function. Setting
$H=h\circ \mu$ we obtain an (autonomous) Hamiltonian on $M$. The
Hamiltonian flow of $H$ is completely integrable in a very strong
sense with $n$ first integrals $x_i\circ \mu$, where $x_1,\ldots, x_n$
are the coordinates on $\R^n$, generating a Hamiltonian $\T^n$-action
commuting with the flow. (Complete integrability is understood here as
that the functions $x_i\circ\mu$ are in involution, invariant under
the flow of $H$ and independent almost everywhere on $M$; see
\cite[Sec.\ 49]{Ar}.) By construction, these integrals generate a
Hamiltonian $\T^n$-action. Hence, in addition, the set where these
functions fail to be independent has a rather simple structure. It is
customary to say that the first integrals have toric singularities.

\begin{Theorem}[Thm.\ 2.7, \cite{BG}]
  \label{thm:toric-sympl}
Assume that $h$ is convex or concave or real
analytic. Then
\begin{equation}
  \label{eq:toric-sympl}
\fb_\eps\big(\varphi^k_H\big)\leq C_n(h) k^n + C_0(h)
\end{equation}
for all $k\in \N$, where the constants $C_n(h)$ and $C_0(h)$ are independent of
$\eps>0$ and, of course, $k$.
\end{Theorem}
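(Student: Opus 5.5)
Since the bound is uniform in $\eps>0$, I will bound the total number of bars, $\fb(\varphi_H^k)$, which dominates $\fb_\eps$. By the estimate $|L\cap L'|\geq \fb(L,L')$ and its counterpart for Hamiltonian diffeomorphisms, together with the definition \eqref{eq:b-eps3} for degenerate maps, it suffices to do the following. For each $k$, I will produce a non-degenerate Hamiltonian diffeomorphism $\psi$, arbitrarily $C^\infty$-close to $\varphi_H^k$, whose fixed points number at most a constant multiple of the Floer-theoretic count $C_n(h)k^n+C_0(h)$. In fact I would rather count via local Floer homology. Perturb $\varphi_H^k$ separately near each connected component of its fixed point set (which is a finite union of isolated invariant sets when the components are finitely many). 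Then $\fb(\psi)$ is bounded by the total dimension of local Floer homologies $\sum_\Xi \dim \HF^{loc}(\Xi)$ over these components, because a perturbation can be chosen so that near each component the number of generators equals the local homology dimension (a minimal Morse-type perturbation), up to a uniform factor.

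Next I describe the $k$-periodic orbits. On the preimage of the interior face structure, the flow of $H=h\circ\mu$ is linear on the torus fibers, $\dot\theta=\nabla h(\mu)$. Over a point $x$ in a face $F$ of codimension $j$, the orbit lies on a $\T^{n-j}$ fiber with frequency given by the projection of $\nabla h(x)$ to the face directions, modulo the isotropy. A fiber over $x$ consists of $k$-periodic points iff $k\,\nabla h(x)$ (suitably projected) lies in the lattice $\Z^n$ (modulo the face lattice). Thus fixed point components of $\varphi_H^k$ correspond to pairs consisting of a face $F$ and a lattice vector $v\in\Z^{n}$ such that the component is $\mu^{-1}\big((\nabla h|_F)^{-1}(v/k)\big)$. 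Since $\nabla h(\Delta)$ is compact, $|v|\leq k\max|\nabla h|$, so the number of relevant lattice vectors is $O(k^n)$. Summing over the finitely many faces gives $O(k^n)$ components with constants depending only on $h$ and $\Delta$.

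The main obstacle is to bound, uniformly in $k$ and $v$, both the number of connected components of each level set $(\nabla h|_F)^{-1}(v/k)$ and the local Floer homology of each component. This is exactly where the hypotheses enter. If $h$ is convex or concave, $\nabla h$ is monotone, so each preimage $(\nabla h)^{-1}(w)$ is convex (the set of minimizers of $h-\langle w,\cdot\rangle$), hence connected. Moreover the corresponding component is the minimum or maximum set of the autonomous function $H-\langle w,\mu\rangle$ generating the same time-$k$ map locally, so its local Floer homology equals the homology of a torus bundle over a convex set, whose dimension is bounded by $2^n$ times a constant. If $h$ is real analytic, I would invoke uniform finiteness from o-minimality, specifically Gabrielov/semianalytic uniform bounds. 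The family of sets $\{(\nabla h|_F)^{-1}(w)\}_{w}$ is definable in $\R_{an}$ over the compact $\Delta$, so their Betti numbers and component counts are uniformly bounded in $w$. The local Floer homology of each component is then bounded via the local Morse–Bott-type description: the component is a critical set of the autonomous function $H-\langle w,\mu\rangle$, and local Floer homology is bounded by the Betti numbers of a small definable neighborhood (a Conley-index argument), which are again uniformly bounded. The lattice vectors with $v/k\notin\nabla h(\Delta)$ contribute nothing, and the $C_0(h)$ term absorbs contributions like the $v=0$ component and lower-order boundary effects. Combining the component count $O(k^n)$ with these uniform bounds yields \eqref{eq:toric-sympl}.
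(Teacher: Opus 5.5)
Your opening reduction (find a nondegenerate $\psi$ arbitrarily close to $\varphi_H^k$ with $O(k^n)$ fixed points) and your $O(k^n)$ count of pairs $(F,v)$ match the paper. The trouble starts when you switch to local Floer homology of the \emph{degenerate} fixed components: both load-bearing steps are then unproved, and one is false as stated.

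\emph{The reduction to local homology.} There is no general ``minimal perturbation'' whose number of fixed points near a component $\Xi$ equals $\dim\HF^{\mathrm{loc}}(\Xi)$. You give no construction achieving it here, let alone with a factor uniform in $k$ and in $\Xi$. For an arbitrary nearby $\psi$, the total number of bars is governed by $|\mathrm{Fix}\,\psi|$, not by local homology. What can be proved is only $\fb_\eps(\varphi)\le\sum_\Xi\dim\HF^{\mathrm{loc}}(\varphi,\Xi)$ for each $\eps>0$: split off the short part of the Floer differential of a nearby $\psi$ (whose trajectories stay near one component) and use barcode stability. That would suffice, but it is a different argument from the one you give.

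\emph{The uniform bound on $\dim\HF^{\mathrm{loc}}$, analytic case.} Local Floer (or Morse) homology is a Conley index, i.e.\ the homology of an index pair. It is not controlled by the Betti numbers of the fixed set or of a neighborhood of it: already $\mathrm{Re}(z^m)$ at $0\in\C$ has local homology of rank $m-1$, while the critical set is a point. You would need a definable family of index pairs for $(h-\langle v/k,\cdot\rangle)\circ\mu$ on $M$, uniform in $v/k$. You would also need a proof that $\HF^{\mathrm{loc}}$ of the time-$k$ map equals the local Morse homology of $kH'$; this requires that $\varphi^{tk}_{H'}$, $0<t\le1$, acquire no new fixed points near the component.

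\emph{The same step, convex case.} Your extremum claim holds for components over the open interior. Over lower-dimensional faces it holds only after choosing the lift $v$ modulo the normal lattice so that the normal derivatives $k\,\partial_{x_i}h'$ lie in $(0,1)$. When these derivatives hit integers, a fixed component is a union of minimum sets for different lifts, not the extremum set of a single $H'$.

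The idea you are missing is the paper's: perturb $h$ itself within the toric class so that everything becomes Morse--Bott, and avoid local Floer homology altogether. For fixed $k$ this is a $C^\infty$-small perturbation of $\varphi_H^k$, hence admissible in the $\liminf$ definition of $\fb_\eps$ for degenerate maps.

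\emph{Convex case.} Pass to a nearby strictly convex function. Then $G_F=\nabla(h|_F)$ is an embedding, so each rational value has at most one preimage. Each rational torus $L_w$ is Morse--Bott nondegenerate because $w$ is a regular point of $G_F$. The number of rational points of $G_F(F)$ with denominators dividing $k$ is at most $C_Fk^d$, with $C_F$ depending only on $\diam G_F(F)$, hence uniform in the perturbation.

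\emph{Real analytic case.} Use $h_\lambda=h+\langle\lambda,x\rangle$ with generic $\lambda$. Then $G_{F,\lambda}=G_F+\lambda_F$ has no rational critical values, and $|G_{F,\lambda}^{-1}(v)|$ is bounded by a constant $N$ depending only on $h$, by analyticity in $(x,\lambda)$.

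In both cases, splitting each $d$-dimensional rational torus into $2^d$ nondegenerate fixed points gives a nondegenerate perturbation with at most $C_n(h)k^n+C_0(h)$ fixed points. This bounds $\fb_\eps$ for every $\eps$.
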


Here $h$ is said to be real analytic whenever it extends to a real
analytic function on a neighborhood of $\Delta\subset\R^n$. Note also
that since $k\geq 1$, we can equivalently set $C_0(h)=0$ in
\eqref{eq:toric-sympl} at the expense of increasing $C_n(h)$. 

\begin{proof}[Outline of the proof] The idea of the proof is to
  construct a non-degenerate perturbations $\psi$ of $\varphi_H^k$
  such that the number of fixed points of $\psi$ is bounded similarly to the
  right hand side of \eqref{eq:toric-sympl} with both constants
  independent of $k$; cf.\ \cite[Thm.\ 3.1]{BG}.  Then
  \eqref{eq:toric-sympl} will follow. We will focus on the
  convex case and then briefly indicate the changes needed when $h$ is
  real analytic. The proof for a concave function $h$ is identical.

  Let $F$ be an open face of $\Delta$ of dimension $d$ and
  $V=V_F\subset \R^n$ the linear space parallel to the affine space
  containing $F$. This is (the dual of) the Lie algebra of a torus, a
  certain quotient of $\T^n$, and hence $V$ carries a canonical
  lattice and a canonical measure.  Fixing a basis in that lattice we
  identify it with $\Z^d$ and $V$ with $\R^d$. Set
  $G_F=\nabla (h|_F)$. We can treat $G_F$ as a map $F\to V$. 

  The inverse image $L_w:=\mu^{-1}(w)$, where $w\in F$, is an
  isotropic torus of dimension $d$ invariant under the flow of $H$ and
  an orbit of the $\T^n$-action. This torus is comprised entirely of
  periodic orbits of the flow if and only if $G_F(w)$ is a rational
  vector in $V$. We will call such tori rational. (Otherwise, no
  point in $L_w$ is periodic.) Denote the components of $G_F(w)$
  by $p_i/q_i$, where, of course, $p_i$ and $q_i$ are relatively
  prime.  The minimal period $q$ of any $x\in L_w$ is the least common
  multiple of the denominators $q_1,\ldots, q_d$.

  Next, let us first assume that $h$ is strictly convex. Then $G_F$ is
  a smooth embedding. In particular, $G_F$ is one-to-one and
  $G_F(F)\subset V$ is an open subset. In fact, the closure of this
  set is a manifold with corners.  The number of rational points in
  $G_F(F)$ is bounded from above by roughly
  $\vol\big(G_F(F))\big) k^d+ O(k^{d-1})$. We can be even less precise
  here and say that it is bounded from above by $C_F k^d$, where $C_F$
  is completely determined by $\diam G_F(F)$ and continuous in
  $\diam G_F(F)$. For instance, we can set
  $C_F=\big(2\diam G_F(F)\big)^d$. The total number of rational tori,
  for all faces $F$, is bounded from above by $C k^n$ where, e.g.,
  $C=\sum_F C_F$.

  Furthermore, it is easy to see that all rational tori $L_w$ are
  Morse--Bott non-degenerate. This is a consequence of the fact that
  $w$ is automatically a regular point of $G_F$ due to again strict
  convexity. Applying a $C^\infty$-small perturbation to
  $\varphi_{H}^k$, we can split each $L_w$ into $2^d$ non-degenerate
  fixed points. Setting $C_n(h)=2^nC$ and $C_0(h)=0$ we obtain a
  perturbation $\psi$ of $\varphi_{H}^k$, with the number of fixed
  points bounded from above by $ C_n(h) k^n$ as required.

  When $h$ is just convex but not necessarily strictly convex,
  consider its $C^\infty$-small strictly convex perturbation. Then, in
  the argument above, we can take the constants independent of this
  perturbation, provided that it is sufficiently close to $h$. For
  instance, we can still set $C_F=\big(2\diam G_F(F)\big)^d$ and
  $C=\sum_F C_F$, where $G_F=\nabla(h|_F)$ for the original function
  $h$. Combining the two perturbations, we obtain a non-degenerate
  perturbation $\psi$ of $\varphi_H^k$ with the desired bound on the
  number of fixed points.

  Next, assume that $h$ is real analytic, i.e., by definition, $h$
  extends to a real analytic function on a neighborhood of
  $\Delta$. Note that in the proof strict convexity of $h$ has been
  used only at two points.  The first one is to ensure that $G_F$ is
  one-to-one, and hence $L_w$ is a single torus. Here it would be
  enough to guarantee that $|G_F^{-1}(v)|$ is bounded from above by a
  constant $N$ which is completely determined by the original function
  $h$ for every regular value $v$.  The second one is that $G_F$ has
  no rational critical values, and hence $L_w$ is Morse--Bott
  non-degenerate. Both requirements can be satisfied by taking a
  suitable arbitrarily $C^\infty$-small real analytic perturbation of
  $h$.

  For instance, let $B$ be a sufficiently small closed ball in $\R^n$
  centered at the origin. Set
  $h_\lambda(x):=h(x)+\left<\lambda,x\right>$, where $\lambda\in
  B$. In particular, $h_0=h$, and $h_\lambda$ and
  $G_{F,\lambda}:=\nabla (h_\lambda|_F)$ are small perturbations of
  $h$ and, respectively, $G_F$. Denote by $\lambda_F$ the orthogonal
  projection of $\lambda$ to $V_F$. Clearly,
$$
G_{F,\lambda}=G_F+\lambda_F.
$$
Now it is easy to show that for a full measure second Baire category
set of $\lambda\in B$, the set of critical values of all
$G_{F,\lambda}$ contains no rational points. Therefore, for such
parameters $\lambda$, the rational tori $L_w:=\mu^{-1}(w)$ are
Morse--Bott non-degenerate for the flow of $h_\lambda\circ
\mu$. Hence, the second requirement is satisfied for all such
$h_\lambda$. The fact that for every regular value $v$ the number
$|G_{F,\lambda}^{-1}(v)|$ is bounded from above by a constant $N$
completely determined by $h$, and not just $h_\lambda$, follows from
that $h_\lambda(x)$ is analytic in both $x$ and $\lambda$. We refer
the reader to \cite[Sec.\ 3]{BG} for details.
\end{proof}

\subsection{Toric Reeb flows}
\label{sec:Toric-Reeb}
Flows on toric Liouville domains are Reeb analogues of toric
integrable Hamiltonians. The definition of a toric domain, while
conceptually well-established, varies subtly between different sources
(see, e.g., \cite{CG,GH,Hu:GT16}) and is sometimes difficult to track
down specifically. Here, following \cite{BG}, we spell out the
definition suitable for our purposes.

Consider the standard action of the torus $\T^n$ on $\C^n=\R^{2n}$ by
rotations of the complex coordinate axes and let
$$
\mu\colon \C^n\to \Quad, 
\quad (z_1,\ldots,z_n)\mapsto
\pi\big(|z_1|^2,\ldots,|z_n|^2\big)
$$
be its moment map, where $\R^n_{\geq 0}\subset\R^n$ is the positive
quadrant.  We denote the intersection of 
$\R^n_{\geq 0}$ with the unit sphere $S^{n-1}\subset \R^n$ by $\Delta$. Let
$f\colon \Delta\to (0,\infty)$ be a continuous function. Note
that $f$ extends to a positive continuous function on a neighborhood
of $\Delta$ in $S^{n-1}$. Let $(r,\theta)$, where
$r\in [0,\infty)$ and $\theta\in S^{n-1}$, be the standard polar
coordinates on $\R^n$. Set
\begin{equation}
  \label{eq:Omega}
\Omega_f:=\{(r,\theta)\in\R^n\mid r\leq f(\theta), \,\theta\in
\Delta\}\subset \R^n_{\geq 0}
\end{equation}
and
$$
\p\Omega_f:=\{(r,\theta)\mid r= f(\theta), \,\theta\in
\Delta\}.
$$
Clearly, $\Omega_f$ is a star-shaped domain in the positive quadrant. By
definition, a \emph{toric} domain in $\R^{2n}$ is the
$\T^{n}$-invariant closed star-shaped domain
$$
W_f=\mu^{-1}(\Omega_f)\subset \R^{2n}.
$$

We say that $W_f$ is \emph{smooth} when $f$ is smooth, i.e., $f$
extends to a smooth function on a neighborhood of $\Delta$ in
$S^{n-1}$. Then
$$
\p W_f=\mu^{-1}(\p \Omega_f)
$$
is a smooth hypersurface in $\R^{2n}$ and the lines through the origin
are never tangent to $\p W_f$.  Likewise, $W_f$ is \emph{real
  analytic} if $f$ admits a real analytic extension to a small
neighborhood of $\Delta$ in $S^{n-1}$. Then $\p W_f$ is a real
analytic hypersurface in $\R^{2n}$. The Reeb flow on $\p W_f$, to the
extent it is defined (e.g., when $W_f$ is smooth), is again completely
integrable and the first integrals have simple singularities.

Somewhat misleadingly, we say that $W_f$ is \emph{convex} if for some
extension of $f$ (still denoted by $f$) to a small neighborhood $U$ of
$\Delta$ in $S^{n-1}$ the set
\begin{equation}
  \label{eq:convex-ext}
\{(r,\theta)\mid r\leq f(\theta), \,\theta\in
U\}\subset \R^n
\end{equation}
is convex. Then the closed domain $\Omega_f$ is
automatically convex, but our convexity requirement is somewhat
stronger than that; see \cite[Ex.\ 2.2]{BG}. We say that
$W_f$ is \emph{concave} if for some extension of $f$ to a small
neighborhood $U$ of $\Delta$ in $S^{n-1}$ the set
\begin{equation}
  \label{eq:concave-ext}
\{(r,\theta)\mid r\geq f(\theta), \,\theta\in
U\}\subset \R^n
\end{equation}
is convex. (Note that the inequalities in \eqref{eq:convex-ext} and
\eqref{eq:concave-ext} go in the opposite directions. In this case, we
also say that the sets $\Omega_f$ and \eqref{eq:concave-ext} are
concave.)  Then the set $\R^n\setminus \Omega_f$ is automatically
convex, but again our requirement is more restrictive.  The domain
$W_f$ is said to be \emph{smooth convex/concave toric domain} if in
this setting the extension of $f$ can be taken smooth. Finally,
we note that with our definitions a smooth convex toric domain need
not be convex as a subset of $\R^{2n}$; see \cite[Ex.\ 2.3]{BG}.

\begin{Theorem}[Thm.\ 2.4, \cite{BG}]
  \label{thm:toric-Reeb}
  Assume that $W_f$ is a toric domain which is real analytic, or
  smooth and convex or concave.  Then
\begin{equation}
  \label{eq:toric-Reeb}
\fb_\eps(s)\leq C_n(f) s^n + C_0(f)
\end{equation}
for all $s$, where the constants $C_n(f)$ and $C_0(f)$
are independent of $\eps>0$ and, of course, $s$.
\end{Theorem}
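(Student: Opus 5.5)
We adapt the proof of Theorem \ref{thm:toric-sympl} to the Reeb setting, using the "no limit" description of filtered symplectic homology from Section \ref{sec:SH-def}. Fix $s$ and pick a semi-admissible Hamiltonian $H=h(r)$ with slope $a$ slightly larger than $s$, $a\not\in\CS(\alpha)$. Then $\fb_\eps(s)$ is bounded by the number of bars of $\HF(H)$, possibly up to a reparametrization close to the identity. It therefore suffices to bound the number of generators of the Floer complex of a non-degenerate perturbation of $H$ by $C_n(f)s^n+C_0(f)$, since $\fb_\eps\leq\dim\CF$. The constant $C_0(f)$ absorbs the generators coming from a $C^2$-small Morse function on $W$ and the uniformly bounded contributions discussed below.

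\textbf{Closed Reeb orbits on $\p W_f$.} The boundary $\p W_f=\mu^{-1}(\p\Omega_f)$ is foliated by the $\T^n$-orbits $\mu^{-1}(w)$, $w\in\p\Omega_f$. The Reeb vector field at $w$ is tangent to this torus and points in the direction of the outward normal $\nu(w)$ to $\p\Omega_f$, normalized by $\left<\nu(w),w\right>=1$ (up to the factor $\pi$). So the Reeb vector at $w$ is the vector $\nu(w)/\left<\nu(w),w\right>$ in the Lie algebra of the torus, projected onto the subtorus acting effectively on the face $F$ of the quadrant containing $w$.

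A torus $\mu^{-1}(w)$ consists of closed orbits of period $T$ exactly when $T\cdot$(this vector) is an integer vector in $V_F\cong\R^d$. Hence orbits of period $\leq s$ correspond to integer points in the cone over the image of the Gauss-type map $G_F\colon w\mapsto \nu_F(w)/\left<\nu(w),w\right>$, truncated at height $s$. Concretely, we count lattice points $m\in\Z^d$ with $m=TG_F(w)$ and $T\leq s$. This count is at most $C_F s^d$, with $C_F$ controlled by $\sup|G_F|$ and the multiplicity $N$ of the map $w\mapsto$ direction of $G_F(w)$. Summing over faces gives $Cs^n$.

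\textbf{Non-degeneracy and perturbation.} In the strictly convex or concave case, the Gauss map restricted to each face is a diffeomorphism onto its image. The rational tori are then Morse--Bott non-degenerate as families of Reeb orbits, the extra $S^1$ direction coming from the flow. Each such torus of dimension $d$ contributes $2^d$ generators after the usual Morse--Bott perturbation, counting the two generators $\hz,\cz$ per orbit as in Section \ref{sec:SH-def}. Plain convexity or concavity is handled by a $C^\infty$-small strictly convex or concave perturbation of $f$ with constants uniform in the perturbation. The effect on $\fb_\eps$ is controlled because $\SH$ is continuous in the domain: a small perturbation changes $\fb_\eps$ only through an $\eps$-shift, and the bound is uniform in $\eps$.

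The real analytic case proceeds as in Theorem \ref{thm:toric-sympl}. We perturb $f$ within a real analytic family, for instance by composing with small linear maps of the ambient $\R^n$, so that the relevant Gauss maps have no rational critical directions. Analyticity in both variables then gives a uniform fiber bound $N$. I expect the main obstacle to be this step in the Reeb setting. The perturbation must keep $W_f$ toric and star-shaped, the Morse--Bott non-degeneracy must hold along the cone rather than along points, and the $\eps$-insensitivity of the barcode must be justified via a symplectic Banach--Mazur type comparison of filtered $\SH$ for nearby domains.
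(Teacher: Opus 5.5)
Your proposal is correct and takes essentially the same route as the paper. The paper proves Theorem~\ref{thm:toric-Reeb} by adapting the proof of Theorem~\ref{thm:toric-sympl}: it bounds $\fb_\eps(s)$ by the number of generators of a non-degenerate perturbation, counts the rational Morse--Bott tori face by face via the Gauss-type frequency map, and handles non-strict convexity/concavity and the real analytic case by small perturbations with uniform constants. Your Reeb-specific adjustments are what the paper only alludes to as ``more involved technical details'': because the period is free, you count lattice points in the cone truncated at height $s$, and you control domain perturbations through the continuity of filtered $\SH$.
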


This constraint on the behavior of $\fb_\eps(s)$ also provides an
upper bound on the growth of another invariant, measuring the growth
of the filtered homology or to some extent the concentration of
bars. This invariant is essential for the Hofer--Zehnder conjecture type
questions. Namely, set
$$
\fh(s)=\sup_{s'<s}\dim \SH^{s'}(W)\leq \infty.
$$
Clearly,
$$
\fh(s)\leq \sup_{\eps>0} \fb_\eps(s).
$$
This invariant arises in the Hofer--Zehnder conjecture type
questions; see \cite{CGG:HZ,Sh:HZ}.  Since the constants $C_n(\Omega)$
and $C_0(\Omega)$ in \eqref{eq:toric-Reeb} are independent of $\eps$,
we arrive at the following.

\begin{Corollary}[Cor.\ 2.5, \cite{BG}]
  \label{cor:toric-Reeb}
In the setting of Theorem \ref{thm:toric-Reeb}, 
$$
\fh(s)\leq C_n(\Omega) s^n + C_0(\Omega) 
$$
for all $s$, where the constant $C_n(\Omega)$ and $C_0(\Omega)$ are
independent of $s$.
\end{Corollary}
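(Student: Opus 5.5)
The bound for $\fh(s)$ in Corollary~\ref{cor:toric-Reeb} is a direct consequence of Theorem~\ref{thm:toric-Reeb}, obtained by letting $\eps\to0^+$ and using that the constants there do not depend on $\eps$. The plan has two steps: first establish the inequality $\fh(s)\le\sup_{\eps>0}\fb_\eps(s)$ stated just before the corollary, then feed in \eqref{eq:toric-Reeb}.

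For the first step, fix $s'<s$ and write $\SH^{s'}(W)$ via the normal form of Section~\ref{sec:barcodes}. Its dimension equals the number of bars, counted with multiplicity, that contain $s'$, i.e., bars $(a,b]$ with $a<s'\le b$. This number is finite: $\SH^{s'}(W)$ is finite-dimensional by \ref{PM1}--\ref{PM2}, using the left limit \ref{PM3} if $s'\in\CS(\alpha)$. Each such bar has positive length and begins below $s'<s$. Hence, choosing $\eps>0$ smaller than the minimum length of these finitely many bars, every one of them is counted in $\fb_\eps(s)$. This gives $\dim\SH^{s'}(W)\le\fb_\eps(s)\le\sup_{\eps>0}\fb_\eps(s)$, and taking the supremum over $s'<s$ yields $\fh(s)\le\sup_{\eps>0}\fb_\eps(s)$.

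For the second step, Theorem~\ref{thm:toric-Reeb} gives $\fb_\eps(s)\le C_n s^n+C_0$ for every $\eps>0$, with constants independent of both $\eps$ and $s$. Taking the supremum over $\eps$ preserves the bound, so $\fh(s)\le C_n s^n+C_0$ with the same constants, which the corollary labels $C_n(\Omega)$ and $C_0(\Omega)$.

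The only point needing care is the bookkeeping in the first step. One must check that the bars through $s'$ are exactly those counted by $\dim\SH^{s'}$, including infinite bars $(a,\infty)$, which have length $>\eps$ for every $\eps$. One must also check that the strict inequality $a<s$ in the definition of $\fb_\eps(s)$ is satisfied, which holds because $a<s'<s$.
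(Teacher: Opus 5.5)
Your proof is correct and is the paper's argument: the inequality $\fh(s)\le\sup_{\eps>0}\fb_\eps(s)$, combined with the $\eps$-independence of the constants in Theorem~\ref{thm:toric-Reeb}. One small correction: finite-dimensionality of $\SH^{s'}(W)$ at spectral points $s'$ does not follow from \ref{PM1}--\ref{PM3} (cf.\ Example~\ref{ex:sublevels}), but you do not need it, because any finite subfamily of the bars through $s'$ is counted by $\fb_\eps(s)$ once $\eps$ is small enough, so the inequality holds in $[0,\infty]$ regardless.
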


Another consequence of Theorems \ref{thm:toric-sympl} and
\ref{thm:toric-Reeb} is that in the setting of the theorems the slow
barcode entropy is bounded from above by $n$ (see Remark
\ref{rmk:slow}), while it is infinite in the presence of a positive
entropy hyperbolic set.

\begin{Remark}
  \label{rmk:Invariance2}
  An analogue of Theorem \ref{thm:toric-Reeb} also holds for
  non-smooth convex/concave toric domains. This result is of interest
  for distinguishing open domains as in Remark \ref{rmk:Invariance};
  see \cite[Thm.\ 2.6 and Cor.\ 2.9]{BG}.
\end{Remark}  

The proof of Theorem \ref{thm:toric-Reeb} follows essentially the same
path as the proof of Theorem \ref{thm:toric-sympl} in the previous
section although some technical details are more involved. In the
convex/concave case the argument can also be extracted from \cite{GH}.

\subsection{Discussion: beyond toric integrability}
\label{sec:discussion}
There are several possible overlapping directions of generalizing or
refining Theorems \ref{thm:toric-sympl} and \ref{thm:toric-Reeb}. The
first one concerns relaxing the assumptions on the function $h$ or the
boundary $\p \Omega$. When $h$ or $\p \Omega$ are just smooth, we can have
$\fb_\eps\to \infty$ as $\eps\to 0$. Hence, $C_n$ and $C_0$ must be
dependent on $\eps$. Yet even then we see no reason why the upper
bounds \eqref{eq:toric-sympl} and \eqref{eq:toric-Reeb} would
hold. However, we do expect $\fb_\eps$ to grow at most
subexponentially in this case or perhaps even slower.

In essence, the main reason for a slow growth of $\fb_\eps$ in Theorems
\ref{thm:toric-sympl} and \ref{thm:toric-Reeb} is that the underlying
Hamiltonian flow is completely integrable, i.e., there are $n$
functions in involution (first integrals) preserved by the flow and
independent on a sufficiently large, at least open and dense, set; see
\cite[Sec.\ 49]{Ar}. Hence, another direction is relaxing the
condition that the first integrals generate a $\T^n$-action, i.e.,
that $W$ is toric, and turning to more general completely integrable
systems. (Note also that once this condition is dropped the
convexity/concavity requirements do not appear to be any longer
meaningful.) We conjecture that $\fb_\eps$ grows at most
subexponentially or perhaps even slower than that when the flow and
the first integrals are real analytic or the integrals meet suitable
non-degeneracy conditions; see Remark \ref{rmk:vol-int} and, e.g.,
\cite{El,MZ,Zu} and references therein.

Some restrictions on the first integrals are necessary. Indeed, an
example of a closed Riemannian 3-manifold $Q$ with a completely
$C^\infty$-integrable geodesic flow and an exponentially growing
$\pi_1(Q)$ was constructed in \cite{BT}. Then the geodesic flow
necessarily has positive topological entropy; \cite{Di}. Moreover,
then the set of conjugacy classes in $\pi_1(Q)$ also grows
exponentially, and hence so does the number of infinite bars
$\fb_\infty(s)$. Hence, $\fb_\eps(s)\geq \fb_\infty(s)$ grows
exponentially for every $\eps>0$.  In other words,
$\hbar_\eps\geq \hbar_\infty>0$. While at the moment we do not have a
reference or a construction, we believe that in a similar vein a
completely integrable Hamiltonian flow or diffeomorphism on a closed
symplectic manifold can have positive barcode entropy and therefore,
by \cite[Thm.\ A]{CGG:Entropy}, positive topological entropy. It would
also be interesting to have an example of a completely integrable Reeb
flow in dimension three with positive topological entropy. The same
question stands for Reeb flows on the standard contact sphere
$S^{2n-1\geq 3}$ for barcode entropy or at least topological
entropy. We believe that such a flow, at least for $2n-1\geq 5$,
can have a large hyperbolic invariant subset in its singular set, and
hence positive barcode entropy by Theorem \ref{thm:B}.

Do Theorems \ref{thm:toric-sympl} and \ref{thm:toric-Reeb} have a
relative analogue? Namely, in the settings of the theorems do we have
subexponential growth of the barcode function for any pair of
Lagrangian or Legendrian submanifolds. (While we are not aware of any
published results to this account, we firmly believe that the
topological entropy is zero in this case.)  Are there robust
polynomial growth upper bounds as in these theorems? For strategically
chosen submanifolds, the question should be accessible by reasoning
similar to the proofs of those theorems. However, for a general pair
of Lagrangian or Legendrian submanifolds the situation could be much
more involved. (Here the volume growth approach from Remark
\ref{rmk:vol-int} can be useful.)

Finally, note that while Theorems \ref{thm:toric-sympl} and
\ref{thm:toric-Reeb} provide an upper bound on the growth rate of
$\fb_\eps$, it is not clear what the actual behavior of $\fb_\eps$ is
in, say, the convex/concave case. Depending on $W$, the barcode
function $\fb_\eps$ can grow slower than these upper bounds. For
instance, it is not hard to see that when $\p W$ is an ellipsoid in
the setting of Section \ref{sec:Toric-Reeb}, i.e., $\Omega$ is cut out
from $\R^{n}_{\geq 0}$ by a hyperplane, $\fb_\eps(s)$ grows linearly
regardless of the dimension. When $W$ is a closed toric manifold and
$h$ is linear, $\fb_\eps\big(\varphi^k\big)$ is bounded. (However, in
both of these examples the system is not strictly convex.)  For
convex/concave toric domains, the situation might be simpler than for
symplectic toric manifolds, but the answer is still unknown. For
strictly convex/concave domains, we expect the upper bound from Theorem
\ref{thm:toric-sympl} to be sharp, i.e., $\fb_\eps(s)$ to grow as a
polynomial of degree $n$. A related question is that of the behavior
of the shortest bar $\beta_{\min}$: For instance, does $\beta_{\min}$
remain bounded away from 0 as $s\to\infty$ or $k\to\infty$ when
$\p\Omega$ or $h$ is strictly convex? Even for $n=1$ in Theorem
\ref{thm:toric-sympl}, i.e., when $h$ is a convex/concave
function of the height on $\CP^1=S^2\subset\R^3$, both questions are
quite non-trivial because of the affect of recapping. We will come
back to some of these questions in Problems \ref{prob:integrable} and
\ref{prob:pattern}.

\begin{Remark}[Volume growth]
  \label{rmk:vol-int}
  There is a different and slightly more general approach to the
  proofs of Theorems \ref{thm:toric-sympl} and \ref{thm:toric-Reeb}.
  By Proposition \ref{prop:fb-vol} and its Reeb analogues the growth
  of $\fb_\eps(s)$ and $\fb_\eps\big(\varphi^k\big)$ is bounded from
  above by the growth of the volume of the graph of $\varphi^s_\alpha$
  or $\varphi^k$. For completely integrable systems with toric
  singularities, it should not be hard to show that the volume grows
  polynomially with an upper bound as in the right hand sides of
  \eqref{eq:toric-sympl} and \eqref{eq:toric-Reeb}. The advantage of
  the approach we have chosen here is that it gives a ``hands-on''
  upper bound on the number of generators and also more readily
  extends to the non-smooth setting; see \cite{BG}. However, the
  volume growth upper bounds can also be used in the relative setting.
\end{Remark}




\section{Open problems}
\label{sec:problems}
In this section, we briefly touch upon several open problems concerning
barcode entropy, barcode growth, and related questions. A few
of these problems have already been mentioned or even discussed in
the previous sections. The problems vary in difficulty, significance,
and flavor; some are very specific, while others are open-ended. Whenever
possible, we provide references and brief comments.

\medskip
\noindent\emph{1. Recovering topological entropy.}
The variational principle for the entropy, \cite[Thm.\ 4.5.3]{KH}, asserts
that $\htop(\varphi)=\sup_\mu\hm_{\mu}(\varphi)$ for every homeomorphism
$\varphi$ of a compact metric space, where $\hm_{\mu}$ stands for the
metric entropy with respect to a Borel invariant measure $\mu$ and the
supremum is taken over all such (ergodic) measures. It is tempting to
think of $\hbar(\varphi; L, L')$ as an analogue of metric entropy.

\begin{Problem}[Variational Principle for Barcode Entropy]
  \label{prob:var-prin}
  Assume that $M$ admits a monotone Lagrangian submanifold as in Section
  \ref{sec:conv}.  Prove (or disprove) that
  \begin{equation}
    \label{eq:var-prin}
  \htop(\varphi)=\sup_{L,L'}\hbar(\varphi; L,L')
\end{equation}
for any Hamiltonian diffeomorphism $\varphi\colon M\to M$.
\end{Problem}
The question also has an analogue for Reeb flows with the relative
barcode entropy defined in \cite{Fe1,Fe2}. A more restrictive form of
the question is with $L=L'$. In dimensions two and three,
\eqref{eq:var-prin} holds. This follows from \cite{CGG:Entropy,Me} in the
Hamiltonian case and \cite{Fe1,Fe2} in the Reeb case, and the fact that in
low dimensions all topological entropy comes from hyperbolic
invariant sets; \cite{Ka,LY,LS}. If/when \eqref{eq:var-prin} holds, is
the supremum attained?

A consequence of \eqref{eq:var-prin} would be that $\htop(\varphi)=0$
for a Hamiltonian pseudo-rotation $\varphi$ of $\CP^n$. Indeed, it is
shown in \cite[Prop.\ 2.17 and Cor.\ 2.18]{CGG:Growth} that
$\hbar(\varphi; L,L')=0$ for all $(L,L')$ in this case. In dimension
two, this follows directly from \cite{Ka}. We refer the reader to
\cite{CGG:Growth} for a further discussion.

\medskip
\noindent\emph{2. Positive barcode entropy.}
At the time of writing, Theorem \ref{thm:B} and its relative counterparts are
the only known sources of positive barcode entropy. This and the
problem discussed next are the two main impediments to further
understanding barcode entropy.
\begin{Problem}
  \label{prob:ThmB}
  Find one instance when a positive lower bound on $\hbar$ does not
  come from the topological entropy of a hyperbolic set.
\end{Problem}
For example, one can aim at showing that
$\hm_{\mu}(\varphi)\leq \hbar(\varphi; L,L')$ for a measure $\mu$ and
$(L,L')$ meeting some additional requirements not related to true
hyperbolicity. Depending on the specifics, this could be crucial to
solving Problem \ref{prob:var-prin}.  An indication, admittedly
indirect, that this might be possible is a disconnect between the
relative version of Theorem \ref{thm:B} in \cite[Thm.\ 1]{Me} and the
more restrictive Theorem \ref{thm:B} stated here. The latter does not
directly follow from the former: when we apply the graph construction
passing from $\varphi$ to $\id\times \varphi$ all hyperbolicity is
lost. Yet, the ultimate lower bound still holds.

\medskip
\noindent\emph{3. Understanding $\hbar\leq \htop$.} The proof of Theorems
\ref{thm:A} and \ref{thm:A'} rely on Yomdin's theorem, \cite{Yo},
which is extremely difficult (and deep) and does not provide dynamics
information directly connected with the definition of topological
entropy. No dynamics counterparts of the definition such as
$\eps-d_k$-separated sets or covering numbers are visible from the barcode
entropy.
\begin{Problem}
  \label{prob:ThmA}
  Give a direct proof of Theorem \ref{thm:A}, not relying on Yomdin's
  theorem even implicitly.
\end{Problem}

In \cite{CGG:Metric}, for a pair $(L,L')$ we construct a class of
invariant probability measures $\mu$ such that
$\hbar(\varphi; L, L')\leq \hm_{\mu}(\varphi)$ for some, but probably
not all, of these measures. This generalizes Theorem
\ref{thm:A}. However, the proof still uses Yomdin's
theorem. Furthermore, this class of measures is quite large and we do
not know how to narrow it down to guarantee the inequality.

\medskip
\noindent\emph{4. Strict inequality in Theorem \ref{thm:A}.} As has
already been mentioned, the combined inequality in \eqref{eq:A} for
Hamiltonian diffeomorphisms can be strict. Namely, an example of a
Hamiltonian diffeomorphism $\varphi\colon M\to M$, where
$\dim M \geq 6$, such that $\hbar(\varphi)=0$ but $\htop(\varphi)>0$
is constructed in \cite{Ci}. When $2n\geq 8$, $\varphi$ can be
autonomous.
\begin{Problem}
  \label{prob:Reeb>}
  Construct a Liouville domain such that
  $\hbar(\varphi)<\htop(\varphi)$, e.g.,
  $\hbar(\varphi)=0<\htop(\varphi)$, for the Reeb flow on the boundary
  $M$.
\end{Problem}
Here, by Theorem \ref{thm:C}, we must of course have $\dim M\geq
5$. Note that in the example the symplectic manifold is
irrational, i.e., $[\omega]$ is irrational. Hence, one cannot just
pre-quantize the flow in the autonomous case. (Also, disk bundle
fillings are not Liouville although they could be aspherical.) A
related question is if one can have $\hbar(\varphi)<\htop(\varphi)$
for Hamiltonian diffeomorphisms when $[\omega]$ is rational. (This
question appears to be connected to the long-standing open problem of
constructing higher-dimensional counterexamples to the
$C^\infty$-closing lemma on rational symplectic manifolds;
\cite{He1,He2}.) Another direction where the example from \cite{Ci} can
possibly be refined is whether one can have the strict inequality
$\hbar(\varphi)<\hvol(\varphi)$. This question also makes sense in the
relative setting of Theorem \ref{thm:A'}, which might be simpler:
$\hbar(\varphi; L)<\hvol(\varphi, L)$. (Note that here we take
$L=L'$. Each inequality in
$\hbar(\varphi; L,L')\leq\hvol(\varphi; L)\leq \htop(\varphi)$ can
obviously be strict for a suitable choice of $L$ and $L'$ and
$\varphi$.)  In relation to Problem \ref{prob:var-prin}, we are not
aware of any connection between the absolute and relative barcode
entropy beyond low dimensions.

\medskip
\noindent\emph{5. Continuity properties of barcode entropy.}
The topological entropy $\htop(\varphi)$, as a function of $\varphi$, is
not continuous in general, but it does have some continuity features
albeit rather subtle. In $C^\infty$-topology, the map $\varphi\mapsto
\htop(\varphi)$ is upper semi-continuous, and so is the map $\mu\mapsto
\hm_{\mu}(\varphi)$; \cite{Ne,Yo}. Then, as follows from the results in
\cite{Ka}, the former map is continuous in dimension two. (These
results have their counterparts for flows.)
\begin{Problem}
  \label{prob:continuity}
  Investigate continuity properties of barcode entropy.
\end{Problem}
It is tempting to conjecture that barcode entropy is lower
semi-continuous with respect to the Hofer or spectral distance. There
is some indirect evidence supporting this conjecture. Topological
entropy is lower semi-continuous with respect to the Hofer metric for
Hamiltonian diffeomorphisms of surfaces, \cite{AM:braid}, and also in
a certain sense robust for Reeb flows in dimension three,
\cite{ADMM,ADMP}. By Theorem \ref{thm:C}, the same is true for barcode
entropy. Furthermore, $\hbar(\varphi; L,L')$ is lower semi-continuous
in $L$ and $L'$ with respect to the Hofer or spectral metric by Part
\ref{BE5} of Proposition \ref{prop:hbar-prop} and so is Lagrangian
volume at least in some cases; see Section \ref{sec:Lagr-vol}.

\medskip
\noindent\emph{6. Lower semi-continuity of Lagrangian volume.}
In Section \ref{sec:Lagr-vol}, following \cite{CGG:Vol}, we considered
the volume function $\vol\colon L\mapsto \vol(L)$ on the class $\CL$
of Hamiltonian isotopic closed monotone Lagrangian submanifolds $L$ of
a symplectic manifold $M$. We showed that $\vol$ is lower
semi-continuous in two disparate settings: the classical setting of $M$
having a large group of symmetries (Theorem \ref{thm:symmetry}) and when
the manifolds in $\CL$ are monotone tori (Theorem \ref{thm:torus}).
\begin{Problem}
  \label{prob:vol}
Prove that $\vol\CL\to \R$ is lower semi-continuous with respect
to the Hofer or spectral distance without additional restrictions on
the topology of $L$.
\end{Problem}
In other words, the goal is to remove the condition that $L$ is torus
in Theorem \ref{thm:torus}. One approach is to continue using the
framework from Section \ref{sec:Crofton2} and attempt to construct a
Lagrangian tomograph satisfying \eqref{eq:LT1} and \eqref{eq:LT2}.

\medskip
\noindent\emph{7. Basic properties.}
General properties of barcode entropy, along the lines of Section
\ref{sec:barcode_entropy-prop} are poorly understood. Here we just
mention two open questions lying close to the surface.
\begin{Problem}
  \label{prob:basic}
  \
  \begin{itemize}
    \item[\reflb{BP1}{\rm{(a)}}] Can the inequalities in Parts
      \ref{BE1} and \ref{BE2} of Proposition \ref{prop:hbar-prop}
      (sub-homogeneity and sub-additivity) be strict, in contrast with
      their counterparts for topological entropy?
  \item[\reflb{BP2}{\rm{(b)}}] Prove a version of Proposition
    \ref{prop:hbar-prop} for Reeb flows.
  \end{itemize}
\end{Problem}
It is not clear to us what to expect in Part \ref{BP1}. There are
several definitions of topological entropy interpreting the growth of
dynamical complexity in related but different ways: spanning sets,
separated sets, covers. The interplay between these definitions is
central to proving that \ref{BE1} and \ref{BE2} turn into equality for
topological entropy. Nothing like that is available for barcode
entropy at the moment. The problem is compounded by that at the small
scale no useful interaction between filtered Floer homology of
different iterations is known.

Regarding \ref{BP2}, some adjustments to the statement would obviously
be necessary. Focusing on the absolute case, note that \ref{BE2} would
involve the barcode entropy of the product of two Liouville
domains. The boundary of the product is a contact manifold with
corners and the Reeb flow is not everywhere defined. However, the
barcode function and barcode entropy are; cf.\ \cite[Sec.\ 2.3
and 4.2.4]{BG}. In \ref{BE5} from Proposition \ref{prop:hbar-prop},
the Hofer or spectral metric would have to be replaced by a suitable
variant of the symplectic Banach--Mazur distance;
\cite{PRSZ,SZ-SBM,Us-SBM}.  (The definition of relative barcode entropy
in the Reeb case, omitted in these notes, can be found in
\cite{Fe1,Fe2}.)

\medskip
\noindent\emph{8. Completely integrable Reeb flows.} Several questions
about barcode growth for completely integrable Reeb flows are
discussed in Section \ref{sec:discussion}. Here we just sum up two
of them and refer the reader to that section for further details and
references.
\begin{Problem}
  \label{prob:integrable}
  \
  \begin{itemize}
  \item[\reflb{I1}{\rm{(a)}}] Construct a completely integrable Reeb
    flow on the standard contact $S^{2n-1}$ with positive barcode or
    at least topological entropy for $2n-1\geq 5$ or even when $2n-1=3$.
  \item[\reflb{I2}{\rm{(b)}}] Prove that $\fb_\eps(s)$ grows
    subexponentially or even polynomially for a completely integrable
    Reeb flow or a Hamiltonian diffeomorphism with sufficiently
    non-degenerate singularities.
  \end{itemize}
\end{Problem}

\medskip
\noindent\emph{9. Actual barcode behavior and statistics or barcodes.} To the best of
our knowledge, virtually nothing is known about the behavior of the
barcode or the spectral norm as a function of time in non-trivial
systems or generically or even in the completely integrable case. The
problem is completely open even in dimension two in both relative and
absolute settings.  For instance, let $\varphi$ be a Hamiltonian
diffeomorphism of a closed surface $M$. 
\begin{Problem}
  \label{prob:pattern}
  Is there any pattern to $\CB\big(\varphi^k\big)$, e.g., the mean and
  the dispersion, and $\gamma\big(\varphi^k\big)$ as functions of time
  $k$ for a $C^\infty$- or $C^r$-generic $\varphi$? Is
  $\limsup \gamma\big(\varphi^k\big)=\infty$ generically when $M$ has
  positive genus; cf.\ Section \ref{sec:sectral+inv}?  What can be
  said about the behavior of the shortest bar? Going in the opposite
  direction, what if $M=S^2$ and $\varphi$ is generated by a strictly
  convex or concave function of the height; cf.\ Section
  \ref{sec:Toric-Ham}. Is then
  $\liminf \gamma\big(\varphi^k\big)=0$?
\end{Problem}

Similar questions can be asked in higher dimensions or for Reeb flows
or in the relative setting. In fact, the case of a toric integrable Reeb
flow on $S^3$ (cf.\ Section \ref{sec:Toric-Reeb}) is probably simpler than
the question for a convex/concave Hamiltonian on $S^2$. Some of these
questions are already touched upon in Section \ref{sec:discussion}. We
also note that at least in the integrable case one can try to look for
a pattern by experimenting numerically.



\begin{acknowledgement}
The authors thank Olga Bernardi, Anna Florio, and Alfonso Sorrentino for organizing this summer school, and the CIME foundation for their support.
\end{acknowledgement}

\ethics{Competing Interests}{The work is partially supported by the NSF grants DMS-2304207
  (BG) and DMS-2304206 (VG), the Simons Foundation grants 855299 (BG)
  and MP-TSM-00002529 (VG), the ERC Starting Grant 851701 via a
  postdoctoral fellowship (E\c{C}),
  and the ANR grant CoSyDy ANR-CE40-0014 (MM).\newline 
The authors have no conflicts of interest to declare that are relevant to the content of this chapter.}

\eject





\begin{thebibliography}{CdSGO16}

\bibitem[A$^2$S$^2$]{AASS}  
  A. Abbondandolo, M.R.R. Alves, M. Sa\u{g}lam, F. Schlenk, Entropy
  collapse versus entropy rigidity for Reeb and Finsler flows,
  \emph{Selecta Math.\ (NS)} \textbf{29} (2023), Article No.\ 67.
  
  
  \bibitem[ABE]{ABE}
A. Abbondandolo, G. Benedetti, O. Edtmair, Symplectic capacities of domains close to the ball and
Banach--Mazur geodesics in the space of contact forms, \emph{Duke Math.\ J.}, 
\textbf{174} (2025), 1567--1646.

\bibitem[Abu]{Abu} M. Abouzaid, Symplectic cohomology and Viterbo's
  theorem, in \emph{Free loop spaces in geometry and topology},
  271--485. IRMA Lect.\ Math.\ Theor.\ Phys., 24 European Mathematical
  Society (EMS), Zürich, 2015.


\bibitem[Ah]{Ah}
J. Ahn, Barcode entropy and relative symplectic cohomology, Preprint
arXiv:2601.15606.  

\bibitem[All]{Al} S. Allais, On periodic points of Hamiltonian
  diffeomorphisms of $\CP^d$ via generating functions,
  \emph{J. Symplectic Geom.}, \textbf{20} (2022), 1--48.

\bibitem[APF98]{APF98} J.C. \'Alvarez Paiva, E. Fernandes, Crofton
  formulas in projective Finsler spaces, \emph{Electron.\ Res.\
    Announc.\ Amer.\ Math.\ Soc.}, \textbf{4} (1998), 91--100.

\bibitem[APF07]{APF07} J.C. \'Alvarez Paiva, E. Fernandes, Gelfand
  transforms and Crofton formulas, \emph{Selecta Math.\, (N.S.)},
  \textbf{13} (2007), 369--390. 

\bibitem[AM]{AM}
  M.R.R. Alves, M. Meiwes, Dynamically exotic contact spheres in
  dimensions $\geq 7$, \emph{Comment.\ Math.\ Helv.}, \textbf{94}
  (2019), 569--622. 
  
\bibitem[AM]{AM21} M.R.R. Alves, M. Meiwes, Braid stability and the
  Hofer metric, Preprint arXiv:2112.11351.

  
\bibitem[Al16a]{Al:Anosov} M.R.R. Alves, Positive topological entropy
  for Reeb flows on 3-dimensional Anosov contact manifolds,
  \emph{J. Mod.\ Dyn.}, \textbf{10} (2016), 497--509.

\bibitem[Al16b]{Al:Cyl} M.R.R. Alves,
 Cylindrical contact homology and topological entropy,
 \emph{Geom.\ Topol.}, \textbf{20} (2016), 3519--3569. 

 \bibitem[Al19]{Al:Leg} M.R.R. Alves,
 Legendrian contact homology and topological entropy
\emph{J. Topol.\ Anal.}, \textbf{11} (2019), 53–108. 

\bibitem[ACH]{ACH}
  M.R.R. Alves, V. Colin, K. Honda, Topological entropy for Reeb
  vector fields in dimension three via open book decompositions,
  \emph{J.\ \'Ec.\ polytech.\ Math.}, \textbf{6} (2019), 119--148.

\bibitem[ADM$^2$]{ADMM} M.R.R. Alves, L. Dahinden, M. Meiwes,
  L. Merlin $C^0$-Robustness of topological entropy for geodesic
  flows, \emph{J. Fixed Point Theory Appl.}, \textbf{22} (2022), Paper
  no.\ 42, 43 pp.

\bibitem[ADMP]{ADMP} M.R.R. Alves, L. Dahinden, M. Meiwes,
  A. Pirnapasov. $C^0$-stability of topological en- tropy for Reeb
  flows in dimension 3, Preprint arXiv:2311.12001.

\bibitem[AM19]{AM} M.R.R. Alves, M. Meiwes, Dynamically exotic contact
  spheres in dimensions $\geq 7$, \emph{Comment.\ Math.\ Helv.},
  \textbf{94} (2019), 569--622. 

\bibitem[AM24]{AM:braid} M.R.R. Alves, M. Meiwes, Braid stability and the
Hofer metric, \emph{Ann.\ H. Lebesgue},
\textbf{7} (2024), 521--581. 

 \bibitem[AP]{AP}
  M.R.R. Alves, A. Pirnapasov, Reeb orbits that force topological
  entropy, \emph{Ergodic Theory Dynam.\ Systems}, \textbf{42} (2022),
  3025--3068.

\bibitem[ABC]{ABC} G. Ambrosioni, P. Biran, O. Cornea, Approximability
  for Lagrangian submanifolds, Preprint arXiv:2601.12506.

  \bibitem[AK]{AK}
  D.V. Anosov, A.B. Katok, New examples in smooth ergodic
  theory. Ergodic diffeomorphisms, \emph{Trudy Moskov.\ Mat.\ Ob\v s\v c.}, \textbf{23}
    (1970), 3--36.

\bibitem[Arn]{Arn} M.-C. Arnaud, Type des points fixes des
  diff\'eomorphismes symplectiques de $\T^n\times \R^n$,
  \emph{M\'em. Soc.\ Math.\ France (N.S.)}, (1992), no.\ 48, 63 pp.

  
  \bibitem[Ar89]{Ar} V.I. Arnold, \emph{Mathematical Methods of
      Classical Mechanics}, Grad.\ Texts in Math., 60,
    Springer-Verlag, New York, 1989.

\bibitem[Ar90a]{Ar1} V.I. Arnold, Dynamics of complexity of
  intersections, \emph{Bol.\ Soc.\ Brasil.\ Mat.\ (N.S.)}, \textbf{21}
  (1990), 1--10. 

\bibitem[Ar90b]{Ar2} V.I. Arnold, Dynamics of intersections,
  \emph{Analysis, et cetera}, 77--84, Academic Press, Boston, MA,
  1990.

  

\bibitem[As]{As} M. Asaoka, Abundance of fast growth of the number of
  periodic points in 2-dimensional area-preserving dynamics,
  \emph{Comm.\ Math.\ Phys.}, \textbf{356} (2017), 1--17.

\bibitem[ACW]{ACW} A. Avila, S. Crovisier, A. Wilkinson, $C^1$ density
  of stable ergodicity, \emph{Adv.\ Math.}, \textbf{379} (2021), Paper
  No.\ 107496, 68 pp.
  
  

\bibitem[BL]{BL} H. Bae, S. Lee, A comparison of categorical and
  topological entropies on Weinstein manifolds, Preprint
  arXiv:2208.14597.

\bibitem[BG]{BG} E. Barut, V.L. Ginzburg, Barcode growth for
  toric-integrable Hamiltonian systems, Preprint arXiv:2503.08922; to
  appear in \emph{Israel J.\ Math.}.
 

\bibitem[BiC]{BC} P. Biran, O. Cornea, Rigidity and unriddling for
  Lagrangian submanifolds, \emph{Geom.\ Topol.}, \textbf{13} (2009),
  2881--2989. 

  
  
\bibitem[BT]{BT} A.V. Bolsinov, I.A. Taimanov, Integrable geodesic
  flows with positive topological entropy, \emph{Invent.\ Math.},
\textbf{140} (2000), 639--650.

\bibitem[Bo]{Bo} F. Bourgeois, A Morse--Bott approach to contact
  homology, In \emph{Symplectic and contact topology: interactions and
    perspectives (Toronto, ON/Montreal, QC, 2001)}, Fields Inst.\
  Commum., AMS, \textbf{35}, (2003), 55--77.

  
\bibitem[BO09a]{BO0} F. Bourgeois, A. Oancea, Symplectic homology,
  autonomous Hamiltonians, and Morse--Bott moduli spaces, \emph{Duke
    Math.\ J.}, \textbf{146} (2009), 71--174.


\bibitem[BO09b]{BO}
F. Bourgeois, A. Oancea,
An exact sequence for contact- and symplectic homology, 
\emph{Invent.\ Math.},  \textbf{175}  (2009),   611--680.

  
\bibitem[Br]{Br} B. Bramham, Pseudo-rotations with sufficiently
  Liouvillean rotation number are $C^0$-rigid, \emph{Invent.\ Math.},
  \textbf{199} (2015), 561--580. 

\bibitem[BHPW]{BHPW}
P. Bubenik, M. Hull, D. Patel, B. Whittle, 
Persistent homology detects curvature,
\emph{Inverse Problems}, \textbf{36} (2020), 025008, 23 pp. 

  
\bibitem[BV]{BV} P. Bubenik, T.Vergili, Topological spaces of
  persistence modules and their properties, \emph{J. Appl.\ Comput.\
    Topol.}, \textbf{2} (2018), 233--269. 

  

  \bibitem[BP$^3$S$^2$]{BP3S2}
L. Buhovsky, J. Payette, I. Polterovich, L. Polterovich, E. Shelukhin, V. Stojisavljevi\'c,
Coarse nodal count and topological persistence, \emph{J. Eur.\ Math.\ Soc.\
    (JEMS)}, (2024), doi:10.4171/JEMS/1521.

\bibitem[BI]{BuIv} D. Burago, S. Ivanov, On asymptotic volume of tori,
  \emph{Geom.\ Funct.\ Anal.}, \textbf{5} (1995), 800--808.

  
\bibitem[BZ]{BZ} Yu.D. Burago, V.A. Zalgaller, \emph{Geometric
    Inequalities}, Grundlehren Math.\ Wiss.\ 285, Springer, Berlin
  (1988).

\bibitem[CdS]{CdS}A. Cannas da Silva, \emph{Lectures on Symplectic
    Geometry}, Lecture Notes in Math., 1764 Springer-Verlag, Berlin,
  2001. 

  
\bibitem[CZCG]{CZCG} G. Carlsson, A. Zomorodian, A. Collins,
  L. Guibas, Persistence barcodes for shapes, \emph{Int.\ J. Shape
    Model.}, \textbf{11} (2005), 149--187.

\bibitem [Ce]{Ce} L. Cesari, \emph{Surface Area}, Princeton University
  Press, Princeton, NJ, 1956.
 
  
\bibitem[Ch]{Ch} Yu.V. Chekanov, Lagrangian intersections, symplectic
  energy, and areas of holomorphic curves, \emph{Duke Math.\ J.},
  \textbf{95} (1998), 213--226. 

  
\bibitem[CFH]{CFH} K. Cieliebak, A. Floer, H. Hofer, Symplectic
  homology II: A general construction, \emph{Math.\ Z.},
  \textbf{218} (1995), 103--122.

\bibitem[CFHW]{CFHW} K. Cieliebak, A. Floer, H. Hofer, K. Wysocki,
  Applications of symplectic homology II: Stability of the action
  spectrum, \emph{Math.\ Z.}, \textbf{223} (1996), 27--45.

\bibitem[CO]{CO} K. Cieliebak, A. Oancea, Symplectic homology and the
  Eilenberg–Steenrod axioms, \emph{Algebr.\ Geom.\ Topol.}, \textbf{18} (2018),
    1953--2130.

  \bibitem[\c Ci]{Ci} E. \c Cineli, A generalized
    pseudo-rotation with positive topological entropy,
\emph{Bull.\ Lond.\ Math.\ Soc.}, 2025, doi: 10.1112/blms.70021.

\bibitem[\c CG$^2$22]{CGG:HZ} E. \c Cineli, V.L. Ginzburg, B.Z. G\"urel,
  Another look at the Hofer--Zehnder conjecture, \emph{J. Fixed Point
    Theory Appl., Claude Viterbo’s 60th Birthday Festschrift.},
  \textbf{24} (2022), doi:10.1007/s11784-022-00937-w.

\bibitem[\c CG$^2$24a]{CGG:Entropy} E. \c Cineli, V.L. Ginzburg,
  B.Z. G\"urel, Topological entropy of Hamiltonian diffeomorphisms: a
  persistence homology and Floer theory perspective, \emph{Math.\ Z.},
  \textbf{308} (2024), doi:10.1007/s00209-024-03627-0.
    
\bibitem[\c CG$^2$24b]{CGG:Growth} E. \c Cineli, V.L. Ginzburg,
  B.Z. G\"urel, On the growth of the Floer barcode, \emph{J. Mod.\
    Dyn.}, \textbf{20} (2024), 275--298.

\bibitem[\c CG$^2$24c]{CGG:Spectral} E. \c Cineli, V.L. Ginzburg,
  B.Z. G\"urel, On the generic behavior of the spectral norm,
  \emph{Pacific J. Math.}, \textbf{328} (2024), 119--135,

\bibitem[\c CG$^2$24d]{CGG:HZ} E. \c Cineli, V.L. Ginzburg,
  B.Z. G\"urel, Closed orbits of dynamically convex Reeb flows:
  Towards the HZ- and multiplicity conjectures, Preprint arXiv:2410.13093.


\bibitem[\c CG$^2$25a]{CGG:Vol} E. \c Cineli, V.L. Ginzburg,
  B.Z. G\"urel, Lower semi-continuity of Lagrangian volume,
  \emph{Israel J.\ Math.}, \textbf{267} (2025), 253-274. 

  \bibitem[\c CG$^2$25b]{CGG:Metric} E. \c Cineli, V.L. Ginzburg,
  B.Z. G\"urel, From barcode entropy to metric entropy, Preprint
  arXiv:2507.13215; to appear in \emph{J. Fixed Point Theory Appl.}

\bibitem[\c CG$^2$M25]{CGGM:Reeb} E. \c Cineli, V.L. Ginzburg,
  B.Z. G\"urel, M. Mazzucchelli, On the barcode entropy of Reeb flows,
  \emph{Selecta Math.\ (NS)}, \textbf{31} (2025), Article No.\ 64, doi:
  10.1007/s00029-025-01062-5.

\bibitem[\c CG$^2$M26]{CGGM:hyperbolic} E. \c Cineli, V.L. Ginzburg,
  B.Z. G\"urel, M. Mazzucchelli  
Invariant sets and hyperbolic closed Reeb orbits, 
\emph{Adv.\ Math.}, \textbf{484} (2026), 110709.
  

  \bibitem[\c CS]{CS} E. \c Cineli, S. Seyfaddini,
The strong closing lemma and Hamiltonian pseudo-rotations,
\emph{J. Mod.\ Dyn.}, \textbf{20} (2024), 299--318.

\bibitem[CdSGO]{CdSGO16} F. Chazal, V. de Silva, M. Glisse,
  S. Oudot, The structure and stability of persistence modules,
  Springer Briefs Math.  \emph{Springer, [Cham]}, 2016. x+120 pp.

\bibitem[CSEHM]{CSEHM}
  D. Cohen-Steiner, H. Edelsbrunner, J. Harer, Y. Mileyko,
Lipschitz functions have $L_p$-stable persistence, 
\emph{Found.\ Comput.\ Math.}, \textrm{10} (2010), 127--139.  
  
\bibitem[CDHR]{CDHR}  
V. Colin, P. Dehornoy, U. Hryniewicz, A. Rechtman, Generic
properties of 3-dimensional Reeb flows: Birkhoff sections and
entropy, Preprint arXiv:2202.01506.

\bibitem[EP]{EP} M. Entov, L. Polterovich, Calabi quasimorphism and
  quantum homology, \emph{Int.\ Math.\ Res.\ Not.}, 2003,
  1635--1676. 


\bibitem[CKMS]{CKMS}
G. Contreras, G. Knieper, M. Mazzucchelli, B.H. Schulz, Surfaces of
  section for geodesic flows of closed surfaces,
  Preprint arXiv:2204.11977.

\bibitem[CB]{CB} W. Crawley-Boevey, Decomposition of pointwise
  finite-dimensional persistence modules, \emph{J. Algebra Appl.},
  \textbf{14} (2015), no.\ 5, 1550066, 8 pp.

\bibitem[CG]{CG} D. Cristofaro-Gardiner, Symplectic embeddings from
  concave toric domains into convex ones,
  \emph{J. Differential Geom.}, \textbf{112} (2019), 199--232.
  
  
\bibitem[De]{De} T. Delzant, Hamiltoniens p\'eriodiques et images
  convexes de l'application moment, \emph{Soc.\ Math.\ France},
  \textbf{116} (1988), 315--339. 

  \bibitem[DHK$^2$]{DHKK} G. Dimitrov, F. Haiden, L. Katzarkov,
  M. Kontsevich, Dynamical systems and categories, The influence of
  Solomon Lefschetz in geometry and topology, \emph{Contemp.\ Math.},
  vol.\ 621, Amer. Math. Soc., Providence, RI, 2014, pp. 133--170.  
  
\bibitem[Di]{Di} E.I. Dinaburg
characterizations of dynamical systems, \emph{Izv.\ Akad.\ Nauk SSSR
Ser.\ Mat.}, \textbf{35} (1971), 324--366. 

\bibitem[El]{El} L.H. Eliasson, Normal forms for Hamiltonian systems
  with Poisson commuting integrals -- elliptic case, \emph{Comment.\
    Math.\ Helv.}, \textbf{65} (1990), 4--35. 

\bibitem[EP]{EP} M. Entov, L. Polterovich, Calabi quasimorphism and
  quantum homology, \emph{Int.\ Math.\ Res.\ Not.}, 2003,
  1635--1676. 

\bibitem[FK]{FK} B. Fayad, A. Katok, Constructions in elliptic
  dynamics, \emph{Ergodic Theory Dynam.\ Systems}, \textbf{24} (2004),
  1477--1520.

\bibitem[Fe52]{Fe52} H. Federer, Measure and area, \emph{Bull.\ Amer.\
    Math.\ Soc.}, \textbf{58} (1952), 306--378. 

  
\bibitem[FeLS]{FLS} E. Fender, S. Lee, B. Sohn, Barcode entropy for
  Reeb flows on contact manifolds with exact Liouville fillings,
  \emph{Comm.\ Contemp.\ Math.}, 2025, doi.org/10.1142/S0219199725500440.

\bibitem[Fe24]{Fe1} R. Fernandes, 
Barcode entropy and wrapped Floer homology, Preprint arXiv:2410.05528.

\bibitem[Fe25]{Fe2} R. Fernandes, Wrapped Floer homology and
hyperbolic sets, Preprint arXiv:2501.06654.

\bibitem[Fi]{Fi} J.W. Fish, Target-local Gromov compactness,
  \emph{Geom.\ Topol.}, \textbf{15} (2011), 765--826.


\bibitem[FiHa]{FH}
  T. Fisher, B. Hasselblatt, \emph{Hyperbolic Flows}, Zurich Lectures
  in Advanced Mathematics, European Mathematical Society, Berlin,
  2019.

\bibitem[Fr96]{Fr96} J. Franks, Area preserving homeomorphisms of open
  surfaces of genus zero, \emph{New York Jour.\ of Math.}, \textbf{2}
  (1996), 1--19.

\bibitem[Fr99]{Fr99} J. Franks, The Conley index and non-existence of
  minimal homeomorphisms. Proceedings of the Conference on
  Probability, Ergodic Theory, and Analysis (Evanston, IL,
  1997). \emph{Illinois J. Math.}, \textbf{43} (1999),
  457--464. 
 

  
\bibitem[FrHa]{FrHa} J. Franks, M. Handel, Periodic points of Hamiltonian
  surface diffeomorphisms, \emph{Geom.\ Topol.}, \textbf{7} (2003),
  713--756.


\bibitem[FM]{FM} J. Franks, M. Misiurewicz, Topological methods in
  dynamics. \emph{Handbook of dynamical systems}, Vol.\ 1A, 547–598,
  North-Holland, Amsterdam, 2002.
  
  
\bibitem[FGS]{FGS}  
  U. Frauenfelder, V.L. Ginzburg, F. Schlenk,
  Energy capacity inequalities via an action selector, \emph{Contemp.\
    Math.}, \textbf{387}, Israil Math.\ Conf.\ Proc., American
  Mathematical Society, RI, 2005, 129--152.
  
\bibitem[FrLS]{FrLS}
U. Frauenfelder, C. Labrousse, F. Schlenk, 
Slow volume growth for Reeb flows on spherizations and contact
Bott--Samelson theorems,
\emph{J. Topol.\ Anal.}, \textbf{7} (2015), 407--451.  


\bibitem[FS06]{FS} U. Frauenfelder, F. Schlenk, Fiberwise volume growth
  via Lagrangian intersections, \emph{J. Symplectic Geom.}, \textbf{4}
  (2006), 117--148.  

\bibitem[FS07]{FS:convex}  
U. Frauenfelder, F. Schlenk, Hamiltonian dynamics on convex symplectic
manifolds, \emph{Israel
    J. Math.}, \textbf{159} (2007), 1--56. 

\bibitem[FO${^3}$]{FOOO} K. Fukaya, Y.-G. Oh, H. Ohta, K. Ono,
  \emph{Lagrangian Intersection Floer Theory: Anomaly and Obstruction,
    I and II}, AMS/IP Studies in Advanced Math., vol.\ 46, Amer.\
  Math.\ Soc.\ and International Press, 2009.

  
\bibitem[GeSm]{GS} I.M.  Gelfand, M.M. Smirnov, Lagrangians satisfying
  Crofton formulas, Radon transforms, and nonlocal differentials,
  \emph{Adv.\ Math.}, \textbf{109} (1994), 188--227.  

\bibitem[G$^2$09]{GG:gaps} V.L. Ginzburg, B.Z. G\"urel, Action and index
  spectra and periodic orbits in Hamiltonian dynamics, \emph{Geom.\
    Topol.}, \textbf{13} (2009), 2745--2805.

  
\bibitem[G$^2$14]{GG:hyperbolic} V.L. Ginzburg, B.Z. G\"urel, Hyperbolic
  fixed points and periodic orbits of Hamiltonian diffeomorphisms,
  \emph{Duke Math.\ J.}, \textbf{163} (2014), 565--590.

\bibitem[G$^2$15]{GG:survey} V.L. Ginzburg, B.Z. G\"urel, The Conley
  conjecture and beyond, \emph{Arnold Math.\ J.}, \textbf{1} (2015),
  299--337.
  
\bibitem[G$^2$18]{GG:PR} V.L. Ginzburg, B.Z. G\"urel, Hamiltonian
  pseudo-rotations of projective spaces, \emph{Invent.\ Math.},
  \textbf{214} (2018), 1081--1130.

\bibitem[G$^2$20]{GG:LS} V.L. Ginzburg, B.Z. G\"urel,
  Lusternik--Schnirelmann theory and closed Reeb orbits, \emph{Math.\
    Z.}, \textbf{295} (2020),
  515--582. 

\bibitem[G$^2$22]{GG:PRvsR} V.L. Ginzburg, B.Z. G\"urel, Pseudo-rotations
  vs.\ rotations, \emph{J. London Math.\ Soc.\ (2)}, \textbf{106}
  (2022), 3411--3449.
  
\bibitem[G$^2$M]{GGM} V.L. Ginzburg, B.Z. G\"urel, M. Mazzucchelli,
  Barcode entropy of geodesic flows, \emph{J. Eur.\ Math.\ Soc.\
    (JEMS)},  doi: 10.4171/JEMS/1572; First online: 13 Dec 2024.

\bibitem[GO]{GO}  
B.R. Gelbaum, J.M.H. Olmsted, \emph{Counterexamples in Analysis},
Dover, 2003.

\bibitem[GuSt]{GuSt} V. Guillemin, S. Sternberg, The moment map
  revisited, \emph{J. Differential Geom.}, \textbf{69} (2005),
  137--162. 
 

\bibitem[Gu]{Gu} J. Gutt, The positive equivariant symplectic homology
  as an invariant for some contact manifolds, \emph{J. Symplectic
    Geom.}, \textbf{15} (2017), 1019--1069. 
  
\bibitem[GH]{GH} J. Gutt, M. Hutchings, Symplectic capacities from
  positive $S^1$-equivariant symplectic homology, \emph{Algebr.\
    Geom.\ Topol.}, \textbf{18} (2018), 3537--3600. 

  

\bibitem[He91a]{He1} M.-R. Herman, Exemples de flots hamiltoniens dont
  aucune perturbation en topologie $C^\infty$ n’a d’orbites
  p\'eriodiques sur un ouvert de surfaces d’\'energies,
  \emph{C. R. Acad.\ Sci.\ Paris S\'er.\ I Math.}, \textbf{312}
  (1991), 989--994.
  
  \bibitem[He91b]{He2}
  M.-R. Herman, Diff\'erentiabilit\'e optimale et contre-exemples \`a
  la fermeture en topologie $C^\infty$ des orbites r\'ecurrentes de flots
  hamiltoniens, \emph{C. R. Acad.\ Sci.\ Paris S\'er.\ I Math.},
  \textbf{313} (1991), 49--51.
  
  
\bibitem[Ho]{Ho} H. Hofer, On the topological properties of
    symplectic maps, \emph{Proc.\ Roy.\ Soc.\ Edinburgh Sect.\ A},
    \textbf{115} (1990), 25--38. 

  \bibitem[HS]{HS} H. Hofer, D.A. Salamon, Floer homology and Novikov
    rings, in \emph{The Floer memorial volume}, (H. Hofer,
    C.H. Taubes, A. Weinstein, E. Zehnder, editors), Progr.\ Math.\
    133, Birkh\"auser, Basel (1995) 483--524.
    
\bibitem[HZ]{HZ} H. Hofer, E. Zehnder, \emph{Symplectic Invariants and
    Hamiltonian Dynamics}, Birk\"auser Verlag, Basel, 1994.


\bibitem[Hum]{Hum} V. Humili\`ere, On some completions of the space of
  Hamiltonian maps, \emph{Bull.\ Soc.\ Math.\ France}, \textbf{136}
  (2008), 373--404. 

  
\bibitem[HLS]{HLS} V. Humili\`ere, R. Leclercq, S. Seyfaddini,
    Reduction of symplectic homeomorphisms, \emph{Ann.\ Sci.\ \'Ec.\
      Norm.\ Sup\'er. (4)}, \textbf{49} (2016), 633--668. 
  

\bibitem[Hu16]{Hu:GT16} M. Hutchings, Beyond ECH capacities,
  \emph{Geom.\ Topol.}, \textbf{20} (2016), 1085--1126.  
  
\bibitem[Hu24]{Hu} M. Hutchings, Zeta functions of dynamically tame
  Liouville domains, Preprint arXiv:2402.07003.

\bibitem[JS]{JS} D. Joksimovi\'c, S. Seyfaddini, A H\"older-type
  inequality for the $C^0$ distance and Anosov--Katok pseudo-rotations,
\emph{Int.\ Math.\ Res.\ Not.\ IMRN} 2024, no.\ 8, 6303--6324.

\bibitem[Iv]{Iv} S.V. Ivanov, Gromov--Hausdorff convergence and volumes
  of manifolds, \emph{Algebra i Analiz}, \textbf{9} (1997), 
  65--83; translation in \emph{St.\ Petersburg Math.\ J.}, \textbf{9}
  (1998), 
  945--959.


\bibitem[Kal]{Kal} V. Kaloshin, An extension of the Artin--Mazur
  theorem, \emph{Ann.\ of Math.} (2), \textbf{150} (1999),
  729--741. 
 

\bibitem[Ka73]{Ka:Izv}
A.B. Katok, 
Ergodic perturbations of degenerate integrable Hamiltonian systems (Russian),
\emph{Izv.\ Akad.\ Nauk SSSR Ser.\ Mat.},   \textbf{37} (1973), 539--576.

\bibitem[Ka80]{Ka} A. Katok, Lyapunov exponents, entropy and periodic
  orbits for diffeomorphisms, \emph{Inst.\ Hautes \'Etudes Sci.\
    Publ.\ Math.}, \textbf{51} (1980), 137--173.

  \bibitem[Ka82]{Ka82}
  A. Katok, Entropy and closed geodesics, \emph{Ergodic Theory Dynam.\
    Systems}, \textbf{2} (1982), 339--365. 

  
\bibitem[KH]{KH}
  A. Katok, B. Hasselblatt, \emph{Introduction to the
    Modern Theory of Dynamical Systems.} With a supplementary chapter
  by A. Katok and Mendoza. Encyclopedia of Mathematics and its
  Applications, 54. Cambridge University Press, Cambridge, 1995.


\bibitem[KT]{KT}
A. Katok, J.P. Thouvenot, Slow entropy type invariants and smooth
realization of commuting measure-preserving transformations, \emph{Ann.\ Inst.\
H. Poincar\'e Probab.\ Statist.}, \textbf{33} (1997), 323--338.  

\bibitem[KS]{KS}
A. Kislev, E. Shelukhin, 
Bounds on spectral norms and barcodes, 
\emph{Geom.\ Topol.},  \textbf{25} (2021), 3257--3350. 

  \bibitem[KLCN]{KLCN}
A. Koropecki, P. Le Calvez, M. Nassiri, Prime ends rotation numbers and periodic points,
\emph{Duke Math.\ J.}, \textbf{164} (2015), 403--472.  

\bibitem[LMcD]{LMcD} F. Lalonde, D. McDuff, The geometry of symplectic
  energy, \emph{Ann.\ of Math.\ (2)} \textbf{141} (1995),
  349--371. 

\bibitem[LeC]{LeC} P. Le Calvez, Periodic orbits of Hamiltonian
  homeomorphisms of surfaces, \emph{Duke Math.\ J.}, \textbf{133}
  (2006), 125--184.
  

\bibitem[LCS]{LCS} P. Le Calvez, M. Sambarino, Homoclinic orbits for
  area preserving diffeomorphisms of surfaces, \emph{Ergodic Theory
  Dyn.\ Syst.}, \textbf{42} (2022), 1122--1165 .

\bibitem[LCY]{LCY} P. Le Calvez, J.-C. Yoccoz, Un th\'eor\`eme d'indice
  pour les hom\'eomorphismes du plan au voisinage d'un point fixe,
  \emph{Ann.\ of Math.\ (2)}, \textbf{146} (1997), 241--293. 


\bibitem[LRS]{LRS}
 F. Le Roux, S. Seyfaddini, The Anosov--Katok method and pseudo-rotations in symplectic
dynamics, \emph{J. Fixed Point Theory Appl.}, \textbf{24} (2022), Paper No.\ 36, 39 pp.


\bibitem[Le]{Le} M. Lesnik, The theory of the interleaving distance on
  multidimensional persistence modules, \emph{Found.\ Comput.\ Math.},
  \textbf{15} (2015), 613--650.  

\bibitem[Li]{Li} W. Li, The $\Z$-graded symplectic Floer cohomology of
  monotone Lagrangian sub-manifolds, \emph{Algebr.\ Geom.\ Topol.},
  \textbf{4} (2004), 647--684.
 
  
\bibitem[LY]{LY} Z. Lian, L.-S. Young, Lyapunov exponents, periodic
  orbits, and horseshoes for semiflows on Hilbert spaces,
  \emph{J. Amer.\ Math.\ Soc.}, \textbf{25} (2012), 637--665.  

\bibitem[LS]{LS} Y. Lima, O.M. Sarig, Symbolic dynamics for
  three-dimensional flows with positive topological entropy,\emph{
    J. Eur.\ Math.\ Soc.\ (JEMS)}, \textbf{21} (2019),
  199--256. 

\bibitem[MS]{MS} L. Macarini, F. Schlenk, Positive topological entropy
  of Reeb flows on spherizations, \emph{Math.\ Proc.\ Cambridge
    Philos.\ Soc.}, \textbf{151} (2011), 103--128. 

  


\bibitem[McDS]{McDS} D. McDuff, D. Salamon, \emph{$J$-Holomorphic
    Curves and Symplectic Topology}, Colloquium Publications, Vol.\
  52, AMS, Providence, RI, 2004.

  
\bibitem[Me18]{Me:Th} M. Meiwes, \emph{Rabinowitz Floer Homology,
    Leafwise Intersections, and Topological Entropy}, Inaugural
  Dissertation zur Erlangung der Doktorw\"urde der
  Naturwissenschaftlich -- Mathematischen Gesamtfakultät der Ruprecht
  -- Karls -- Universit\"at Heidelberg, 2018; available at
  http://www.ub.uni-heidelberg.de/archiv/24153.
   

\bibitem[Me24]{Me} M. Meiwes, Relative barcode entropy and
horseshoes, Preprint arXiv:2401.07034.

\bibitem[MZ]{MZ} E. Miranda, N.T. Zung, Equivariant normal form for
  nondegenerate singular orbits of integrable Hamiltonian systems,
  \emph{Ann.\ Sci.\ \'Ecole Norm.\ Sup.} \textbf{37} (2004),
  819--839. 

\bibitem[Ne]{Ne} S.E. Newhouse, Continuity properties of entropy,
  \emph{Ann.\ Math.\ (2)}, \textbf{129} (1989), 215--235. 

\bibitem[Oh90]{Oh:Invent} Y.-G. Oh, Second variation and stabilities
  of minimal Lagrangian submanifolds in Kähler manifolds,
  \emph{Invent.\ Math.}, \textbf{101} (1990), 501--519. 

\bibitem[Oh93a]{Oh:MathZ} Y.-G. Oh, Volume minimization of Lagrangian
submanifolds under Hamiltonian deformations, \emph{Math.\ Z.},
\textbf{212} (1993), 175--192. 

  
\bibitem[Oh93b]{Oh} Oh, Y.-G., Floer cohomology of Lagrangian
  intersections and pseudo-holomorphic disks I and II, \emph{Comm.\
    Pure Appl.\ Math.} \textbf{46} (1993), 949--993 and 995--1012.


\bibitem[Oh99]{Oh:spec} Oh, Y.-G.,
Symplectic topology as the geometry of action functional. II. Pants product
and cohomological invariants, \emph{Comm\. Anal.\ Geom.}, \textbf{7} (1999), 1--54.

\bibitem[Oh05]{Oh:gamma} Y.-G. Oh, Spectral invariants, analysis of
  the Floer moduli space, and geometry of the Hamiltonian
  diffeomorphism group, \emph{Duke Math.\ J.}, \textbf{130} (2005),
  199--295.  




\bibitem[Pa97]{Pa} G.P. Paternain, Topological entropy for geodesic flows
  on fibre bundles over rationally hyperbolic manifolds, \emph{Proc.\
    Amer.\ Math.\ Soc.}, \textbf{125} (1997), 2759--2765. 

\bibitem[Pa99]{Pa:book} G.P. Paternain, \emph{Geodesic Flows},
Progress in Mathematics,  Birkh\"auser, (Boston, MA) 1999.

\bibitem[P$^2$S]{PPS}
I. Polterovich, L. Polterovich, V. Stojisavljevi\'c, Persistence barcodes and
Laplace eigenfunctions on surfaces, \emph{Geom.\ Dedicata},
\textbf{201} (2019), 111--138.

\bibitem[Po93]{Po:Hofer} L. Polterovich, Symplectic displacement
  energy for Lagrangian submanifolds, \emph{Ergodic Theory Dynam.\
    Systems}, \textbf{13} (1993), 357--367. 

 \bibitem[Po01]{Po:Book} L.  Polterovich, \emph{The Geometry of the
    Group of Symplectic Diffeomorphisms}, Lectures in Mathematics ETH
  Z\"urich, Birkh\"auser Verlag, Basel, 2001.
   
  
\bibitem[PS]{PS} L. Polterovich, E. Shelukhin, Autonomous Hamiltonian
  flows, Hofer’s geometry and persistence modules, \emph{Selecta
    Math.\ (N.S.)} \textbf{22} (2016), 227--296.
  
\bibitem[PRSZ]{PRSZ} L. Polterovich, D. Rosen, K. Samvelyan, J. Zhang,
  \emph{Topological Persistence in Geometry and Analysis}, University
  Lecture Series, vol.\ 74, Amer.\ Math.\ Soc., Providence, RI, 2020.


\bibitem[Sa]{Sa} D.A. Salamon, Lectures on Floer homology. In
  \emph{Symplectic Geometry and Topology}, IAS/Park City Math.\ Ser.,
  vol.\ 7, Amer.\ Math.\ Soc., Providence, RI, 1999, 143--229.

\bibitem[SaZe]{SaZe} D. Salamon, E. Zehnder, Morse theory for periodic
  solutions of Hamiltonian systems and the Maslov index, \emph{Comm.\
    Pure Appl.\ Math.}, \textbf{45} (1992), 1303--1360.

  
\bibitem[Sc]{Sc} M. Schwarz, On the action spectrum for closed symplectically aspherical manifolds,
\emph{Pacific J. Math.}, \textbf{193} (2000), 419--461.


  \bibitem[Se06]{Se:biased} P. Seidel, \emph{A Biased View of Symplectic
    Cohomology}, Current Developments in Mathematics, 2006
  (International Press, Somerville, MA, 2008) 211--253.

  
\bibitem[Se13]{Se} P. Seidel, \emph{Lectures on Categorical Dynamics and
    Symplectic Topology}, MIT lecture notes, 2013. 


  

\bibitem[Sh22]{Sh:HZ} E. Shelukhin, On the Hofer--Zehnder conjecture,
  \emph{Ann.\ of Math. (2)}, \textbf{195} (2022), 775--839.  

  
\bibitem[StZh]{SZ-SBM}
V. Stojisavljevi\'c, J. Zhang, 
 Persistence modules, symplectic Banach--Mazur distance and Riemannian metrics,
\emph{Internat. J.\ Math.}, \textbf{32} (2021), Paper No.\ 2150040, 76 pp. 

\bibitem[Su]{Su21} Y. Sugimoto, On the generic Conley conjecture,
  \emph{Arch.\ Math.}, \textbf{117} (2021), 423–432. 


\bibitem[Us08]{Us0} M. Usher, Spectral numbers in Floer theories,
  \emph{Compositio Math.}, \textbf{144} (2008), 1581--1592.
  
\bibitem[Us11]{Us1} M. Usher, Boundary depth in Floer theory and its
  applications to Hamiltonian dynamics and coisotropic submanifolds,
  \emph{Israel J. Math.}, \textbf{184} (2011), 1--57.
  

\bibitem[Us13]{Us} M. Usher, Hofer’s metrics and boundary depth,
  \emph{Ann.\ Sci.\ \'Ec. Norm.\ Sup\'er.}, \textbf{46} (2013),
  57--128.

  
\bibitem[Us22]{Us-SBM} M. Usher, Symplectic Banach--Mazur distances
  between subsets of $\C^n$, \emph{J. Topol.\ Anal.}, \textbf{14} (2022), 
    231--286. 
  
\bibitem[UZ]{UZ} M. Usher, J. Zhang, Persistent homology and
Floer--Novikov theory, \emph{Geom.\ Topol.}, \textbf{20} (2016),
3333--3430, 
  

\bibitem[Vi92]{Vi} C. Viterbo, Symplectic topology as the geometry of
  generating functions, \emph{Math.\ Ann.}, \textbf{292} (1992),
  685--710.  


\bibitem[Vi99]{Vi:GAFA} C. Viterbo, Functors and computations in Floer
  cohomology, I, \emph{Geom.\ Funct.\ Anal.}, \textbf{9} (1999),
  985--1033.

\bibitem[Vi00]{Vi:metric} C. Viterbo,
Metric and isoperimetric problems in symplectic geometry.
\emph{J. Amer.\ Math.\ Soc.}, \textbf{13} (2000), 411--431. 


  
\bibitem[Yo]{Yo} Y. Yomdin, Volume growth and entropy, \emph{Israel
    J. Math.}, \textbf{57} (1987), 285--300. 

  
\bibitem[ZC]{ZC} A. Zomorodian, G. Carlsson, Computing persistent
  homology, \emph{Discrete Comput.\ Geom.}, \textbf{33} (2005),
  249--274. 

\bibitem[Zu]{Zu} N.T.  Zung, Non-degenerate singularities of
  integrable dynamical systems, \emph{Ergodic Theory Dynam.\ Systems},
  \textbf{35} (2015), 994--1008. 

\end{thebibliography}
\end{document}